\definecolor{purple}{RGB}{100, 0, 200}
\theoremstyle{plain} 
\newtheorem{thm}{Theorem}[section]
\newtheorem{prop}[thm]{Proposition}
\newtheorem{lem}[thm]{Lemma} 
\newtheorem{cor}[thm]{Corollary} 
\numberwithin{equation}{section} 
\theoremstyle{definition}
\theoremstyle{remark} 
\newtheorem{rmk}[thm]{Remark}
\newcommand{\R}{\mathbb{R}}
\newcommand{\ep}{\varepsilon}
\newcommand{\p}{\partial}
\newcommand{\be}{\begin{equation}}
\newcommand{\ee}{\end{equation}}
\newcommand{\ba}{\begin{aligned}}
	\newcommand{\ea}{\end{aligned}}
\newcommand{\bv}{\bar{\mathpzc{v}}}
\newcommand{\bu}{\bar{\mathpzc{u}}}
\newcommand{\bw}{\bar{\mathpzc{w}}}
\newcommand{\bp}{\bar{\mathpzc{p}}}
\newcommand{\cA}{\mathcal A}
\newcommand{\cE}{\mathcal E}
\newcommand{\tv}{\mathpzc{v}}
\newcommand{\tu}{\mathpzc{u}}
\newcommand{\tw}{\mathpzc{w}}
\newcommand{\tp}{\mathpzc{p}}
\newcommand{\tx}{\widetilde{x}}
\newcommand{\ty}{\widetilde{y}}
\newcommand{\tz}{\widetilde{z}}
\newcommand{\bbv}{\overline{\overline{v}}}
\newcommand{\uuv}{\underline{\underline{v}}}
\newcommand{\ug}{\underline{\underline{g}}}
\newcommand{\cT}{\mathcal T}
\newcommand{\dv}{D_v}
\newcommand{\du}{D_u}
\DeclareFontFamily{OT1}{pzc}{}
\DeclareFontShape{OT1}{pzc}{m}{it}{<-> s * [1.10] pzcmi7t}{}
\DeclareMathAlphabet{\mathpzc}{OT1}{pzc}{m}{it}
\title{Local and global well-posedness of one-dimensional free-congested equations}
\author{Anne-Laure Dalibard\footnote{Sorbonne Université, Université Paris-Diderot SPC, CNRS,  Laboratoire Jacques-Louis Lions, LJLL, F-75005 Paris \& ~École Normale Supérieure, Université PSL, Département de Mathématiques et applications,
F-75005, Paris, France; dalibard@ann.jussieu.fr} \ and Charlotte Perrin\footnote{Aix Marseille Univ, CNRS, Centrale Marseille, I2M, Marseille, France; charlotte.perrin@univ-amu.fr}}
\begin{document}

\maketitle

\begin{small}

\begin{abstract}
    This paper is dedicated to the study of a one-dimensional congestion model, consisting of two different phases. In the congested phase, the pressure is free and the dynamics is incompressible, whereas in the non-congested phase, the fluid obeys a pressureless compressible dynamics.
    
    We investigate the Cauchy problem for initial data which are small perturbations in the non-congested zone of travelling wave profiles. We prove two different results. First, we show that for arbitrarily large perturbations, the Cauchy problem is locally well-posed in weighted Sobolev spaces. The solution we obtain takes the form $(v_s, u_s)(t, x-\tx(t))$, where $x<\tx(t)$ is the congested zone and $x>\tx(t)$ is the non-congested zone. The set $\{x=\tx(t)\}$ is a free surface, whose evolution is coupled with the one of the solution. Second, we prove that if the initial perturbation is  sufficiently small
    , then the solution is global. 
    This stability result relies on coercivity properties of the linearized operator around a travelling wave, and on the introduction of a new unknown which satisfies better estimates than the original one.
    In this case, we also prove that travelling waves are asymptotically stable.

\end{abstract}

	\bigskip
	\noindent{\bf Keywords:} Navier-Stokes equations, free boundary problem, nonlinear stability.
	
	\medskip
	\noindent{\bf MSC:} 35Q35, 35L67.
\end{small}

\section{Introduction}

The purpose of this paper is to construct solutions of the fluid system
\begin{subnumcases}{\label{eq:NS-0}}
\partial_t \tv - \partial_x \tu = 0, \label{eq:NS-0-v}\\
\partial_t \tu + \partial_x \tp - \mu \partial_x\left(\dfrac{1}{\tv}\partial_x \tu \right) = 0, \\
\tv \geq 1, \quad (\tv-1) \tp = 0, \quad \tp \geq 0,\label{eq:NS-0-excl}\\
\tv_{|t=0}= \tv^0,\quad \tu_{|t=0}=\tu^0,
\end{subnumcases}
for a large class of initial data $(\tv^0, \tu^0)$, with 
\[
(\tv,\tu)(t,x) \underset{x \rightarrow \pm \infty}{\longrightarrow} (v_\pm,u_\pm).
\]
The variable $\tv$ represents the specific volume of the fluid, that is the inverse of the density, while $\tu$ denotes its velocity.
Equations~\eqref{eq:NS-0} are actually a reformulation in Lagrangian coordinates of the constrained Navier-Stokes system introduced in~\cite{BPZ2014}.
We further assume that
\begin{equation}\label{lim:vup}
v_- = 1, \quad v_+ > 1, \quad u_- > u_+.
\end{equation}	
We do not impose a limit condition on the pressure variable $\tp$ which is actually linked to $(\tv,\tu)$.
The pressure is indeed seen as a Lagrange multiplier associated to the constraint
$\partial_x\tu \geq 0$ on $\{\tv=1\}$.

\bigskip
Let us recall a few facts about system \eqref{eq:NS-0}, and be more specific about the contents of the present paper. 
It describes a partially congested system, consisting of two different phases. In the phase $\{\tv>1\}$ (non-congested phase where $\rho =1/\tv < 1$), the pressure vanishes and the dynamic is compressible. In the phase $\{\tv=1\}$ (congested phase), the pressure is activated and the dynamic is incompressible. \\
From the modelling point of view, the system~\eqref{eq:NS-0} may apply in various contexts. 
A first example is given by the dynamics of two-phase flows in presence of pure-phase (or saturation) regions as described by Bouchut {\it et al.} in~\cite{bouchut2000}.
In this context, the constrained variable is the volume fraction which has to stay between $0$ and $1$, the extremal values corresponding to the pure-phase states.
Another domain of application of Equations~\eqref{eq:NS-0} is the modeling of collective motion (like crowds or vehicular traffic, see for instance~\cites{berthelin2017,degond2011,maury2011}). 
There, the maximal density limit (or equivalently the minimal specific volume) corresponds to a microscopic \emph{packing constraint}, constraint which is locally achieved when the agents are in contact. 
In this framework, models of type~\eqref{eq:NS-0} are called \emph{hard congestion models} (see~\cite{maury2012}).
Finally, let us also mention the connections between~\eqref{eq:NS-0} and models for wave-structure interactions developed in the very recent years by Lannes~\cite{lannes2017} and Godlewski et al.~\cite{godlewski2018}. 
A similar constraint to~\eqref{eq:NS-0-excl} can be indeed formulated to express the two possible states of the flow: \emph{pressurized} in the ``interior'' domain at the contact with the structure (the height being then constrained by the structure), \emph{free} in the exterior domain.

\bigskip
It can be easily checked that there exist travelling wave solutions $(\bu, \bv)(x-st)$ for \eqref{eq:NS-0}. These were constructed in \cite{DP} and their main features are recalled below in Lemma \ref{lem:TW}. 
They consist of a congested zone for $x-st<0$, and of a non-congested zone for $x-st>0$, in which $\bv$ is the solution of a logistic equation (see Figure~\ref{fig:profile} below). 
The setting of the current paper is the following: we consider initial data $(\tu^0, \tv^0)$ which are perturbations of the travelling wave profiles $(\bu,\bv)$ in the non-congested zone $x>0$ only. In other words, $\tu^0(x)=\bu(x)=u_-$ and $\tv^0(x)=v_-=1$ for $x<0$. Under compatibility conditions on the initial data, we prove that there exists a local strong solution of \eqref{eq:NS-0}. Furthermore, this solution is global provided 
the initial perturbation is sufficiently small.

Originally, the study of density constrained fluid systems begins with the proof of the existence of global weak solutions by Lions and Masmoudi~\cite{lions1999} for the multi-dimensional free-congested Navier-Stokes equations.
The result is achieved via a penalty approach: the equations are approximated by a fully compressible Navier-Stokes system in which the maximal density constraint has been relaxed and the (compressible) pressure plays the role of the penalty function.
Later, the same type result was obtained in~\cite{perrin2015} by means of a \emph{soft congestion approximation} which consists of a fully compressible Navier-Stokes system with a singular pressure law blowing up as the density approaches $1$. 
Contrary to the former study, the maximal density constraint is satisfied even at the approximate level which can be useful from the numerical point of view (see for instance~\cites{degond2011,PS}), and relevant from the physical point of view if one thinks for instance of the influence of repulsive social forces in collective motion.
If the constructed weak solutions can theoretically couple the  free and congested states, the previous existence results do not give any information about the congested domain and about the time evolution of its boundary. 
In other words, it is not clear whether free and congested states actually co-exist within a given weak solution.\\
The existence of more regular solutions to~\eqref{eq:NS-0} for initial mixed compressible-incompressible data is, up to our knowledge, a largely open problem. 
Comparatively to other compressible-incompressible free boundary problems like the ones studied in~\cite{shibata2016} or in~\cite{colombo2016}, we have to handle the fact that the interface between the free (compressible) domain and the congested (incompressible) domain is not closed, {i.e.} matter passes through the boundary and the volume of the congested region evolves with time.
Besides, the identification of appropriate transmission conditions across the interface is a non-trivial issue which is for instance raised in~\cite{bresch2017} by Bresch and Renardy.
In the hyperbolic framework of wave-structure interactions (WSI), the recent study of Iguchi and Lannes~\cite{iguchi2019} provides a one-dimensional existence result in $H^m$, $m \geq 2$ (regularity of the solution in the ``exterior'' domain, which is called the free domain in our framework).
This result has been then extended to the dispersive Boussinesq case in~\cites{bresch2019,beck2021freely}, and to the two-dimensional axisymmetric case in~\cite{bocchi2020}. 
Finally, the study~\cite{maity2019} includes viscosity effects, still in an axisymmetric configuration.
The congestion problem~\eqref{eq:NS-0} is similar to the viscous WSI problem~\cite{maity2019} in the sense that it can be formulated as a mixed initial-boundary value problem with a implicit coupling of the (parabolic) PDEs describing the dynamics in the free/exterior zone with a nonlinear ODE. 
This ODE (see~\eqref{EDO-tx} below) represents the evolution of the free-congested interface in~\eqref{eq:NS-0}, while in~\cite{maity2019} the ODE models the vertical motion of the structure (the contact line between the fluid and the structure is there constant due to the axisymmetric hypothesis).

\medskip
As said before, partially congested propagation fronts $(\bu, \bv)(x-st)$ for the viscous system~\eqref{eq:NS-0} have already been identified in the previous study~\cite{DP}.
These traveling waves are such that $\bv, \bu$ and the (effective) flux $\bp-\mu\partial_x \bu$ are continuous across the free-congested interface.
But the core of~\cite{DP} is devoted to the analysis of approximate traveling waves $(\tv_\ep,\tu_\ep)_{\ep> 0}$ solutions to the \emph{soft congestion approximation} of~\eqref{eq:NS-0}. 
Under some smallness condition (quantified in terms of $\ep$) on the initial perturbation, the profiles $(\tv_\ep,\tu_\ep)_{\ep> 0}$ are shown to be asymptotically stable.
This result is achieved by means of weighted energy estimates, it relies on the use of integrated variables and a reformulation of the system in the variables $(\tv_\ep,\tw_\ep)$ where $\tw_\ep:= \tu_\ep -\mu \partial_x \ln\tv_\ep$ is the so-called \emph{effective velocity}.
Roughly speaking, the use of this new velocity induces regularization effects on the specific volume $\tv_\ep$, effects previously identified (among others) by Shelukhin~\cite{shelukhin1984}, Bresch, Desjardins~\cite{bresch2006}, Vasseur~\cite{vasseur2016}, Haspot~\cite{haspot2018}.
The use of the integrated variables is related to the structure of the dissipation and source terms.
As detailed in Section~\ref{sec:unif-est-g} below, it enables the derivation of uniform-in-time energy estimates on the solution. \\
Unfortunately, as $\ep \to 0$ the smallness condition on the initial perturbations degenerates and no stability can be inferred directly for the limit profiles $(\bv,\bu)$.

\medskip
The present study contains three main results related to initial perturbations of the profile $(\bv,\bu)$ in the free zone.
We demonstrate a local well-posedness result for large data as well as a global result for small initial perturbations.
Finally, we prove the asymptotic stability the profile $(\bv,\bu)$. 
Similarly to the case $\ep > 0$ described above, our analysis is based on energy estimates and the use of the effective velocity to rewrite the equation on the specific volume~\eqref{eq:NS-0-v} as a parabolic equation with a nonlinear diffusion. 
One significant difference between the two studies is that $\tw$ satisfies in the present case a pure transport equation in the free domain due to the absence of pressure in that region.
As a consequence, the two equations in $\tv$ and $\tw$ can be decoupled, which simplifies somehow the dynamics and the derivation of estimates on $\tv$.
One the other hand, and more importantly, the analysis (in particular the global-in-time existence proof) is made here more difficult as a result of the dynamics of the new free boundary, {\it i.e.} the interface between the free and congested domains, which is coupled to the dynamics of $(\tu, \tv)$ itself through a continuity condition imposed at the interface (see~\eqref{eq:cont-v-i}-\eqref{eq:tx-0}).
Similarly to the WSI problem tackled by Iguchi and Lannes~\cite{iguchi2019}, we introduce a new variable which allows us to have a good control of the motion of the interface.

These results are presented in the next section.

\section{Main results and strategy}
\label{sec:results}
As explained in the introduction, we will construct solutions in the vicinity of the travelling wave solution $(\bu,\bv)(x-st).$ Hence we first recall some features of the profile $(\bu, \bv)$:

\begin{lem}[\cite{DP}] Assume that $u_->u_+$, $v_+>1$, and let
\[
s:=\frac{u_--u_+}{v_+-1}.
\]
Then there exists a unique (up to a shift) travelling wave solution of \eqref{eq:NS-0}. This travelling wave propagates at speed $s$ and is of the form $(\bu, \bv)(x-st)$. Furthermore, 
\[
 \bv(x)= \begin{cases}
1&\text{ if }x\leq 0,\\
\displaystyle\frac{v_+}{1 + (v_+-1) \exp(-sv_+x/\mu)}&\text{ if }x> 0,
\end{cases}.
\]
\[
\bu=u_+ + s v_+ - s\bv = u_- + s v_- - s\bv.
\]
In the zone $x<0$, the pressure is constant and equal to $p_-= s^2 (v_+ - 1)$. Eventually, introducing the effective velocity $\bw= \bu - \mu \p_x \ln \bv$, we have
\[
\bw(x)=u_- \mathbf 1_{x<0} + u_+ \mathbf 1_{x>0}.
\]

\label{lem:TW}
\end{lem}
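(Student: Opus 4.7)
The plan is to use the travelling wave ansatz $\xi = x - st$ and reduce the PDE system to ODEs in each phase, then match across the free boundary.

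First I would substitute $\tv(t,x) = \bv(x-st)$, $\tu(t,x) = \bu(x-st)$, $\tp(t,x) = \bp(x-st)$ into~\eqref{eq:NS-0}. The mass equation becomes $-s\bv' - \bu' = 0$, which integrates to $\bu + s\bv \equiv \text{const}$. Evaluating at $\pm\infty$ forces the Rankine-Hugoniot relation $u_- + sv_- = u_+ + sv_+$; since $v_- = 1$ this yields the stated formula $s = (u_- - u_+)/(v_+ - 1) > 0$ and the identity $\bu = u_+ + sv_+ - s\bv$. The momentum equation integrates once to
\[
\bp - s\bu - \mu \bu'/\bv \equiv \text{const}.
\]
Using $\bu(+\infty) = u_+$, $\bp(+\infty) = 0$ (non-congested limit) and $\bu'(+\infty) = 0$, the constant equals $-su_+$.

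Next I would split into the two phases dictated by~\eqref{eq:NS-0-excl}. In the congested region $\{\bv = 1\}$, the relation $\bu + s\bv = u_- + sv_-$ forces $\bu \equiv u_-$, hence $\bu' = 0$, and the integrated momentum equation gives the constant pressure $\bp = s(u_- - u_+) = s^2(v_+ - 1) = p_-$. In the non-congested region, $\bp \equiv 0$ and substituting $\bu' = -s\bv'$, $\bu - u_+ = s(v_+ - \bv)$ into the integrated momentum equation yields the logistic ODE
\[
\mu \bv' = s\,\bv\,(v_+ - \bv).
\]
Its general solution is $\bv(\xi) = v_+/(1 + C e^{-s v_+ \xi/\mu})$; fixing the free boundary at $\xi = 0$ by the normalization $\bv(0) = 1$ determines $C = v_+ - 1$, giving the displayed formula. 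The shift invariance corresponds precisely to the one-parameter family of choices for the location of the interface, proving uniqueness up to translation.

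It remains to check the matching conditions at $\xi = 0$. The formula $\bv(0^+) = 1$ shows continuity of $\bv$; the identity $\bu = u_+ + s(v_+ - \bv)$ then gives $\bu(0^+) = u_-$, matching the congested value. For the effective flux $\bp - \mu \bu'/\bv$, on the congested side it equals $p_-$, while on the free side $\mu \bu'/\bv\big|_{0^+} = -\mu s \bv'(0^+) = -s^2(v_+ - 1) = -p_-$ thanks to the ODE evaluated at $\bv = 1$, so the flux equals $p_-$ there as well. Finally, for the effective velocity $\bw = \bu - \mu \p_x \ln \bv$: in the congested zone $\bv' = 0$ and $\bu = u_-$, so $\bw = u_-$; in the free zone $\mu \bv'/\bv = s(v_+ - \bv) = \bu - u_+$, so $\bw = \bu - (\bu - u_+) = u_+$.

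No real obstacle arises, since the free-congested structure linearizes the problem into two solvable ODEs with explicit transmission. The only point requiring mild care is verifying compatibility of the sign constraints ($\bv \geq 1$, $\bp \geq 0$, $\bv' \geq 0$ on the free side) with the chosen branch of the logistic equation, which follows from $s > 0$ and $v_+ > 1$.
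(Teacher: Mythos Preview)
Your proof is correct and follows the natural approach: reduce to ODEs via the travelling-wave ansatz, integrate once in each phase, solve the resulting logistic equation in the free zone, and verify the transmission conditions and the formula for $\bw$. The paper does not actually prove this lemma---it is quoted from the earlier work \cite{DP}---and your argument is precisely the standard derivation one expects there.
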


The profile is represented in Figure~\ref{fig:profile}.
\begin{figure}[h]
\begin{center}
\includegraphics[scale=0.35]{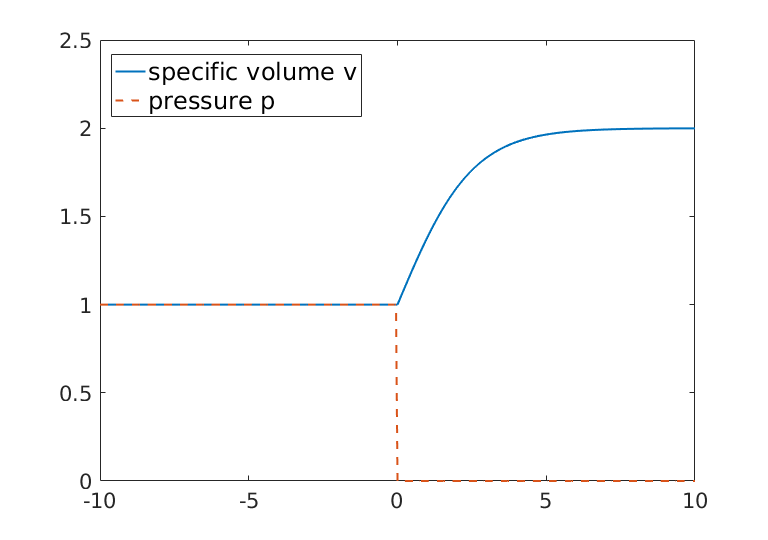}
\includegraphics[scale=0.35]{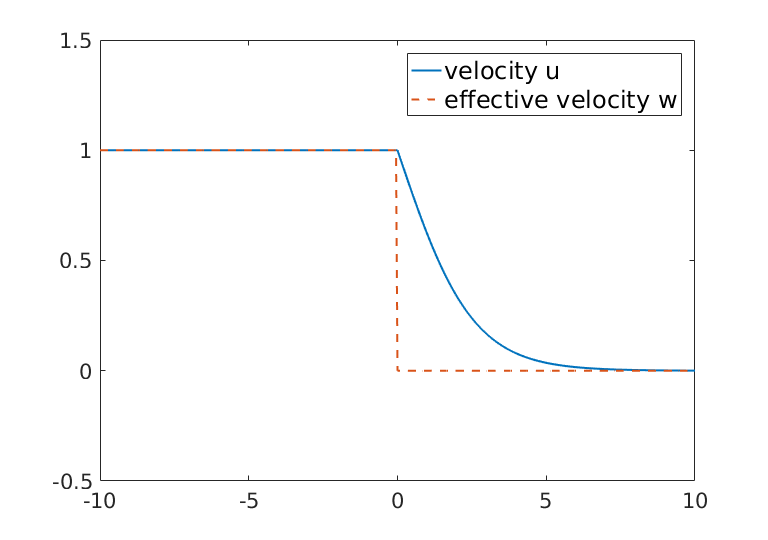}
\end{center}
\caption{ On the left: the profiles $\bv$ and $\bp$, on the right: the profiles $\bu$ and $\bw$. }
\label{fig:profile}
\end{figure}
Let us now explicit our assumptions on the initial data~$(\tu^0, \tv^0)$:
\begin{enumerate}[(H1)]
\item{\it  Partially congested initial data: }$(u^0, v^0)\in (\bu, \bv) + L^1(\R)$, and such that $u^0(x)=\bu(x)=u_-$, $v^0(x)=\bv(x)=1$ if $x<0$;

\item {\it Regularity: }$\mathbf 1_{x>0}(\tu^0-\bu, \tv^0-\bu)\in  H^3(\R_+)$ and $\mathbf 1_{x>0} \sqrt{x}\p_x^k \tw^0\in L^2(\R_+) $ for $k=1,2$, where $\tw^0 = \tu^0-\mu \p_x \ln \tv^0$;

\item {\it Compatibility:} $\tu^0(0^+)=u_-$, $\tv^0(0^+)=1$, and
\begin{equation}
\left[-\frac{(\p_x \tu^0)^2}{\p_x \tv^0} - \mu \p_x \tv^0 \p_x u^0 + \mu \p_x^2 \tu^0\right]_{|x=0^+}=0;
\end{equation}

\item{\label{item:no-deg}} {\it Non-degeneracy:}
$\p_x \tv^0(0^+)>0$, $\p_x \tu^0(0^+)<0$ and $\tv^0(x)>1$ for $x>0$;

\item {\it Decay: }$\mathbf 1_{x>0}V^0\in L^2(\R_+)$, where $V^0(x):=-\int_x^\infty (v^0-\bv)$, and  $\mathbf 1_{x>0} (1+ \sqrt{x}) W^0 \in L^2(\R_+) $, where $W^0:=-\int_x^\infty (\tw^0-u_+)$.

\end{enumerate}

Under these assumptions, the solution of \eqref{eq:NS-0} associated with $(u^0, v^0)$, if it exists, will not be a travelling wave. However, it is reasonable to expect such a solution to be congested in a zone $x<\tx(t)$, and non-congested in a zone $x>\tx(t)$, where the free boundary $x=\tx(t)$ is an unknown of the problem. 
The dynamics of the interface is actually encoded in the continuity condition that we impose on the specific volume, namely:
\begin{equation}\label{eq:cont-v-i}
v(t, \tx(t)) = 1 \qquad \forall\ t.
\end{equation}
By differentiating the relation with respect to time, we then get:
\begin{equation}\label{eq:tx-0}
\tx'(t) = - \dfrac{\partial_t v_{|x=\tx^+}}{\partial_x v_{|x=\tx^+}}.
\end{equation}
The free boundary problem~\eqref{eq:NS-0} differs from ``classical'' free boundary problems associated with a kinematic condition at the interface. 
In that latter case, the regularity of  $\tx'$ is the same as the regularity of the solution at the boundary, while there is here a loss of one derivative for $\tx'$ with respect to the solution $v$. 
The boundary condition~\eqref{eq:tx-0} is \emph{fully nonlinear} (see the study of Iguchi, Lannes~\cite{iguchi2019} ).
Eventually, using the mass equation, we get the dynamics of $\tx$:
\be\label{EDO-2}
\tx'(t) = - \dfrac{\p_x u_{|x=\tx^+}}{\p_x v_{|x=\tx^+}}
.\ee
We actually prove the following result:

\begin{thm}[Local in time result]
Let  $(\tu^0, \tv^0)$ satisfying the assumptions (H1)-(H5).

Then there exists $T>0$ and $\tx \in H^2_\text{loc}([0,T[)$, with $\tx(0) = 0$, $\tx'(0) = -[\frac{\partial_x\tu^0}{\partial_x \tv^0}]_{x=0}$, such that~\eqref{eq:NS-0} has a unique maximal solution $(\tu,\tv)$ of the form $(\tu,\tv)(t,x)= (u_s, v_s)(t, x-\tx(t))$ on the interval $[0, T[$, where $u_s(t,x)= u_-$, $v_s(t,x)= 1$ and $p_s(t,x) = -\mu (\partial_x u_s)_{|x=0^+}$ for $x<0$. Furthermore, 
\begin{equation}
v_s(t,x) > 1 \quad \text{for all}~ t\in [0,T[, ~x > 0,
\end{equation}
and the solution $(u_s,v_s)$ has the following regularity in the free domain:
\begin{align}
v_s - \bv, \ u_s -\bu \in L^\infty([0,T[; H^3(\R_+)), \\
\partial_t(v_s -\bv), ~ \partial_t(u_s-\bu) 
\in L^\infty([0,T[;H^1(\R_+)) \cap L^2(]0,T[; H^2(\R_+)).
\end{align}
Eventually, the pressure in the congested domain satisfies
\begin{equation}
p_s \in H^1(0,T).
\end{equation}
\label{thm:main-loc}
\end{thm}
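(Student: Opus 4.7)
The natural first step is to pass to the co-moving coordinate $y=x-\tx(t)$, which turns the unknown free boundary $\{x=\tx(t)\}$ into the fixed interface $\{y=0\}$. On the congested side $y<0$, the constraint $v_s\equiv 1$ together with the mass equation gives $\partial_y u_s\equiv 0$, so the far-field condition at $-\infty$ forces $u_s\equiv u_-$; the momentum equation then reduces to $\partial_y p_s=0$, and $p_s(t)$ is fixed by continuity of the effective flux $p-\mu\partial_y u/v$ across $y=0$, which yields $p_s(t)=-\mu\,\partial_y u_s(t,0^+)$. The problem thus reduces to finding $(v_s,u_s)$ on $y>0$ solving the bulk equations with drift $-\tx'(t)\partial_y$, Dirichlet data $v_s(t,0)=1$, $u_s(t,0)=u_-$, asymptotic conditions $(v_s,u_s)\to(v_+,u_+)$ at $+\infty$, and the coupling ODE $\tx'(t)=-\partial_y u_s(t,0^+)/\partial_y v_s(t,0^+)$ from \eqref{EDO-2}.

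\textbf{Triangular decoupling via the effective velocity.} Following the strategy of \cite{DP}, I would introduce $w_s = u_s - \mu\,\partial_y\ln v_s$. A direct computation in the moving frame, using both bulk equations and the absence of pressure in the free zone, shows that $w_s$ satisfies the pure transport equation $\partial_t w_s -\tx'(t)\partial_y w_s = 0$, while $v_s$ obeys the quasilinear parabolic equation
\[
\partial_t v_s -\tx'(t)\,\partial_y v_s -\mu\,\partial_y\!\left(\tfrac{\partial_y v_s}{v_s}\right) = \partial_y w_s.
\]
Given $\tx$, the system is then triangular: $w_s$ is obtained by transporting $w^0$ along characteristics (so the regularity and weighted-$L^2$ decay of (H2) and (H5) are preserved), and, feeding $\partial_y w_s$ as a source, I would solve a Dirichlet problem for the quasilinear parabolic equation on $v_s$; finally $u_s$ is recovered algebraically as $w_s + \mu\,\partial_y\ln v_s$.

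\textbf{Fixed-point construction.} I would then build the solution by a Banach fixed point on the free-boundary trajectory. One picks a small ball of $H^2([0,T])$ around the affine profile $t\mapsto\tx'(0)\,t$, with $\tx'(0)=-\partial_x\tu^0(0^+)/\partial_x\tv^0(0^+)$ given by (H4). For each candidate $\tx$ in this ball, running the triangular solver of the previous step produces $(v_s,u_s)$, and one defines the new trajectory $\Phi(\tx)(t):=-\int_0^t\partial_y u_s(\tau,0^+)/\partial_y v_s(\tau,0^+)\,d\tau$. For $T$ small, the non-degeneracy $\partial_y\tv^0(0^+)>0$ propagates by continuity, keeping the ratio well-defined, and the strict positivity $v_s>1$ on $(0,+\infty)$ is preserved by the maximum principle together with the boundary conditions $v_s(t,0)=1$ and $\partial_y v_s(t,0^+)>0$. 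Standard parabolic regularity in weighted Sobolev spaces yields $v_s-\bv\in L^\infty(0,T;H^3)\cap L^2(0,T;H^4)$ with $\partial_t v_s\in L^\infty H^1\cap L^2 H^2$, and via $u_s=w_s+\mu\,\partial_y\ln v_s$ the same regularity for $u_s-\bu$. Measuring differences of two iterates in a slightly weaker norm gives a contraction for $T$ small, hence the fixed point and a unique maximal solution.

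\textbf{Main obstacle.} The delicate point is the regularity of $\tx$: since $\tx'$ is a \emph{ratio of traces of first derivatives}, obtaining $\tx\in H^2_{\mathrm{loc}}$ requires the boundary traces $\partial_y u_s(\cdot,0^+)$ and $\partial_y v_s(\cdot,0^+)$ in $H^1(0,T)$, which is exactly the borderline regularity delivered by the parabolic estimates and is reminiscent of the fully nonlinear character emphasised in \cite{iguchi2019}. The compatibility condition (H3) is precisely the first-order compatibility at $(t,y)=(0,0)$ obtained by imposing $\partial_t u_s(t,0)=0$ in the momentum equation, and it is exactly what is needed to start the iteration in $H^2$; the weighted integrated assumptions in (H5) handle the slow far-field decay of the purely transported $w_s$. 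The pressure regularity $p_s\in H^1(0,T)$ then follows for free from this trace control on $\partial_y u_s(\cdot,0^+)$.
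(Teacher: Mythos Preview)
Your triangular decoupling via the effective velocity and the overall architecture are essentially the ones used in the paper, but there is a genuine gap in the fixed point step: with $u_s$ recovered \emph{algebraically} as $w_s+\mu\,\partial_y\ln v_s$, the map $\Phi$ you define is the identity. Indeed, once $v_s$ solves $\partial_t v_s-\tx'\partial_y v_s-\mu\,\partial_y^2\ln v_s=\partial_y w_s$ with $v_s(t,0)=1$, this equation is precisely the mass equation $\partial_t v_s-\tx'\partial_y v_s=\partial_y u_s$; taking the trace at $y=0^+$ and using $\partial_t v_s(t,0)=0$ yields $-\tx'(t)\,\partial_y v_s(t,0^+)=\partial_y u_s(t,0^+)$, i.e.\ $\Phi(\tx)'(t)=\tx'(t)$ for \emph{every} candidate $\tx$. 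The ratio $\partial_y u_s/\partial_y v_s$ at the boundary carries no new information, and your iteration does not select any particular free boundary. What is missing is the Dirichlet condition $u_s(t,0)=u_-$: with the algebraic definition this reads $\tw^0(\tx(t))+\mu\,\partial_y v_s(t,0^+)=u_-$, and it is \emph{this} constraint (not the ratio) that couples $\tx$ to the parabolic problem.

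The paper circumvents this by \emph{not} defining $u$ algebraically. Given a candidate $\ty$, it first solves the quasilinear parabolic equation for $v$, and then solves a \emph{separate} linear parabolic equation for $u$ with the Dirichlet condition $u_{|x=0}=u_-$ (equation~\eqref{eq:us-y}); the map is then $\tz'(t)=-\mu\,\partial_x u_{|x=0}/(u_--\tw^0(\ty(t)))$, where the denominator is the expression that \emph{should} equal $\mu\,\partial_x v_{|x=0}$ at the fixed point but does not for a generic $\ty$. This makes $\mathcal T$ genuinely nonlinear, and the contraction is closed in $H^2(0,T)$ using the higher energy estimates of Propositions~\ref{cor:recap-est-v} and~\ref{prop:est-u} (in particular $\partial_t\partial_x u_{|x=0}\in L^2(0,T)$). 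A posteriori one must then check that the $u$ obtained this way coincides with $w_s+\mu\,\partial_x\ln v_s$; this is the verification carried out around \eqref{eq:ws-para}--\eqref{BC-w}. A secondary issue with your route is that $u_s-\bu=(w_s-u_+)+\mu\,\partial_y\ln(v_s/\bv)$ only inherits $H^2$ regularity (since $\tw^0-u_+\in H^2$), one derivative short of the $L^\infty H^3$ claimed.
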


Our second result shows the global existence of the solution provided
the initial perturbation is small. 

\begin{thm}
Let  $(\tu^0, \tv^0)$ satisfying the assumptions (H1)-(H5), and let
\[\ba
\cE_0
& :=\|\tv^0 - \bv\|_{H^3(\R_+)}^2 + \|\tu^0-\bu\|_{H^3(\R_+)}^2 +\|\tw^0 - u_+\|_{L^2(\R_+)}^2 +\|V^0\|_{L^2(\R_+)}^2  \\
& \quad+ \|(1+\sqrt{x}) W^0\|_{L^2(\R_+)}^2+ \|(1+\sqrt{x}) \p_x\tw^0\|_{L^2(\R_+)}^2+ \|(1+\sqrt{x})\partial_x^2 \tw^0\|_{L^2(\R_+)}^2.\nonumber
\ea
\]
Assume moreover that
\begin{equation}
\mathbf{1}_{\{x > 0\}} \sqrt{x} \partial^3_x \tw^0 \in L^2(\R).
\end{equation}
Then, there exist constants $c_0,\delta_0 >0$ depending only on the parameters of the problem $s, \mu, v_+,u_\pm$, such that for all $\delta \in (0,\delta_0)$, if 
\begin{equation}\label{hyp:small-glob}
\cE_0 \leq c_0 \delta^2, \quad  \|(1+\sqrt{x})\partial^k_x \tw^0\|_{L^2(\R_+)} \leq c_0 \delta^{3/2}, \quad \forall\ \delta \in (0, \delta_0), ~ k=1,2,3,
\end{equation}
then
the solution $(\tx,u_s,v_s)$ is global, and 
\[
\|v_s-\bv\|_{L^\infty(\R_+, H^3(\R_+))} + \|u_s-\bu\|_{L^\infty(\R_+, H^3(\R_+))} + \|\tx'-s\|_{H^1(\R_+)}\leq C \delta.
\]
Furthermore
\begin{equation}
|\tx'(t) -s| + \sup_{x\in \R} |(u_s,v_s,p_s)(t,x) - (\bu, \bv,\bp)(x)| \longrightarrow 0 \quad \text{as}~ t\to +\infty,
\end{equation}
where $\bp(x) = p_- \mathbf{1}_{\{x <0\}}$.
\label{thm:main-glob}
\end{thm}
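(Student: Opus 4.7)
The strategy combines the local existence result of Theorem~\ref{thm:main-loc} with uniform-in-time a priori estimates via a continuation argument. I would work in the moving frame $y = x - \tx(t)$, so the free domain becomes the fixed half-line $\R_+$, and set $(V,U,W) := (v_s - \bv, u_s - \bu, \tw - \bw)$. Following the effective-velocity point of view of~\cite{DP}, the pair $(V,U)$ satisfies a quasilinear parabolic system with an additional drift proportional to $\tx'-s$, while in the free zone $W$ satisfies the pure transport equation $\partial_t W - s\partial_y W = (\tx'-s)\partial_y W$; since $s>0$ the characteristics exit through $y=0$, which decouples the propagation of $W$ from a boundary datum. The integrated variables $\cV(t,y):=-\int_y^{+\infty} V$ and $\cW(t,y):=-\int_y^{+\infty}W$ are introduced because the dissipation and source structure of the linearized system is most naturally read off at the level of primitives.

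The main step is a bootstrap. Define
\[
\cE(t) := \|V(t)\|_{H^3}^2 + \|U(t)\|_{H^3}^2 + \|\cV(t)\|_{L^2}^2 + \|(1+\sqrt{y})\cW(t)\|_{L^2}^2 + \sum_{k=1}^{3}\|(1+\sqrt{y})\partial_y^k W(t)\|_{L^2}^2 + \|\tx'-s\|_{H^1(0,t)}^2,
\]
and assume $\cE(t) \le M\delta^2$ on $[0,T^\ast)$ for some large $M$; the goal is to improve this to $\cE(t) \le M\delta^2/2$, which, combined with Theorem~\ref{thm:main-loc}, yields global existence. At low order the coercivity of the linearized parabolic operator around $\bv$, already exploited in the $\epsilon>0$ analysis of~\cite{DP} and adapted here via the exponential decay of the reference profile, controls $\|V\|_{H^1}^2 + \|\cV\|_{L^2}^2$ together with the dissipative term $\int_0^t \|\partial_y V\|_{L^2}^2$. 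For $W$, the transport structure and the $(1+\sqrt{y})$-weight propagate derivatives up to order three from the corresponding weighted norms of $\tw^0$, which is precisely why~\eqref{hyp:small-glob} demands $\delta^{3/2}$-smallness of $\|(1+\sqrt{y})\partial_y^k \tw^0\|_{L^2}$. Higher-order control on $V$ is then recovered from the identity $\mu\partial_y V = \tv(\tu-\tw)$ built into the definition of the effective velocity, and the estimates on $U$ follow from standard parabolic arguments applied to the momentum equation.

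The dynamics of the free boundary is handled via~\eqref{EDO-2}, which, using $\partial_y \bu + s\partial_y \bv = 0$, becomes
\[
\tx'(t) - s = -\frac{\partial_y U(t,0^+) + s\,\partial_y V(t,0^+)}{\partial_y v_s(t,0^+)}.
\]
The non-degeneracy $\partial_y \bv(0^+) > 0$ combined with the bootstrap hypothesis keeps the denominator bounded away from zero, and a trace inequality then yields $|\tx'(t)-s| \lesssim \|(V,U)(t)\|_{H^2}$, so that $\tx'-s \in L^2_t$ follows from the dissipation obtained above. The $L^2_t$ bound on $\tx''$ requires differentiating the trace formula in time and invoking the parabolic smoothing estimates on $\partial_t(V,U)$ of Theorem~\ref{thm:main-loc}. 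Closing this last step is the main obstacle: the relation~\eqref{eq:tx-0} is fully nonlinear and loses one derivative relative to the bulk regularity, which is why the hypothesis~\eqref{hyp:small-glob} must impose a stronger $\delta^{3/2}$-smallness on the weighted norms of $\partial_y^k \tw^0$ and forces the regularity $\sqrt{x}\partial_x^3 \tw^0 \in L^2$. Once the bootstrap is closed, a Barbalat-type argument applied to the quantities $\|(V,U)(t)\|_{H^2}^2 + |\tx'(t)-s|^2$, whose $L^2_t$ integrability is controlled together with their time derivatives, forces their vanishing as $t\to\infty$, yielding $(u_s,v_s,p_s)\to(\bu,\bv,\bp)$ uniformly in $x$ and $\tx'(t)\to s$, as claimed.
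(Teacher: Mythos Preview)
Your bootstrap framework, the use of integrated variables to remove exponential time growth, and the Barbalat argument for asymptotic convergence are all aligned with the paper. However, the mechanism you propose for closing the bound on $\|\tx'-s\|_{H^1}$ is circular, and this is precisely the crux of the proof.

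You write that the trace formula gives $|\tx'(t)-s|\lesssim \|(V,U)(t)\|_{H^2}$ and that $\tx'-s\in L^2_t$ then follows from ``the dissipation obtained above''. But every energy estimate on $(V,U)$ in this problem carries $\|\tx'-s\|_{H^1(0,T)}^2$ on the right-hand side, because the equation for $g=v_s-\bv$ contains the source term $(\tx'-s)\partial_x\bv$, which is \emph{linear} in $\beta:=\tx'-s$. Concretely, the estimates of Propositions~\ref{cor:recap-est-v} and~\ref{prop:est-u} are of the form $\|g\|^2\leq C(\cE_0+\|\beta\|_{H^1}^2)$; plugging this into your trace inequality yields $\|\beta\|_{L^2}^2\leq C(\cE_0+\|\beta\|_{H^1}^2)$, which does not close. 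The same obstruction arises for $\|\beta'\|_{L^2}$.

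The paper's resolution is to introduce the auxiliary unknown $g_1:=\cA g=-sg-\mu\partial_x(g/\bv)$, where $\partial_x\cA$ is the linearized operator around $\bv$. Because $\partial_x\bv\in\ker\cA$, applying $\cA$ to the equation for $g$ annihilates the linear source $\beta\partial_x\bv$; the resulting equation $\partial_t g_1+\cA\partial_x g_1=(\text{quadratic in }g,\beta)$ has only quadratic right-hand side. The control of $\beta$ itself is then recovered from the \emph{boundary}: one computes $\partial_x g_{1|x=0}=\beta\,s(v_+-1)/\mu+R_1$ with $R_1$ quadratic, and a weighted $L^\infty(H^1)$ estimate on $g_1$ (Lemma~\ref{lem:coerc-A} with a weight $\rho$ chosen so that $\rho(0)=2$, $\rho'(0)=-4s/\mu$) produces a coercive boundary contribution $c\beta^2$. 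This yields $\|\beta\|_{L^2}^2\leq C\cE_0+C(\cE_0+\|\beta\|_{H^1}^2)^2$, which does close since $\|\beta\|_{H^1}^2$ now appears squared. The bound on $\|\beta'\|_{L^2}$ is obtained analogously after differentiating in time, using $\partial_t\partial_x g_{1|x=0}=\beta'\,s(v_+-1)/\mu+\partial_t R_1$. This is also where the $\delta^{3/2}$ smallness on $\|(1+\sqrt{x})\partial_x^k\tw^0\|_{L^2}$ enters: when $\tw^0\not\equiv u_+$, the boundary remainders $R_1,R_2$ and the nonlinear traces $T_2,T_3$ (Lemma~\ref{lem:traces}) contribute terms of order $\|(1+\sqrt{x})\partial_x^k\tw^0\|^{4/3}$, which must be $\lesssim\delta^2$. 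Your proposal does not contain this construction or any substitute for it.
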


The strategy of proof is the following. We work in the shifted variable $x-\tx(t)$. Since $(u,v)$ is expected to be constant in $x-\tx(t)<0$, we only consider the system satisfied by $(u_s, v_s)$ in the positive half-line, which reads
\begin{subnumcases}{\label{eq:NS-shifted-1}}
\partial_t v_s - \tx'(t) \p_x v_s- \partial_x u_s = 0,\quad  t>0,\ x>0\label{eq:NS-shifted-1-v}\\
\partial_t u_s - \tx'(t) \p_x u_s - \mu \partial_x\left(\dfrac{1}{v_s}\partial_x u_s \right) = 0,\quad  t>0,\ x>0\label{eq:NS-shifted-1-u}\\
(v_s,u_s)_{|x=0}= ( 1 ,u_-), \quad \lim_{x\to \infty} (v_s(t,x),u_s(t,x)) = (v_+,u_+) \quad \forall \ t > 0.
\end{subnumcases}
We have already seen that the dynamics of $\tx$ is coupled with the dynamics of $(v_s, u_s)$ through~\eqref{EDO-2}. 
In order to construct a solution of \eqref{eq:NS-shifted-1}, it will be more convenient to modify the equation on $v_s$ in order to make the regularizing effects of the diffusion more explicit. Indeed, setting $w_s= u_s - \mu \p_x \ln v_s$, we find that equation \eqref{eq:NS-shifted-1-v} can be written as
\[
\partial_t v_s - \tx'(t) \p_x v_s- \mu\partial^2_x \ln v_s  = \partial_x w_s,\quad t>0,\ x>0.
\] 
Moreover,
\begin{equation}\label{eq:ws}
\p_t w_s - \tx'(t) \p_x w_s = 0,\quad t>0,\ x>0,
\end{equation}
therefore $w_s(t,x)= \tw^0 (x + \tx(t))$ for all $t>0, x>0$ provided $\tx'(t)>0$ for all $t>0$. In particular, letting $x\to 0^+$, we obtain
\be\label{EDO-1}
u_- - \mu \p_x v_{s|x=0^+}= \tw^0(\tx(t)).
\ee
Gathering \eqref{EDO-1} and \eqref{EDO-2} leads to
\be\label{EDO-tx}
\tx'(t) = -\mu  \frac{ \p_x u_{s|x=0^+}}{u_- - \tw^0(\tx(t))}.
\ee

Since $w_s(t,x)=\tw^0(\cdot + \tx(t))$, the equation on $v_s$ rewrites 
\be\label{eq:vs}\ba
\p_t v_s - \tx'(t) \p_x v_s - \mu \p_{x x} \ln v_s= \p_x \tw^0(x+ \tx(t)),\quad t>0, \ x>0,\\
v_{s|x=0}=1,\quad \lim_{x\to \infty} v_s(t,x)= v_+,\\
v_{s|t=0}=v^0.
\ea\ee
Thus we will build a solution $(\tx, v_s, u_s)$ of \eqref{EDO-tx}-\eqref{eq:vs}-\eqref{eq:NS-shifted-1-u} thanks to the following fixed point argument:
\begin{enumerate}
    \item For any given $\ty\in H^2_\text{loc}(\R_+)$, such that $\ty(0) = 0$ and $\ty'(0)= - \frac{\p_x \tu^0_{|x=0^+}}{\partial_x \tv^0_{|x=0^+}}$, we consider the solution $v$ of the equation
    \be\label{eq:vs-y}\ba
\p_t v - \ty'(t) \p_x v - \mu \p_{x x} \ln v = \p_x \tw^0(x+ \ty(t)),\quad t>0, \ x>0,\\
v_{|x=0}=1,\quad \lim_{x\to \infty} v(t,x)= v_+,\\
v_{|t=0}=v^0.
\ea\ee
    We prove that under suitable conditions on the initial data, there exists a unique solution $v \in \bv + L^\infty_\text{loc}(\R_+,H^1(\R_+))$, and we derive higher regularity estimates.
    \item We then consider the unique solution $u\in \bu + L^\infty_\text{loc}(\R_+, H^1(\R_+))$ of
    \be\label{eq:us-y}
    \ba
    \partial_t u - \ty'(t) \p_x u - \mu \partial_x\left(\dfrac{1}{v}\partial_x u\right) = 0\quad t>0, \ x>0,\\
    u_{|x=0}= u_-, \quad \lim_{x\to \infty} u(t,x)= u_+,\\
    u_{|t=0}=u^0,
\ea\ee
    where $v$ is the solution of \eqref{eq:vs-y}. Once again, we derive regularity estimates on $u$.
    
    \item Eventually, we define
    \[
    \tilde z(t):=-\mu \int_0^t  \  \frac{ \p_x u(\tau, 0)}{u_- - \tw^0(\tilde y(\tau))}d\tau,
    \]
    and we consider the application $\cT:\tilde y \in H^2(0,T) \mapsto \tilde z \in H^2(0,T)$.
    
    We prove that  for $T>0$ small enough the application $\cT$ is a contraction, and therefore has a unique fixed point. 
    
\end{enumerate}

We then need to prove that the solution $(\tx, v_s, u_s)$ provided by the fixed point of $\cT$ is global when the initial energy is small (see Hypothesis~\eqref{hyp:small-glob}).
First, we will show that the passage to the integrated variables allows us to remove the exponential dependency with respect to time in the energy estimates.
Next, we prove in Section~\ref{sec:global}  that if the initial data is sufficiently small, then $\|\tx'-s\|_{H^1}$ remains bounded (and small) on the existence time of the solution.
The key ingredient to get this  property is the introduction of the new unknown 
$g_1 = -s (v-\bv) - \mu \partial_x \left(\frac{v-\bv}{\bv}\right) = \mathcal{A}(v-\bv)$
where $\partial_x \mathcal{A}$ is the linearized operator around $\bv$.
We exhibit coercivity properties for the linearized operator, and we prove that $g_1$ satisfies an equation of the type
\[
\p_t g_1 + \cA\p_x g_1=\text{quadratic terms}.
\]
Whence we deduce good estimates on both $g_1$ and $\tx'-s$.

We finally need to check that the solution $(\tx, v_s, u_s)$ of \eqref{EDO-tx}-\eqref{eq:vs}-\eqref{eq:NS-shifted-1-u} 
is indeed a solution of the original problem. 
Since system \eqref{eq:NS-shifted-1} has been modified, this is not completely obvious. In fact, we need to check that the function $w_s=u_s-\mu \ln v_s$ is indeed equal to $\tw^0 (x+\tx(t))$. To that end, let us compute the equation satisfied by $w_s$ if $v_s$ is the solution of \eqref{eq:vs} and if $u_s$ is the solution of \eqref{eq:NS-shifted-1-u}.
Combining \eqref{eq:vs} and \eqref{eq:NS-shifted-1-u}, we have
\be\label{eq:ws-para}
\p_t w_s - \tilde x'(t) \p_x w_s -\mu \p_x\left(\frac{1}{v_s}\p_x w_s\right)=-\mu \p_x\left(\frac{1}{v_s}\p_x \tw^0(x+\tx(t))\right).
\ee
Furthermore, the condition $u_{s|x=0^+}=u_-$ ensure that
\[
w_{s|x=0^+}= u_-- \mu \p_x v_{s |x=0^+},
\]
and using the equation \eqref{eq:vs} together with~\eqref{EDO-tx}
\begin{align*}
\p_x w_{s|x=0^+}
& = \p_x u_{s|x=0^+} - \mu \p_{xx} \ln v_{s|x=0^+} \\
& = \frac{\tw^0(\tx(t))-u_-}{\mu}\tx'(t) + \tx'(t) \p_x v_{s|x=0^+} + \p_x \tw^0(\tx(t)).
\end{align*}
Taking a linear combination of these two equations leads to
\be\label{BC-w}
\mu \p_x w_{s|x=0^+} + \tx'(t) w_{s|x=0^+}
= \tx'(t)\tw^0(\tx(t)) + \mu \p_x \tw^0(\tx(t)).
\ee
It can be easily proved that the unique solution of \eqref{eq:ws-para}-\eqref{BC-w} endowed with the initial data $\tw^0$ is the  function $\tw^0(x+\tx(t))$.
Thus the function $(v_s,u_s)$ constructed as the solution of \eqref{eq:vs}-\eqref{eq:NS-shifted-1-u}, where $\tx$ is the solution of \eqref{EDO-tx}, is in fact a solution of \eqref{eq:NS-shifted-1}. We extend this solution in $x<0$ by setting $v_s(t,x)=1$, $u_s(t,x)=u_-$, and we set
\[
p_s(t,x)=-\mu\p_x u_{s|x=0^+}= \tx'(t)(u_--\tw^0(\tx(t)))\quad \forall x<0.
\]
Eventually, we come back to the original variables and set 
$(v,u,p)(t,x)=(v_s,u_s,p_s)(t,x-\tx(t)).$
Then it is easily checked that $(v,u,p)$ is a solution of the original system \eqref{eq:NS-0}.

\medskip
\begin{rmk}[About the regularity of $\tx$]
In the above discussion, we have claimed that we will prove the existence of a fixed point $\tx$ in $H^2_{loc}(\R_+)$. 
Let us discuss why this regularity is required on $\tx$.
First, we need a control of $\tx'$ in $L^\infty(\R_+)$ in order to control the transport equation~\eqref{eq:ws} satisfied by $w_s$.
Next, we see in~\eqref{EDO-tx} that the control in $L^\infty$ of $\tx$ requires a bound on $\partial_x u_s$ in $L^\infty(\R_+ \times \R_+)$, while this latter bound would a priori rely on a control of $\tx''$ in $L^2_{loc}(\R_+)$ (see Proposition~\ref{prop:est-gal} in the Appendix).
Therefore the regularity $\tx \in H^2_{loc}(\R_+)$ is the minimal regularity which allows us to formally close the fixed point argument.
\end{rmk}

\medskip
\begin{rmk}[More general perturbations] 
This study is concerned with perturbations only affecting the ``free part'' of the travelling wave profile $(\bv,\bu,\bp)$.
This allows us to concentrate the analysis on the free domain $\{ x>\tx(t)\}$. 
Nevertheless, it would be interesting to study more general perturbations of $(\bv,\bu,\bp)$, including for instance a free zone $\{\tv^0 > 1\}$ in $\{x < 0\}$. 
It would be then necessary to analyze the coupled dynamics between the two disconnected free domains through the interior congested phase. 
This is actually out of the scope of the present paper and it is left for future work.
\end{rmk}

\bigskip

The organization of the paper is the following.
Given the discussion above, the goal is to construct a fixed point of the application $\mathcal T$.
Most of the proof is devoted to the derivation of suitable energy estimates on $v_s$. 
We first prove the well-posedness of equation \eqref{eq:vs-y} in Section \ref{sec:v-WP}.
Higher regularity estimates on $v_s$ and estimates for the solution $u_s$ of \eqref{eq:us-y} are then derived in Section~\ref{sec:estimates}. 
We prove in Section~\ref{sec:local} that $\mathcal T$ is a contraction and get therefore the local-wellposedness result stated in Theorem~\ref{thm:main-loc}.
Eventually, for small initial perturbations, we extend in Section~\ref{sec:global} the local solutions and show the asymptotic stability of the profiles $(\bv,\bu)$ which completes the proof of Theorem~\ref{thm:main-glob}.
We have postponed to the Appendix several technical results.

\subsubsection*{Notation.} Throughout the paper, we denote by $C$ a constant depending only on the parameters of the problem, i.e. $u_+$, $u_-$, $v_+$, $s$, and $\mu$. 

\section{Well-posedness of equation \texorpdfstring{\eqref{eq:vs-y}}{eqv}}
\label{sec:v-WP}
This section is devoted to the proof of well-posedness of equation \eqref{eq:vs-y}, and to the derivation of some preliminary bounds. In order to alleviate the notation, we drop the indices $s$. Throughout this section, we assume that $\ty$ is a given function in $H^2(0,T)$, such that $\ty(0)=0$, $\ty'(t)>0$ for all $t\in [0,T]$,  and satisfying the compatibility condition
\be\label{compatibility-ty}
\ty'(0)=-\frac{\p_x \tu^0_{|x=0^+}}{\p_x \tv^0_{|x=0^+}}.
\ee

We approximate equation \eqref{eq:vs-y} by considering an equation in a truncated, bounded domain. We derive $L^\infty$, $L^1$  and energy bounds on the sequence of approximated solutions, which are all uniform in $R$. Passing to the limit as the domain fills out the whole half-line, we recover the well-posedness of \eqref{eq:vs-y}.

Therefore, for $R \geq 2$, we consider the following equation in  $(0,T) \times (0,R)$
\begin{subnumcases}{\label{eq:v-trunc}}
\partial_t v^R(t,x) -\ty'(t) \partial_x v^R(t,x) - \mu \partial_{xx} a(v^R(t,x)) =  (\chi_R\partial_x \tw^{0})(x+\ty(t)), \\
v^R_{|t=0}=\tv^{0,R},\\
v^R_{|x=0}=1,\quad v^R_{|x=R}=\bv_{|x=R},
\end{subnumcases}
where the nonlinearity in the diffusion term has been changed in order to avoid degeneracy. The function $a\in \mathcal C^\infty(\R)$ is such that $a(x)=\ln (x)$ for $x \in [1/2,\bar C]$ where  $\bar C:= 2 \sup \tv^0$. 
Note that with this choice, 
$\bar C \geq v_+$.
We also assume  that there exists $\nu>0$ such that $\nu\leq a'(x)\leq \nu^{-1}$ for all $x\in \R$. The initial data $\tv^{0,R}$ is defined by $\tv^{0,R}=\tv^0 \chi_R + \bv_{|x=R} (1-\chi_R)$, where $\chi_R\in \mathcal C^\infty(\R)$ satisfies $\chi_R(x)=1$ if $x\leq R-2$ and $\chi_R(x)=0$ if $x\geq R-1$.  This change in the initial condition and in the source term ensures that the compatibility conditions at first and second order are satisfied at $t=0$, $x=R$, namely
\begin{equation}\label{eq:compatib_v}
v^R_{|t=0,x=R} = \bv_{|x=R}, \qquad
\ty'(0) \partial_x v^R_{|t=0,x=R} + \mu (\partial^2_x \ln v^R)_{|t=0,x=R} = 0. 
\end{equation}
The following result is a straightforward consequence of Theorem 6.2 in Chapter 5 of \cite{lady-para}:

\begin{lem}[Theorem 6.2 in \cite{lady-para}]{\label{thm:lady-1}}
There exists $\alpha>0$ such that system \eqref{eq:v-trunc} has  a unique solution $v^R\in \mathcal C([0,T]\times [0,R])$  such that $\p_t v^R \in \mathcal C^\alpha((0,T)\times (0,R))$ and $\p_{xx} v_R \in \mathcal C^\alpha((0,T)\times (0,R))$.
\end{lem}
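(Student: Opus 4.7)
The statement is invoked as a direct consequence of Ladyzhenskaya, Solonnikov and Ural'tseva's Theorem 6.2 in Chapter V of \cite{lady-para}, so the strategy is simply to recast \eqref{eq:v-trunc} in a form covered by that theorem and check its hypotheses. The plan is as follows.

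First I would rewrite the equation in non-divergence form as
\[
\partial_t v^R - \mu a'(v^R) \partial_{xx} v^R = \mu a''(v^R) (\partial_x v^R)^2 + \ty'(t) \partial_x v^R + (\chi_R \partial_x \tw^{0})(x+\ty(t)),
\]
and observe that the assumption $\nu \leq a' \leq \nu^{-1}$ yields uniform parabolicity, while $a \in \mathcal C^\infty(\R)$ provides the required smoothness of the principal part. The lower-order nonlinearity has at most quadratic growth in $\partial_x v^R$, which matches the structural hypothesis of the cited theorem; the coefficient $\ty'$ lies in $C^{0,1/2}([0,T])$ by the embedding $H^2(0,T) \hookrightarrow C^{1,1/2}([0,T])$, and the source is Hölder continuous since $\tw^0 \in H^3(\R_+) \hookrightarrow C^{1,\alpha}(\overline{\R_+})$ and $\ty$ is Lipschitz.

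Next I would verify the regularity of the initial and boundary data. The modified initial datum $\tv^{0,R} = \chi_R \tv^0 + (1-\chi_R) \bv_{|x=R}$ belongs to $C^{2,\alpha}([0,R])$ since $\tv^0 \in \bv + H^3(\R_+) \hookrightarrow C^{2,1/2}(\overline{\R_+})$ and $\chi_R$ is smooth; the boundary values $v^R_{|x=0}=1$ and $v^R_{|x=R}=\bv_{|x=R}$ are constant in time and thus trivially regular.

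The crucial point is the verification of the compatibility conditions at the two corners $(t,x)=(0,0)$ and $(t,x)=(0,R)$. At $x=R$ the choice of $\tv^{0,R}$ and the fact that $\chi_R$ vanishes near $x=R$ make both first- and second-order compatibility immediate, precisely as written in \eqref{eq:compatib_v}. At $x=0$, first-order compatibility is $\tv^0(0^+)=1$ from (H3). For second-order compatibility I need $\ty'(0)\partial_x \tv^0(0^+) + \mu \partial_{xx}\ln \tv^0(0^+) + \partial_x \tw^0(0^+) = 0$ (since $v^R_{|x=0}=1$ forces $\partial_t v^R(0,0)=0$). Expanding $\partial_x \tw^0 = \partial_x \tu^0 - \mu\,\partial_x(\partial_x \tv^0/\tv^0)$ at $x=0^+$ where $\tv^0=1$, the terms involving $\partial_{xx}\tv^0$ and $(\partial_x \tv^0)^2$ cancel exactly, leaving $\ty'(0)\partial_x \tv^0(0^+) + \partial_x \tu^0(0^+)=0$, which is precisely the definition \eqref{compatibility-ty} of $\ty'(0)$.

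With the hypotheses in place, Theorem 6.2 of \cite{lady-para} supplies a solution $v^R \in \mathcal C([0,T]\times[0,R])$ whose $\partial_t v^R$ and $\partial_{xx} v^R$ are Hölder continuous on the open parabolic cylinder, for some exponent $\alpha>0$. Uniqueness in this class follows by the standard argument: the difference of two solutions satisfies a linear uniformly parabolic equation with bounded coefficients and homogeneous data, hence vanishes by the maximum principle. The only subtle point in carrying this out is the verification of the second-order compatibility at $(0,0)$, which is exactly where the tailored condition in (H3) combined with the initialization \eqref{compatibility-ty} of $\ty'(0)$ is used; everything else reduces to bookkeeping of Hölder norms.
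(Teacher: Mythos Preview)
Your proposal is correct and follows essentially the same approach as the paper: both simply verify the hypotheses of the cited Ladyzhenskaya--Solonnikov--Ural'tseva theorem, with the only nontrivial point being the second-order compatibility at $(0,0)$, which you correctly derive from \eqref{compatibility-ty} just as the paper records in \eqref{eq:compa}. Your write-up is in fact more detailed than the paper's, which merely states the compatibility conditions without spelling out the cancellation; the extra discussion of the non-divergence form, the quadratic growth in the gradient, and the H\"older regularity of the data is implicit in the paper's ``we merely check the assumptions'' but is a welcome clarification.
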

\begin{proof}
We merely check the assumptions of \cite{lady-para}*{Theorem 6.2, Chapter 5}.
Due to the assumptions on the function $a$, we only need to verify that the initial data satisfies the compatibility conditions.
Note that the  function $(t,x)\in (0,T)\times (0,R)\mapsto \tv^{0,R}(x)$ coincides with $v^R$ on the sets $\{t=0\}$, $\{x=0\}$ and $\{x=R\}$. Furthermore, thanks to \eqref{compatibility-ty} and to \eqref{eq:compatib_v} we have the compatibility condition
	\begin{equation}\label{eq:compa}
	\left(\ty'(0)\p_x \tv^{0,R} + \mu \p_{xx} \ln \tv^{0,R}\right)_{|x=0}	=\partial_x \tw^0_{|x=0}, \qquad\left(\ty'(0)\p_x \tv^{0,R} + \mu \p_{xx} \ln \tv^{0,R}\right)_{|x=R}	=0.
	\end{equation}
	We infer that the assumptions of Theorem 6.1 in \cite{lady-para}*{Chapter 5} are satisfied, and $\p_t v^R, \p_{xx} v^R \in \mathcal C^\alpha ([0,T]\times [0,R])$ for some $\alpha>0$. 

\end{proof}

\subsection{Local-in-time \texorpdfstring{$L^\infty$}{Linfty} estimate}

\begin{lem}\label{lem:infty-est}
Let $T>0$ be arbitrary.
Assume  that there exists  $M\geq 1$ such that
\begin{equation} \label{hyp:M}
\frac{1}{M} \leq \ty'(t)\leq M \quad \forall \ t \in [0,T],\quad \inf_{x\in [0,1]}\p_x \tv^0(x) \geq \frac{1}{M}.
\end{equation}
Then there exists $T_0\in (0,T]$, depending only on $\|\p_x \tw^0\|_{L^\infty(\R_+)}$, $\sup \tv^0$, $M$ and on the parameters of the system, such that for all $t\in [0,T_0]$,
\begin{equation}
    1 < v^R(t,x) \leq \bar C\quad \forall x \in (0,R),
\end{equation}
where $\bar C = 2 \sup \tv^0 $ is the constant involved in the definition of $a$.
As a consequence, $a(v^R)= \ln (v^R)$ almost everywhere.

\end{lem}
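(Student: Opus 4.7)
The plan is to apply the parabolic comparison principle to~\eqref{eq:v-trunc} — available here because $a' \geq \nu > 0$ — against explicit constant-in-space barriers. Set $A := \|\partial_x \tilde w^0\|_{L^\infty(\R_+)}$, so that the source $f(t,x) := (\chi_R \partial_x \tilde w^0)(x + \tilde y(t))$ satisfies $\|f\|_{L^\infty} \leq A$, and both barriers will be built from this single bound together with the $L^\infty$ size of the initial data.

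For the upper bound, the function $\bar v(t) := \sup_{\R_+} \tilde v^0 + A t$ satisfies $\partial_t \bar v - \tilde y'(t) \partial_x \bar v - \mu \partial_{xx} a(\bar v) = A \geq f$ and dominates $v^R$ on the parabolic boundary: $\bar v(0) = \sup \tilde v^0 \geq \tilde v^{0,R}(x)$ (by the convex-combination definition of $\tilde v^{0,R}$), $\bar v(t) \geq 1 = v^R(t,0)$, and $\bar v(t) \geq v_+ \geq \bv(R) = v^R(t,R)$. Comparison yields $v^R \leq \bar v$, and imposing $T_0 \leq \sup \tilde v^0 / A$ forces $v^R \leq 2 \sup \tilde v^0 = \bar C$.

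The strict lower bound $v^R > 1$ for $x \in (0, R)$ is the delicate point and I expect it to be the main obstacle, since the naive constant sub-solution $\underline v(t) := 1 - At$ only yields $v^R \geq 1 - At$, which may dip below $1$. My plan is to combine a Hopf-type boundary argument with a continuity argument in the interior. Writing $w := v^R - 1$ and expanding $\partial_{xx} a(v^R)$, one sees that $w$ satisfies a linear parabolic equation with $\mathcal{C}^\alpha$ coefficients (via Lemma~\ref{thm:lady-1}), with $w(0, x) > 0$ for $x > 0$ (by~(H4)), $w(t, 0) = 0$, and $w(t, R) > 0$. The non-degeneracy $\partial_x \tilde v^0(0^+) \geq 1/M$, combined with the continuity of $\partial_x v^R$ (a consequence of $\partial_t v^R, \partial_{xx} v^R \in \mathcal C^\alpha$ from Lemma~\ref{thm:lady-1}), lets me maintain $\partial_x v^R(t, 0) \geq 1/(2M)$ on a short time interval, which translates into a quantitative bound $w(t, x) \geq x/(3M)$ in a neighborhood of $x = 0$. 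For $x$ bounded away from zero, the uniform strict positivity of $\tilde v^{0,R} - 1$ on compact subsets of $(0, R)$, together with continuity of $v^R$, gives $w > 0$. Patching the two regions yields $v^R > 1$ on $(0, R)$, whence $a(v^R) = \ln v^R$ follows immediately from $v^R \in (1, \bar C]$. The principal technical challenge is to keep the constants uniform in $R$ and to ensure that the resulting $T_0$ depends only on $\|\partial_x \tilde w^0\|_{L^\infty}$, $\sup \tilde v^0$, $M$ and the parameters; I expect to further reduce $T_0$ to absorb any implicit dependencies coming from the Hölder estimates.
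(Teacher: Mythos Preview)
Your upper-bound barrier $\sup \tv^0 + At$ is correct and in fact simpler than the paper's choice $C_1(2 - e^{-At})$; both give $v^R \leq \bar C$ on a time interval with the stated dependencies.

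The lower bound is where your approach has a genuine gap. You rely on H\"older continuity of $\p_x v^R$ (to propagate $\p_x v^R(t,0) \geq 1/(2M)$) and on pointwise-in-time continuity of $v^R$ (for interior positivity). Both come from Lemma~\ref{thm:lady-1}, i.e.\ Ladyzhenskaya's theorem on the bounded domain $(0,R)$, and the H\"older constants there depend on $R$ a priori. Your remark that ``the principal technical challenge is to keep the constants uniform in $R$'' correctly identifies the issue, but the fix you propose---further reducing $T_0$---cannot work: if the H\"older seminorm of $\p_x v^R$ scales like $C(R)$, the time on which $|\p_x v^R(t,0) - \p_x v^R(0,0)| < 1/(2M)$ scales like $C(R)^{-1/\alpha}$, and no reduction of $T_0$ removes that $R$-dependence. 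Local parabolic Schauder estimates near $x=0$ could perhaps be made $R$-independent, but you would still owe a quantitative, $R$-uniform interior argument for the patching step, which a bare continuity-in-$t$ statement does not provide. The paper bypasses all of this by constructing an explicit sub-solution $\uuv(t,x) = 1 + \ug(x)e^{-At}$, with $\ug(x) = \alpha_1 x + \alpha_2 x^2$ near $x=0$ (the quadratic term, via $\mu\alpha_2 \geq 2\|\p_x \tw^0\|_\infty$, is what drives the sub-solution inequality against the source) and $\ug$ monotone, bounded by $(v_+-1)/2$, and below $\tv^0 - 1$ for larger $x$; the resulting $A$ and $T_0 = (\ln 2)/A$ are manifestly independent of $R$.
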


\begin{proof}
The proof relies on the maximum principle. We construct by hand a super and a sub-solution, which give the desired estimates. 


	\begin{itemize}
	\item \textit{Sub-solution.}
	We set
		\[
		\uuv(t,x) = 1 + \ug(x) \exp(-A t),
		\]
	where $A>0$ is a parameter that will be chosen later on and where the function $\ug$ will be chosen so that $0\leq \ug \leq (v_+-1)/2$. 
	It follows that $a(\uuv)=\ln (\uuv)$.
	We will also take $T_0$ so that $\exp(-A T_0)=1/2$.
	With this choice, we have, on the one hand
\begin{eqnarray}
		&&\partial_t \uuv - \ty'(t)\partial_x \uuv - \mu \partial^2_x a (\uuv)\nonumber \\
		&=&- A e^{-At} \ug(x) - \tilde y'(t) \ug'(x) e^{-At} - \mu \p_x^2 \ln (1+ \ug(x) e^{-At})\nonumber\\
		&=&- A e^{-At} \ug(x) - \tilde y'(t) \ug'(x) e^{-At} -\mu\frac{\ug''(x) e^{-At}}{1+ \ug(x)e^{-At} } 
		+\mu \frac{(\ug'(x))^2 e^{-2At}}{(1+ \ug(x)e^{-At})^2}. \label{g-sous-sol}
		\end{eqnarray}
	On the other hand,
	\[
	(\chi_R \p_x \tw^0)(x+ \ty(t)) \geq -\|\p_x \tw^0\|_\infty.
	\]
	We choose the function $\ug$ as follows:
	\begin{itemize}
    \item For $x\leq x_0\leq 1$, we take $\ug(x)= \alpha_1 x + \alpha_2 x^2$, with $\alpha_1>0$ small enough and $\alpha_2>0$. Then, if $x_0$ is small enough, $\ug(x_0) = \alpha_1 x_0 + \alpha_2 x_0^2 \leq 1$, and therefore
    \[
    - A e^{-At} \ug(x) - \tilde y'(t) \ug'(x) e^{-At} - \mu\frac{\ug''(x) e^{-At}}{1 + \ug(x)e^{-At} } 
    \leq -{\mu \alpha_2}.
    \]
	Choose $\alpha_2$ so that $\mu \alpha_2\geq 2 \|\p_x \tw^0\|_\infty$, and choose $0<\alpha_1\leq  1/(2M)$. Then for $x_0$ small enough, $\uuv(0,x)\leq \tv^0(x)$ for all $x\in [0,x_0]$. Furthermore, the last term in \eqref{g-sous-sol} can be treated as a perturbation provided $\alpha_1$ and $x_0 $ are small enough.
	It can be checked that it is sufficient to take $\alpha_1^2\leq \alpha_2/8$ and $x_0\leq \alpha_1/(2\alpha_2)$.

	\item For $x\geq x_0$, we choose $\ug$ to be monotone increasing and such that $1 + \ug \leq \tv^0$ a.e.  and $\ug \leq (v_+-1)/2$. We then choose $A$ so that
	\[\ba
	\frac{A}{4} \ug(x_0) \geq \|\p_x \tw^0\|_\infty,\\
	A \ug(x_0)\geq 4\mu (\|\ug''\|_\infty + \|\ug'\|_\infty^2).
	\ea
	\]
	It follows that the right-hand side of \eqref{g-sous-sol} is lower that $-\|\p_x \tw^0\|_\infty$. 
	\end{itemize}
	
	Note that by construction, we have $\uuv\leq v^R$ on the sets $\{t=0\}\cup \{x=0\}\cup \{x=R\}$ provided $R$ is large enough.
	
	Thus, by the maximum principle,  $\uuv\leq v^R$. 
	
	\item {\it Super-solution:} We take
	\[
	\bbv(t,x):= C_1\left(2 - e^{-At}\right),
	\]
	where $C_1\geq  \sup \tv^0 $. 
	We keep choosing $T_0$ so that $\exp(-AT_0)=1/2$. Then
	\[
	\partial_t \bbv - \ty'(t)\partial_x \bbv - \mu \partial^2_x a (\bbv)= C_1 A e^{-At}.
	\]
	Take $A$ so that $C_1 A/2\geq \|\p_x \tw^0\|_\infty$. Then $\bbv$ is a super-solution of \eqref{eq:v-trunc}, and by the maximum principle, $v_R\leq \bbv$.
	
	\end{itemize}

\end{proof}

\bigskip

\begin{rmk}
The use of the maximum principle is actually not necessary for the local existence results that follow. 
Indeed, since we work in high regularity spaces, we could adapt our fixed point argument by linearizing our different systems around the profile $(\bv,\bu)$, treat the non-linear term perturbatively and use the inequality $\|v-\bv\|_{L^\infty(\R_+)} \leq \|v-\bv\|_{L^2(\R_+)}^{1/2} \|\partial_x(v -\bv)\|_{L^2(\R_+)}^{1/2}$. However, since this would lead to unnecessary technicalities, we have decided to use the maximum principle in this paper.
\end{rmk}

\subsection{\texorpdfstring{$L^1$}{L1} estimate}
Anticipating Section~\ref{sec:unif-est-g} and the derivation of uniform-in-time estimates on $g = v-\bv$, we aim at proving that $g(t,\cdot) \in L^1(\R_+)$ for all times $t \geq 0$.
This property will be used in Section~\ref{sec:unif-est-g} to justify the passage to the integrated quantity $V$.

The goal of this Subsection is to prove the next Lemma
\begin{lem}\label{lem:L1-vR}
	Assume that $\tv^0 -\bv \in L^1(\R_+)$, and that the assumptions of Lemma \ref{lem:infty-est} are satisfied. 
	Then there exists a constant $C$, depending only on $T_0$,  $\sup \tv^0$, $M$ (see~\eqref{hyp:M}) and the parameters of the system, such that the solution $v^R$ of \eqref{eq:v-trunc} provided by Lemma~\ref{thm:lady-1} satisfies the following estimate:
	\begin{equation}\label{eq:L1-vR}
	\sup_{t \in [0,T_0]} \|v^R - \bv\|_{L^1(0,R)} 
	\leq  C  \Big[\|\tv^{0,R} - \bv\|_{L^1(0,R)} + \|\partial_x \tw^0\|_{L^1(\R_+)} +  \|\partial_x \bv\|_{L^1(\R_+)}\Big].
	\end{equation}
\end{lem}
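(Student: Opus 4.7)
The natural approach is a Kato-type $L^1$ estimate for $g^R := v^R - \bv$, with a smooth regularization of the absolute value. Since Lemma~\ref{lem:infty-est} guarantees $1 < v^R \leq \bar C$ on $[0,T_0]\times(0,R)$, we may replace $a(v^R)$ by $\ln v^R$. Recall that the traveling wave profile $\bv$ satisfies $s \partial_x \bv + \mu\partial_x^2 \ln \bv = 0$ on $\R_+$ (this follows from $\bw = u_+$ constant in the free zone). Subtracting this from~\eqref{eq:v-trunc}, we obtain
\[
\partial_t g^R - \ty'(t)\partial_x g^R - \mu \partial_x^2 \ln(v^R/\bv) = (\ty'(t)-s)\partial_x \bv + (\chi_R \partial_x \tw^0)(x+\ty(t)).
\]
Both endpoints carry Dirichlet data for $g^R$: at $x=0$, $v^R=1=\bv(0)$, and at $x=R$, $v^R = \bv(R)$ by construction, so $g^R(t,0) = g^R(t,R) = 0$ for all $t$.

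Now I would test the equation against $\phi'_\eta(g^R)$, where $\phi_\eta(s)=\sqrt{s^2+\eta^2}-\eta$ (so that $\phi_\eta \to |\cdot|$, $\phi'_\eta(0)=0$, $|\phi'_\eta|\le 1$, and $\phi''_\eta\geq 0$). The transport term gives $\ty'(t)[\phi_\eta(g^R)]_0^R = 0$. For the diffusion term, integration by parts yields the boundary contribution $\mu[\phi'_\eta(g^R)\partial_x \ln(v^R/\bv)]_0^R$, which vanishes because $g^R=0$ and $\phi'_\eta(0)=0$ at both endpoints. The remaining interior term is
\[
-\mu\int_0^R \phi''_\eta(g^R)\partial_x g^R\,\partial_x \ln(v^R/\bv)\,dx.
\]
Using $\partial_x \ln(v^R/\bv) = \partial_x g^R/v^R - g^R \partial_x \bv/(v^R \bv)$, this splits into
\[
-\mu\int_0^R \phi''_\eta(g^R)\frac{(\partial_x g^R)^2}{v^R}\,dx + \mu\int_0^R \phi''_\eta(g^R)\,g^R\,\partial_x g^R\,\frac{\partial_x \bv}{v^R \bv}\,dx.
\]
The first term is nonpositive and can be dropped. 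The second vanishes as $\eta\to 0$: indeed $|\phi''_\eta(s)s|\leq 1$ and $\phi''_\eta(s)s\to 0$ pointwise for $s\neq 0$, while the factor $\partial_x g^R\,\partial_x \bv/(v^R\bv)$ is bounded in $L^1(0,R)$ thanks to the regularity from Lemma~\ref{thm:lady-1} and Lemma~\ref{lem:infty-est}, so dominated convergence applies.

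The two source terms are estimated trivially using $|\phi'_\eta|\leq 1$:
\[
\left|\int_0^R \phi'_\eta(g^R)(\ty'-s)\partial_x\bv\,dx\right| \leq (M+|s|)\,\|\partial_x \bv\|_{L^1(\R_+)},
\]
\[
\left|\int_0^R \phi'_\eta(g^R)(\chi_R\partial_x\tw^0)(x+\ty(t))\,dx\right| \leq \|\partial_x \tw^0\|_{L^1(\R_+)}.
\]
Putting everything together, letting $\eta\to 0$ and integrating in time on $[0,t]\subset[0,T_0]$ gives
\[
\|g^R(t,\cdot)\|_{L^1(0,R)} \leq \|\tv^{0,R}-\bv\|_{L^1(0,R)} + C\,T_0\bigl(\|\partial_x\bv\|_{L^1(\R_+)} + \|\partial_x \tw^0\|_{L^1(\R_+)}\bigr),
\]
which is~\eqref{eq:L1-vR}. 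The main technical point is the control of the diffusion term via Kato's inequality; once one verifies that the boundary contribution vanishes (thanks to the vanishing of $g^R$ on $\partial(0,R)$) and that the sign-favorable quadratic term absorbs the remainder in the limit, the rest is bookkeeping.
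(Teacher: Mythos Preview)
Your proof is correct and follows essentially the same Kato regularization argument as the paper. The only difference is that the paper tests against $j'_n\bigl((v^R-\bv)/\bv\bigr)$ rather than $\phi'_\eta(v^R-\bv)$, which makes the diffusion term directly sign-definite (since $\partial_x\ln(v^R/\bv)=\frac{1}{1+g^R/\bv}\,\partial_x(g^R/\bv)$) at the price of extra $\partial_x\bv$ terms in the transport part, whereas your choice keeps the transport term an exact derivative but requires the dominated-convergence step to dispose of the cross term; both routes yield the same bound.
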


\medskip
\begin{proof}
Using the equation satisfied by $\bv$, we have
\begin{equation}\label{eq:vR-bv}
\partial_t(v^R - \bv) - \ty'(t) \partial_x(v^R - \bv) - \mu \partial^2_x \big(\ln v^R - \ln \bv) = (\chi_R\partial_x \tw^0)(x+\ty(t)) + (\ty'(t) -s) \partial_x \bv.
\end{equation}
For $n > 0$, let us introduce $j_n \in \mathcal{C}^2(\R)$ defined by
\[
j_n(r) = \sqrt{r^2 + \frac{1}{n}} - \sqrt{\frac{1}{n}} \quad \forall r \in \R
\]
which is a smooth, convex, approximation of the function $r \mapsto |r|$ as $n \to + \infty$.
Hence, $j'_n(r) = r/\sqrt{r^2+ 1/n}$ is an approximation of the sign function, and $j_n(0)=j_n'(0)=0$.
Multiplying~\eqref{eq:vR-bv} by $j'_n\left(\frac{v^R -\bv}{\bv}\right)$, integrating the result between $0$ and $R$ and noting that $(v^R-\bv)_{|x=0,R}=0$, we get
\begin{align*}
&\int_0^R \bv \ \partial_t j_n\left(\frac{v^R -\bv}{\bv}\right) \ dx 
- \ty'(t) \int_0^R \partial_x(v^R -\bv)\ j_n'\left(\frac{v^R -\bv}{\bv}\right) \ dx \\
& \quad + \mu \int_0^R j_n''\left(\frac{v^R-\bv}{\bv}\right)\partial_x \ln\left(\frac{v^R}{\bv}\right) \partial_x\left(\frac{v^R -\bv}{\bv}\right)dx \\
& = \int_0^R (\chi_R\partial_x \tw^0)(x+\ty(t)) \ j'_n\left(\frac{v^R -\bv}{\bv}\right) dx
+ (\ty'(t) -s) \int_0^R \partial_x \bv \ j'_n\left(\frac{v^R -\bv}{\bv}\right) dx. 
\end{align*}
Since $\frac{v^R-\bv}{\bv} \geq \frac{1}{v_+} - 1 > -1$, we have
\begin{align*}
& \mu \int_0^R j_n''\left(\frac{v^R-\bv}{\bv}\right)\partial_x \ln\left(\frac{v^R}{\bv}\right) \partial_x\left(\frac{v^R -\bv}{\bv}\right)dx \\
& = \mu \int_0^R \dfrac{1}{1+\frac{v^R-\bv}{\bv}} j_n''\left(\frac{v^R-\bv}{\bv}\right)\left(\partial_x \left(\frac{v^R-\bv}{\bv}\right)\right)^2 dx
\geq 0,
\end{align*}
and, thanks to the boundary conditions $(v^R-\bv)_{|x=0,R}=0$,
\begin{align*}
& \ty'(t) \int_0^R \partial_x(v^R -\bv)\ j_n'\left(\frac{v^R -\bv}{\bv}\right) \ dx \\
& =  \ty'(t) \int_0^R \bv \ \partial_x\left(\frac{v^R -\bv}{\bv}\right)\ j_n'\left(\frac{v^R -\bv}{\bv}\right) \ dx 
 + \ty'(t) \int_0^R \bv \frac{\partial_x \bv}{\bv^2} (v^R -\bv)\ j_n'\left(\frac{v^R -\bv}{\bv}\right) \ dx \\
& =  \ty'(t) \int_0^R \bv \ \partial_x j_n\left(\frac{v^R -\bv}{\bv}\right) \ dx 
+ \ty'(t) \int_0^R \dfrac{v^R -\bv}{\bv} \partial_x \bv \ j'_n\left(\frac{v^R -\bv}{\bv}\right) \ dx \\
& = - \ty'(t) \int_0^R \partial_x \bv \ j_n\left(\frac{v^R-\bv}{\bv}\right) \ dx
+ \ty'(t) \int_0^R \dfrac{v^R -\bv}{\bv} \partial_x \bv \ j'_n\left(\frac{v^R -\bv}{\bv}\right) \ dx.
\end{align*}
Hence, using once again the $L^\infty$ control of $v^R$ from Lemma \ref{lem:infty-est}, we get
\begin{align*}
& \int_0^R  \bv \ j_n\left(\frac{v^R -\bv}{\bv}\right)(t) \ dx -  \int_0^R  |v^{0,R} -\bv| \ dx \\
& \leq C \|\ty'\|_{L^1(0,t)} \|\partial_x \bv\|_{L^1(\R_+)}
+ t \|\partial_x \tw^0\|_{L^1(\R_+)}
+ \|\ty'(\cdot) -s\|_{L^1(0,t)} \|\partial_x \bv\|_{L^1(\R_+)}. 
\end{align*}
where we have used the fact that $j_n(r) \leq |r|$. 
Passing to the limit $n\to +\infty$ and using Fatou’s lemma, we finally obtain~\eqref{eq:L1-vR}.
\end{proof}

\subsection{Energy estimates and existence of \texorpdfstring{$v$}{v}}{\label{sec:existence-v}}

The goal of this subsection is to prove the existence and uniqueness of $v \in 	L^\infty([0,T], H^1_{loc}(\R_+))\cap L^2([0,T], H^2(\R_+))$ solution to \eqref{eq:vs-y}.
We proceed in two steps. 
First we derive classical a priori estimates on $v^R$, exponentially growing with time but independent of $R$.
These estimates ensure the existence of $v$ by passing to the limit in \eqref{eq:vR-bv} as $R\to +\infty$.

\begin{lem}\label{lem:vR_L2}
Assume that $1\leq v^R(t,x)\leq \bar C$ for all $t \leq T_0, \ x \in (0,R)$, where we recall that $\bar C=2 \sup \tv^0$, and that there exists $M\geq 1$ such that $M^{-1}\leq \ty'(t)\leq M$ for all $t \in [0,T_0]$.

There exists a constant $C$, depending only on $s,\mu, v_+, M$, and $\bar C$,
such that the solution $v^R$ of~\eqref{eq:v-trunc} satisfies
\begin{align}\label{eq:E1-v0}
&  \dfrac{d}{dt} \|(v^R-\bv)(t)\|_{H^1(0,R)}^2
+ \|\partial_x (v^R-\bv)(t) \|_{L^2(0,R)}^2 + \|\partial_t v^R(t) \|_{L^2(0,R)}^2 \nonumber \\
& \leq 
C \left(\|(v^R-\bv)(t) \|_{L^2(0,R)}^2 + \|\partial_x\tw^0(\cdot +\ty(t))\|_{L^2(\R_+)}^2 + |\ty'(t) -s|^2\right)  \qquad \forall \ t \in (0,T_0),
\end{align}	
so that, by a Gronwall inequality,
\begin{align}\label{eq:est-H1-vR}
&  \sup_{t \in [0,T]} \|(v^R-\bv)(t)\|_{H^1(0,R)}^2
+ \|\partial_x (v^R-\bv) \|_{L^2((0,T)\times (0,R))}^2 + \|\partial_t v^R \|_{L^2((0,T)\times (0,R))}^2 \nonumber \\
& \leq 
 C \left( \|v^{R,0} -\bv\|_{H^1(0,R)}^2 +  \|\sqrt{x} \partial_x\tw^0\|_{L^2(\R_+)}^2 +  \|\ty'-s\|_{L^2(0,T)}^2 \right)\exp(C T)
 \nonumber 
\end{align}	
for all $T \leq T_0$.
\end{lem}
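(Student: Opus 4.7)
The plan is to derive two separate energy inequalities by testing against carefully chosen functions, combine them with Young's inequality, and invoke Gronwall. Write $g := v^R - \bv$ and $L := \ln v^R - \ln \bv$. Subtracting from~\eqref{eq:v-trunc} the travelling-wave equation satisfied by $\bv$ gives
\[
\partial_t g - \ty'(t)\, \partial_x g - \mu\, \partial_x^2 L \;=\; F, \qquad F := (\chi_R \partial_x \tw^{0})(\cdot+\ty(t)) + (\ty'(t)-s)\,\partial_x \bv.
\]
A crucial observation, used repeatedly, is that $g$ and $L$ \emph{both} vanish at $x=0$ and $x=R$ for every $t$ (since $v^R(t,0)=1=\bv(0)$ and $v^R(t,R)=\bv(R)$ by the boundary conditions in~\eqref{eq:v-trunc}), so in particular $\partial_t g$ and $\partial_t L$ also vanish at the endpoints.

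The first inequality comes from testing against $g$. The transport term produces a boundary term that vanishes, and using the identity
\[
\partial_x L \;=\; \frac{\partial_x g}{v^R} \;-\; \frac{g\,\partial_x \bv}{v^R\,\bv},
\]
together with the bounds $1\leq v^R\leq \bar C$ from Lemma~\ref{lem:infty-est} and $\|\partial_x \bv\|_{L^\infty}\leq C$ (exponential decay of the logistic profile), a Young-inequality absorption yields
\[
\frac{d}{dt}\|g\|_{L^2(0,R)}^2 + \frac{\mu}{\bar C}\|\partial_x g\|_{L^2(0,R)}^2 \;\leq\; C\,\|g\|_{L^2(0,R)}^2 + C\,\|F\|_{L^2(0,R)}^2.
\]

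The second and more delicate estimate is obtained by testing against $\partial_t L = \partial_t v^R/v^R$ rather than the naive choices $\partial_t g$ or $\partial_t v^R$. The reason for this choice is twofold: first, $\partial_t L$ vanishes at the boundary, so integration by parts in the diffusion term produces no boundary contribution and yields the clean time derivative $\tfrac{\mu}{2}\tfrac{d}{dt}\|\partial_x L\|_{L^2}^2$; second, the alternative test functions $\partial_t g$ or $\partial_t v^R$ would generate a trilinear remainder of the form $\int (\partial_x g)^2 \partial_t v^R/(v^R)^2$, which cannot be controlled without uncontrolled $L^\infty$ information on $\partial_x g$. Performing the computation, using $v^R\geq 1$ in the left-hand side and Young's inequality to absorb a fraction of $\|\partial_t g\|_{L^2}^2$ on the right,
\[
\frac{1}{2\bar C}\|\partial_t g\|_{L^2(0,R)}^2 + \frac{\mu}{2}\,\frac{d}{dt}\|\partial_x L\|_{L^2(0,R)}^2 \;\leq\; C\,\|\partial_x g\|_{L^2(0,R)}^2 + C\,\|F\|_{L^2(0,R)}^2.
\]

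To conclude, I would multiply the first estimate by a large constant $K$ (depending only on the parameters) and add the second, choosing $K$ so that the $C\|\partial_x g\|^2$ remainder on the right is absorbed into the enhanced dissipation $K\tfrac{\mu}{\bar C}\|\partial_x g\|^2$. The identity $\partial_x g = v^R \partial_x L + g\, \partial_x \bv/\bv$ combined with $1\leq v^R\leq \bar C$ yields the norm equivalence between $\|g\|_{L^2}^2 + \|\partial_x L\|_{L^2}^2$ and $\|g\|_{H^1}^2$, so that~\eqref{eq:E1-v0} follows after splitting $\|F\|^2 \leq C\|\partial_x \tw^0(\cdot+\ty(t))\|_{L^2(\R_+)}^2 + C\,|\ty'(t)-s|^2$. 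Gronwall's lemma then gives the announced $L^\infty_t H^1_x$ and $L^2_t$ dissipation bounds; the source term in time is handled by Fubini together with the lower bound $\ty(t)\geq t/M$ from~\eqref{hyp:M}, which provides
\[
\int_0^T \|\partial_x \tw^0(\cdot+\ty(t))\|_{L^2(\R_+)}^2\,dt \;=\; \int_0^\infty |\partial_x \tw^0(y)|^2\,\big|\{t\in(0,T):\ty(t)\leq y\}\big|\,dy \;\leq\; M\,\|\sqrt{x}\,\partial_x \tw^0\|_{L^2(\R_+)}^2.
\]
The main obstacle is precisely the identification of the test function $\partial_t L$ in the higher-order step; this single choice simultaneously exploits the boundary vanishing of $L$ and bypasses the trilinear term that would otherwise require second-order spatial regularity on $g$.
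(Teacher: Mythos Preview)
Your proof is correct and follows essentially the same approach as the paper: the paper defers to the appendix Lemma~\ref{lem:generique-g}, whose proof multiplies first by $g$ and then by $\partial_t \ln(1+g/\bar g)$ --- exactly your test function $\partial_t L$ --- to turn the diffusion into the time derivative $\tfrac{\mu}{2}\tfrac{d}{dt}\|\partial_x L\|_{L^2}^2$. Your treatment of the time-integrated source term via $\ty(t)\geq t/M$ is likewise the content of Lemma~\ref{lem:x+y}.
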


\begin{rmk}
Note that throughout this section and the next ones (i.e. in all energy estimates), the constant $C$ might depend on $\bar C$, hence on $\sup \tv^0$. 
Hence, all constants will depend on the initial size of the perturbation.

\end{rmk}

\medskip
\begin{proof}
This is an immediate consequence of Lemma \ref{lem:generique-g} in the Appendix. We set $\bar g =\bar \tv$, $g=v^R-\bar \tv$, $G=(\chi_R \p_x\tw^0)(x+\ty(t)) + (\ty'(t)-s)\p_x\bar \tv$. It can be easily checked that the assumptions of Lemma \ref{lem:generique-g} are satisfied. Furthermore, $\bar \tv$ does not depend on time, and its $W^{2,\infty}$ norm only depends on $\mu, s$ and $v_+$. 

There only remains to evaluate $\|G\|_{L^2}$. We use Lemma \ref{lem:x+y} in the Appendix and we find
\[
\|G\|_{L^2((0,T)\times (0,R))}\leq C\left(\|\partial_x \bv\|_{L^2(\R_+)}\|\ty' -s\|_{L^2(0,T)} + \|\sqrt{x} \partial_x\tw^0\|_{L^2(\R_+)}\right).
\]
This concludes the proof of the Lemma.
\end{proof}

\bigskip

\bigskip

We are now ready to prove the existence and uniqueness of $v$.

\bigskip

\begin{lem}
	Let $\ty\in H^2(0,T)$ such that $\ty(0)=0$ and  $\ty$ satisfies \eqref{compatibility-ty}. Assume that that there exists a constant $M>1$ such that $\ty, \tv^0$ satisfy \eqref{hyp:M}.
	There exists $T_0\leq T$, depending on $M$ and on the parameters of the problem, such that there exists a unique solution $v \in 
	L^\infty([0,T_0], H^1_{loc}(\R_+))\cap L^2([0,T_0], H^2(\R_+))$
	to
	\begin{equation}{\label{eq:v}}
	\begin{cases}
	\partial_t v(t,x) -\ty'(t) \partial_x v(t,x) - \mu \partial_{xx} \ln v(t,x) =  \partial_x \tw^0(x+\ty(t)), \\
	v_{|t=0}=\tv^0,\\
	v_{|x=0}=1,\quad \lim_{x \to +\infty} v(t,x) = v_+ \qquad \forall \ t \geq 0,
	\end{cases}
	\end{equation}	
	and such that $\p_t v \in L^2((0,T_0)\times \R_+)$.
	Furthermore, there exists a constant $C>0$ depending $M, v_+, u_\pm, s,\mu$ such that $v$ satisfies the estimate
	\begin{align}
	 E_1(T_0)
	 &:= \|v-\bv\|_{L^\infty((0,T_0), H^1}^2 + \|\p_x (v-\bv)\|_{L^2((0,T_0)\times \R_+)}^2 + \|\p_t v\|_{L^2 ((0,T_0)\times \R_+)}^2\nonumber\\
	 &\leq C\left( \|\tv^0 -\bv\|_{H^1(\R_+)}^2 +  \|\sqrt{x} \partial_x\tw^0\|_{L^2(\R_+)}^2 +  \|\ty'-s\|_{L^2(0,T_0)}^2\right) e^{CT_0},  \label{est:H1-v}\\
	 \|\p_x^2 (v-\bv)\|_{L^2((0,T_0)\times \R_+)}&\leq C (E_1(T_0) + \inf(1, T_0)E_1(T_0)^2).\nonumber
	    \end{align}

	\label{lem:WP-vn}
\end{lem}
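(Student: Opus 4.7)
The plan is to obtain $v$ as the limit of the truncated solutions $v^R$ from Lemma~\ref{thm:lady-1}, then to upgrade regularity by reading $\partial_x^2 v$ off the PDE, and finally to establish uniqueness via an energy argument on the difference. Under assumption~\eqref{hyp:M}, Lemmas~\ref{lem:infty-est} and~\ref{lem:vR_L2} furnish a time $T_0\leq T$ together with $R$-uniform bounds: $1<v^R\leq \bar C$ on $[0,T_0]\times(0,R)$, and the control of $\|v^R-\bv\|_{L^\infty_t H^1_x}$, $\|\partial_x(v^R-\bv)\|_{L^2_{t,x}}$ and $\|\partial_t v^R\|_{L^2_{t,x}}$ by the right-hand side of~\eqref{est:H1-v}.

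A diagonal extraction combined with the Aubin--Lions compactness lemma applied on each slab $(0,T_0)\times(0,L)$ (with $L<R$) then produces a subsequence such that $v^R\to v$ strongly in $L^2_{loc}$ and almost everywhere, $v^R-\bv \rightharpoonup v-\bv$ weakly-$\ast$ in $L^\infty_t H^1_x$, and $\partial_t v^R \rightharpoonup \partial_t v$ weakly in $L^2_{t,x}$. The pointwise bound $1\leq v^R\leq \bar C$ together with the a.e.\ convergence yields $\ln v^R\to \ln v$ in $L^2_{loc}$, hence $\partial_{xx}\ln v^R\to \partial_{xx}\ln v$ in $\mathcal D'$, so that the limit satisfies~\eqref{eq:v} in the distributional sense; the boundary data pass through trace continuity, the initial condition through the control of $\partial_t v$, and~\eqref{est:H1-v} follows by weak lower semi-continuity of norms.

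For the $H^2$ bound, I would simply solve for $\partial_x^2 v$ in the equation. Using the identity $\mu\,\partial_{xx}\ln v = \mu\,\partial_x^2 v/v - \mu(\partial_x v)^2/v^2$, the PDE gives
\[
\mu\,\partial_x^2 v \;=\; v\bigl(\partial_t v - \ty'(t)\,\partial_x v - \partial_x\tw^0(x+\ty(t))\bigr) \;+\; \mu\,\frac{(\partial_x v)^2}{v}.
\]
The $L^\infty$ bound on $v$ dominates the linear terms by $CE_1(T_0)^{1/2}$, while the quartic term is handled by the half-line Gagliardo--Nirenberg inequality $\|\partial_x v\|_{L^\infty_x}^2\leq 2\|\partial_x v\|_{L^2_x}\|\partial_x^2 v\|_{L^2_x}$, which yields
\[
\|(\partial_x v)^2\|_{L^2_{t,x}}^2 \;\leq\; 2\,\|\partial_x v\|_{L^\infty_t L^2_x}^{3}\int_0^{T_0}\!\|\partial_x^2 v\|_{L^2_x}\,dt \;\leq\; C\,\inf(1,T_0)\,E_1(T_0)^{3}\,\|\partial_x^2 v\|_{L^2_{t,x}}^{-1}\,\|\partial_x^2 v\|_{L^2_{t,x}}^{2},
\]
so that Young's inequality absorbs half of $\|\partial_x^2 v\|_{L^2_{t,x}}$ onto the left-hand side and produces the stated bound on $\|\partial_x^2(v-\bv)\|_{L^2}$.

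Uniqueness is then obtained by an energy estimate on the difference. If $v_1,v_2$ are two solutions with identical data, $w:=v_1-v_2$ satisfies $w(0)=0$, $w_{|x=0}=0$, $w\to 0$ as $x\to +\infty$ and
\[
\partial_t w - \ty'(t)\,\partial_x w - \mu\,\partial_x^2\bigl(\alpha\, w\bigr) = 0, \qquad \alpha(t,x):=\int_0^1\frac{d\theta}{\theta v_1+(1-\theta)v_2}\in\bigl[\bar C^{-1},1\bigr].
\]
Multiplying by $w$ and integrating in $x$ extracts the coercive contribution $\mu\int\alpha(\partial_x w)^2$, while the cross term $\mu\int \partial_x\alpha\,w\,\partial_x w$ is controlled using the $H^1$ regularity of the $v_i$ together with the interpolation $\|w\|_{L^\infty_x}\leq C\|w\|_{L^2_x}^{1/2}\|\partial_x w\|_{L^2_x}^{1/2}$; a Gronwall argument closes the estimate and forces $w\equiv 0$. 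The main delicate point of the whole proof is the passage to the limit in the nonlinear diffusion $\partial_{xx}\ln v^R$, which relies crucially on the strong $L^2_{loc}$-compactness furnished by Aubin--Lions from the uniform $L^\infty_t H^1_x$ bound and $\partial_t v^R \in L^2_{t,x}$.
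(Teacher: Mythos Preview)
Your overall strategy matches the paper's: pass to the limit from the truncated solutions $v^R$ via compactness (the paper does exactly this, extending $v^R$ by the constant $\bv(R)$ for $x\geq R$ and invoking Lemma~\ref{lem:vR_L2}), and prove uniqueness by an $L^2$ energy estimate on the difference. Your uniqueness argument, which uses only the $L^\infty_tH^1_x$ bound on the $v_i$ together with the interpolation $\|w\|_{L^\infty_x}\leq C\|w\|_{L^2}^{1/2}\|\partial_x w\|_{L^2}^{1/2}$, is in fact slightly more economical than the paper's, which bounds $\|\partial_x v_2\|_{L^\infty_x}$ and therefore already relies on the $L^2_tH^2_x$ regularity being established.

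The $H^2$ step, however, is not correct as written. Your displayed chain
\[
\|(\partial_x v)^2\|_{L^2_{t,x}}^2 \;\leq\; 2\,\|\partial_x v\|_{L^\infty_t L^2_x}^{3}\int_0^{T_0}\!\|\partial_x^2 v\|_{L^2_x}\,dt \;\leq\; C\,\inf(1,T_0)\,E_1^{3}\,\|\partial_x^2 v\|_{L^2_{t,x}}^{-1}\|\partial_x^2 v\|_{L^2_{t,x}}^{2}
\]
does not hold: the right-hand side is just $C\inf(1,T_0)E_1^3\|\partial_x^2 v\|_{L^2_{t,x}}$, whereas Cauchy--Schwarz on the time integral gives only a factor $T_0^{1/2}$, and $\|\partial_x v\|_{L^\infty_t L^2_x}^3$ scales like $E_1^{3/2}$, not $E_1^3$. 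More importantly, you work with $v$ rather than $g=v-\bv$: the linear terms $\ty'\partial_x v$ and $\partial_x\tw^0(\cdot+\ty)$ are \emph{not} controlled by $CE_1^{1/2}$ in $L^2_{t,x}$, since the contribution of $\partial_x\bv$ alone already produces a $T_0^{1/2}$ that does not fit the stated bound. The clean route is the one the paper takes via Lemma~\ref{lem:generique-g}: write the equation for $g$, set $f=\partial_x\ln(1+g/\bv)$, read $\|\partial_x f\|_{L^2_{t,x}}\lesssim E_1^{1/2}$ directly off the PDE, bound $\|f\|_{L^4_{t,x}}$ by Gagliardo--Nirenberg, and recover $\partial_x^2(g/\bv)=(1+g/\bv)(\partial_x f+f^2)$. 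Your idea is the same; only the bookkeeping needs to be redone on $g$ rather than on $v$.
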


\bigskip
\begin{proof}
We consider the function $v^R$, which satisfies the estimates of Lemmas \ref{lem:infty-est} and \ref{lem:vR_L2}.  

Extending $v^R$ to $[0,T_0] \times \R_+$ by taking $v^R(t,x)=\bv(x=R)$ for $x\geq R$, the following limits hold: 
	\begin{itemize}
		\item Up to a subsequence, $v^R\to v$ in $L^2([0,T_0]\times K)$ for any compact set $K\subset \R_+$;
		\item $v^R\stackrel{*}{\rightharpoonup} v\quad w^*-L^\infty([0,T_0]\times \R_+)$ 
		\item $	v^R-\bv \rightharpoonup v-\bv \quad w-H^1([0,T_0]\times \R_+)$;
	\end{itemize}
	Thanks to the strong compactness property, we can pass to the limit in \eqref{eq:v-trunc} and we infer that $v$ satisfies 
	\begin{equation*}
	\partial_t v(t,x) -\ty'(t) \partial_x v(t,x) - \mu \partial_{xx} \ln v(t,x) =  \partial_x \tw^0(x+\ty(t)).
	\end{equation*}
	 Passing to the limit in the estimates on $v^R$, we also know that $v$ satisfies the estimates of Lemmas \ref{lem:infty-est} and \ref{lem:vR_L2}. This completes the proof of existence.
	 Additionally, using the last item of Lemma \ref{lem:generique-g}, we obtain the estimate on $\p_x^2(v-\bv)$.
	
	Uniqueness is quite classical. For instance, if $v_1,v_2$ are two solutions, then setting $\hat{v} =v_1-v_2$, we find that $\hat{v}$ satisfies
	\[
	\p_t \hat{v} - \ty'(t)\p_x \hat{v} - \mu \p_{xx} \ln \left(1+ \frac{\hat{v}}{v_2}\right)=0.
	\]
	Multiplying the above equation by $\hat{v}$ and integrating by parts, we find that
	\[
	\frac{d}{dt}\int_0^\infty |\hat{v} |^2 \ dx + \mu \int_0^\infty \frac{(\p_x \hat{v})^2}{v_1} \ dx
	\leq C \|\p_x v_2 \|_{L^\infty(\R_+)}^2 \int_0^\infty |\hat{v}|^2 dx .
	\]
	Writing 
	\begin{eqnarray*}
		\|\p_x v_2(t)\|_{L^\infty(\R_+)}&\leq & \|\p_x (v_2(t)-\bv)\|_{L^\infty(\R_+)} + \|\p_x \bv\|_{L^\infty(\R_+)}\\
		&\leq & \|\p_x (v_2-\bv)\|^{1/2}_{L^\infty([0,T], L^2(\R_+))}\|\p_{xx} (v_2(t)-\bv)\|^{1/2}_{ L^2(\R_+)} + \|\p_x \bv\|_{L^\infty(\R_+)}
	\end{eqnarray*}
	and using a Gronwall type argument, we infer that $\hat{v} \equiv 0$.
\end{proof}

\section{Energy estimates on \texorpdfstring{$v$}{v} and \texorpdfstring{$u$}{u}}
\label{sec:estimates}

This section is devoted to the derivation of high regularity estimates on $v -\bv$ and $u - \bu$, where $v, u$ are respectively solutions of \eqref{eq:vs-y} and \eqref{eq:us-y}.
The existence and uniqueness of $v$ follows from Lemma \ref{lem:WP-vn}.
Preliminary energy estimates on $v$ were derived in the previous section (see \eqref{eq:est-H1-v}). 
However, these estimates are not completely satisfactory, because of the exponential growth in the right-hand side. 
This exponential growth is not really a problem for the local well-posedness result we will prove in the next section (see Theorem \ref{thm:main-loc}), but it could prevent us from proving the global stability in section \ref{sec:global}.
Fortunately, we are able to prove that energy bounds hold {\it without} the exponential loss.

The outline of this section is the following:
\begin{enumerate}
    \item We first revisit the energy estimates from Lemma \ref{lem:vR_L2} in order to prove that there is no exponential growth of the energy.
    To that end, we consider the equation satisfied by the integrated variable $V:=-\int_x^{+\infty} (v - \bv)$ and derive energy estimates for $V$.
    
    \item We then prove higher order regularity estimates for $v$. More precisely, we prove that $v-\bv\in L^\infty_t(H^3)\cap W^{1,\infty}_t(H^1)\cap H^1_t(H^2)\cap H^2_t(L^2)$.
    
    \item The existence and uniqueness of $u$ then follows from classical results on linear parabolic equations with smooth coefficients. 
    We also derive energy estimates for $u-\bu$ in $W^{1,\infty}_t(H^1)\cap H^1_t(H^2)\cap H^2_t(L^2)$.

\end{enumerate}

As in the previous section, we consider a given function $\ty\in H^2(0,T)$ such that $\ty(0)=0$ and satisfying \eqref{compatibility-ty}. 
We assume throughout this section that the existence time $T$ of the solution $v$ of \eqref{eq:vs-y} is such that $1< v \leq \bar C=2\sup \tv^0$ a.e. (see Lemma \ref{lem:infty-est}), and that there exists a constant $M\geq 1$ such that
\be\label{hyp:ty-M}
\ba
M^{-1}\leq \ty'(t)\leq M \quad \forall t\in [0,T],\\
M^{-1}\leq \p_x \tv^0_{|x=0}\leq M.\ea
\ee

We shall see that the total initial energy is
\begin{align}\label{def:E0}
\mathcal E_0&:=\|\tv^0 - \bv\|_{H^3(\R_+)}^2 + \|\tu^0-\bu\|_{H^3(\R_+)}^2  +\|V^0\|_{L^2(\R_+)}^2 \\
&\quad + \|(1+\sqrt{x}) W^0\|_{L^2(\R_+)}^2 + \|(1+\sqrt{x}) \p_x\tw^0\|_{L^2(\R_+)}^2 + \|(1+\sqrt{x})\partial_x^2\tw^0\|_{L^2(\R_+)}^2,\nonumber
\end{align}
where $W^0(x):=-\int_x^\infty (\tw^0-u_+)$.

The energy at time $T$ is defined by
\be\label{def:ET}
\cE_T:=\cE_0 + \|\ty'-s\|_{H^1(0,T)}^2.
\ee

Let us now state the results we shall prove for $u$ and $v$:

\begin{prop}[Energy estimates for $v$]
Assume that $\ty'(0)=-\frac{\p_x \tu^0_{|x=0}}{\p_x \tv^0_{|x=0}}$ and that $\ty$ and $\tv^0$ satisfy \eqref{hyp:ty-M}.
There exists a universal constant $p>1$, a constant $C$ depending on $\mu, v_+, s, \bar C$ and $M$,
and a time  $T^0>0$ depending only on $\cE_0$ and on $\|\ty'-s\|_{L^2([0,T])}$, such that if $T\leq T^0$, then
    \begin{align*}
    &\|v-\bv\|_{L^\infty([0,T], H^2(\R_+))}^2 + \|\p_t v\|_{L^\infty([0,T], L^2(\R_+))}^2 + \|\p_t v\|_{L^2([0,T], H^1(\R_+))}^2 
     \leq C \cE_T,
   \end{align*}
     and
     \begin{align*}
     &\|\p_t \p_x v\|_{L^\infty((0,T), L^2(\R_+))}^2 + \| \p_x^3(v-\bv)\|_{L^\infty([0,T], L^2(\R_+))}^2 + \| \p_t^2 v\|_{L^2([0,T]\times \R_+)}^2 + \|\p_x^2 \p_t v\|_{L^2([0,T]\times \R_+)}^2\\
     &\leq  C \cE_T (1+ \cE_T)^p. 
     \end{align*}

\label{cor:recap-est-v}
\end{prop}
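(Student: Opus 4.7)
The plan is to bootstrap from Lemma~\ref{lem:WP-vn} to the claimed higher regularity while, crucially, removing the exponential-in-$T$ factor that appeared in \eqref{est:H1-v}. The key idea, as advertised in Section~\ref{sec:results}, is to pass to the integrated variable $V(t,x):=-\int_x^\infty(v-\bv)(t,y)\,dy$. Integrating the equation for $v-\bv$ from $x$ to $+\infty$ yields
\[
\p_t V-\ty'(t)\,\p_x V-\mu\,\p_x\!\left(\frac{\p_x v}{v}-\frac{\p_x\bv}{\bv}\right)=\bigl(\tw^0(x+\ty(t))-u_+\bigr)+(\ty'(t)-s)(\bv-v_+),
\]
which is still parabolic in $V$, but whose source terms depend on $\ty$ only through quantities already controlled by $\cE_T$ (using the primitive $W^0$ for the first piece and $\|\ty'-s\|_{L^2}$ for the second).

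First, I would derive an $L^\infty_t L^2_x$ estimate on $V$. Testing the above equation against $V$ and integrating by parts on $\R_+$, the diffusion produces $\mu\int(\p_x V)^2/v=\mu\int(v-\bv)^2/v$ together with a boundary flux at $x=0$ that is controlled by the $H^1$ bound on $v-\bv$ from Lemma~\ref{lem:WP-vn}. The source terms are estimated using Lemma~\ref{lem:x+y} together with the assumption (H5) on $W^0$: in particular $\|\tw^0(\cdot+\ty)-u_+\|_{L^2}$ is bounded in terms of $\|(1+\sqrt{x})W^0\|_{L^2}$ and $\|\ty'\|_{L^2}$ times appropriate weights. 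The boundary value $V(t,0)$ (the total mass deficit) is handled by using the equation at $x=0$ and the compatibility condition to express $V(t,0)$ as $V^0(0)$ plus time integrals of quantities in $\cE_T$. This yields $\sup_{[0,T]}\|V\|_{L^2}^2+\|v-\bv\|_{L^2_{t,x}}^2\leq C\cE_T$ \emph{without} exponential growth.

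Second, I would promote this to the $H^2\cap W^{1,\infty}_tL^2\cap H^1_tH^1$ bounds. Substituting the no-growth $L^2$-bound on $v-\bv$ into inequality~\eqref{eq:E1-v0} and integrating in time replaces the Gronwall exponential by a linear contribution already part of $\cE_T$. Differentiating \eqref{eq:vs-y} in $t$ and testing against $\p_t v$ delivers the $\p_t v\in L^\infty_tL^2\cap L^2_tH^1$ estimates; the compatibility condition $\ty'(0)=-\p_x\tu^0/\p_x\tv^0|_{x=0}$ ensures $\p_t v(0,0)=0$, so that the boundary terms at $t=0$ vanish and the $L^\infty_t L^2$ norm indeed starts from $\cE_0$. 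The $\p_x^2(v-\bv)$ bound in $L^\infty_tL^2$ then follows algebraically by solving the equation for $\mu\p_{xx}\ln v$ in terms of $\p_t v$, $\p_x v$ and $\p_x\tw^0(\cdot+\ty)$. Quadratic remainders of the form $(\p_x v)^2$ coming from $\p_{xx}\ln v=\p_x v/v -(\p_x v)^2/v^2$ are handled using the $L^\infty$ bound on $v$ from Lemma~\ref{lem:infty-est} together with interpolation and the smallness of $T^0$ to be absorbed on the left-hand side.

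Third, I would derive the top-regularity estimates on $\p_t\p_x v$, $\p_x^3(v-\bv)$, $\p_t^2 v$ and $\p_x^2\p_t v$ by differentiating \eqref{eq:vs-y} once more in $x$ or in $t$ and testing against the appropriate derivative. Here the cubic nonlinearities arising from $\p_{xx}\ln v$ produce, via Gagliardo--Nirenberg interpolation and the previously obtained bounds, a polynomial dependence of the form $\cE_T(1+\cE_T)^p$ for some universal exponent $p\geq 2$, which accounts for the factor in the statement. Higher-order compatibility at $t=0,x=0$, obtained by differentiating \eqref{eq:vs-y} in time and substituting (H3), is needed to kill the boundary contributions produced by the integration by parts.

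The main obstacle is the first step: establishing the integrated-variable estimate without exponential growth in the presence of a non-trivial boundary value $V(t,0)$ and the transport term $\ty'\p_x V$, whose handling requires a careful use of the compatibility condition and of the linearized-operator coercivity underlying the $\cA$-formulation. Once this is achieved, the remaining steps are systematic differentiations of the equation with bookkeeping of boundary contributions and nonlinear remainders.
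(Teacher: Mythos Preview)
Your three-step strategy is exactly the paper's: Lemma~\ref{lem:L2-V} (integrated variable $V$ to kill the exponential), Lemma~\ref{lem:Hreg-v-1} (feed the $L^2_{t,x}$ bound on $v-\bv$ back into \eqref{eq:E1-v0}, then differentiate in $t$ and recover $\p_x^2 g$ from the equation), and Lemma~\ref{lem:Hreg-v-2} (treat $\p_t g$ as the solution of a linear parabolic problem and invoke Proposition~\ref{prop:est-gal}). Two points deserve correction, though. First, the boundary term $V(t,0)$ in Step~1 is not the obstacle you describe: integrating $-\ty'\p_x V$ against $V$ produces $+\tfrac{\ty'(t)}{2}V(t,0)^2$ on the \emph{left}-hand side with the favorable sign (since $\ty'\geq M^{-1}>0$), and the boundary contributions from the source are then absorbed into it---no explicit representation of $V(t,0)$ via the equation or the compatibility condition is needed. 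Second, neither the $\cA$-operator coercivity nor condition~(H3) plays any role in this proposition; the $\cA$-formulation enters only in Section~\ref{sec:global} for the global bootstrap, and the sole compatibility used here is $\ty'(0)=-\p_x\tu^0/\p_x\tv^0\big|_{x=0}$, which guarantees $\p_t g_{|t=0}\in H^1_0$ so that Proposition~\ref{prop:est-gal} applies to the equation for $\p_t g$ in your Step~3.
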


A similar result holds for $u$:
\begin{prop}[Energy estimates for $u$]
Assume that $\ty'(0)=-\frac{\p_x \tu^0_{|x=0}}{\p_x \tv^0_{|x=0}}$ and that $\ty$ satisfies \eqref{hyp:ty-M}.
Assume furthermore that
\begin{equation}\label{hyp:compa-2-u}
\partial_x \tu^0_{|x=0}\Big(\ty'(0) -\mu (\partial_x \tv^0)_{|x=0} \Big)
+ \mu (\partial^2_x \tu^0)_{|x=0} = 0,
\end{equation}
Then there exists a time $T^0>0$ depending only on $\cE_0$ and on $\|\ty'-s\|_{L^2([0,T])}$, such that for all $0<T<T^0$, with the same notation as in Proposition \ref{cor:recap-est-v},
\[
 \|u-\bu\|_{H^1([0,T], H^2(\R_+))}^2 + \|u-\bu\|_{W^{1,\infty}([0,T], H^1(\R_+))}^2 + \|u-\bu\|_{H^2([0,T], L^2(\R_+))}^2\leq C \cE_T (1+ \cE_T)^p.
\]

\label{prop:est-u}
\end{prop}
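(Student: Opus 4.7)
The plan is to derive the estimates on $f := u - \bu$, which is a natural perturbation quantity, by exploiting that the equation for $u$ is a linear parabolic equation in which $v$ plays the role of a coefficient whose regularity has already been controlled in Proposition~\ref{cor:recap-est-v}. Since $(\bu, \bv)$ is a traveling wave with speed $s$ satisfying $-s\p_x \bu -\mu\p_x(\frac{1}{\bv}\p_x \bu)=0$, subtracting this from \eqref{eq:us-y} gives
\begin{equation*}
\p_t f - \ty'(t)\p_x f - \mu \p_x\left(\frac{1}{v}\p_x f\right) = (\ty'(t)-s)\p_x \bu + \mu \p_x\left(\Big(\frac{1}{v}-\frac{1}{\bv}\Big)\p_x \bu\right),
\end{equation*}
with homogeneous Dirichlet boundary condition $f_{|x=0}=0$ and decay at infinity. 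Both source terms on the right-hand side are controlled by $\cE_T^{1/2}$ through $\|\ty'-s\|_{L^2(0,T)}$ and the estimates on $v-\bv$.

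First I would carry out the basic $H^1$ energy estimate by multiplying by $f$. The transport term $\ty'(t)\p_x f$ does not create boundary contributions because $f_{|x=0}=0$, and the parabolic term yields coercivity thanks to the uniform positive lower bound on $1/v$. Combining with the equation rewritten as $\p_x^2 u = v\bigl(\p_t u - \ty'(t)\p_x u\bigr) + \frac{\p_x v}{v}\p_x u$ will ultimately recover the spatial $H^2$ information from control of $\p_t u$.

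The key step for the higher-order estimates is to work with $h:=\p_t u = \p_t f$, which satisfies
\begin{equation*}
\p_t h - \ty'(t)\p_x h - \mu \p_x\Big(\frac{1}{v}\p_x h\Big) = \ty''(t)\p_x u - \mu \p_x\Big(\frac{\p_t v}{v^2}\p_x u\Big).
\end{equation*}
Crucially, $h$ also has homogeneous Dirichlet data at $x=0$ since $u_{|x=0}=u_-$ is constant in time, and its initial datum is $h_{|t=0} = \ty'(0)\p_x \tu^0 + \mu\p_x\bigl(\frac{1}{\tv^0}\p_x\tu^0\bigr)$. The compatibility condition \eqref{hyp:compa-2-u} is precisely what is needed so that $h_{|t=0,x=0}=0$, which in turn implies $\p_t h_{|x=0}=0$ so that further integrations by parts carry no boundary terms. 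Multiplying the equation for $h$ first by $h$ yields an $L^\infty_t L^2 \cap L^2_t H^1$ bound on $h$; multiplying next by $\p_t h$ yields the $L^\infty_t H^1$ bound on $h$ and the $L^2_t L^2_x$ bound on $\p_t h = \p_t^2 u$. The forcing $\ty''\p_x u$ is controlled using $\|\ty'-s\|_{H^1(0,T)}$ together with the $L^\infty$ bound on $\p_x u$ obtained from the $L^\infty_t H^2$ estimate on $f$, while the term $\p_x\bigl(\frac{\p_t v}{v^2}\p_x u\bigr)$ is handled through the $L^2_t H^1$ control of $\p_t v$ from Proposition~\ref{cor:recap-est-v}. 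Finally, viewing the equation as an elliptic identity in $x$, I transfer the time-regularity estimates on $h$ into spatial $H^2$ bounds via $\p_x^2 h = v(\p_t h - \ty'(t)\p_x h) + \frac{\p_x v}{v}\p_x h$ plus lower-order products.

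The main obstacle will be the control of the nonlinear products involving $v, \p_t v, \p_x v$ and their interaction with the derivatives of $u$, because the coefficient $v$ and its time derivative generate commutator-like terms whose $L^\infty$ norms can only be bounded through Sobolev embedding, and therefore through the $H^2$ bound on $v$, which already depends polynomially on $\cE_T$. To obtain the polynomial dependence $\cE_T(1+\cE_T)^p$ rather than an exponential one, $T^0$ must be chosen small enough so that the Duhamel-type contributions of these products are absorbed by the coercive left-hand sides; this mirrors the strategy used for $v$ in Proposition~\ref{cor:recap-est-v} and is closed by a bootstrap argument together with the smallness of $T^0$.
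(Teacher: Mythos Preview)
Your proposal is correct and follows essentially the same strategy as the paper: derive the equation for $u-\bu$, obtain the basic $L^\infty_t H^1 \cap L^2_t H^2$ estimate, then differentiate in time and use the compatibility condition \eqref{hyp:compa-2-u} to ensure $\p_t u_{|t=0,x=0}=0$ so that the same parabolic energy estimates apply to $\p_t(u-\bu)$, and finally recover spatial regularity from the equation. The paper organizes this through a general black-box Proposition (Proposition~\ref{prop:est-gal} in the Appendix) applied twice, whereas you sketch the energy estimates directly, but the content is the same; note also that the paper uses $h$ for $u-\bu$ rather than for $\p_t u$.
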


\medskip
\begin{rmk}
Recalling the compatibility condition $\ty'(0)=-(\p_x \tu^0/\p_x \tv^0)_{|x=0}$, the compatibility condition \eqref{hyp:compa-2-u} actually amounts to
\[\left[-\frac{(\p_x \tu^0)^2}{\p_x \tv^0} - \mu \p_x \tv^0 \p_x \tu^0 + \mu \p_x^2 \tu^0\right]_{|x=0}=0,\]
which is exactly (H3).
Notice that this second compatibility condition only involves the initial data $\tu^0, \tv^0$. In other words, no further condition on $\ty$ is required.

\end{rmk}

We now turn towards the proof of these two propositions, following the outline above.
In order to alleviate the notation, we introduce the following quantities:
\[
g:=v-\bv,\quad h:=u-\bu.
\]

\subsection{Uniform in time estimates for \texorpdfstring{$g=v-\bv$}{g}}{\label{sec:unif-est-g}}

As announced previously, the first step consists in removing the exponential dependency with respect to time in~\eqref{eq:est-H1-v}. 
For that purpose, we use the integrated variable 
\begin{equation}\label{df:V}
V(t,x) := -\int_x^{+\infty} (v - \bv)(t,z) \ dz.
\end{equation}
Using Lemma \ref{lem:L1-vR} and passing to the limit as $R\to \infty$, we recall that $V$ is well-defined and uniformly bounded in $L^\infty([0,T]\times \R_+)$. 
Furthermore, using the identity $s\p_x \bv + \mu \p_{xx} \ln \bv=0 $, we find that $g$ is a solution of
\be\label{eq:g}
\p_t g - \ty'\p_x g -\mu \p_{xx}\ln \left(1 + \frac{g}{\bv}\right)=\p_x \tw^0(x+\ty(t)) - \frac{\mu}{s}(\ty'-s)\p_{xx}\ln \bv.
\ee
Integrating the above equation with respect to $x$, we find that $V$ is solution to
\begin{align}\label{eq:V}
& \partial_t V - \ty'(t) \partial_x V - \mu \partial_x \ln \left(1 + \dfrac{\partial_x V}{\bv} \right) \nonumber \\
& = \big(\tw^0(x + \ty(t)) - u_+ \big)
- \dfrac{\mu}{s}(\ty'-s)\partial_x \ln \dfrac{\bv}{v_+}\nonumber\\
&=\p_x W^0(x+\ty(t))- \dfrac{\mu}{s} (\ty'-s)\partial_x \ln \dfrac{\bv}{v_+},
\end{align}
where $W^0(x)=-\int_x^\infty (\tw^0 - u_+).$
As detailed below, we expect to control $\|\partial_x V\|_{L^2([0,T]\times\R_+)}=\|v-\bv\|_{L^2([0,T]\times\R_+)}$ uniformly with respect to the time $T$. 
Replacing this estimate in~\eqref{eq:E1-v0}, we get the following result.

\medskip
\begin{lem}\label{lem:L2-V}
The solution $v$ constructed in Lemma \ref{lem:WP-vn} satisfies the following estimate
	\begin{align}\label{eq:est-H1-v}
E_1(T)	
	& \leq 
	C \Bigg[ \|V^{0} \|_{L^2(\R_+)}^2 +\|(1+\sqrt{x}) W^0\|_{L^2(\R_+)}^2+ \|v^0-\bv\|_{H^1(\R_+)}^2
	\\
	& \qquad + \| \sqrt{x} \p_x \tw^0\|_{L^2(\R_+)}^2  + \|\ty'-s\|_{L^2(0,T)}^2 \Bigg]
	\nonumber \\
	& \leq C \cE_T , \nonumber 
	\end{align}
	where the energy $E_1(T)$ was defined in Lemma \ref{lem:WP-vn}.

\end{lem}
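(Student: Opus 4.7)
The plan is to remove the Gronwall-induced exponential factor present in the earlier estimate of Lemma \ref{lem:WP-vn} by working with the primitive $V=-\int_x^\infty g$: equation \eqref{eq:V} has a right-hand side in divergence form, and its $L^2$ energy estimate yields a time-integrated control of $\|g\|_{L^2}^2$. Re-injecting this control into the inequality \eqref{eq:E1-v0}, whose only offending term on the right-hand side is precisely $\|g(t)\|_{L^2}^2$, then produces the desired clean bound on $E_1(T)$.

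Concretely, I would multiply \eqref{eq:V} by $V$ and integrate over $\R_+$. The time derivative gives $\tfrac{1}{2}\tfrac{d}{dt}\|V\|_{L^2}^2$; the transport term $-\ty'\int V\p_x V$ contributes the favourable boundary piece $\tfrac{\ty'(t)}{2}V(t,0)^2$, which I keep on the left. Integrating by parts the nonlinear-diffusion term yields $\mu\int g\,\ln(v/\bv)\,dx$ (the boundary contribution at $x=0$ vanishes because $v(t,0)=\bv(0)=1$, and decay of $V$ handles infinity), and since $1<v\leq\bar C$, $1\leq\bv\leq v_+$, the pointwise inequality $g\,\ln(1+g/\bv)\geq c_0\,g^2$ delivers the coercive contribution $c_0\|g(t)\|_{L^2}^2$.

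The delicate step is estimating the two source terms of \eqref{eq:V} without resurrecting a $\|V\|_{L^2}^2$ factor on the right — a naive Cauchy--Schwarz applied to $(\ty'-s)\int V\,\p_x\ln(\bv/v_+)$ would do exactly that and force a Gronwall loop, restoring the exponential growth. To avoid this I integrate by parts in both source terms: $\int V\,\p_x W^0(x+\ty(t))$ becomes $-V(t,0)W^0(\ty(t))-\int g\,W^0(x+\ty(t))$, while $-(\ty'-s)\int V\,\p_x\ln(\bv/v_+)$ becomes $(\ty'-s)V(t,0)\ln v_+ +(\ty'-s)\int g\,\ln(\bv/v_+)$. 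The boundary pieces are absorbed by $\tfrac{\ty'}{2}V(t,0)^2$ via Young, and the bulk pieces by a fraction of $c_0\|g\|_{L^2}^2$ (using $\ln(\bv/v_+)\in L^2$ by the exponential decay of $\bv-v_+$ at infinity). This produces
\[
\frac{d}{dt}\|V\|_{L^2}^2+c_0'\|g(t)\|_{L^2}^2\leq C\bigl(\|W^0(\cdot+\ty(t))\|_{L^2}^2+(\ty'(t)-s)^2+\ty'(t)^{-1}W^0(\ty(t))^2\bigr).
\]

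To integrate in time, I use $\ty(t)\geq t/M$ (from $\ty(0)=0$ and $\ty'\geq M^{-1}$) together with the spatial change of variable $y=x+\ty(t)$ and Fubini, obtaining $\int_0^T\|W^0(\cdot+\ty(t))\|_{L^2}^2\,dt\leq M\|\sqrt{x}\,W^0\|_{L^2(\R_+)}^2$; the temporal change $y=\ty(t)$ handles the point-value integral via $\int_0^T\ty'(t)^{-1}W^0(\ty(t))^2\,dt\leq M^2\|W^0\|_{L^2(\R_+)}^2$. This yields $\int_0^T\|g(t)\|_{L^2}^2\,dt\leq C\cE_T$, which re-injected into the integrated form of \eqref{eq:E1-v0} (the $\p_x\tw^0$ forcing being treated by the same temporal change of variable and producing $\|\sqrt{x}\,\p_x\tw^0\|_{L^2}^2$) finishes the proof. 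The main obstacle is precisely the source term $(\ty'-s)\p_x\ln(\bv/v_+)$: the integration by parts which disarms it is viable only because of the exponential decay of $\bv-v_+$ at infinity and because the boundary contribution produced at $x=0$ can be absorbed using the strictly positive $\ty'(t)V(t,0)^2/2$ coming from the transport term — the standing assumption $\ty'\geq M^{-1}$ is therefore structurally used here.
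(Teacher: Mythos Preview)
Your proposal is correct and follows essentially the same route as the paper: multiply \eqref{eq:V} by $V$, use the coercivity $g\ln(1+g/\bv)\geq c_0 g^2$ for the diffusion term, integrate by parts both source terms so that only $\|g\|_{L^2}$ (not $\|V\|_{L^2}$) appears on the right, absorb the resulting boundary pieces into $\tfrac{\ty'}{2}V(t,0)^2$, and handle the time integrals of $W^0(\cdot+\ty)$ and $W^0(\ty)$ via the change of variables encoded in Lemma~\ref{lem:x+y}. The paper's proof is organized identically, with your inline change-of-variable argument replaced by an explicit appeal to Lemma~\ref{lem:x+y}.
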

Note that the upper bound on $ E_1(T)$ only depends on $\cE_0$ and on $\|\ty'-s\|_{L^2(0,T)}$. Hence, assuming that $T$ is such that $\inf(1, T) \cE_T \leq 1$, we obtain the first set of inequalities announced in Proposition \ref{cor:recap-est-v}.

\medskip
\begin{proof}
	We multiply Eq.~\eqref{eq:V} by $V$ and integrate over $\R_+$:
	\begin{align*}
	& \dfrac{d}{dt} \int_0^{+\infty} \dfrac{|V|^2}{2} dx
	+ \dfrac{\ty'(t)}{2} (V_{|x=0})^2  + \mu \int_{\R_+}  \ln \left(1 + \dfrac{\partial_x V}{\bv} \right) \ \partial_x V \ dx \nonumber \\
	& = \int_{\R_+} \p_x W^0(x+\ty(t)) \ V dx
	+ \dfrac{\mu}{s} \big(\ty'(t)-s\big) \int_{\R_+} \partial_x \ln \dfrac{\bv}{v_+}\ V \ dx \\
	& = - \int_{\R_+}  W^{0}(x + \ty(t)) \ \partial_x V dx
	- \dfrac{\mu}{s} \big(\ty'(t)-s\big) \int_{\R_+} \ln \dfrac{\bv}{v_+}\ \partial_x V \ dx \\
	& \quad - \left[W^{0}_{|x=\ty(t)} +  \dfrac{\mu}{s} \big(\ty'(t)-s\big) \ln \dfrac{1}{v_+}\right] \ V_{|x=0}.
	\end{align*}
	Using Lemma \ref{lem:infty-est}, we recall that
	\[
	\frac{\p_x V}{\bv}=\frac{v-\bv}{\bv}\in \left[\frac{1}{v_+}-1, \frac{\bar C}{v_+}-1\right].
	\]
Observing that 
	\[
	\mu \int_{\R_+}  \ln \left(1 + \dfrac{\partial_x V}{\bv} \right) \ \partial_x V
	\geq c_0 \int_{\R_+}  |\partial_x V|^2
	\]
	for some $c_0 = c_0(\mu,\bar C, v_+) > 0$, we have by the Cauchy-Schwarz inequality
	\begin{align*}
	& \dfrac{d}{dt} \int_{\R_+} \dfrac{|V|^2}{2} dx
	+ \dfrac{\ty'(t)}{2} (V_{|x=0})^2 + c_0 \int_{\R_+}  |\partial_x V|^2 \\
	& \leq \dfrac{c_0}{2} \int_{\R_+}  |\partial_x V|^2 + \dfrac{\ty'(t)}{4} (V_{|x=0})^2 \\
	& \quad + C \left[\| W^{0}(\cdot + \ty(t))\|_{L^2(\R_+)}^2 + (W^{0}_{|x=\ty(t)})^2
	+ |\ty'(t)-s|^2 \big(\|\bv- v_+\|_{L^2(\R_+)}^2 + |v_+-1| \big)
	\right]
	\end{align*}
	for some positive constant $C = C(c_0, \mu,s, M)$.
	Using Lemma \ref{lem:x+y} in the Appendix, we have
	\begin{align}\label{eq:W0-L2}
	&\| W^{0}(\cdot + \ty)\|_{L^2((0,T)\times\R_+)} \leq C\|\sqrt{x} W^0\|_{L^2(\R_+)},\nonumber\\
	&\| W^{0}(\ty(\cdot))\|_{L^2(0,T)}\leq  C\| W^0\|_{L^2(\R_+)}.
	\end{align}
	Hence
	\begin{align}\label{eq:est_E0}
E_0(T)	& := \sup_{t \in [0,T]} \Big[ \|V(t)\|_{L^2(\R_+)}^2 + \ty'(t) (V_{|x=0})^2 \Big] 
	+ \|v - \bv \|_{L^2([0,T] \times \R_+)}^2 \nonumber \\
	& \leq C \Bigg[ \|V^{0} \|_{L^2(\R_+)}^2 + 
	 \|(1+\sqrt{x}) W^0\|_{L^2(\R_+)}^2  \nonumber \\
	& \qquad 
	+ \|\ty'(t)-s\|_{L^2(0,T)}^2 \big(\|\bv- v_+\|_{L^2(\R_+)}^2 + |v_+-1| \big)
	\Bigg] \nonumber\\
	& \leq C \Bigg[ \|V^{0} \|_{L^2(\R_+)}^2 + 
	 \|(1+\sqrt{x}) W^0\|_{L^2(\R_+)}^2 + \|\ty'(t)-s\|_{L^2(0,T)}^2 \Bigg]
\\
&\leq C \cE_T.
	\end{align}
	By Lemma~\ref{lem:WP-vn}, we have
	\begin{align*}
	&  \dfrac{d}{dt} \|(v-\bv)(t)\|_{H^1(\R_+)}^2
	+ \|\partial_x (v-\bv) \|_{L^2(\R_+)}^2 + \|\partial_t v \|_{L^2(\R_+)}^2 \nonumber \\
	& \leq 
	C \left(\|(v-\bv)(t) \|_{L^2(\R_+)}^2 + \|\partial_x \bv\|_{L^2(\R_+)}^2 |\ty' -s|^2 + \| \partial_x\tw^0(\cdot +\ty(t))\|_{L^2(\R_+)}^2\right)
	\end{align*}	
	so that
	\begin{align*}
E_1(T)	&=  \sup_{t \in [0,T]} \|(v-\bv)(t)\|_{H^1(\R_+)}^2
	+ \|\partial_x (v-\bv) \|_{L^2([0,T] \times \R_+)}^2 + \|\partial_t v \|_{L^2([0,T] \times \R_+)}^2 \nonumber \\
	& \leq 
	\|\tv^0-\bv\|_{H^1}^2 + C\left(E_0(T) + \|\ty'-s\|_{L^2(0,T)}^2+ \| \sqrt{x}\p_x \tw^0\|_{L^2(\R_+)}^2\right)\\
	&\leq C \cE_T.
	\end{align*}		
	This concludes the proof of the lemma.
\end{proof}


\subsection{Higher order regularity estimates for \texorpdfstring{$g=v-\bv$}{g}} 
\label{sec:regularity-v}

This section is devoted to the derivation of energy estimates in $L^\infty([0,T], H^3(\R_+))$ for $v-\bv$, in $L^\infty([0,T], H^1(\R_+))\cap L^2([0,T], H^2(\R_+))$ for $\p_t v$, and in $L^2([0,T]\times \R_+)$ for $\p_t^2 v$.

We split the rest of the proof of  Proposition \ref{cor:recap-est-v} into two steps corresponding to Lemmas~\ref{lem:Hreg-v-1} and~\ref{lem:Hreg-v-2}.

\begin{lem}[Higher regularity - I]{\label{lem:Hreg-v-1}}
	
	Assume that $\ty'(0)= - \frac{\p_x \tu^0_{|x=0}}{\partial_x \tv^0_{|x=0}}$ and that $\inf (1,T^{1/2}) E_1(T)\leq 1$.

	The solution $v$ of~\eqref{eq:v} satisfies
	\[
	\ba
	v -\bv \in L^\infty([0,T], H^2(\R_+)),\\
	\partial_t(v-\bv) \in L^\infty([0,T];L^2(\R_+)) \cap L^2([0,T]; H^1(\R_+)).
	\ea
	\]
	Furthermore, the following estimate holds:

	\begin{align}\label{est:high-reg-v-I-bis}
	E_2(T):=\sup_{t \in[0,T]} \Big[\left\|\partial_t  (v-\bv)(t)\right\|_{L^2(\R_+)}^2 + \|\partial^{2}_x(v-\bv)(t)\|_{L^2(\R_+)}^2  \Big] \nonumber\\
	+ \| \partial_t \partial_x (v-\bv)\|_{L^2([0,T]\times\R_+)}^2   \leq \cE_T. 
	\end{align}

\end{lem}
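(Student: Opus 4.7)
The plan is to derive a parabolic equation for $z := \p_t v$ by differentiating \eqref{eq:v} in time, perform an $L^2$ energy estimate yielding control of $z$ in $L^\infty_t L^2_x \cap L^2_t H^1_x$, and then recover the $L^\infty_t H^2_x$ bound on $v - \bv$ by reading $\p_{xx} v$ off the equation. Differentiating \eqref{eq:v} in $t$ yields
\[
\p_t z - \ty'(t)\, \p_x z - \mu \p_{xx}\!\left(\frac{z}{v}\right) = \ty''(t)\, \p_x v + \ty'(t)\, \p_{xx}\tw^0(x + \ty(t)).
\]
The Dirichlet condition $v(t,0) = 1$ transfers to $z(t,0) = 0$. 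The initial datum $z(0,\cdot) = \ty'(0)\p_x \tv^0 + \mu \p_{xx}\ln \tv^0 + \p_x \tw^0$ lies in $L^2(\R_+)$ because $\tv^0-\bv \in H^2(\R_+)$ and $\p_x\tw^0 \in L^2(\R_+)$, and vanishes at $x=0$ thanks to the identity $\p_x \tw^0 = \p_x \tu^0 - \mu \p_{xx} \ln \tv^0$ combined with the compatibility condition $\ty'(0) = -(\p_x\tu^0/\p_x\tv^0)_{|x=0}$; in particular $\|z(0)\|_{L^2}^2 \leq C\cE_0$.

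For the energy estimate, multiplying the equation for $z$ by $z$ and integrating over $\R_+$ kills the boundary contributions ($z(t,0)=0$ and decay at infinity), and expanding $\p_{xx}(z/v)$ gives
\[
\frac{1}{2}\frac{d}{dt}\|z\|_{L^2}^2 + \mu \int_0^\infty \frac{(\p_x z)^2}{v}\,dx = \mu\int_0^\infty \frac{z \p_x v}{v^2}\, \p_x z\, dx + \int_0^\infty \left[\ty''\p_x v + \ty'\, \p_{xx}\tw^0(x+\ty)\right] z\,dx.
\]
The first right-hand side term is absorbed by Young's inequality into the parabolic dissipation, up to a remainder of the form $C\|\p_x v\|_{L^\infty}^2\|z\|_{L^2}^2$. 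For the $\ty''\p_x v$ source we integrate by parts using $z(t,0)=0$ to rewrite $\int \p_x v\, z\,dx = -\int (v-v_+)\, \p_x z\,dx$, which after Cauchy--Schwarz and Young produces a term controlled by $\|\ty''\|_{L^2(0,T)}^2\,(1+\cE_T)$ plus a fraction of the dissipation. The source $\ty'\p_{xx}\tw^0(x+\ty)$ is controlled in $L^2_{t,x}$ by $\|\sqrt{x}\,\p_{xx}\tw^0\|_{L^2(\R_+)}^2 \leq \cE_0$ via Lemma \ref{lem:x+y}, exactly as in the proof of Lemma~\ref{lem:L2-V}. A Gronwall argument then bounds $\sup_t \|z\|_{L^2}^2 + \|\p_x z\|_{L^2_{t,x}}^2$ by $C\cE_T$.

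To upgrade to $v-\bv \in L^\infty_t H^2_x$, we read \eqref{eq:v} backwards:
\[
\mu \p_{xx} v = v\bigl(\p_t v - \ty'\p_x v - \p_x\tw^0(x+\ty(t))\bigr) + \mu\,(\p_x v)^2/v.
\]
Hence $\|\p_{xx} v\|_{L^2}$ is controlled by $\|\p_t v\|_{L^2}$, by $\|g\|_{H^1}$, by the already-estimated $\|\p_x\tw^0(\cdot+\ty)\|_{L^2}$, and by the nonlinear term $\|\p_x v\|_{L^4}^2 \leq C\|\p_x v\|_{L^2}^{3/2}\|\p_{xx} v\|_{L^2}^{1/2}$; the latter is absorbed by Young at the price of a factor depending on $E_1(T)$, which stays bounded under the assumption $\inf(1, T^{1/2})E_1(T) \leq 1$. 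The main obstacle is the mutual coupling between the energy estimate for $z$ (which requires control of $\p_x v$ in $L^\infty_x$, hence of $\p_{xx} v$ in $L^2$) and the bootstrap for $\p_{xx} v$ (which requires control of $\p_t v$ in $L^2$); this circularity is resolved by propagating both estimates simultaneously through a single Gronwall argument on the combined energy $\|z(t)\|_{L^2}^2 + \|\p_{xx}(v-\bv)(t)\|_{L^2}^2$, exploiting the smallness of $E_1(T)$ from Lemma~\ref{lem:L2-V} to close the nonlinear contributions and obtain~\eqref{est:high-reg-v-I-bis}.
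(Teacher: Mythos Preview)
Your overall approach matches the paper's: differentiate \eqref{eq:v} in time, derive an $L^2$ energy estimate for $z=\p_t v=\p_t g$ yielding $L^\infty_t L^2_x\cap L^2_t H^1_x$ control, then read $\p_{xx}(v-\bv)$ off the equation to get $L^\infty_t H^2_x$. The derivation of the equation for $z$, the boundary condition $z(t,0)=0$, and the treatment of the source terms are all correct.

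The one point where you diverge from the paper is the closure of the Gronwall step, and here you have created an obstacle that does not exist. You write that controlling the coefficient $\|\p_x v\|_{L^\infty}^2$ in the Gronwall requires $\|\p_{xx}(v-\bv)\|_{L^2_x}$, which in turn depends on $\|\p_t v\|_{L^2_x}$, hence a ``circularity'' to be resolved by a joint Gronwall on $\|z\|_{L^2}^2+\|\p_{xx}(v-\bv)\|_{L^2}^2$. But the quantity actually needed in the Gronwall exponent is the \emph{time-integrated} coefficient
\[
\int_0^T \|\p_x g(t)\|_{L^\infty_x}^2\,dt \leq C\,\|\p_x g\|_{L^2_tL^2_x}\,\|\p_x^2 g\|_{L^2_tL^2_x},
\]
and $\|\p_x^2 g\|_{L^2_tL^2_x}$ is already controlled by the last estimate of Lemma~\ref{lem:WP-vn} (equivalently Lemma~\ref{lem:L2-V}): $\|\p_x^2(v-\bv)\|_{L^2((0,T)\times\R_+)}\leq C(E_1+\inf(1,T)E_1^2)$. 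Under the hypothesis $\inf(1,T^{1/2})E_1(T)\leq 1$ this gives $\|\p_x^2 g\|_{L^2_tL^2_x}\leq C E_1^{1/2}$, and together with $\|\p_x g\|_{L^2_tL^2_x}\leq C\inf(1,T^{1/2})E_1^{1/2}$ the exponent is bounded by a constant. This is precisely how the paper uses the smallness hypothesis, and it makes the Gronwall linear and immediate.

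Your proposed resolution is therefore unnecessary, and as written it is also not quite justified: $\|\p_{xx}(v-\bv)(t)\|_{L^2}^2$ is determined algebraically from the equation rather than by an evolution inequality, so a ``single Gronwall on the combined energy'' has no natural differential form; substituting the algebraic relation back into the coefficient produces a nonlinear inequality of the type $\Phi'\lesssim (1+E_1^{1/2}\Phi^{1/2})\Phi+\text{sources}$, which is harder to close without the very $L^2_tH^2_x$ input you are trying to avoid. Simply invoking the already-available $L^2_tL^2_x$ bound on $\p_x^2 g$ removes the issue entirely.
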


\bigskip
\begin{proof}
	Let us first derive formally these regularity estimates, and then explain how they can be proved rigorously.
	We recall that we have set $g:=v-\bv$.

	Differentiating \eqref{eq:v} with respect to time leads to
	\begin{equation}\label{eq:dt2-v}
	\partial_t \partial_t g - \mu \partial_{x} \left(\dfrac{1}{v} \partial_x \partial_t  g \right) \\
	= \partial_t F 
	- \mu \partial_x \left( \dfrac{\partial_t v}{v^2} \partial_xg \right)
	\end{equation}
	where we have denoted 
	\begin{equation}\label{eq:F}
	F 
	= \partial_x \tw^0(x+\ty(t))
	+ \ty'(t) \partial_x g  + (\ty'-s)\partial_x \bv 
	+ \mu \partial_{x} \left( \dfrac{\partial_x \bv }{v \bv} g\right). 
	\end{equation}
	Multiplying by $\partial_t g(=\p_t v)$ and integrating we get
	\begin{align*}
	& \dfrac{d}{dt} \int_{\R_+} \dfrac{|\partial_tg|^2}{2} 
	+ \mu \int_{\R_+} \dfrac{|\partial_x \partial_t g|^2}{v} 
	+ \mu \underset{=0}{\underbrace{\left[\dfrac{1}{v}\partial_x \partial_t g \ \partial_t g \right]_{|x=0}}} \\
	& = \int_{\R_+} \partial_t F \partial_tg
	+ \mu \int_{\R_+}\dfrac{\partial_t v}{v^2}\partial_xg \partial_x \partial_tg
	+ \mu \underset{=0}{\underbrace{\left[\dfrac{\partial_t v}{v^2}\partial_x g \ \partial_t g \right]_{|x=0}}} 
	\end{align*}
	We estimate the two terms of the RHS as follows
	\begin{align*}
	\left|\int_{\R_+} \partial_t F \partial_tg\right|
	& \leq \|\partial_t F\|_{L^2(\R_+)} \| \partial_tg\|_{L^2(\R_+)} \\
	& \leq \dfrac{\mu}{4} \int_{\R_+} \dfrac{|\partial_x \partial_t g|^2}{v}
		 + |\ty'(t)|^2 \|\partial^2_x \tw^0(\cdot + \ty(t))\|_{L^2(\R_+)}^2 \\
	& \quad + C|\ty''(t)|(\|\p_x g\|_{L^2(\R_+)} + \|\p_x \bv\|_{L^2(\R_+)})\|\p_t g\|_{L^2(\R_+)}\\
	& \quad + C \|\partial_t g\|_{L^2(\R_+)}^2 \big(\|\partial_x g\|_{L^\infty(\R_+)} +|\ty'(t)|^2 + 1\big),
	\end{align*}
	and
	\begin{align*}
	 \left|\mu \int_{\R_+}\dfrac{\partial_t v}{v^2}\partial_xg \partial_x \partial_tg\right|
	& \leq  \dfrac{\mu}{4} \int_{\R_+} \dfrac{|\partial_x \partial_t g|^2}{v}
	+ \mu \int_{\R_+} \dfrac{|\partial_t g|^2}{v^3} |\partial_x g|^2 \\
	& \leq \dfrac{\mu}{4} \int_{\R_+} \dfrac{|\partial_x \partial_t g|^2}{v}
	+ C \|\partial_xg\|_{L^\infty(\R_+)}^2 \|\partial_tg\|_{L^2(\R_+)}^2.
	\end{align*}	
	Hence, using Young's inequality and recalling that $\ty'\leq M$,
	\begin{align*}
	& \dfrac{d}{dt} \|\partial_tg(t)\|_{L^2(\R_+)}^2 +\mu \|\partial_t \partial_xg\|_{L^2(\R_+)}^2 \\
	& \leq C \Big[|\ty'(t)|^2 \|\partial^2_x \tw^0(\cdot + \ty(t))\|_{L^2(\R_+)}^2 +|\ty''(t)|(\|\p_x g\|_{L^2(\R_+)} + \|\p_x \bv\|_{L^2(\R_+)})\|\p_t g\|_{L^2(\R_+)}\Big]\\
&	\quad + C(1+ \|\partial_xg\|_{L^2(\R_+)}\|\partial_x^2g\|_{L^2(\R_+)}) \|\partial_t g\|_{L^2(\R_+)}^2\\
&\leq C\Big[ \|\partial^2_x \tw^0(\cdot + \ty(t))\|_{L^2(\R_+)}^2 +|\ty''(t)|^2 + \|\p_t g\|_{L^2(\R_+)}^2\Big]\\
&\quad + C \left(\|\partial_xg\|_{L^2(\R_+)}\|\partial_x^2g\|_{L^2(\R_+)} + \|\p_x g\|_{L^2(\R_+)}^2\right) \|\p_t g\|_{L^2}^2.
	\end{align*}


	Applying Gronwall's inequality, we deduce that 
	 	\begin{align*}
	& \sup_{t \in [0,T]} \|\partial_tg(t)\|_{L^2(\R_+)}^2 + \|\partial_t \partial_xg\|_{L^2([0,T],L^2(\R_+)}^2 \\
	& \leq C\Big[  \|(\partial_tg)_{|t=0}\|_{L^2(\R_+)}^2 + \|\sqrt{x} \partial^2_x \tw^0\|_{L^2(\R_+)}^2 + \|\ty''\|_{L^2(0,T)}^2 + \|\p_t g\|_{L^2([0,T]\times \R_+)}^2\Big] \\
	& \qquad \times e^{C[ \|\partial_xg\|_{L^2L^2}\|\partial^2_xg\|_{L^2L^2} + \|\p_x g\|_{L^2L^2}^2]}
	.	 	\end{align*}

	 We now use the control of $\|\p_xg\|_{L^2(\R_+)}$ and $\|\p_x^2g\|_{L^2(\R_+)}$ provided by Lemma \ref{lem:L2-V}. Note that thanks to the assumption $\inf (1,T) \cE_T\leq 1$, we have
	 $ \|\p_x^2g\|_{L^2((0,T)\times \R_+)}\leq C { E_1}^{1/2}$. Moreover, we have
	 $\|\p_xg\|_{L^2((0,T)\times \R_+)}\leq C\inf(1, T^{1/2})  E_1^{1/2}$ (for small times, we use the $L^\infty(L^2)$ estimate).
	 It follows that
	 \[
	 \exp\left(C[ \|\partial_xg\|_{L^2L^2}\|\partial^2_xg\|_{L^2L^2} + \|\p_x g\|_{L^2L^2}^2]\right) \leq \exp\left( C\inf(1, T^{1/2})  E_1\right) \leq C
	 \]
	 (for some different constant $C$).

	 As a consequence, we obtain
	\begin{align}
	& \sup_{t \in [0,T]} \|\partial_tg\|_{L^2(\R_+)}^2 + \|\partial_t \partial_xg\|_{L^2([0,T],L^2(\R_+)}^2 \label{est:dtv-2}\\
	& \leq C\Big[  \|(\partial_tg)_{|t=0}\|_{L^2(\R_+)}^2 +\| \sqrt{x}\partial^2_x \tw^0\|_{L^2(\R_+)}^2 + \|\ty''\|_{L^2(0,T)}^2 +  E_1\Big].\nonumber
	\end{align}
	
	Next, using \eqref{eq:F}, we have
	\begin{align}
	 \|(\partial_tg)_{|t=0}\|_{L^2(\R_+)}^2 
	& \leq  \|F_{|t=0}\|_{L^2(\R_+)}^2 + \|g_{|t=0}\|_{H^2(\R_+)}^2 \nonumber\\
	&\leq \|\p_x \tw^0\|_{L^2(\R_+)}^2 + C \|\tv^0 - \bv\|_{H^2(\R_+)}^2
	+ C |\ty'(0) -s|^2 .\label{CI-dtv}
	\end{align}
	Gathering 
	\eqref{est:dtv-2} and \eqref{CI-dtv}, we obtain the inequality announced in the statement of the Lemma.

	To complete the proof, it remains to estimate $\partial^2_x g$ in $L^\infty L^2$. 
	For that purpose, we write
	\begin{align*}
	\mu^2 \int_{\R_+} \left|\partial_x\left(\dfrac{\partial_xg}{v} \right)\right|^2(t)
	& = \int_{\R_+} |\partial_t g - F|^2(t) \\
	& \leq 2 \|\partial_tg(t)\|_{L^2(\R_+)}^2 + 2 \|F(t)\|_{L^2(\R_+)}^2
	\end{align*}
	for any $t \in [0,T]$.
	Hence, considering the sup in time and combining with
	\eqref{est:dtv-2}, we get,
	\begin{align*} 
	& \sup_{t \in [0,T]} \dfrac{\mu^2}{4}\left\|\partial_x\left(\dfrac{\partial_xg}{v} \right)\right\|_{L^2(\R_+)}^2   \nonumber \\
	& \leq C\Big[  \|\tv^0 - \bv\|_{H^2}^2  + \|(1+\sqrt{x}) \partial_x \tw^0\|_{L^2 }^2 + \|\sqrt{x} \partial^2_x \tw^0\|_{L^2}^2 \|\ty''\|_{L^2(0,T)}^2+ \|\ty'-s\|_{L^\infty(0,T)}^2\Big].
	\end{align*} 
	Using the compatibility condition $\ty'(0)=-(\p_x \tu^0/\p_x \tv^0)_{|x=0}$, we obtain
	  \begin{align}
    \nonumber\|\ty'-s\|_{L^\infty([0,T])}&\leq |\ty'(0)-s| + C\|\ty'-s\|_{L^2(0,T)}^{1/2} \|\ty''\|_{L^2(0,T)}^{1/2}\\&\leq C\left(\|\tu^0-\bu\|_{H^2} +\|\tv^0-\bv\|_{H^2}+ \|\ty'-s\|_{L^2(0,T)}^{1/2} \|\ty''\|_{L^2(0,T)}^{1/2}\right).\label{est:b-infty}
    \end{align}
	
	Using the estimate on $ E_1$ leads to the desired inequality.

	\medskip

	Let us now say a few words about a more rigorous derivation of these estimates. Once again, we consider the approximation $v^R$ solution to \eqref{eq:v-trunc}. 
	We can perform the previous estimates for $v^R$, and we obtain uniform bounds in $R$. Passing to the limit $R\to +\infty$, we get  inequality~\eqref{est:high-reg-v-I-bis}.
\end{proof}

\bigskip
\begin{lem}[Higher regularity - II]{\label{lem:Hreg-v-2}}
Assume that $\ty'(0)= - \frac{\p_x \tu^0_{|x=0}}{\partial_x \tv^0_{|x=0}}$.
The solution $v$ of~\eqref{eq:v} satisfies
$v -\bv \in L^\infty([0,T], H^3(\R_+))$, and $\partial_t(v-\bv) \in L^\infty([0,T];H^1(\R_+)) \cap L^2([0,T]; H^2(\R_+))$.

Furthermore, the following estimate holds:
assume that $\inf(1, T^{1/2}) \cE_T\leq 1$.
 Then, for some $p> 1$,
\begin{align}\label{est:high-reg-v-II-bis}
     & E_3 := \sup_{t \in [0,T]} \Big[\|\partial_x \partial_t (v-\bv)(t)\|_{L^2(\R_+)}^2 
     +  \|\partial^3_x (v-\bv)\|_{L^2(\R_+)}^2 \Big] \nonumber \\
     & \qquad     + \|\partial^2_t(v-\bv)\|_{L^2([0,T] \times \R_+)}^2 + \|\partial^2_x \partial_t (v-\bv)\|_{L^2([0,T] \times \R_+)}^2 \nonumber \\
    & \quad \leq \cE_T(1+\cE_T)^p.
    \end{align}

\end{lem}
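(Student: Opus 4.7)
The plan is to iterate the energy argument of Lemma~\ref{lem:Hreg-v-1} one derivative higher: rather than multiplying the time-differentiated equation~\eqref{eq:dt2-v} for $\phi := \partial_t g$ by $\phi$, as was done to produce $E_2$, I would now multiply by $\partial_t\phi = \partial_t^2 g$ itself. This is the natural choice, since the two missing norms $\partial_x\partial_t g \in L^\infty_t L^2_x$ and $\partial_t^2 g \in L^2_t L^2_x$ are exactly the two quantities produced by such an energy identity. Once they are controlled, the remaining bounds on $\partial_x^3 g \in L^\infty_t L^2_x$ and $\partial_x^2\partial_t g \in L^2_t L^2_x$ can be recovered \emph{algebraically} by solving the PDE pointwise in time for the highest spatial derivative.

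Since $g_{|x=0}\equiv 0$ in time, one has $\phi_{|x=0}=\partial_t\phi_{|x=0}=0$ and all boundary terms in the integration by parts vanish. Multiplying~\eqref{eq:dt2-v} by $\partial_t\phi$ and integrating in $x$ yields the energy identity
\[
\frac{\mu}{2}\frac{d}{dt}\int_{\R_+}\frac{(\partial_x\phi)^2}{v}\,dx + \int_{\R_+}|\partial_t\phi|^2\,dx = \int_{\R_+}\tilde F\,\partial_t\phi\,dx - \frac{\mu}{2}\int_{\R_+}\frac{\partial_t v}{v^2}(\partial_x\phi)^2\,dx,
\]
where $\tilde F := \partial_t F - \mu\partial_x\!\bigl((\partial_t v/v^2)\partial_x g\bigr)$ is the right-hand side of~\eqref{eq:dt2-v}. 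I would then bound the $L^2_{t,x}$ norm of $\tilde F$ term by term, using the previous estimates $E_1,E_2\leq C\cE_T$ from Lemmas~\ref{lem:L2-V} and~\ref{lem:Hreg-v-1} together with $H^1(\R_+)\hookrightarrow L^\infty(\R_+)$: the shifted source $\ty'(t)\partial_x^2\tw^0(x+\ty(t))$ is controlled by $\|\sqrt{x}\partial_x^2\tw^0\|_{L^2}^2$ via Lemma~\ref{lem:x+y}; the $\ty''(\partial_x g+\partial_x\bv)$ and $\ty'\partial_x\partial_t g$ contributions by $\|\ty''\|_{L^2(0,T)}^2\leq\cE_T$ together with $L^\infty$-in-$x$ control of $\partial_x g$ inherited from $E_2$; and the quadratic piece $\mu\partial_x(\partial_t v/v^2\cdot\partial_x g)$, whose most delicate contribution $\partial_t v\cdot\partial_x^2 g$ I would estimate by
\[
\|\partial_t v\,\partial_x^2 g\|_{L^2_t L^2_x} \leq \|\partial_t v\|_{L^2_t L^\infty_x}\|\partial_x^2 g\|_{L^\infty_t L^2_x} \leq C E_2 \leq C\cE_T,
\]
using $\partial_t v\in L^2_t H^1_x$ from $E_2$. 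After absorbing $\tfrac12\|\partial_t\phi\|_{L^2}^2$ into the LHS by Young's inequality, handling the correction $-\tfrac{\mu}{2}\int(\partial_t v/v^2)(\partial_x\phi)^2$ via $\|\partial_t v\|_{L^\infty_x}\in L^2_t$, and applying Gronwall, one obtains the $L^\infty_tL^2_x$ bound on $\partial_x\partial_t g$ and the $L^2_tL^2_x$ bound on $\partial_t^2 g$, with the factor $\cE_T(1+\cE_T)^p$ on the right. The initial datum $(\partial_x\partial_t g)_{|t=0}$ is computed algebraically from~\eqref{eq:v} at $t=0$ and is controlled in $L^2$ by $\|\tv^0-\bv\|_{H^3}$, $\|\partial_x^2\tw^0\|_{L^2}$ and $|\ty'(0)-s|$, hence by $\cE_0$.

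The two remaining norms would then be obtained by algebraic inversion. Writing~\eqref{eq:v} as $\mu\partial_x\!\bigl((1/v)\partial_x g\bigr)=\partial_t g-F$, I would solve pointwise in $t$ for $\partial_x^2 g$ as a combination of $\partial_t g$, $F$ and $\partial_x v\cdot\partial_x g$, and differentiate once more in $x$ to express $\partial_x^3 g$ in terms of $\partial_x\partial_t g$, $\partial_x F$ and lower-order spatial derivatives up to $\partial_x^2 v$, all of which are now under control in $L^\infty_tL^2_x$. The identical inversion applied to~\eqref{eq:dt2-v} expresses $\partial_x^2\phi$ in terms of $\partial_t\phi$, $\tilde F$ and lower-order terms, giving the desired $L^2_tL^2_x$ bound on $\partial_x^2\partial_t g$.

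The main obstacle I anticipate is the treatment of the borderline quadratic terms in $\tilde F$, most notably $\partial_t v\cdot\partial_x^2 g$ and $\partial_x\partial_t v\cdot\partial_x g$, which sit exactly at the Sobolev threshold: their control forces the estimate to be (at best) quadratic in $\cE_T$, whence the power $(1+\cE_T)^p$ on the right-hand side. As in the proof of Lemma~\ref{lem:Hreg-v-1}, the rigorous version of these manipulations should first be carried out on the truncated solution $v^R$ of~\eqref{eq:v-trunc}, where the regularity provided by Lemma~\ref{thm:lady-1} and the compatibility conditions~\eqref{eq:compatib_v} legitimize all the integrations by parts, and then passed to the limit $R\to+\infty$ using the uniform-in-$R$ bounds.
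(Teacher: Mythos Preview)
Your strategy is essentially the one the paper uses: the paper applies Proposition~\ref{prop:est-gal} (estimate~\eqref{est:energy-gal-u_x}) to $\phi=\partial_t g$ viewed as the unknown of a linear parabolic equation with diffusion coefficient $a=1/v$ and source $f=\tilde F$, which internally performs exactly the multiplication by $\partial_t\phi$ you describe; it then recovers $\partial_x^3 g\in L^\infty_tL^2_x$ by the same algebraic inversion you propose.

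There is, however, one point where your write-up does not close. You propose to handle the correction term $-\tfrac{\mu}{2}\int(\partial_t v/v^2)(\partial_x\phi)^2$ ``via $\|\partial_t v\|_{L^\infty_x}\in L^2_t$ and applying Gronwall''. But Gronwall needs the coefficient in $L^1_t$, and from $E_2$ you only have $\partial_t v\in L^2_tH^1_x\hookrightarrow L^2_tL^\infty_x$; passing to $L^1_t$ costs a factor $T^{1/2}$ in the exponent, so you would end up with $\exp(CT^{1/2}\cE_T^{1/2})$, which is \emph{not} bounded by $\cE_T(1+\cE_T)^p$ when $T>1$ and $\cE_T\le 1$. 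The paper avoids Gronwall here: inside the proof of Proposition~\ref{prop:est-gal} it simultaneously multiplies by $-\partial_x^2\phi$, which produces $\|\partial_x^2\phi\|_{L^2L^2}^2$ on the left, and then estimates
\[
\int_0^T\!\!\int_{\R_+}|\partial_t a|\,|\partial_x\phi|^2
\;\le\;\eta\,\|\partial_x^2\phi\|_{L^2L^2}^2
\;+\;C_\eta\bigl(\|\partial_t a\|_{L^\infty L^2}+\|\partial_t a\|_{L^\infty L^2}^2\bigr)\,\|\partial_x\phi\|_{L^2L^2}^2,
\]
absorbing the first piece and bounding the second via the already-controlled quantity $\|\partial_x\phi\|_{L^2L^2}^2\le E_2\le C\cE_T$. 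You can fix your argument in the same spirit without the second multiplication: interpolate $\|\partial_t v(t)\|_{L^\infty_x}\le C\|\partial_t v(t)\|_{L^2_x}^{1/2}\|\partial_x\phi(t)\|_{L^2_x}^{1/2}$, integrate in time, use $\|\partial_x\phi\|_{L^2L^2}^2\le E_2$, and absorb $\eta\sup_t\|\partial_x\phi\|_{L^2_x}^2$ into the left-hand side by Young's inequality. With this adjustment, your proof matches the paper's.
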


\begin{proof}
    Let us now consider \eqref{eq:dt2-v} as a linear parabolic equation on $\p_t(v-\bv)=\p_t g$, with homogeneous boundary conditions, endowed with the initial data $\ty'(0)\p_x \tv^0 + \mu \p_{xx}\ln \tv^0 + \partial_x \tw^0= \ty'(0)\p_x \tv^0  + \p_x \tu^0$. 
    Thanks to the compatibility condition \eqref{compatibility-ty}, the initial condition belongs to $H^1_0(\R)$, and we can apply estimate \eqref{est:energy-gal-u_x} in Proposition \ref{prop:est-gal} in the Appendix. 
    Indeed, setting $a = 1/v$, we have $\partial_t a = -\frac{\partial_t v}{v^2}$, $\partial_x a = -\frac{\partial_x v}{v^2}$. Using Lemma~\ref{lem:Hreg-v-1}, we infer that $\|\p_t a\|_{L^\infty(L^2)}\leq E_2^{1/2}$ and
    \[
    \ba
    \|\p_x a\|_{L^\infty([0,T];L^4(\R_+))}\leq C\left(1+ E_2^{1/2}\right).
    \ea
    \]
    	
    Moreover, setting 
    \[
    f = \partial_t F- \mu \p_x \left(\frac{\p_t v}{v^2}\p_x (v-\bv)\right),
    \]
    we have,  as shown in the course of the proof of Lemma~\ref{lem:Hreg-v-1},
    \begin{eqnarray*}
    \|f\|_{L^2([0,T] \times \R_+)}
    &\leq& C \|\p_t F \|_{L^2((0,T)\times \R_+)}+ C \|\p_x (v-\bv)\|_{L^\infty} \|\p_x \p_t v\|_{L^2 L^2} \\
    && +C \|\p_t v\|_{L^2 L^\infty}\|\p_x^2(v-\bv))\|_{L^\infty L^2} 
    + C \|\p_x v\|_{L^\infty}  \|\p_t v \|_{L^2 L^2}\|\p_x (v-\bv)\|_{L^\infty }\\
    & \leq & C \cE_T^{1/2} (1+ \cE_T).
    \end{eqnarray*}
    Hence, according to Proposition \ref{prop:est-gal}
    \begin{align*}
    & \sup_{t \in [0,T]} \|\partial_x \partial_t g\|_{L^2(\R_+)}^2
    + \|\partial^2_tg\|_{L^2([0,T] \times \R_+)}^2 + \|\partial^2_x \partial_t g\|_{L^2([0,T] \times \R_+)}^2 \\
    & \leq C 
    \|\partial_x \partial_t g_{|t=0}\|_{L^2(\R_+)}^2
    + C \cE_T (1+ \cE_T)^2.
    \end{align*}
    We have
    \begin{equation*}
    [\partial_t \partial_xg]_{|t=0} - \mu \left[\partial^2_x\left(\dfrac{1}{v}\partial_xg\right)\right]_{t=0}
    = [\partial_x F]_{|t=0}
    \end{equation*}
    where
    \begin{align*}
    \partial_x F 
    & = \partial^2_x \tw^0(\cdot + \ty(t)) + \ty'(t) \partial^2_x g
    + (\ty'-s)\partial^2_x \bv 
    + \mu \partial^2_x\left(\dfrac{\partial_x \bv}{v \bv}g \right),
    \end{align*}
    so that
    \begin{align*}
    \left\|(\partial_{t,x}g)_{|t=0}\right\|_{L^2(\R_+)} 
    &\leq C \Big( \|g_{|t=0}\|_{H^3(\R_+)} (1  + \| g_{|t=0}\|_{H^3(\R_+)}^2)\\&\qquad +  \|\partial^2_x \tw^0\|_{L^2(\R_+)}   +    |\ty'(0)-s| \|\partial^2_x \bv\|_{L^2(\R_+)}
    \Big). 
    \end{align*}
    We then estimate $\ty'(0)-s$ as in \eqref{est:b-infty}, so that $\left\|(\partial_{t,x}g)_{|t=0}\right\|_{L^2(\R_+)}\leq C \cE_0^{1/2} (1+ \cE_0).$
    To conclude, coming back to
    \begin{equation*}
    \mu \partial^2_x\left(\dfrac{1}{v} \partial_{x}g \right)
    = \partial_t (\partial_{x}g) - \partial_{x} F 
    \end{equation*}
    we have
    \begin{align*}
    & \sup_{t \in [0,T]} \mu^2 \int_{\R_+}\left|\partial^2_x\left(\dfrac{1}{v} \partial_{x}g\right)(t)\right|^2 \\
    & \leq 2 \|\partial_t \partial_{x}g \|_{L^\infty([0,T];L^2(\R_+))}^2 
    + 2 \| \partial_{x} F\|_{L^\infty([0,T];L^2(\R_+))}^2,
    \end{align*}
    with
    \begin{align*}
    \|\partial_x F\|_{L^\infty([0,T];L^2(\R_+))}
    & \leq C \Bigg[
    \|\partial^2_x \tw^0\|_{L^2(\R_+)} 
    +\|g\|_{L^\infty([0,T], H^2(\R_+))} \\
    &\qquad+ \|b\|_{L^\infty(0,T)} + \|g\|_{L^\infty((0,T), H^2)}(1 +\|g\|_{L^\infty((0,T), H^2})^2 
        \Bigg].
    \end{align*}
    Finally, using the previous estimates as well as the inequality \eqref{est:b-infty},
    we obtain, for some $p\geq 1$,
   \[\sup_{t \in [0,T]} \|\partial^3_x g\|_{L^2(\R_+)}^2 
    \leq C \cE_T (1+ \cE_T)^p.\]

\end{proof}

Gathering the estimates of Lemmas \ref{lem:L2-V}, \ref{lem:Hreg-v-1} and \ref{lem:Hreg-v-2}, we obtain Proposition \ref{cor:recap-est-v}.

\subsection{Estimates on \texorpdfstring{$h=u-\bu$}{h}}
\label{sec:us}

This subsection is devoted to the study of~\eqref{eq:us-y} satisfied by $u$, which we rewrite in terms of $h:=u-\bu$ 
\be\label{eq:u}
\ba
\partial_t h - \ty'(t) \p_x h - \mu \partial_x\left(\dfrac{1}{v}\partial_x h \right) = (\ty'-s)\p_x \bu + \mu \p_x\left( \left(\frac{1}{v}- \frac{1}{\bv}\right) \p_x \bu\right),\quad t>0, \ x>0,\\
h_{|x=0}=0, \quad \lim_{x\to \infty} h(t,x)=0,\\
h_{|t=0}=\tu^0-\bu,
\ea\ee
where $v$ is the solution of \eqref{eq:vs-y} constructed in the previous subsections. The goal is to prove Proposition \ref{prop:est-u}.

Throughout this subsection, we use the notations introduced in \eqref{def:E0}, \eqref{def:ET}, and we assume that the time $T$ is such that $\inf(1,T) \bar E_1\leq 1$, where $\bar E_1$ was introduced in Lemma \ref{lem:L2-V}. We also assume that assumption \eqref{hyp:ty-M} is satisfied.

\bigskip

Our first result is the classical energy estimate for \eqref{eq:u}:

\begin{lem}[Existence and energy estimates]
\label{lem:u-1}
There exists a unique $u \in \bu + L^\infty([0,T];H^1(\R_+))$ solution to~\eqref{eq:u} which is such that:
\begin{align}
E_4(T)&:= \sup_{t\in [0,T]}\|(u-\bu)(t)\|_{H^1(\R_+)}^2 
+ \|\partial_x (u-\bu)\|_{L^2([0,T];H^1(\R_+))}^2
+ \|\partial_t (u-\bu)\|_{L^2([0,T]\times\R_+)}^2 \nonumber \\
&\leq \cE_T(1+ \cE_T)^{p}
\end{align}
for some $p\geq 1$.
\end{lem}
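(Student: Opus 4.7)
The equation \eqref{eq:u} is a linear parabolic equation in $h$, with a variable coefficient $a := 1/v$ which (thanks to Proposition~\ref{cor:recap-est-v} and Lemma~\ref{lem:infty-est}) satisfies $1/\bar C \leq a \leq 1$ together with smoothness estimates $\|\p_x a\|_{L^\infty_t L^4_x}$, $\|\p_t a\|_{L^\infty_t L^2_x} \lesssim 1 + \cE_T^{1/2}$ inherited from the bounds on $v - \bv$. The boundary condition $h_{|x=0}=0$ is compatible with the initial data since $\tu^0(0^+)=u_-=\bu(0)$. My plan is therefore to (i) rewrite \eqref{eq:u} in the abstract form handled by Proposition~\ref{prop:est-gal} in the Appendix, (ii) derive an $L^2 L^2$ bound on the right-hand side $F$, and (iii) combine this with a basic $L^\infty L^2$ energy estimate to get $E_4(T)$.

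First, I would perform the standard $L^2$ energy estimate: multiply \eqref{eq:u} by $h$, integrate on $\R_+$ (the boundary terms at $x=0$ and $x=+\infty$ vanish by the boundary conditions) and rewrite the source term
\[
F = (\ty'-s)\p_x \bu + \mu \p_x G, \qquad G := \frac{\bv - v}{v\bv}\p_x \bu,
\]
by integration by parts as $\int F h = (\ty'-s)\int \p_x \bu\, h - \mu \int G \p_x h$. Using $1/\bar C \leq 1/v$, Cauchy-Schwarz and Young's inequality, I obtain
\[
\frac{d}{dt}\|h\|_{L^2}^2 + c\|\p_x h\|_{L^2}^2 \lesssim \|h\|_{L^2}^2 + |\ty'-s|^2\|\p_x \bu\|_{L^2}^2 + \|G\|_{L^2}^2.
\]
The $L^2$ norm of $G$ is controlled by $\|v-\bv\|_{L^\infty}\|\p_x \bu\|_{L^2}$, and $\|v-\bv\|_{L^\infty} \lesssim \|v-\bv\|_{H^1} \lesssim \cE_T^{1/2}$ thanks to Proposition~\ref{cor:recap-est-v}. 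A Gronwall argument then gives $\sup_t \|h\|_{L^2}^2 + \|\p_x h\|_{L^2 L^2}^2 \lesssim \cE_T(1+\cE_T)$.

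Next, to promote this to the full $E_4(T)$ estimate, I would apply Proposition~\ref{prop:est-gal} of the Appendix directly to \eqref{eq:u} viewed as a parabolic equation with coefficient $a = 1/v$ and source $F$, with homogeneous Dirichlet data. The proposition produces existence, uniqueness, and the bound
\[
\sup_{t \in [0,T]}\|\p_x h\|_{L^2}^2 + \|\p_t h\|_{L^2 L^2}^2 + \|\p_x^2 h\|_{L^2 L^2}^2 \lesssim \|\p_x h_{|t=0}\|_{L^2}^2 + \|F\|_{L^2 L^2}^2,
\]
up to a multiplicative factor depending polynomially on $\|\p_t a\|_{L^\infty L^2}$ and $\|\p_x a\|_{L^\infty L^4}$, both already controlled by $(1+\cE_T)^{p/2}$ via Proposition~\ref{cor:recap-est-v}. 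The bound on $F$ is obtained by expanding the derivative in $\mu \p_x((1/v - 1/\bv)\p_x \bu)$, which produces terms of the form $(\p_x(v-\bv))\p_x \bu / (v\bv)$, $(v-\bv)\p_x^2 \bu/(v\bv)$ and $(v-\bv)(\p_x v)\p_x \bu/(v^2 \bv)$; each is controlled in $L^2 L^2$ by $\cE_T^{1/2}$ times a polynomial in $\cE_T^{1/2}$ using the $L^\infty_t H^3$ regularity of $v$ and the smooth, exponentially decaying profiles $\bu, \bv$.

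Finally, combining both estimates with the initial bound $\|h_{|t=0}\|_{H^1} = \|\tu^0 - \bu\|_{H^1} \leq \cE_0^{1/2}$ and $|\ty'(0)-s| \lesssim \cE_0^{1/2}$ (as in \eqref{est:b-infty}) yields $E_4(T) \leq C\cE_T(1+\cE_T)^p$ for some $p \geq 1$. The uniqueness part follows from linearity and the same energy estimate applied to the difference of two solutions. The main technical point I expect is the bookkeeping in the bound on $\|F\|_{L^2 L^2}$: care is required to avoid losing a factor of $T$ when estimating the time-integrated norms of products involving $v-\bv$ and $\p_x \bu$, especially as the estimate must be uniform in $T$ under the running assumption $\inf(1,T) \cE_T \leq 1$.
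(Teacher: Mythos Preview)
Your overall strategy is the same as the paper's: reduce to Proposition~\ref{prop:est-gal} with $a=1/v$, $b=-\ty'$, $c=0$, and control the coefficient norms via Proposition~\ref{cor:recap-est-v}. One step, however, does not close as written. In your basic $L^2$ estimate you obtain
\[
\frac{d}{dt}\|h\|_{L^2}^2 + c\|\p_x h\|_{L^2}^2 \lesssim \|h\|_{L^2}^2 + |\ty'-s|^2\|\p_x \bu\|_{L^2}^2 + \|G\|_{L^2}^2,
\]
and then invoke Gronwall. The $\|h\|_{L^2}^2$ term on the right forces an $e^{CT}$ factor, which is \emph{not} bounded by $\cE_T(1+\cE_T)^p$; since this lemma feeds into the global-in-time argument of Section~\ref{sec:global}, the exponential loss is fatal there. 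The offending term comes from your treatment of $(\ty'-s)\int_0^\infty \p_x\bu\, h$: you did exploit the divergence structure for the $\mu\p_x G$ part of the source, but not for this one.

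The fix is to treat the full source as a derivative: with $\mathcal F:=(\ty'-s)(\bu-u_+)+\bigl(\tfrac{1}{v}-\tfrac{1}{\bv}\bigr)\p_x\bu$ one has $F=\p_x\mathcal F$, and after an integration by parts (the boundary term vanishes since $h_{|x=0}=0$)
\[
\Bigl|\int_0^\infty F\,h\Bigr| = \Bigl|\int_0^\infty \mathcal F\,\p_x h\Bigr| \le \eta\|\p_x h\|_{L^2}^2 + C_\eta\|\mathcal F\|_{L^2}^2 .
\]
Now no $\|h\|_{L^2}^2$ appears on the right, Gronwall is unnecessary, and one gets directly
\[
\|h\|_{L^\infty_t L^2_x} + \|\p_x h\|_{L^2_{t,x}} \le C\bigl(\|\tu^0-\bu\|_{L^2} + \|\ty'-s\|_{L^2(0,T)} + \|v-\bv\|_{L^2_{t,x}}\bigr)\le C\,\cE_T^{1/2}.
\]
This is exactly the observation the paper records in the Remark following the lemma. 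With this correction in place, the remainder of your plan (applying \eqref{est:energy-gal-u_x} and bounding $\|\p_t a\|_{L^\infty L^2}$, $\|\p_x a\|_{L^\infty L^4}$ via Proposition~\ref{cor:recap-est-v}) goes through and matches the paper's proof.
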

\begin{proof}
The existence and uniqueness of $u$ in $\bu+L^\infty([0,T], L^2(\R_+))\cap L^2([0,T], H^1(\R_+))$ follows from classical variational arguments.
The energy estimates in $L^\infty([0,T], H^1)\cap L^2([0,T], H^2)$ are a consequence of Proposition \ref{prop:est-gal} in the Appendix, setting $b(t,x)=-\tilde y'(t)$, $c\equiv 0$, $a=1/v$, and 
\[
f= (\ty'-s)\p_x \bu + \mu \p_x\left( \left(\frac{1}{v}-\frac{1}{\bv}\right) \p_x \bu\right).
\]
 We obtain in particular, using the lower bound $a\geq \bar C^{-1}$ and setting $h=u-\bu$,
\begin{align}\label{eq:energy-u}
& \|h\|_{L^\infty([0,T], L^2(\R_+))} 
+ \| \p_xh\|_{L^2((0,T)\times\R_+) } \nonumber \\
& \leq C\left(\|\tu^0-\bu\|_{L^2(\R_+)} + \|\ty'-s\|_{L^2(0,T)}\|\bu-u_+\|_{L^2(\R_+)} + \|v-\bv\|_{L^2([0,T], L^2(\R_+))}\right),
\end{align}
and the right-hand side is bounded by $(\cE_T)^{1/2}$. 

Furthermore, note that with the notations of Lemma \ref{lem:L2-V}
\[\ba
\|\p_t a\|_{L^\infty([0,T], L^2(\R_+))}\leq \|\p_t v\|_{L^\infty([0,T], L^2(\R_+))}\leq  E_1^{1/2},\\
\|\p_x a\|_{L^\infty([0,T], L^4(\R_+))} \leq 
C\big(1 + \|\p_x (v -\bv) \|_{L^\infty([0,T], H^1(\R_+))}\big)\leq  C\big(1+ E_1^{1/2}\big).
\ea
\]
The estimate announced in the Proposition follows easily from Proposition \ref{prop:est-gal}.
\end{proof}

\bigskip
\begin{rmk}
Note that, compared to Proposition~\ref{prop:est-gal} and Lemma~\ref{lem:vR_L2} concerning the $L^\infty L^2$ estimate on $v-\bv$, we do not have the exponential dependency with respect to $T$ in~\eqref{eq:energy-u}.
This results from the divergence structure of the diffusion term ($\partial_x \left(\frac{\partial_xh}{v}\right)$ here, compared to $\partial^2_x \ln \left(\frac{v}{\bv}\right)$ in Lemma~\ref{lem:vR_L2}), but also from the structure of the source term $f$.
Indeed $f= \partial_x \mathcal  F$ with $\mathcal F = (\ty'(t)-s)(\bu - u_+) + \left(\frac{1}{v}-\frac{1}{\bv}\right) \p_x \bu$, therefore
\[
\left|\int_0^T \int_{\R_+} f \ h \ dx dt \right|
\leq C_\eta \|\mathcal F\|_{L^2([0,T]\times \R_+)}^2 +  \eta \|\partial_xh\|_{L^2([0,T]\times \R_+)}^2,
\]
where the second term is absorbed in the left-hand side of the energy inequality.
Hence, compared to Proposition~\ref{prop:est-gal}, we can close the energy estimate without need of $\|u-\bu\|_{L^2([0,T] \times \R_+)}$.
\end{rmk}

\bigskip

\begin{lem}[Improved regularity]
\label{lem:u-2}
Under the compatibility condition \eqref{hyp:compa-2-u}, namely
\[
\partial_x \tu^0_{|x=0}\Big(\ty'(0) -\mu (\partial_x \tv^0)_{|x=0} \Big)
+ \mu (\partial^2_x \tu^0)_{|x=0} = 0,
\]
the solution $u$ to~\eqref{eq:u} is such that:
\begin{align}
E_5(T) &:=\|\partial_{t,x}(u-\bu)\|_{L^\infty[0,T]; L^2(\R_+))}^2 + \|\partial^2_t(u-\bu)\|_{L^2([0,T]\times \R_+)}^2 + \|\partial_t \partial^2_x(u-\bu)\|_{L^2([0,T]\times \R_+)}^2 \nonumber \\
&\leq \cE_T(1+ \cE_T)^p
\end{align}
for some integer $p\geq 1$.
\end{lem}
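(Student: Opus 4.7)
The strategy is to differentiate equation~\eqref{eq:u} with respect to time and apply the generic parabolic estimate of Proposition~\ref{prop:est-gal} to the equation satisfied by $\partial_t h$. Setting $h=u-\bu$, differentiation gives
\[
\partial_t(\partial_t h) - \ty'(t)\partial_x(\partial_t h) - \mu\,\partial_x\!\left(\tfrac{1}{v}\partial_x(\partial_t h)\right)
= \partial_t f + \ty''(t)\partial_x h - \mu\,\partial_x\!\left(\tfrac{\partial_t v}{v^2}\partial_x h\right),
\]
with the same coefficient $a=1/v$ and first-order coefficient $-\ty'(t)$ as for $h$ itself, and with homogeneous Dirichlet condition $\partial_t h_{|x=0}=0$. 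So $\partial_t h$ satisfies a linear parabolic equation to which Proposition~\ref{prop:est-gal} applies, provided the initial data is in $H^1_0(\R_+)$ and the right-hand side lies in $L^2([0,T]\times\R_+)$.

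The first task is to check the boundary compatibility at $(t,x)=(0,0)$. Using~\eqref{eq:us-y} at $t=0$,
\[
(\partial_t h)_{|t=0} = (\partial_t u)_{|t=0} = \ty'(0)\partial_x \tu^0 + \mu\,\partial_x\!\left(\tfrac{1}{\tv^0}\partial_x \tu^0\right),
\]
and evaluating at $x=0^+$ (using $\tv^0_{|x=0}=1$) yields exactly the left-hand side of~\eqref{hyp:compa-2-u}; hence $(\partial_t h)_{|t=0,x=0}=0$. The $H^1(\R_+)$ norm of this trace is then controlled by $\|\tu^0-\bu\|_{H^3}\,(1+\|\tv^0-\bv\|_{H^2})+|\ty'(0)-s|\,\|\p_x\bu\|_{L^2}$, which, together with the estimate~\eqref{est:b-infty} already used in the proof of Lemma~\ref{lem:Hreg-v-2}, gives a bound of the form $\cE_0^{1/2}(1+\cE_0)^{p/2}$.

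Next I would estimate the effective source
\[
\widetilde f := \partial_t f + \ty''(t)\partial_x h - \mu\,\partial_x\!\left(\tfrac{\partial_t v}{v^2}\partial_x h\right)
\]
in $L^2([0,T]\times\R_+)$. Since $f=(\ty'-s)\partial_x\bu+\mu\,\partial_x\!\left((\tfrac{1}{v}-\tfrac{1}{\bv})\partial_x\bu\right)$ and $\bu,\bv$ do not depend on time, $\partial_t f$ is a linear combination of $\ty''(t)\partial_x\bu$ and $\partial_x(\partial_t v\,\cdot\,\text{smooth})$. All such contributions are handled with the bounds already gathered in Proposition~\ref{cor:recap-est-v} (so in particular $\partial_t v\in L^\infty L^2\cap L^2 H^1$, $\partial_x \p_t v\in L^2L^2$, $v-\bv\in L^\infty H^3$) and the bound~\eqref{eq:energy-u} on $h$ provided by Lemma~\ref{lem:u-1}, together with Sobolev and interpolation inequalities of the type $\|\p_x h\|_{L^\infty}\lesssim \|\p_x h\|_{L^2}^{1/2}\|\p_x^2 h\|_{L^2}^{1/2}$. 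After some routine but somewhat lengthy manipulations, one reaches $\|\widetilde f\|_{L^2([0,T]\times\R_+)}^2 \leq C\,\cE_T(1+\cE_T)^p$ for some integer $p\geq 1$.

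Finally I would apply Proposition~\ref{prop:est-gal} to $\partial_t h$: the $L^\infty L^4$ bound on $\partial_x a = -\partial_x v/v^2$ and the $L^\infty L^2$ bound on $\partial_t a = -\partial_t v/v^2$, needed to invoke the proposition, are controlled by $1+\cE_T^{1/2}$ thanks to Proposition~\ref{cor:recap-est-v}. This yields the announced bounds on $\|\partial_{t,x} h\|_{L^\infty L^2}$, $\|\partial_t^2 h\|_{L^2L^2}$ and $\|\partial_t\p_x^2 h\|_{L^2L^2}$, modulo multiplicative factors $(1+\cE_T)^p$. As in Lemma~\ref{lem:Hreg-v-1}, $\partial_x^2 h$ in $L^\infty L^2$ can then be recovered from the original equation~\eqref{eq:u} by isolating $\mu\,\partial_x((1/v)\partial_x h) = \partial_t h - \ty'(t)\partial_x h - f$. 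The main obstacle is book-keeping: tracking the nonlinear products carefully enough to close the estimate with the polynomial factor $(1+\cE_T)^p$ and not worse, and in particular to use the smallness of $\inf(1,T^{1/2})\cE_T$ to absorb exponential factors coming from Gronwall-type arguments, exactly as in the proof of Lemma~\ref{lem:Hreg-v-2}.
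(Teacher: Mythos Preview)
Your approach is essentially the paper's: differentiate~\eqref{eq:u} in time and apply Proposition~\ref{prop:est-gal} (specifically estimate~\eqref{est:energy-gal-u_x}) to $\partial_t h$. The compatibility check and the bound on the initial trace are handled just as in the paper.

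One point you gloss over: estimate~\eqref{est:energy-gal-u_x}, applied with $\partial_t h$ in the role of $u$, requires $\|\partial_t h\|_{L^\infty L^2}$ and $\|\partial_x\partial_t h\|_{L^2 L^2}$ as \emph{inputs} on the right-hand side. The paper obtains these by first invoking~\eqref{est:energy-gal-u_xx} for the original equation on $h$ (this is their inequality~\eqref{eq:est-dtu-1}), feeding in the already-established $E_4$ from Lemma~\ref{lem:u-1}; this carries no exponential-in-$T$ factor. If instead you extracted those intermediate bounds from~\eqref{est:energy-gal-1} applied to $\partial_t h$, you would pick up a genuine $e^{CT}$ factor (with $C$ depending on $\|\ty'\|_\infty$), which is \emph{not} absorbed by the smallness of $\inf(1,T^{1/2})\cE_T$ and would spoil the form $\cE_T(1+\cE_T)^p$ needed in Section~\ref{sec:global}. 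An alternative fix---which you do not mention but which would also work---is to observe that your effective source is in divergence form, $\widetilde f=\partial_x\bigl(\ty''(u-u_+)-\mu\,\tfrac{\partial_t v}{v^2}\partial_x u\bigr)$, so the basic energy estimate on $\partial_t h$ closes without Gronwall, exactly as in the Remark following Lemma~\ref{lem:u-1}.

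A minor point: recovering $\partial_x^2 h$ in $L^\infty L^2$ is not part of $E_5$ and the paper does not perform that step here.
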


\begin{proof}
We begin with the control of $\p_t h=\partial_t (u-\bu)$ in $L^\infty([0,T];L^2(\R_+)) \cap L^2([0,T];H^1(\R_+))$ using estimate~\eqref{est:energy-gal-u_xx} from Proposition~\ref{prop:est-gal} and Lemma~\ref{lem:u-1}:
\begin{align}\label{eq:est-dtu-1}
& \|\partial_th\|_{L^\infty([0,T];L^2(\R_+))}^2 
+ \|\partial_{t,x}h\|_{L^2((0,T)\times \R_+)}^2 \nonumber \\
& \leq C \left( \|\tu^0-\bu\|_{H^2(\R_+)}^2 + 
(1+ \|\ty''\|_{L^2(0,T)}^2 + E_2) E_4\right)\nonumber\\
&\qquad + C \|\ty''\|_{L^2(0,T)}^2 + C \|\p_t g\|_{L^2([0,T];H^1(\R_+))}^2 (1 + \|\p_x g\|_{L^\infty([0,T];L^2(\R_+))}^2) \nonumber\\
&\leq \cE_T\left(1 + \cE_T\right)^{p}.
\end{align}
Now, we see $\partial_th$ as the solution to the problem
\begin{align}
\partial_t (\partial_th) -\ty'(t) \partial_{t,x}h - \mu \partial_t \left(\frac{1}{v}\partial_{t,x}h\right)
=  H
\end{align}
with 
\begin{align*}
H= \ty''(t)\partial_xh +\ty''(t)\partial_x \bu- \mu \partial_x\left(\frac{\partial_t v}{v^2} \partial_xh \right)
- \mu \partial_x\left(\frac{\partial_t v}{v^2} \partial_x \bu \right),
\end{align*}
endowed with the initial condition
\begin{align*}
\partial_th_{|t=0} 
& = \ty'(0) \partial_x \tu^0  
+ \mu \partial_x\left(\frac{1}{\tv^0} \partial_x\tu^0 \right) .
\end{align*}
Under the compatibility condition~\eqref{hyp:compa-2-u}, we ensure that $\partial_th_{|t=0, x=0} = 0$.
We can now apply Proposition~\ref{prop:est-gal} and we get thanks to~\eqref{est:energy-gal-u_x}
\begin{align*}
&\|\partial_{t,x}h\|_{L^\infty([0,T]; L^2(\R_+))}^2 + \|\partial^2_th\|_{L^2([0,T]\times \R_+)}^2 + \|\partial_t \partial^2_xh\|_{L^2([0,T]\times \R_+)}^2 \\
& \leq C \Bigg[ \|\partial_t \partial_xh_{|t=0}\|_{L^2(\R_+)}^2 
+ \|\ty''\|_{L^2(0,T)}^2\|\p_xh\|_{L^\infty([0,T];L^2(\R_+))}^2 + \|\ty''\|_{L^2(0,T)}^2\\
& \qquad + \|\p_t v\|_{L^\infty([0,T];H^1(\R_+))}^2 \big(1+ \|\p_x (v-\bv)\|_{L^\infty([0,T];H^1(\R_+))}^2\big) \|\p_xh\|_{L^2([0,T];H^1(\R_+))}^2 \\
& \qquad + \|\p_t v\|_{L^2([0,T];H^1(\R_+))}^2\big(1+ \|v-\bv\|_{L^\infty([0,T];H^1(\R_+))}^2\big) \\
& \qquad + \Big(\|\partial_t v\|_{L^\infty([0,T];L^2(\R_+))} + \|\partial_t v\|_{L^\infty([0,T];L^2(\R_+))}^2+ \|\partial_x v\|_{L^\infty ([0,T];L^4(\R_+))}^2\\&\qquad \quad+ \|\partial_x v\|_{L^\infty ([0,T];L^4(\R_+))}^4 + \|\ty'\|_{L^\infty(0,T)}^2\Big) \|\partial_{t,x}h\|_{L^2([0,T] \times \R_+)}^2  \Bigg].
\end{align*}
The right-hand side is controlled as follows. Proposition \ref{cor:recap-est-v} allows us to upper-bound $v$ and its derivatives.
We also use~\eqref{eq:est-dtu-1} to estimate $\|\partial_{t,x}h\|_{L^2 L^2}$ and Lemma \ref{lem:u-1} to estimate $h$ in $L^\infty(H^1)\cap L^2(H^2)$.
Next, using \eqref{eq:u}, we observe that
\begin{align*}
\partial_t \partial_xh_{|t=0}
&=\ty'(0) \p_x^2(\tu^0 - \bu) + (\ty'(0)-s)\p_x^2\bu + \mu   \partial^2_x\left(\frac{1}{\tv^0}\partial_x (\tu^0 -\bu)\right) + \mu \partial^2_x\left(\left(\frac{1}{\tv^0} - \frac{1}{\bv}\right) \partial_x \bu\right)
\end{align*}
and therefore, using the compatibility condition $\ty'(0)=-\p_x \tu^0_{x=0}/\p_x \tv^0_{|x=0}$,
\begin{align*}
& \|\partial_t \partial_xh_{|t=0}\|_{L^2(\R_+)} \\
& \leq C \left(\|\partial_x (\tu^0-\bu)\|_{H^2(\R_+)} + \|\tv^0-\bv\|_{H^2(\R_+)}\right)\left(1 + |\ty'(0)-s| + \|\tv^0-\bv\|_{H^2(\R_+)}\right) \\
& \qquad + C |\ty'(0) - s|\\
&\leq C \cE_0^{1/2} \left(1+ \cE_0^{1/2}\right).
\end{align*}
Gathering all the estimates, we find that
\[\|\partial_{t,x}h\|_{L^\infty L^2}^2 + \|\partial^2_th\|_{L^2 L^2}^2 + \|\partial_t \partial^2_xh\|_{L^2 L^2}^2 \\
\leq \cE_T\left(1 + \cE_T\right)^p\]
for some large and computable constant $p>1$.

\end{proof}

\section{Existence and uniqueness of local solutions}\label{sec:local}

The purpose of this section is to prove Theorem \ref{thm:main-loc}, following the strategy outlined in section \ref{sec:results}.
We shall construct the solution $\tilde x$ of \eqref{EDO-tx} as the fixed point of a nonlinear application $\cT$, whose definition we now recall:
 $\cT: \ty\in H^2(0,T) \mapsto \tz\in  H^2(0,T)$, where
\[
\tilde z'(t) =-\mu  \frac{ \p_x u(t, 0)}{u_- - \tw^0(\tilde y(t))},\quad \tz(0)=0
\]
and $u$ is the unique solution of \eqref{eq:us-y}.
As a consequence, recalling that $s=-\mu \p_x \bu_{|x=0}/(u_--u_+)$,
\begin{align}\label{y'-s}
\tilde z' - s
& = -\mu \frac{(\p_x u - \p_x \bu)_{|x=0}(t)}{u_--\tw^0(\ty(t))}
    - \mu \partial_x \bu_{|x=0} \dfrac{\tw^0(\ty(t)) - u_+}{(u_- - \tw^0(\ty(t))) (u_- -u_+)} \nonumber \\
& = -\mu \frac{(\p_x u - \p_x \bu)_{|x=0}(t)}{u_--\tw^0(\ty(t))} + s \frac{\tw^0(\ty(t)) - u_+}{u_--\tw^0(\ty(t))}. 
\end{align}

We will first prove that for any initial data satisfying (H1)-(H5), 
 the application $\cT$ has an invariant set provided $T$ is chosen sufficiently small. We will then prove that $\cT$ is a contraction on this invariant set.

Let us define
\begin{multline*}
    \mathcal I_M :=\Big\{\ty\in H^2([0,T]),\ \ty(0)=0, \ \ty'(0)=-\frac{\p_x \tu^0_{|x=0}}{\p_x \tv^0_{|x=0}}\\\text{ and }
M^{-1} \leq \ty'(t)\leq M \ \forall t\in [0,T],
\ \|\ty'-s\|_{H^1([0,T])}\leq M\Big\}.
\end{multline*}
The set $\mathcal I_M$ will be our invariant set provided $M$ is chosen sufficiently large, and $T$ sufficiently small.
Without loss of generality, we will assume that $M \geq 1$.\\
Following the previous section (see \eqref{def:E0} and \eqref{def:ET}), we set
\begin{align*}
    \mathcal E_0&:=\|\tv^0 - \bv\|_{H^3(\R_+)}^2 + \|\tu^0-\bu\|_{H^3(\R_+)}^2  +\|V^0\|_{L^2(\R_+)}^2 \\
&\quad + \|(1+\sqrt{x}) W^0\|_{L^2(\R_+)}^2 + \|(1+\sqrt{x}) \p_x\tw^0\|_{L^2(\R_+)}^2 + \|(1+\sqrt{x})\partial_x^2\tw^0\|_{L^2(\R_+)}^2,\\
\cE_T&:=\cE_0 + \|\ty'-s\|_{H^1([0,T])}^2.
    \end{align*}


Theorem \ref{thm:main-loc} is an immediate consequence of the following result:

\begin{prop}\label{prop:fixed-pt}
Let $M > 1$ be such that
\be\label{def:M}
\sqrt{\frac{2}{M}}\leq \p_x \tv^0_{|x=0}\leq \sqrt{\frac{M}{2}} , \quad \cE_0\leq  M^2, \quad -\sqrt{\frac{M}{2}}\leq \p_x \tu^0_{|x=0}\leq - \sqrt{\frac{2}{M}}.
\ee
Then there exists $T > 0$ depending on $M$ and the parameters of the system, such that $\cT$ has a unique fixed point on $\mathcal{I}_M$.

\end{prop}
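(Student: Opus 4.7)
The plan is to apply a Banach fixed point argument on a suitable complete metric subset of $\mathcal I_M$. First, given $\ty \in \mathcal{I}_M$, the compatibility condition $\ty'(0) = -\p_x \tu^0_{|x=0}/\p_x\tv^0_{|x=0}$ together with (H3) gives \eqref{hyp:compa-2-u}, so that Lemma~\ref{lem:WP-vn}, Proposition~\ref{cor:recap-est-v} and Proposition~\ref{prop:est-u} apply on $[0,T]$ for $T\leq T^0(M)$ and produce $v,u$ with the stated regularity. In particular $\p_x u(t,0)$ and $\p_{tx} u(t,0)$ are well defined via trace, so $\tz \in H^2(0,T)$ is well defined by the formula in the statement, and $\cT$ makes sense on $\mathcal I_M$.

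Next, I would verify that $\cT(\mathcal I_M) \subset \mathcal I_M$ for $T$ small. The condition $\tz(0)=0$ is automatic, and a direct computation using $\tw^0(0) = u_- - \mu\, \p_x\tv^0_{|x=0}$ (since $\tv^0(0)=1$) yields $\tz'(0)= -\p_x\tu^0_{|x=0}/\p_x\tv^0_{|x=0}$, which lies in $[2/M,M/2]$ by~\eqref{def:M}. For the pointwise bound $M^{-1}\leq \tz'(t)\leq M$, I would use $|\tz'(t)-\tz'(0)| \leq \sqrt{t}\,\|\tz''\|_{L^2(0,t)}$ together with a bound $\|\tz''\|_{L^2(0,T)}^2 \leq K(M)$ coming from the trace estimate $\|\p_{tx} u(\cdot,0)\|_{L^2(0,T)}^2 \leq C\|\p_t u\|_{L^2(H^2)}^2 \leq C \cE_T(1+\cE_T)^p$ (and the fact that the denominator $u_- - \tw^0(\ty(t))$ stays bounded away from $0$ thanks to $\ty(0)=0$ and continuity). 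Then $T$ can be chosen so that $\sqrt T K(M)^{1/2} \leq 1/M$, yielding the pointwise bounds. For the $H^1$ bound $\|\tz'-s\|_{H^1(0,T)}\leq M$, the $L^2$ part is at most $\sqrt{T}\,\|\tz'-s\|_{L^\infty}$ which is small in $T$, while $\|\tz''\|_{L^2}^2$ is bounded in terms of $\cE_0$ alone provided $M$ is chosen large enough compared to the constants in Proposition~\ref{prop:est-u}; this is how \eqref{def:M} enters.

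For the contraction, I would take $\ty_1,\ty_2\in \mathcal I_M$, denote $v_i,u_i,\tz_i$ the corresponding solutions, and write the equations satisfied by $\delta v = v_1-v_2$, $\delta u = u_1-u_2$ by subtracting \eqref{eq:vs-y} and \eqref{eq:us-y} for the two indices. Each equation is linear parabolic with coefficients controlled by the a priori estimates of Section~\ref{sec:estimates} and with source terms of the form $(\ty_1'-\ty_2')\p_x v_2$, $\tw^0(\cdot+\ty_1)-\tw^0(\cdot+\ty_2)$, etc., all of which are linearly controlled by $\ty_1-\ty_2$ in $H^1(0,T)$ thanks to the $H^2$-regularity of $v_i,u_i$ and the regularity of $\tw^0$. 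Applying the generic parabolic estimate of Proposition~\ref{prop:est-gal} to $\delta v$ and then $\delta u$, I obtain $\|\delta v\|_{L^\infty(H^1)\cap L^2(H^2)} + \|\delta u\|_{L^\infty(H^1)\cap L^2(H^2)} \leq C(M)\sqrt{T}\,\|\ty_1-\ty_2\|_{H^1(0,T)}$. Substituting into $\tz_1'-\tz_2' = -\mu\,\p_x(u_1-u_2)(t,0)/(u_- -\tw^0(\ty_1)) + \ldots$ and using the trace inequality $\|\p_x \delta u(\cdot,0)\|_{L^2(0,T)} \leq C\|\delta u\|_{L^2(H^2)}$ yields $\|\tz_1-\tz_2\|_{H^1(0,T)} \leq C(M)\sqrt T\,\|\ty_1-\ty_2\|_{H^1(0,T)}$. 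Taking $T$ smaller if needed makes $\cT$ a strict contraction on $(\mathcal I_M, \|\cdot\|_{H^1})$, which is closed in $H^1(0,T)$ since the $H^2$-bound, the pointwise bounds, and the initial conditions pass to $H^1$-limits, so Banach's fixed point theorem gives the unique fixed point.

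The main obstacle is the contraction step, because the $H^2$-regularity that carries the fixed point is \emph{not} the regularity in which contraction naturally holds; one has to choose the weaker $H^1$ topology for the contraction while using the stronger $H^2$ a priori bounds from Section~\ref{sec:estimates} both to make $\mathcal I_M$ closed in $H^1$ and to absorb the nonlinear dependencies of $v,u$ on $\ty$. A second, subtler point is that the constants in Section~\ref{sec:estimates} depend on $\bar C = 2\sup\tv^0$ and on $M$; they must be tracked carefully to ensure that the $H^1$-bound $\|\tz'-s\|_{H^1}\leq M$ can indeed be enforced by enlarging $M$ beyond the threshold of \eqref{def:M} and then shrinking $T$, rather than the reverse.
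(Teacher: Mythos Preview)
Your overall strategy matches the paper's, but there is one recurring gap: you never extract a positive power of $T$ from the boundary traces, and without it neither the invariance nor the contraction closes.

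In the invariance step you claim that $\|\tz''\|_{L^2}^2$ is ``bounded in terms of $\cE_0$ alone provided $M$ is chosen large enough''. This is circular: $M$ is fixed by \eqref{def:M} before $T$ is chosen, and the constants in Propositions~\ref{cor:recap-est-v}--\ref{prop:est-u} themselves depend on $M$ (through $\|\ty'\|_{L^\infty}\le M$), so the bound you obtain is $\|\tz''\|_{L^2}\le K(M)$ with $K$ growing in $M$; enlarging $M$ does not yield $K(M)\le M$. You flag this concern in your last paragraph, but the proposed fix does not work. The paper resolves it by gaining smallness from $T$: since $\p_{t}\p_x h\in L^\infty_tL^2_x$ (Lemma~\ref{lem:u-2}) and $\p_t\p_x^2 h\in L^2_{t,x}$, one interpolates
\[
\|(\p_{t}\p_x h)_{|x=0}\|_{L^2(0,T)}\le C\|\p_t\p_x h\|_{L^2_tL^2_x}^{1/2}\|\p_t\p_x^2 h\|_{L^2_tL^2_x}^{1/2}\le C\,T^{1/4}\|\p_t\p_x h\|_{L^\infty_tL^2_x}^{1/2}\|\p_t\p_x^2 h\|_{L^2_tL^2_x}^{1/2},
\]
whence $\|\tz''\|_{L^2(0,T)}\le C_M T^{1/4}$, and then $T$ is taken small. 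Your trace estimate $\|\p_{tx}u(\cdot,0)\|_{L^2}\le C\|\p_tu\|_{L^2(H^2)}$ loses exactly this factor.

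In the contraction step the same issue reappears. The claim $\|\delta v\|_{L^\infty(H^1)\cap L^2(H^2)}+\|\delta u\|_{L^\infty(H^1)\cap L^2(H^2)}\le C(M)\sqrt T\,\|\ty_1-\ty_2\|_{H^1}$ is not correct: the source $(\ty_1'-\ty_2')\p_x v_2$ carries no $\sqrt T$, and indeed the paper obtains only $\|\dv\|_{L^\infty(H^1)}\le C_M\|\ty_1'-\ty_2'\|_{L^2}$. In addition, the expression for $\tz_1'-\tz_2'$ contains a term $\p_x u_{2|x=0}\,(\tw^0(\ty_1)-\tw^0(\ty_2))$ with $\|\p_x u_{2|x=0}\|_{L^\infty}\sim |\p_x\bu(0)|$ of order one, so a direct $H^1$ contraction does not close as written. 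The paper instead contracts in $H^2$: it estimates $\|\tz_1''-\tz_2''\|_{L^2}\le C_M T^{1/4}\|\ty_1'-\ty_2'\|_{H^1}$ via the same $L^\infty_t\to L^2_t$ interpolation applied to $\p_t\p_x D_u$, and then uses $(\tz_1'-\tz_2')(0)=0$ to recover $\|\tz_1'-\tz_2'\|_{L^2}\le (T/\sqrt 2)\|\tz_1''-\tz_2''\|_{L^2}$. Your weak-topology/strong-bound scheme is a reasonable alternative in principle, but it needs precisely this missing $T^{1/4}$ (and higher-order difference estimates on $\p_t D_u$) to become a contraction.
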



\subsection{\texorpdfstring{$\cT$}{T} has an invariant set}

First, notice that by definition, $\tz(0)=0$ and
\[
\tz'(0)= -\mu  \frac{ \p_x \tu^0_{|x=0}}{u_- - \tw^0(0)} \cdot
\]
We recall that $\tw^0=\tu^0 - \mu \p_x \ln \tv^0$, so that $u_- - \tw^0(0) = \mu \p_x \tv^0_{|x=0}$.
It follows that
\[
\tz'(0)=-\frac{\p_x \tu^0_{|x=0}}{\p_x \tv^0_{|x=0}} \cdot
\]

We assume that $M\geq 1$ is chosen so that \eqref{def:M} is satisfied.
Notice that these conditions ensure that
\[
\frac{2}{M} 
\leq \ty'(0) = \tz'(0)
= - \frac{\p_x \tu^0_{|x=0}}{\p_x \tv^0_{|x=0}}\leq \frac{M}{2}.
\]
Since 
\[
\tz'(0) - \sqrt{t}\|\tz''\|_{L^2(0,T)}\leq \tz'(t)\leq \tz'(0) + \sqrt{t}\|\tz''\|_{L^2(0,T)},
\]
it suffices to prove that $\|\tz'-s\|_{H^1(0,T)}\leq M$ for $T$ sufficiently small (depending on $M$), and to further choose $T$ so that $\sqrt{T} M \leq (2M)^{-1}$, i.e. $T \leq (4 M^4)^{-1}.$

Now, let us consider the solutions $v,u$ of \eqref{eq:vs-y}, \eqref{eq:us-y} respectively.
Using the notation of Lemma \ref{lem:L2-V}, we have
\[
 E_1\leq C(\cE_0 + \|\ty'-s\|_{L^2(0,T)}^2)\leq C_M.
\]
(We recall that the constant $C$ in the above inequality may depend on $M$). 
Now, let us choose $T_M$ so that $\inf(1,T_M) C_M\leq 1$. Then the solutions $v,u$ of \eqref{eq:vs-y}, \eqref{eq:us-y}  satisfy the estimates of Propositions \ref{cor:recap-est-v} and \ref{prop:est-u}.

Let us now turn towards the bound on $\|\ty'-s\|_{H^1(0,T)}$. Note that $\cE_T\leq 2M^2$ according to our choice of $M$.
We also note that
\[
u_--\tw^0(0) = \frac{1}{\mu }\p_x \tv^0_{|x=0}\geq \frac{1}{\mu}\sqrt{\frac{2}{M}}.
\]
Since $\|\ty'\|_{L^\infty(0,T)}\leq M$ and $\|\p_x \tw^0\|_\infty \leq \cE_0^{1/2} \leq M$, we obtain, for $t\in [0,T]$,
\be\label{est-lower-bound-u_-w0}
u_--\tw^0(\ty(t))
\geq \frac{1}{\mu}\sqrt{\frac{2}{M}} - \|\ty'\|_\infty t \|\p_x\tw^0\|_\infty 
\geq  \frac{1}{\mu\sqrt{M}}
\ee
provided $T\leq C M^{-5/2}$. 
We infer, using \eqref{y'-s},
\[
\|\tz'-s\|_{L^2(0,T)}\leq C_M \left(\|\p_x (u-\bu)\|_{L^2((0,T), L^\infty)} + T^{1/2} \|\tw^0- u_+\|_\infty\right)\leq C_M \cE_T^{1/2}T^{1/2}\leq C_M T^{1/2}.
\]
Thus, for $T$ sufficiently small, the right-hand side is smaller than $M/2$.

We now consider $\tz''$. We recall that we set $h=u-\bu$. Using that
\begin{align}\label{eq:z''}
  \tz''(t) 
 & = - \mu \frac{(\p_t \p_x u)_{|x=0}}{u_--\tw^0(\ty(t))} 
  + \mu \frac{(\p_x u)_{|x=0}}{(u_--\tw^0(\ty(t)))^2}\ty'(t) \p_x \tw^0(\ty(t)) \nonumber \\
 & = - \mu \frac{\p_t\p_xh_{|x=0}}{u_--\tw^0(\ty(t))} + \mu \frac{\p_xh_{|x=0}}{(u_--\tw^0(\ty(t)))^2}\ty'(t)\p_x \tw^0(\ty(t))\\&\quad  + s \ty'(t) \p_x \tw^0(\ty(t)) \frac{u_--u_+}{(u_--\tw^0(\ty(t)))^2},
 \end{align}
it follows that
\begin{align*}
\|\tz''\|_{L^2(0,T)}
& \leq  C \sqrt{M} \|\p_t \p_x h\|_{L^2((0,T), L^\infty(\R_+))}
+ C M^3 \| \p_x h\|_{L^2((0,T), L^\infty(\R_+))} \\
& \qquad + C M^{3} T^{1/2}.
\end{align*}
We now use Proposition \ref{prop:est-u}: there exists a constant $p\geq 1$ such that
\[
\|\p_t \p_xh\|_{L^\infty((0,T), L^2)}^2 + \|\p_t\p_x^2 h\|_{L^2((0,T)\times \R_+)}^2 \leq C_M M^{2p+2}.
\]
Hence
\begin{align*}
\|\p_t \p_x h\|_{L^2((0,T), L^\infty(\R_+))}
& \leq C\|\p_t \p_x h\|_{L^2((0,T) \times \R_+)}^{1/2} \|\p_t \p^2_x h\|_{L^2((0,T) \times \R_+)}^{1/2} \\
& \leq C_M T^{1/4} M^{p+1}.
\end{align*}
Similarly,
\begin{align*}
\| \p_x h\|_{L^2((0,T), L^\infty(\R_+))}
& \leq T^{1/4} \|\p_xh\|_{L^\infty([0,T];L^2(\R_+))}^{1/2} \|\p_x^2h\|_{L^2([0,T]\times \R_+)}^{1/2} \\
& \leq C_M T^{1/4} M^p.
\end{align*}
Hence, choosing $T$ sufficiently small (depending on $M$), we obtain
\[
\|\tz''\|_{L^2(0,T)}\leq \frac{M}{2}.
\]
We deduce that $\mathcal I_M$ is stable by $\cT$.

\subsection{\texorpdfstring{$\cT$}{T} is a contraction on \texorpdfstring{$\mathcal I_M$}{IM}}
Let $\ty_1, \ty_2\in \mathcal I_M$.
We consider the associated solutions $v_{i}, u_{i}$ of \eqref{eq:vs-y}, \eqref{eq:us-y}, and we set $\tz_i=\cT(\ty_i)\in \mathcal I_M$. We define $\dv=v_1-v_2$, $\du=u_1-u_2$.

Let us now evaluate $\tz_1'-\tz_2'$ and $\tz_1''-\tz_2''$. First, we have
\[
\tz_1'(t)-\tz_2'(t)=-\mu \frac{\p_x D_{u|x=0}}{u_--\tw^0(\ty_1)} - \mu \p_x u_{2|x=0}\frac{\tw^0(\ty_1)-\tw^0(\ty_2)}{(u_--\tw^0(\ty_1))(u_--\tw^0(\ty_2)) },
\]
so that, using Lemma \ref{lem:Taylor} and \eqref{est-lower-bound-u_-w0}
\be\label{est:tz'}
\|\tz_1'-\tz_2'\|_{L^2(0,T)}\leq C \| \p_x D_{u|x=0}\|_{L^2(0,T)} + C \|\p_x u_{2|x=0}\|_{L^\infty(0,T)}\|\ty_1'-\ty_2'\|_{L^2(0,T)}.
\ee
The constant $C$ depends on parameters of the problem, on Sobolev norms of $\tw^0$, and on $M$. 
In a similar way, using identity \eqref{eq:z''}, we find that
\begin{align}\label{est:tz''}
      \|\tz_1''-\tz_2''\|_{L^2(0,T)}
      & \leq  C \left(\|\p_t \p_x D_{u|x=0}\|_{L^2(0,T)} + \| \p_x D_{u|x=0}\|_{L^2(0,T)}\right)\\
      & \quad + C \left(  \|\p_x u_{2|x=0}\|_{L^\infty(0,T)} 
      + \|\p_t\p_x u_{2|x=0}\|_{L^2(0,T)} \right)\|\ty_1'-\ty_2'\|_{L^2(0,T)}.\nonumber
\end{align}

There remains to evaluate the traces of $\p_t\p_x \du$ and $\p_x \du$ at $x=0$ in terms of $\ty_1-\ty_2$.
In order to do so, we follow the order of the energy estimates in the previous sections and start by evaluating $\dv$.
Note that $\dv$ is a solution of
\[
\ba
\p_t \dv - \ty_1'\p_x \dv - \mu \p_{xx}\ln \left(1+ \frac{\dv}{v_2}\right)= (\ty_1'-\ty_2')\p_x v_2,\\
D_{v|x=0}=0,\quad \lim_{x\to \infty}\dv(t,x)=0,\\
D_{v|t=0}=0.
\ea
\]

We  apply Lemma \ref{lem:generique-g} in the Appendix to the function $\dv$, with $\bar g=v_2$, $G= (\ty_1'-\ty_2')\p_x v_2$.
Since $T\leq T_M$, $\exp(\|\p_x v_2\|_{L^\infty((0,T)\times \R_+)} T_M)\leq C_M$, and 

It follows that
\begin{align*}
&\|\dv\|_{L^\infty((0,T), H^1(\R_+))} + \|\p_t \dv\|_{L^2((0,T)\times \R_+)}
+ \|\p_x \dv\|_{L^2((0,T)\times \R_+)} \\
&\leq C\|\ty_1'-\ty_2'\|_{L^2(0,T)}\left(\|\p_x (v_2 - \bv)\|_{L^\infty((0,T), L^2(\R_+))} 
+ \|\p_x \bv\|_{L^2(\R_+)} \right)\exp\left(\|\p_x v_2\|_{L^\infty((0,T)\times \R_+)} T\right)\\
&\leq C_M \|\ty_1'-\ty_2'\|_{L^2(0,T)},
\end{align*}
and
\begin{align*}
\left\| \p_x^2 \left(\frac{\dv}{v_2}\right)\right\|_{L^2((0,T)\times \R_+)} 
& \leq C_M \left(\|\ty_1'-\ty_2'\|_{L^2(0,T)}  + T^{1/2}\|\ty_1'-\ty_2'\|_{L^2(0,T)}^2\right)
\\
&\leq   C_M \|\ty_1'-\ty_2'\|_{L^2(0,T)}
\end{align*}
Following the estimates of Section~\ref{sec:regularity-v} (see also Proposition \ref{prop:est-gal} in the Appendix), we infer that 
\begin{align*}
    &\|\p_t \dv \|_{L^\infty((0,T), H^1(\R_+))}^2 + \|\p_x \p_t \dv\|_{L^2((0,T), H^1(\R_+))}^2 + \|\p_x^2 \dv\|_{L^\infty((0,T), H^1(\R_+))}^2\\
    &\leq C_M ( \|\ty_1'-\ty_2'\|_{L^2(0,T)}^2  + \|\ty_1''-\ty_2''\|_{L^2(0,T)}^2 ).
    \end{align*}
    
We now turn towards the estimates on $\du$, which satisfies the equation
\[
\p_t \du - \ty_1'\p_x \du -\mu \p_x \left(\frac{1}{v_1}\p_x \du\right)= S
\]
with
\[
S:=(\ty_1'-\ty_2')\p_x u_2 + \mu \p_x \left(\left(\frac{1}{v_1}- \frac{1}{v_2}\right) \p_x u_2\right).
\]
In order to complete the energy estimates on $\du$, we need to evaluate $S$ and $\p_t S$ in $L^2([0,T]\times \R_+)$. We have
\begin{align*}
\|S\|_{L^2([0,T]\times \R_+)}
&\leq  \|\p_x u_2\|_{L^\infty ([0,T],L^2(\R_+))}\|\ty_1'-\ty_2'\|_{L^2(0,T)}\\
&\quad  + C \Big(\|\dv\|_{L^\infty([0,T]\times \R_+)}\|\p_{x}^2 u_2\|_{L^2([0,T]\times \R_+)} \\
& \qquad + \|\partial_x\dv\|_{L^\infty([0,T],L^2(\R_+))}\|\p_x u_2\|_{L^2([0,T],L^\infty(\R_+))} \\
& \qquad  + \|\dv\|_{L^\infty([0,T]\times \R_+)} \|\partial_x(v_1 + v_2)\|_{L^\infty([0,T]\times \R_+)} \|\p_x u_2\|_{L^2([0,T],L^\infty(\R_+))}\Big) \\
&\leq  C_M \|\ty_1'-\ty_2'\|_{L^2(0,T)}
\end{align*}
and similarly,
\begin{align*}
\|\p_t S\|_{L^2([0,T]\times \R_+)}
&\leq C_M \|\ty_1'-\ty_2'\|_{H^1(0,T)}.
\end{align*}
Using once again Proposition \ref{prop:est-gal} in the Appendix, we obtain
\begin{align*}
&\|\du\|_{L^\infty((0,T), H^1(\R_+))} 
+ \|\p_x \du\|_{L^2((0,T),H^1(\R_+))} + \|\p_t \du\|_{L^2((0,T)\times \R_+)}\\
& \leq C_M \|\ty_1'-\ty_2'\|_{L^2(0,T)}
\end{align*}
and
\[\|\p_t \du\|_{L^\infty((0,T), H^1(\R_+))} + \|\p_x\p_t \du\|_{L^2((0,T), H^1(\R_+))}\\
 \leq C_M \|\ty_1'-\ty_2'\|_{H^1(0,T)}.\]

We are now ready to prove the contraction property. We focus on the estimate of $\tz_1''-\tz_2''$, since
\[
\|\tz_1'-\tz_2'\|_{L^2(0,T)}\leq \frac{T}{\sqrt{2}}\|\tz_1''-\tz_2''\|_{L^2(0,T)}.
\]
Using the estimates on $\du$, we obtain
\begin{align*}
&\|(\p_t \p_x \du)_{|x=0}\|_{L^2(0,T)} + \| \p_x D_{u|x=0}\|_{L^2(0,T)}\\
&\leq  C\left(\|\p_t \p_x \du\|_{L^2((0,T)\times \R_+) }^{1/2} \|\p_t \p_x^2 \du\|_{L^2((0,T)\times \R_+) }^{1/2} + \| \p_x \du\|_{L^2((0,T)\times \R_+) }^{1/2}\| \p_x^2 \du\|_{L^2((0,T)\times \R_+) }^{1/2} \right)\\
&\leq  CT^{1/4} \Big(\|\p_t \p_x \du\|_{L^\infty((0,T), L^2(\R_+)) }^{1/2} \|\p_t \p_x^2 \du\|_{L^2((0,T)\times \R_+) }^{1/2}\\
&\qquad\qquad\qquad + \| \p_x \du\|_{L^\infty((0,T), L^2(\R_+)) }^{1/2}\| \p_x^2 \du\|_{L^2((0,T)\times \R_+)}^{1/2} \Big)\\
&\leq  C_M T^{1/4} \|\ty_1'-\ty_2'\|_{H^1(0,T)}.
\end{align*}
Furthermore,
\[
\|\p_x u_{2|x=0}\|_{L^\infty(0,T)} + \|\p_t \p_x u_{2|x=0}\|_{L^2(0,T)} 
\leq |\p_x \bu_{|x=0^+}| + C\cE_T (1 + \cE_T)^p 
\leq C_M.
\]
It follows that
\[
\|\tz_1''-\tz_2''\|_{L^2(0,T)}
\leq C_M T^{1/4}\|\ty_1'-\ty_2'\|_{H^1(0,T)} 
+ C_M \|\ty_1'-\ty_2'\|_{L^2(0,T)}
\leq C_M   T^{1/4}\|\ty_1'-\ty_2'\|_{H^1(0,T)}.
\]
Hence, for $T$ sufficiently small (depending on $M$), $\cT$ is a contraction on $\mathcal I_M$.

\subsection{Conclusion}

We deduce from the previous subsections that there exists $T_M>0$ such that for all $T\leq T_M$, $\cT$ has a unique fixed point in $\mathcal I_M$. Let us consider the solution $(v_s, u_s)$ associated with this fixed point. 
According to the argument at the end of section \ref{sec:results}, $(\tx, v_s, u_s)$ is a solution of \eqref{eq:vs}.
Furthermore, $v_s$ and $u_s$ satisfy the properties listed in Propositions \ref{cor:recap-est-v} and \ref{prop:est-u} respectively, and $v_s$ also satisfies the $L^\infty$ estimates of Lemma \ref{lem:infty-est}, provided $T_M$ is small enough.

Thus $(\tx, v_s, u_s) $ satisfy all the properties listed in Theorem \ref{thm:main-loc}. Eventually, the pressure in the congested domain is given by
\[
p_s(t,x)=p_s(t)=\tx'(t) (u_- - \tw^0(\tx(t))).
\]
Since $\tx\in H^2(0,T)$, $p_s\in H^1(0,T)$. This completes the proof of Theorem \ref{thm:main-loc}.\qed

\section{Global solutions: existence for small data and stability} \label{sec:global}

In this section, we start from a strong solution $(u_s,v_s)$ provided by Theorem \ref{thm:main-loc} (see the previous section).
We  prove that if $\cE_0$ is small enough, the existence time of the solution is infinite. Furthermore, the travelling wave is asymptotically stable.
Once again, we drop the indices $s$ throughout the section in order to alleviate the notation.

Let us now introduce our setting. 
We assume that $\cE_0\leq c_0\delta^2$, where $\delta$ is a small constant, $\cE_0$ is the initial energy, defined in \eqref{def:E0} and $c_0$ is a constant depending only on the parameters of the problem, to be defined later on. At this stage, we merely choose $c_0$ small enough so that $\|\tv_0-\bv\|_{L^\infty(\R_+)}\leq \delta/2$.
We denote by $T^*$ the maximal existence time of the solution.
For $T\in ]0, T^*[$ small enough, using a continuity argument together with the dominated convergence theorem, $\| \tx'-s\|_{H^1([0,T])} + \|v -\bv\|_{L^\infty([0,T] \times \R_+)} \leq \delta$. We set
\[
\bar T:=\sup\left\{T\in ]0, T^*[, \ \|\tx'-s\|_{H^1([0,T])}\leq \delta\right\}.
\]
The global existence of the solution relies on the following bootstrap result:
\begin{prop}
There exist constants $c_0,\delta_0>0$, depending only on the parameters of the problem $s,\mu, v_+$, such that the following result holds.

For all $\delta\in (0, \delta_0)$, if $\cE_0\leq c_0\delta^2$,  and $\|(1+\sqrt{x}) \p_x^k \tw^0\|_{L^2(\R_+)} \leq c_0 \delta^{3/2}$ for $k=1,2,3$, then 
\[
\forall T\in [0, \bar T],\quad \|\tx'-s\|_{H^1([0,T])}\leq\frac{\delta}{2}.
\]

\label{prop:bootstrap}

\end{prop}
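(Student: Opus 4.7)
My plan is to close a bootstrap argument on $[0,\bar T]$. By definition of $\bar T$, we have $\|\tx'-s\|_{H^1([0,T])}\leq \delta$ for all $T\leq\bar T$, hence $\cE_T = \cE_0+\|\tx'-s\|_{H^1([0,T])}^2\leq c_0\delta^2+\delta^2\leq 2\delta^2$ provided $c_0\leq 1$. Consequently Propositions~\ref{cor:recap-est-v} and~\ref{prop:est-u} control every high-regularity norm of $(v-\bv,u-\bu)$ by $C\delta$. The task is to show that, despite this a priori only being $O(\delta)$, the quantity $\tx'-s$ is in fact $O(\delta^{3/2})$, which will suffice to improve the bound to $\delta/2$ once $\delta_0$ is chosen small enough.

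The central object is the new unknown $g_1 := \cA(v-\bv) = -s(v-\bv)-\mu\partial_x\!\left(\frac{v-\bv}{\bv}\right)$, announced in the introduction. Starting from the equation \eqref{eq:vs} for $v$ and the profile relation $s\partial_x\bv+\mu\partial_x^2\ln\bv=0$, I would differentiate and recombine to obtain
\[
\partial_t g_1+\cA\partial_x g_1 = \mathcal Q+\mathcal S,
\]
where $\mathcal Q$ is genuinely \emph{quadratic} in $(v-\bv)$ and its derivatives (arising from linearizing the nonlinear diffusion $\mu\partial_x^2\ln v$ around $\bv$), and $\mathcal S$ collects the linear source contributions $(\tx'-s)\partial_x\bv$ and $\partial_x\tw^0(\cdot+\tx(t))$ and their derivatives. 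The operator $\cA$ is elliptic of order one: a direct $L^2$-energy estimate, obtained by testing against $g_1/\bv$, yields a coercive dissipation $\mu\int|\partial_x(g_1/\bv)|^2$. The quadratic terms $\mathcal Q$ are controlled in $L^1_t L^2_x$ by $C\delta^2$ thanks to Proposition~\ref{cor:recap-est-v}; for the source, Cauchy--Schwarz combined with the weighted hypothesis $\|(1+\sqrt x)\partial_x^k\tw^0\|_{L^2}\leq c_0\delta^{3/2}$ (together with Lemma~\ref{lem:x+y}) contributes $O(\delta^{3/2})$, and the $\tx'-s$ source contributes $O(\delta\cdot\delta)=O(\delta^2)$. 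Assembling, one obtains $\|g_1\|_{L^\infty_t L^2_x}+\|\partial_x(g_1/\bv)\|_{L^2_tL^2_x}\leq C\delta^{3/2}$.

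The next step is to convert this into a bound on $\|\tx'-s\|_{H^1}$. Starting from \eqref{y'-s}, the second term is $O(\delta^{3/2})$ since $|\tw^0(\tx(t))-u_+|\leq \|W^0\|_\infty+\|\partial_x\tw^0\|_{L^1}\lesssim c_0\delta^{3/2}$ by the weighted hypotheses. For the first term, the boundary condition $u_{|x=0}=u_-$, combined with the relation $w=\tw^0(\cdot+\tx(t))$, yields $\mu\partial_x v_{|x=0}=u_--\tw^0(\tx(t))$, so $\partial_x(u-\bu)_{|x=0}$ can be read off the equation satisfied by $u$ and ultimately expressed in terms of the trace of $g_1$ at $x=0$ via the definition of $\cA$. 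The standard trace inequality $\|g_1(\cdot,0)\|_{L^2_t}\leq C\|g_1\|_{L^\infty_tL^2_x}^{1/2}\|\partial_xg_1\|_{L^2_tL^2_x}^{1/2}$ then gives $\|\tx'-s\|_{L^2}\leq C\delta^{3/2}$. The $L^2$ bound on $\tx''$ requires repeating the analysis on $\partial_tg_1$ (or equivalently on $\partial_x g_1$ via the equation), using the time-derivative estimates from Propositions~\ref{cor:recap-est-v}--\ref{prop:est-u} to keep every quadratic term at the $\delta^2$ level.

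Choosing $\delta_0$ so that $C\delta^{1/2}\leq 1/2$ for $\delta\leq \delta_0$, and $c_0$ so small that the absolute constants produced by the source estimates are controlled, yields $\|\tx'-s\|_{H^1([0,T])}\leq C\delta^{3/2}\leq \delta/2$, closing the bootstrap. The main obstacle I anticipate is establishing the coercivity of $\cA$ together with the proper handling of the boundary contributions at $x=0$ in the $g_1$ energy identity (the traces of $g_1$ and $\partial_x g_1$ cannot be discarded and must be either absorbed by the dissipation or estimated through the boundary conditions satisfied by $v$ and $w$). A second delicate point is ensuring that the quadratic structure survives the derivation of the equation for $\partial_t g_1$: some commutators involving $\tx'$ and $\bv$ must be shown to factor as $(\tx'-s)\cdot(\text{small})$ rather than producing linear-in-$\delta$ residuals.
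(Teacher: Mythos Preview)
Your overall architecture matches the paper's: introduce $g_1=\cA(v-\bv)$, observe that $\cA\p_x\bv=0$ kills the dangerous linear term $(\tx'-s)\p_x\bv$, and exploit the resulting quadratic structure together with coercivity of $\cA$. However, the mechanism you propose for recovering $\|\beta\|_{H^1}$ (where $\beta=\tx'-s$) from the $g_1$ estimates has a genuine gap.

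First, the trace you point to carries no information about $\beta$. From $g_{|x=0}=0$ and $\bv(0)=1$ one gets $g_{1|x=0}=-\mu(\p_x g)_{|x=0}$, and the relation $u-\mu\p_x\ln v=\tw^0(\cdot+\tx)$ gives $\mu\p_x v_{|x=0}=u_--\tw^0(\tx)$, hence $g_{1|x=0}=\tw^0(\tx)-u_+$. This is $O(c_0\delta^{3/2})$ by hypothesis but is \emph{independent of $\beta$}; your trace inequality on $g_1(\cdot,0)$ cannot produce a bound on $\|\beta\|_{L^2}$. The relevant identity is one order higher: taking the trace of the equation for $g$ yields
\[
(\p_x g_1)_{|x=0}=\beta\,\frac{s(v_+-1)}{\mu}+R_1,
\]
with $R_1$ small. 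So $\beta$ lives in $\p_x g_{1|x=0}$, not $g_{1|x=0}$.

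Second, even after this correction, the a posteriori trace-inequality route runs into a circularity. Bounding $\|\p_x g_{1|x=0}\|_{L^2_t}$ by $\|\p_x g_1\|_{L^2L^2}^{1/2}\|\p_x^2 g_1\|_{L^2L^2}^{1/2}$ requires an $L^2(H^2)$ estimate on $g_1$; but when you multiply $\p_x(\text{eq.\ for }g_1)$ by $\p_x g_1$ to obtain it, the boundary contribution is precisely $(\p_x g_{1|x=0})^2\sim\beta^2$, together with $\mu\p_x(\p_x g_1/\bv)_{|x=0}\cdot\p_x g_{1|x=0}$, and the latter (computed from the trace of the equation for $g_1$) equals $-(s+\beta)(\p_x g_{1|x=0})^2$ plus small remainders. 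With the naive test function these two boundary pieces partially cancel and the sign is not favorable. The paper resolves this by testing against $\p_x g_1\cdot\rho/\bv$ with a weight chosen so that $\rho(0)=2$, $\rho'(0)=-4s/\mu$; then the full boundary combination becomes $+c\beta^2$ on the \emph{left}-hand side, and $\|\beta\|_{L^2}$ is obtained simultaneously with the $L^\infty(H^1)\cap L^2(H^2)$ bound rather than after it. The same weighted trick, applied to $\p_t\p_x g_1$, yields $\|\beta'\|_{L^2}$.

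A smaller point: your scaling $\|g_1\|\lesssim\delta^{3/2}$ is too optimistic, since $\|g_{1|t=0}\|_{L^2}\leq C\|\tv^0-\bv\|_{H^1}\leq C c_0^{1/2}\delta$, which dominates $\delta^{3/2}$ for fixed $c_0$. The final inequality one actually closes is $\|\beta\|_{H^1}^2\leq C\cE_0+C(\cE_0+\|\beta\|_{H^1}^2)^2+C\sum_k\|(1+\sqrt{x})\p_x^k\tw^0\|_{L^2}^{4/3}\leq Cc_0\delta^2+C\delta^4$, and it is the smallness of $c_0$ (not a $\delta^{3/2}$ gain) that gives $\|\beta\|_{H^1}\leq\delta/2$.
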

The proof of Proposition \ref{prop:bootstrap} is the main purpose of this section. Before describing the strategy of the proof, let us deduce the global existence result of Theorem \ref{thm:main-glob} from Proposition \ref{prop:bootstrap}.
First, it is clear from Proposition \ref{prop:bootstrap} and from a simple continuity argument that $\bar T=T^*$.
Second, notice that for all      $T< \bar T$, $\cE_T=\cE_0 + \|\tx'-s\|_{H^1(0,T)}^2 \leq 2 \delta^2 \leq 1$ provided $\delta_0\leq 1/\sqrt{2}$ and $c_0\leq 1$. Hence the total energy remains bounded on $(0, T^*)$. Classically, this implies that the solution is global. Let us explain why in the present context.

First, note that  for all $t\in [0, \bar T[$,
\be\label{bound:tx'}
\frac s 2 \leq s - \|\tx'-s\|_{L^\infty}\leq   \tx'(t)\leq s + \|\tx'-s\|_{L^\infty}\leq \frac{3s}{2}
\ee
provided $\delta$ is sufficiently small.

As a consequence, as long as $1\leq v\leq 2(v_++1)$, the estimates of Propositions \ref{cor:recap-est-v} and \ref{prop:est-u} hold, with a constant $C$ depending only on the parameters of the problem $s,\mu, v_+, u_\pm$ (note that we chose here $\bar C=2 (v_++1)$).
Hence, let us introduce
\[
\bar T':=\sup \left\{ T\in (0, \bar T),\quad 1\leq v\leq 2( v_+ + 1) \right\}.
\]
By continuity, if $\delta$ is small enough, we have $\bar T'>0$. Furthermore, there exists a constant $C$ depending only on the parameters of the problem such that
\[
\|\p_x (v-\bv)\|_{L^\infty((0, \bar T')\times \R_+)}\leq C \delta.
\]
Consequently, for all $t\in [0, \bar T']$,
\[
\inf_{x\in [0,1]} \p_x v(t,x)\geq \inf_{x\in [0,1]} \p_x \bv(x) - \|\p_x (v-\bv)\|_\infty \geq \frac{1}{2}\inf_{x\in [0,1]} \p_x \bv(x)>0
\]
provided $\delta$ is small enough.

Now, let us prove that $\bar T'=\bar T$ provided $\delta$ is small enough. We argue by contradiction and assume that $\bar T'<\bar T$. 
We then consider the Cauchy problem at $t=\bar T'$, and we apply Lemma \ref{lem:infty-est}. Then \eqref{hyp:M} is satisfied, with a constant $M$ depending only on the parameters of the problem. Furthermore, 
\[
\sup_x v(\bar T', x) \leq \sup \bv + \|v(\bar T')-\bv\|_{L^\infty(\R_+)}\leq v_+ + C\cE_{\bar T'} \leq v_+ + \frac{1}{2}
\]
provided $\delta$ is small enough.
As a consequence, there exists a time $\tau>0$, depending only on the parameters of the problem, such that
\[
1<v(t,x)\leq 2\left(v_+ + \frac{1}{2}\right)=2 v_+ +1
\]
for all $t\in [\bar T', \inf(\bar T, \bar T' + \tau)]$. This contradicts the definition of $\bar T'$, and we infer that $\bar T'=\bar T$.

 Thus the constants in Propositions \ref{cor:recap-est-v} and \ref{prop:est-u} and in all Lemmas of section \ref{sec:estimates} depend only on $s,\mu, v_+, u_\pm$ for all estimates bearing on the interval $[0, \bar T]$.   
 At last, let us emphasize that since $\cE_T\leq 1$ for all $T\leq \bar T$, the condition $\inf(1,T^{1/2}) E_1(T)\leq 1$ of Lemmas \ref{lem:Hreg-v-1}, \ref{lem:Hreg-v-2} etc. is always satisfied.

It then follows from Proposition \ref{prop:fixed-pt} that the time $T_M$ on which the fixed point argument is valid depends only on the parameters of the problem $s,\mu, v_+,u_\pm$. By a classical induction argument, we may solve the Cauchy problem on $[nT_M, (n+1)T_M]$ for all $n\geq 0$, and we deduce that $T^*=+\infty$.

\medskip

Let us now explain our strategy of proof of Proposition \ref{prop:bootstrap}. It relies on two sets of estimates:
\begin{itemize}
    \item The first set of estimates was obtained in Propositions \ref{cor:recap-est-v} and \ref{prop:est-u}. It ensures that if $\delta$ is small enough, as long as $T\leq \bar T$, setting $g:=v-\bv$, $h := u -\bu$,
    \begin{align}
    \|g\|_{L^\infty([0,T], H^3(\R_+))} 
    + \|\partial_t g\|_{L^\infty([0,T], H^1(\R_+))} + \|\p_t g\|_{L^2((0,T), H^2(\R_+))} \nonumber \\
    \leq  C (\cE_0^{1/2} + \|\tx'-s\|_{H^1(0,T)}),\label{est:g-global}\\
    \|h\|_{L^\infty([0,T], H^1(\R_+))}+ \|\partial_t h \|_{L^2([0,T], H^2(\R_+))} + \|\partial^2_t h\|_{L^2((0,T)\times \R_+)} \nonumber \\
    \leq  C (\cE_0^{1/2} + \|\tx'-s\|_{H^1(0,T)}),\label{est:h-global}
    \end{align}
    where the constant $C$ depends only on the parameters of the problem, namely $s,\mu,v_+$ and $u_+$, as explained above (recall that $\cE_T\leq 1$ for all $T<\bar T$).

    From there, using the equations \eqref{eq:g} and \eqref{eq:u} on $g$ and $h$ respectively, we also deduce that for all $T\leq \bar T$,
    \begin{align}
    \|g\|_{L^2((0,T), H^4(\R_+))} + \|h\|_{L^2((0,T), H^4(\R_+))} \nonumber \\
    \leq  C \left(\cE_0^{1/2} +\|(1+\sqrt{x})\p_x^3 \tw^0\|_{L^2(\R_+)}+ \|\tx'-s\|_{H^1(0,T)}\right).\label{est:g-h-global-bis}
   \end{align}

    
    \item The second set of estimates relies on a coercivity inequality for the linearized operator around $\bv$, which will be the main focus of this section. One crucial observation lies in the fact that $\p_x \bv$ belongs to the kernel of this linearized operator\footnote{Note that this is a classical property of nonlinear equations with constant coefficients, linearized around a given stationary solution. It is related to the (formal) space invariance of the equation.}. This allows us to define a new unknown
   \be\label{def:g1}
    g_1:= - s(\tv - \bv) - \mu \p_x\left(\frac{\tv - \bv}{\bv}\right),
    \ee
    which will satisfy better estimates than $g=\tv-\bv$ (cf Remark~\ref{rmk:g1}).
    
\end{itemize}

In order to keep the presentation as simple as possible, we will start with the case when $\tw^0\equiv u_+$, which contains the main ideas of the proof.
In subsection \ref{ssec:w0-qcq}, we will address the case of non-constant $\tw^0$, and we will point out the main differences with the simplified case.
We conclude this section with a proof of the long time stability in subsection \ref{ssec:stability}

\subsection{Case \texorpdfstring{$\tw^0\equiv u_+$}{w0 cst}}

In this section, we assume that $\tw^0$ is constant and equal to $u_+$. The equation satisfied by $v$ on $[0,T^*[$ is therefore
\[\ba
\p_t v - \tx' \p_x v - \mu \p_x^2 \ln v=0,\\
v_{|x=0}=1, \quad \lim_{x\to \infty}v(t,x)=v_+,\\
v_{|t=0}=\tv^0.
\ea
\]
In the rest of this section, we introduce the linearized operator around $\bv$, namely $\p_x \cA$, where
\[
\cA:=-s \mathrm{Id} - \mu \p_x\left( \frac{\cdot}{\bv}\right).
\]
We also set $\beta(t)=\tx'(t)-s$ and $g=v-\bv$, as in the previous sections. The equation on $g$ can be written as
\be\label{eq:g0}\ba
\p_t g + \p_x \cA g=  \beta \p_x \bv + \beta \p_x g + \mu \p_x^2\left(\ln\left(1+ \frac{g}{\bv}\right) - \frac{g}{\bv}\right),\\
g_{|t=0}=\tv^0-\bv,\\
g_{|x=0}=0.
\ea
\ee
Let us comment a little on the structure of this equation. We will prove that the operators $\p_x \cA$ and $\cA \p_x$ enjoy nice coercivity properties (see Lemma \ref{lem:coerc-A} below). The second and third terms in the right-hand side of \eqref{eq:g0} are quadratic and will be treated perturbatively, using the first set of estimates on $g$, namely \eqref{est:g-global} and \eqref{est:g-h-global-bis}. Eventually, $\p_x \bv\in \ker \cA$. Hence the first term in the right-hand side disappears when $\cA$ is applied to the equation. As a consequence, $g_1=\cA g$ satisfies the equation
\be\label{eq:g1}
\ba
\p_t g_1 + \cA \p_x g_1= \beta \cA \p_x g + \mu \cA \p_x^2\left(\ln\left(1+ \frac{g}{\bv}\right) - \frac{g}{\bv}\right),\\
g_{1|t=0}=\cA\left(\tv^0-\bv\right).\ea
\ee
From definition~\eqref{def:g1} and estimates \eqref{est:g-global}, \eqref{est:g-h-global-bis}, we know that 
\begin{equation}\label{reg:g1-1}
\begin{aligned}
g_1\in L^\infty((0, \bar T), H^2(\R_+)) , \quad g_1\in L^2((0,\bar T), H^3(\R_+))\\ \text{and that} \quad \p_t g_1\in L^\infty((0,\bar T), L^2(\R_+))\cap L^2((0, \bar T), H^1(\R_+)).  
\end{aligned}
\end{equation} 
Using the identity $u-\mu\p_x \ln v=u_+$, we also infer that $h-\mu\p_x \ln (1+g/\bv)=0$. Using \eqref{est:h-global} and \eqref{est:g-h-global-bis}, we deduce that 
\begin{equation}\label{reg:g1-3}
g_1 \in  L^2((0, \bar T), H^4(\R_+)),\quad \p_t g_1\in L^2((0, \bar T), H^2(\R_+)) \quad \text{and} \quad \p^2_t g_1\in L^2((0, \bar T) \times \R_+)
\end{equation}

\begin{rmk}\label{rmk:g1}
In Equation~\eqref{eq:g1}, all the terms in the right-hand side are quadratic (recall that $\|g\| \lesssim \cE_0^{1/2} + \|\beta\|$), so that $g_1$
can be expected to satisfy better estimates than $g$.
\end{rmk}

\bigskip

Before addressing the estimates on $g_1=\cA g$, let us derive some information on the traces of $\p_x^k g_1$ at $x=0$ for $k=0,1, 2$.
Since $\tw^0=u_+$, we know that
\[
u(t,x)- \mu \p_x \ln v(t,x)=u_+\quad \forall t>0, \ x>0.
\]
As a consequence,
\[
\p_x v_{|x=0}= \frac{u_--u_+}{\mu}=\p_x \bv_{|x=0},
\]
and thus $g_{1|x=0}=\p_x g_{|x=0}=0$. Taking the trace of \eqref{eq:g0} at $x=0$ (which is legitimate since all terms belong to $L^2((0, \bar T), H^2(\R_+))$, we find that
\be\label{trace-dx-g1}
\p_x g_{1|x=0}= \beta \p_x \bv_{|x=0}= \beta \frac{s(v_+-1)}{\mu}.
\ee
We now take the trace of \eqref{eq:g1} at $x=0$, noticing that all terms in \eqref{eq:g1} belong to $L^2((0,T), H^1(\R_+))$. Note that
\be\label{comm-A-dx}
\cA\p_x g= \p_x g_1 + [\cA, \p_x] g= \p_x g_1 + \mu \p_x \left(\frac{\p_x \bv}{\bv^2} g\right).
\ee
Furthermore, $\ln (1+X)-X$ is quadratic close to $X=0$, and we recall that $(g)_{|x=0} = (\p_x g)_{|x=0}=0$. 
It follows that 
\[
\left(\cA \p_x^2\left(\ln\left(1+ \frac{g}{\bv}\right) - \frac{g}{\bv}\right)\right)_{|x=0}=0.
\]
We infer from~\eqref{eq:g1} and~\eqref{comm-A-dx} that
\[
(\cA \p_x g_1)_{|x=0}
= \beta (\cA \partial_x g)_{|x=0}
= \beta \p_x g_{1|x=0}= \beta^2 \frac{s(v_+-1)}{\mu},
\]
and thus
\begin{eqnarray}\label{bc:p2xg1-0}
\mu \p_x \left(\frac{\p_x g_1}{\bv}\right)_{|x=0}
&=&- s \p_x g_{1|x=0} - \beta \p_x g_{1|x=0} \label{eq:px_g1_0}\\
&=&- \beta \frac{s^2(v_+-1)}{\mu} - \beta^2\frac{s(v_+-1)}{\mu}.
\end{eqnarray}

Let us now state a Lemma giving some coercivity properties on $\cA$:
\begin{lem}[Coercivity properties of $\p_x \cA$ and $\cA \p_x$]
\label{lem:coerc-A}~
\begin{itemize}
\item Estimate for $\cA \p_x$: let $\varphi \in H^2(\R_+)$. Then
\begin{align*}
\int_0^\infty (\cA \p_x \varphi )\; \varphi 
& = \mu\int_0^\infty \frac{(\p_x \varphi )^2}{\bar v} + \frac{s}{2}(\varphi(0))^2 + \mu \varphi'(0) \varphi(0) \\
& \geq \frac{\mu}{v_+} \int_0^\infty (\p_x \varphi )^2+ \frac{s}{2}(\varphi(0))^2 + \mu \varphi'(0) \varphi(0).
\end{align*}
\item Estimate for $ \p_x\cA$: let $\varphi \in H^2(\R_+)$. Then, for any weight function $\rho\in \mathcal C^2_b(\R_+)$ such that $\rho>0$ a.e.
\begin{eqnarray*}
\int_0^\infty (\p_x \cA \varphi ) \frac{\varphi }{\bv} \rho &\geq& \mu \int_0^\infty \left( \p_x \frac{\varphi }{\bv}\right)^2 \rho -C\|\rho\|_{W^{2,\infty}}\int_0^\infty \varphi ^2 \\&&+\varphi ^2(0)\left( \frac{s}{2} \rho(0)- \frac{\mu}{2}\rho'(0)\right) + \mu\p_x\left(\frac{\varphi }{\bv}\right)_{|x=0} \varphi (0) \rho(0) .
\end{eqnarray*}

\end{itemize}

\end{lem}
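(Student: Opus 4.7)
Both inequalities reduce to integration by parts, so the plan is simply to carry out those integrations carefully and to keep track of the boundary terms at $x=0$ as well as the lower-order terms arising from derivatives of $\bv$ and of the weight $\rho$. Note throughout that $\bv(0) = 1$, that $1 \leq \bv \leq v_+$, and that $\|\bv\|_{W^{2,\infty}}$ depends only on the parameters of the problem; also $\varphi \in H^2(\R_+)$ vanishes at infinity together with $\p_x \varphi$.

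For the first estimate, I would write
\[
\int_0^\infty (\cA \p_x \varphi)\,\varphi\,dx = -s\int_0^\infty \p_x\varphi\cdot\varphi\,dx - \mu \int_0^\infty \p_x\!\left(\frac{\p_x\varphi}{\bv}\right)\varphi\,dx.
\]
The first integral equals $\frac{s}{2}(\varphi(0))^2$ after integration by parts. For the second, one integration by parts yields
\[
-\mu\int_0^\infty \p_x\!\left(\frac{\p_x\varphi}{\bv}\right)\varphi\,dx = \mu\,\varphi'(0)\varphi(0) + \mu\int_0^\infty \frac{(\p_x\varphi)^2}{\bv}\,dx,
\]
and the pointwise bound $\bv \leq v_+$ gives the claimed lower bound.

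For the second estimate, the strategy is the same, but the weight $\rho$ and the non-constant coefficient $1/\bv$ produce additional lower-order terms. After expanding $\p_x \cA \varphi = -s\p_x\varphi - \mu \p_x^2(\varphi/\bv)$, I would integrate by parts once in the $-\mu\p_x^2(\varphi/\bv)$ term against the test function $(\varphi/\bv)\rho$, which produces the main dissipation
$\mu\int_0^\infty (\p_x(\varphi/\bv))^2\rho\,dx$,
the boundary contribution $\mu\,\p_x(\varphi/\bv)_{|x=0}\,\varphi(0)\rho(0)$, and a cross term $\mu\int \p_x(\varphi/\bv)(\varphi/\bv)\p_x\rho\,dx$ which, after a further integration by parts written as $\frac{\mu}{2}\int \p_x((\varphi/\bv)^2)\p_x\rho$, becomes $-\frac{\mu}{2}\rho'(0)\varphi(0)^2 - \frac{\mu}{2}\int (\varphi/\bv)^2 \p_x^2\rho\,dx$. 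For the transport term $-s\int \p_x\varphi\cdot(\varphi/\bv)\rho\,dx$, I would rewrite $\varphi \p_x\varphi\cdot\rho/\bv = \p_x(\varphi^2\rho/(2\bv)) - \frac{\varphi^2}{2}\p_x(\rho/\bv)$ and integrate, obtaining the boundary term $\frac{s}{2}\rho(0)\varphi(0)^2$ plus lower-order terms bounded by $C\|\rho\|_{W^{2,\infty}}\int \varphi^2$.

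Collecting all boundary contributions at $x=0$ produces exactly $\varphi(0)^2\!\left(\frac{s}{2}\rho(0)-\frac{\mu}{2}\rho'(0)\right) + \mu \p_x(\varphi/\bv)_{|x=0}\varphi(0)\rho(0)$, while all remaining volume terms involving $\p_x\rho$, $\p_x^2\rho$, and $\p_x\bv$ are absorbed into $C\|\rho\|_{W^{2,\infty}}\int_0^\infty \varphi^2\,dx$ using $\bv\geq 1$. No real obstacle is expected; the only thing to watch out for is bookkeeping of signs in the boundary term at $x=0$ (remembering that $[\cdot]_0^\infty = -\,\cdot\,|_{x=0}$) and the fact that $\rho$ is not assumed to vanish, so no boundary term at $x=0$ may be discarded.
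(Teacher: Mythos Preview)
Your proposal is correct and follows essentially the same approach as the paper's proof: both parts are handled by direct integration by parts, using $\bv(0)=1$ to simplify the boundary terms and the bound $1\leq\bv\leq v_+$ for the coercivity, with the cross term $\mu\int \p_x(\varphi/\bv)(\varphi/\bv)\p_x\rho$ in the second estimate rewritten as $\tfrac{\mu}{2}\int\p_x((\varphi/\bv)^2)\p_x\rho$ and integrated once more. The bookkeeping of boundary signs and lower-order terms that you flag is exactly what the paper's proof does, and there is no additional idea needed.
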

The proof of Lemma \ref{lem:coerc-A} is straightforward and provided in Appendix \ref{app:lem:coerc-A} for the reader's convenience.

We are now ready to tackle the estimates on $g_1$. In the following, we denote by $C$ any constant depending only on the parameters of the problem (i.e. $v_+, u_\pm, s, \mu$).

\bigskip
$\rhd${\it $L^\infty(L^2)\cap L^2(H^1)$ estimate:}

We multiply equation \eqref{eq:g1} by $g_1$. Using the first point in Lemma \ref{lem:coerc-A} together with the observation \eqref{comm-A-dx}, we obtain
\begin{equation}\label{eq:energ-g1-0}
\frac{1}{2}\frac{d}{dt}\int_0^\infty g_1^2 + \frac{\mu}{v_+}\int_0^\infty (\p_x g_1)^2\leq \left| \mu \beta \int_0^\infty\p_x \left(\frac{\p_x \bv}{\bv^2} g\right) g_1\right| + \mu \left|  \int_0^\infty\cA \p_x^2 \left(\ln\left(1+ \frac{g}{\bv}\right) - \frac{g}{\bv}\right) g_1\right|.
\end{equation}
We now evaluate separately the two terms in the right-hand side.
\begin{itemize}
    \item Integrating by parts the first term, we obtain
    \[
    \left| \int_0^\infty\p_x \left(\frac{\p_x \bv}{\bv^2} g\right) g_1\right|\leq C \|\p_x g_1\|_{L^2(\R_+)}\|g\|_{L^2(\R_+)}.
    \]
    Recalling that
    \[
    \|g\|_{L^\infty([0,\bar T], L^2(\R_+))}^2\leq \cE_0 + C\|\beta\|_{L^2(0, \bar T)}^2,
    \]
    we obtain
    \[
    \left| \mu \beta \int_0^\infty\p_x \left(\frac{\p_x \bv}{\bv^2} g\right) g_1\right|\leq \frac{\mu}{4v_+}\|\p_x g_1\|_{L^2(\R_+)}^2 + C(\cE_0 + \|\beta\|_{L^2(0, \bar T)}^2)|\beta|^2.
    \]
    
    \item Using the definition of the differential operator $\cA$ and integrating by parts, we have
    \begin{eqnarray*}
    &&\left|  \int_0^\infty\cA \p_x^2 \left(\ln\left(1+ \frac{g}{\bv}\right) - \frac{g}{\bv}\right) g_1\right|\\
    &\leq &C \left|  \int_0^\infty \p_x \left(\ln\left(1+ \frac{g}{\bv}\right) - \frac{g}{\bv}\right) \p_x g_1\right|+ C \left|  \int_0^\infty \p_x^2 \left(\ln\left(1+ \frac{g}{\bv}\right) - \frac{g}{\bv}\right) \p_x g_1\right|.
    \end{eqnarray*}
We now consider the nonlinear term. We have
    \be\label{diff-quadratic}\ba
    \p_x \left(\ln\left(1+ \frac{g}{\bv}\right) - \frac{g}{\bv}\right)= -\p_x \left( \frac{g}{\bv}\right) \frac{\frac{g}{\bv}}{1+ \frac{g}{\bv}},\\
     \p_x^2 \left(\ln\left(1+ \frac{g}{\bv}\right) - \frac{g}{\bv}\right)= 
     -\p_x^2 \left( \frac{g}{\bv}\right) \frac{\frac{g}{\bv}}{1+ \frac{g}{\bv}} - \left( \p_x \left( \frac{g}{\bv}\right)\right)^2 \frac{1}{(1+ \frac{g}{\bv})^2}.\ea
    \ee
Consequently, recalling that $\bv + g= v\geq 1$, we have
    \begin{eqnarray*}
    &&\left|  \int_0^\infty\cA \p_x^2 \left(\ln\left(1+ \frac{g}{\bv}\right) - \frac{g}{\bv}\right) g_1\right|\\
    &\leq & C \|\p_x g_1\|_{L^2(\R_+)}\left(\|g\|_{L^\infty(\R_+)} \left\| \p_x \left( \frac{g}{\bv}\right) \right\|_{H^1(\R_+)} + \left\| \p_x \left( \frac{g}{\bv}\right) \right\|_{L^4(\R_+)}^2\right)\\
    &\leq & C \|\p_x g_1\|_{L^2(\R_+)}\|g\|_{W^{1,\infty}(\R_+)} \left\| \p_x \left( \frac{g}{\bv}\right) \right\|_{H^1(\R_+)}.
     \end{eqnarray*}
    Since
    \[
    \|g\|_{L^\infty([0,\bar T], W^{1,\infty}(\R_+))}\leq \|g\|_{L^\infty([0,\bar T], H^2(\R_+))}\leq C(\cE_0^{1/2} + \|\beta\|_{H^1(0, \bar T)}),
    \]
    we obtain eventually
    \[
    \left| \mu \int_0^\infty\cA \p_x^2 \left(\ln\left(1+ \frac{g}{\bv}\right) - \frac{g}{\bv}\right) g_1\right|
    \leq \frac{\mu}{4 v_+}\|\p_x g_1\|_{L^2(\R_+)}^2 + C (\cE_0 + \|\beta\|_{H^1(0, \bar T)}^2)\left\| \p_x \left( \frac{g}{\bv}\right) \right\|_{H^1(\R_+)}^2.
    \]
\end{itemize}
Gathering all the terms, we are led to
\[
\frac{d}{dt}\int_0^\infty g_1^2 + \frac{\mu}{v_+}\int_0^\infty (\p_x g_1)^2
\leq  C(\cE_0 + \|\beta\|_{H^1(0, \bar T)}^2)\left(|\beta|^2 + \left\| \p_x \left( \frac{g}{\bv}\right) \right\|_{H^1(\R_+)}^2\right).
\]
Integrating in time and using the bound
\[
\|g\|_{L^2([0, \bar T], H^2(\R_+))}\leq C(\cE_0^{1/2} + \|\beta\|_{H^1(0, \bar T)}),
\]
we get
\be\label{borne-g1-L2H1}
\|g_1\|_{L^\infty([0,\bar T], L^2(\R_+))}^2 + \frac{\mu}{v_+}\|\p_x g_1\|_{L^2([0,\bar T], L^2(\R_+))}^2 \leq \cE_0 + C (\cE_0 + \|\beta\|_{H^1(0, \bar T)}^2)^2.
\ee

\bigskip

$\rhd${\it $L^\infty(H^1)\cap L^2(H^2)$ estimate:}
Differentiating \eqref{eq:g1} with respect to $x$, we obtain, using \eqref{comm-A-dx}
\be\label{eq:dxg1}
\p_t \p_x g_1 + \p_x \cA \p_x g_1 = \beta \p_x^2 g_1 + \mu \beta \p_x^2\left(\frac{\p_x \bv}{\bv^2} g\right) + \mu \p_x \cA \p_x^2 \left(\ln\left(1+ \frac{g}{\bv}\right) - \frac{g}{\bv}\right).
\ee
We also recall~\eqref{eq:px_g1_0}:
\[
\p_x g_{1|x=0}= \beta \frac{s(v_+-1)}{\mu},\quad 
\mu \p_x \left(\frac{\p_x g_1}{\bv}\right)_{|x=0} =- s \p_x g_{1|x=0} - \beta \p_x g_{1|x=0}.
\]
We multiply \eqref{eq:dxg1} by $\p_x g_1 \rho/\bv $, where $\rho\in \mathcal C^2_b(\R_+)$ is a weight function that will be chosen momentarily, and such that $\rho\geq 1$.  Using the second part of Lemma \ref{lem:coerc-A} and replacing $\mu \p_x \left(\frac{\p_x g_1}{\bv}\right)_{|x=0}$, we have
\begin{eqnarray*}
\int_0^\infty \left( \p_x \cA \p_x g_1\right) \;\p_x g_1 \; \frac{\rho}{\bv } &\geq & \mu \int_0^\infty \left(\p_x \left(\frac{\p_x g_1}{\bv}\right)\right)^2 \rho - C \|\rho\|_{W^{2,\infty}}\int_0^\infty (\p_x g_1)^2\\
&&+ \left( \p_x g_{1|x=0}\right)^2 \left( -\frac{s}{2} \rho(0)- \frac{\mu}{2}\rho'(0) - \beta \rho(0)\right).
\end{eqnarray*}

We now choose the function $\rho$ so that the last term is positive at main order. More precisely, we take
\[
 \rho(0)=2,\quad \rho'(0)=-\frac{4s}{\mu},\quad 1\leq \rho(x)\leq 2\quad \forall x\in \R_+.
\]
Then
\begin{equation}\label{eq:diffu_pxg1}
\int_0^\infty \left( \p_x \cA \p_x g_1\right) \;\p_x g_1 \; \frac{\rho}{\bv } \geq  \mu \int_0^\infty \left(\p_x \frac{\p_x g_1}{\bv}\right)^2 + \beta^2\frac{s^3(v_+-1)^2}{\mu^2} - C \|\rho\|_{W^{2,\infty}}\int_0^\infty (\p_x g_1)^2  - C|\beta|^3.
\end{equation}
Note that the term $\|\p_x g_1\|_{L^2}$ is controlled thanks to \eqref{borne-g1-L2H1}.

We now address the terms in the right-hand side of \eqref{eq:dxg1}.
\begin{itemize}
    \item The first term is easily bounded thanks to the $L^2 L^2$ bound \eqref{borne-g1-L2H1} on $\p_x g_1$. We have
\[
\int_0^\infty \p_x^2 g_1 \p_x g_1 \frac{\rho}{\bv}
= -\frac{1}{2}\int_0^\infty(\p_x g_1)^2 \p_x\left(  \frac{\rho}{\bv}\right)-(\p_x g_{1|x=0})^2,
\]
    so that
    \[
    \left| \beta \int_0^\infty \p_x^2 g_1 \p_x g_1 \frac{\rho}{\bv}\right| 
    \leq C |\beta| \|\p_x g_1\|_{L^2(\R_+)}^2 + C |\beta|^3.
    \]

    \item For the second term in the right-hand side, we use the bounds on $g$. We have
    \[
    \left|\mu \beta \int_0^\infty \p_x^2\left(\frac{\p_x \bv}{\bv^2} g\right) \p_x g_1 \frac{\rho}{\bv}\right|\leq C |\beta|\|\p_x g_1\|_{L^2(\R_+)}\|g\|_{H^2(\R_+)}.
    \]
    
    \item Let us now address the nonlinear term, which is quadratic in $g$. Integrating by parts and recalling that $\cA\p_x^2 (\ln(1+g/\bv)- g/\bv)$ vanishes at $x=0$, we have
    \begin{eqnarray*}
  &&  \int_0^\infty \p_x \cA \p_x^2 \left(\ln\left(1+ \frac{g}{\bv}\right) - \frac{g}{\bv}\right) \p_x g_1 \frac{\rho }{\bv}\\
    &=& - \int_0^\infty \cA \p_x^2 \left(\ln\left(1+ \frac{g}{\bv}\right) - \frac{g}{\bv}\right)\left(\p_x \frac{\p_x g_1}{\bv}\right) \rho\\
    &&- \int_0^\infty \cA \p_x^2 \left(\ln\left(1+ \frac{g}{\bv}\right) - \frac{g}{\bv}\right)\left(\frac{\p_x g_1}{\bv}\right) \p_x \rho.
     \end{eqnarray*}
    Furthermore, by definition of the operator $\cA$,
   \begin{eqnarray*}
   \cA \p_x^2 \left(\ln\left(1+ \frac{g}{\bv}\right) - \frac{g}{\bv}\right)
   &=&-s \p_x^2 \left(\ln\left(1+ \frac{g}{\bv}\right) - \frac{g}{\bv}\right)\\
   &&-\mu \p_x \left(\frac{1}{\bv}  \p_x^2 \left(\ln\left(1+ \frac{g}{\bv}\right) - \frac{g}{\bv}\right)\right).
   \end{eqnarray*}
    Remembering \eqref{diff-quadratic}, we obtain
    \begin{eqnarray*}
      && \left|\cA \p_x^2 \left(\ln\left(1+ \frac{g}{\bv}\right) - \frac{g}{\bv}\right)\right|\\
      &\leq & C\left( \left|\p_x^3\frac{g}{\bv}\right| \|g\|_{L^\infty}+  \left|\p_x^2\frac{g}{\bv}\right| \|g\|_{W^{1,\infty}} + \left|\p_x\frac{g}{\bv}\right|^2 +   \left|\p_x\frac{g}{\bv}\right|^3\right).
    \end{eqnarray*}
    As a consequence,
    \begin{align} \label{eq:nonlin-1}
        & \left| \mu  \int_0^\infty \p_x \cA \p_x^2 \left(\ln\left(1+ \frac{g}{\bv}\right) - \frac{g}{\bv}\right) \p_x g_1 \frac{\rho }{\bv}\right| \nonumber \\
        & \leq C\left( \left\| \p_x \frac{\p_x g_1}{\bv} \right\|_{L^2(\R_+)} + \|\p_x g_1\|_{L^2(\R_+)}\right)\left( \|g\|_{L^\infty_{t,x}}\|\partial_x g\|_{H^2(\R_+)} + \|g\|_{W^{1,\infty}_{t,x}}\|g\|_{H^2(\R_+)}\right).
    \end{align}
    In the inequality above, we have used the fact that $\|g\|_{L^\infty([0,\bar T], W^{1,\infty})}$ remains small, so that the cubic term $|\p_x (g/\bv)|^3$ remains smaller than $|\p_x (g/\bv)|^2$.

\end{itemize}

Gathering all the terms and using several times the Cauchy-Schwarz inequality, we obtain, with $c=s^3(v_+-1)^2/\mu^2$
\begin{eqnarray*}
&&\frac{1}{2}\frac{d}{dt}\int_0^\infty (\p_x g_1)^2 \frac{\rho}{\bv} + \frac{\mu}{2}\int_0^\infty\left( \p_x \frac{\p_x g_1}{\bv}\right)^2 + c \beta^2\\
&\leq & C \left(1+ \|\beta\|_{L^\infty(0,\bar T)}\right)\int_0^\infty (\p_x g_1)^2 + C |\beta|^3 + C |\beta|^2\|g\|_{H^2(\R_+)}^2\\
&&+ C \|g\|_{L^\infty([0,\bar T]\times \R_+) }^2 \|g\|_{H^3(\R_+)}^2 + C \|g\|_{L^\infty([0,\bar T], W^{1,\infty}(\R_+)) }^2 \|g\|_{H^2(\R_+)}^2. 
\end{eqnarray*}
Recall that $\|\beta\|_{L^\infty(0,\bar T)}\lesssim \|\beta\|_{H^1(0,\bar T)}\leq 1$ for $\delta$ small enough.
Integrating with respect to time and using \eqref{borne-g1-L2H1}, we get
\begin{multline}\label{est:b}
    \|\p_x g_1\|_{L^\infty([0,\bar T], L^2)}^2 + \left\|\p_x \frac{\p_x g_1}{\bv}\right\|^2_{L^2([0,\bar T]\times \R_+)}+ \|\beta\|_{L^2([0,\bar T])}^2\\
    \leq  C\cE_0 + C(\cE_0 + \|\beta\|_{H^1(0,\bar T)}^2)^2 + C(\cE_0^{1/2} + \|\beta\|_{H^1(0,\bar T)})\|\beta\|_{L^2(0,\bar T)}^2.
    \end{multline}
Note additionally that the last term in the right hand side can be absorbed in the left-hand side provided $\cE_0^{1/2} + \|\beta\|_{H^1(0,\bar T)}$ is small enough.

\bigskip

$\rhd${\it $W^{1,\infty}(L^2)\cap H^1(H^1)$ estimate:}
Let us now consider the time derivative of $g_1$ satisfying
\begin{equation}\label{eq:dt-g1}
\begin{cases}
\partial_t(\partial_t g_1) + \cA \partial_x \partial_t g_1
= \beta \cA \partial_x \partial_t g + \mu \cA \partial_x^2 \partial_t \left(\ln \left(1+ \dfrac{g}{\bv}\right) - \dfrac{g}{\bv} \right)
+ \beta'\cA\partial_x g, \\
(\partial_t g_1)_{|x=0} = 0.
\end{cases}
\end{equation}
Recall that we have by~\eqref{comm-A-dx}:
\[
\cA \partial_x \partial_t g 
= \partial_x \partial_t g_1 + \mu \partial_x \partial_t \left(\dfrac{\partial_x \bv}{\bv^2}g\right).
\]
Multiplying Equation~\eqref{eq:dt-g1} by $\partial_t g_1$ and using once again Lemma \ref{lem:coerc-A}, we get
\begin{align*}
& \dfrac{1}{2} \dfrac{d}{dt} \int_{\R_+} |\partial_t g_1|^2 + \dfrac{\mu}{v_+} \int_{\R_+} |\partial_x \partial_t g_1|^2 \\
& \leq \mu |\beta| \left|\int_{\R_+} \partial_x \partial_t \left(\dfrac{\partial_x \bv}{\bv^2} g \right) \partial_t g_1 \right|
+ \mu \left| \int_{\R_+} \cA \partial_x^2 \partial_t \left(\ln \left(1+ \dfrac{g}{\bv}\right) - \dfrac{g}{\bv} \right) \partial_t g_1\right| \\
& \quad + |\beta'| \left| \int_{\R_+}\cA\partial_x g \partial_t g_1 \right|.
\end{align*}
Integrating by parts and using the Cauchy-Schwarz inequality together with \eqref{est:g-global}, we have for the different terms of the right-hand side:
\begin{itemize}
    \item First
\begin{align*}
\mu |\beta| \left|\int_{\R_+} \partial_x \partial_t \left(\dfrac{\partial_x \bv}{\bv^2} g \right) \partial_t g_1 \right|
& \leq \dfrac{\mu}{4 v_+} \|\partial_t \partial_x g_1\|_{L^2(\R_+)}^2 + C \|\partial_t g\|_{L^\infty([0,\bar T],L^2(\R_+))}^2 |\beta|^2;
\end{align*}
\item Next, for the nonlinear term, we differentiate \eqref{diff-quadratic} with respect to time, which yields
\begin{align*}
& \mu \left| \int_{\R_+} \cA \partial_x^2 \partial_t \left(\ln \left(1+ \dfrac{g}{\bv}\right) - \dfrac{g}{\bv} \right) \partial_t g_1\right| \\
& \leq s\mu \left|\int_{\R_+} \partial_t \partial_x \left(\ln \left(1+ \dfrac{g}{\bv}\right) - \dfrac{g}{\bv} \right)\partial_t \partial_x g_1\right|
+ \mu^2 \left|\int_{\R_+}  \dfrac{1}{\bv} \partial_t \partial^2_x \left(\ln \left(1+ \dfrac{g}{\bv}\right) - \dfrac{g}{\bv} \right)\partial_t \partial_x g_1\right| \\
& \leq \dfrac{\mu}{4v_+} \|\partial_t \partial_x g_1\|_{L^2}^2
+ C \Big(
\|g\|_{W^{1,\infty}_{t,x}}^2 \left\|\partial_t \partial_x \left(\frac{g}{\bv}\right)\right\|_{H^1(\R_+)}^2 + \|\partial_t g\|_{L^\infty_{t,x}}^2 \| g\|_{H^2(\R_+)}^2\Big)\\
&\qquad
+ C\|\partial_t g\|_{L^\infty_{t,x}}^2 \left\| \partial_x \left(\dfrac{g}{\bv}\right)\right\|_{L^4(\R_+)}^2
;
\end{align*}

\item Finally, recalling \eqref{comm-A-dx}
\begin{align*}
&\qquad|\beta'| \left| \int_{\R_+}\cA\partial_x g \partial_t g_1 \right|\\
& \leq |\beta'| \left| \int_{\R_+}\Big[g_1 + \mu \dfrac{\partial_x \bv}{\bv^2} g \Big] \partial_x \partial_t g_1 \right| \\
& \leq \dfrac{\mu}{4v_+} \|\partial_t \partial_x g_1\|_{L^2(\R_+)}^2
+ C |\beta'|^2 \Big( \|g_1\|_{L^\infty([0,\bar T],L^2(\R_+))}^2 + \|g\|_{L^\infty([0,\bar T],L^2(\R_+))}^2\Big).
\end{align*}
\end{itemize}
Eventually, we obtain after time integration of all previous terms and the use of the estimates \eqref{borne-g1-L2H1} on $g_1$, \eqref{est:g-global} on $g$
\begin{align}
\|\partial_t g_1\|_{L^\infty([0, \bar T], L^2(\R_+))}^2
+ \|\partial_t \partial_x g_1\|_{L^2((0,\bar T) \times \R_+)}^2
\leq C \Big[\cE_0 + (\cE_0 +\|\beta\|_{H^1(0,\bar T)}^2 )^2 \Big].
\end{align}

\bigskip
$\rhd${\it $W^{1,\infty}(H^1)\cap H^1(H^2)$ estimate:}
We differentiate Eq~\eqref{eq:dxg1} with respect to time:
\begin{align}\label{eq:dtdx-g1}
 \partial_t (\partial_t \partial_x g_1) + \partial_x \cA \partial_t \partial_x g_1  
& = \beta \partial^2_x \partial_t g_1 + \beta'\partial^2_x g_1
+ \mu \beta' \partial^2_x \left(\dfrac{\partial_x \bv}{\bv^2} g\right) \\
& \quad + \mu \beta \partial^2_x \left(\dfrac{\partial_x \bv}{\bv^2} \partial_t g\right) 
+ \mu \partial_x \cA \partial_x^2 \partial_t \left(\ln \left(1+ \dfrac{g}{\bv}\right) - \dfrac{g}{\bv} \right)\nonumber.
\end{align}
This equation is endowed with the following boundary condition, which is to be understood in a weak sense
\begin{align}\label{BC:px2g1-0}
\mu \partial_x \left(\dfrac{\partial_t\partial_x g_1}{\bv}\right)_{|x=0}
& = -\beta'(\partial_x g_1)_{|x=0} - (\beta+s) (\partial_t \partial_x g_1)_{|x=0} \nonumber \\
& = - \beta'(2\beta +s)s  \dfrac{(v_+-1)}{\mu}.
\end{align}
In other words, for any test function $\psi\in L^2((0,T), H^1(\R_+))$, for almost every $t\in [0,T]$ (recall the regularities~\eqref{reg:g1-1}-\eqref{reg:g1-3} on $g_1$),
\begin{align}
\nonumber&\langle \p_t (\p_t\p_x g_1)(t), \psi(t)\rangle_{H^{-1}, H^1} -\int_0^\infty \cA(\p_t\p_x g_1)\; \p_x \psi +\left(s\p_t\p_x g_{1|x=0} - \beta'(2\beta +s)s  \dfrac{(v_+-1)}{\mu}\right)\psi_{|x=0}\\
\label{weak-dtdxg1}&=-\int_0^\infty \mu \cA \p_x^2 \p_t \left(\ln \left(1+ \dfrac{g}{\bv}\right) - \dfrac{g}{\bv} \right) \p_x \psi(t,x) \\
\nonumber&\quad +\int_0^\infty \left[\beta \p_x^2\p_t g_1 + \beta'\partial^2_x g_1
+ \mu \beta' \partial^2_x \left(\dfrac{\partial_x \bv}{\bv^2} g\right)
+ \mu \beta \partial^2_x \left(\dfrac{\partial_x \bv}{\bv^2} \partial_t g\right) \right] \psi.
\end{align}
In particular, we have used the fact that
\[
\mu \p_x^3\p_t  \left(\ln \left(1+ \dfrac{g}{\bv}\right) - \dfrac{g}{\bv} \right)_{|x=0}=0
\]
in a weak sense. 
The weak formulation~\eqref{weak-dtdxg1} can be obtained by writing the weak formulation of~\eqref{eq:dxg1} associated with $\p_x g_1$, taking a test function of the form $\p_t \psi$ with $\psi \in \mathcal C^1([0,\bar T[, H^1(\R_+))$, and integrating by parts in time.

Similarly to the $L^\infty(H^1)$ estimate we take $\psi=\partial_x(\partial_t g_1) \rho / \bv$ in the weak formulation above, with the same weight function $\rho$ satisfying $\rho(0)=2$, $\rho'(0)=-\frac{4s}{\mu}$, $1\leq \rho(x)\leq 2~\forall x\in \R_+$, and we evaluate each term separately. 
At the boundary $x=0$, we have
\begin{equation}\label{BC:pxg1-0}
(\partial_t \partial_x g_1)_{|x=0} = \dfrac{s(v_+-1)}{\mu} \beta', 
\end{equation}
and the equality holds in $L^2(0,T)$.

\medskip
\noindent
$\bullet$ For the transport-diffusion term we have, replacing $\psi$ by $\partial_x\partial_t g_1 \rho / \bv$ in the left-hand side of \eqref{weak-dtdxg1} and using Lemma \ref{lem:coerc-A},
\begin{align*}
& -\int_0^\infty \cA(\p_t\p_x g_1)\; \p_x \left(\partial_x\partial_t g_1 \frac{\rho}{\bv}\right) +\left(s\p_t\p_x g_{1|x=0} - \beta'(2\beta +s)s  \dfrac{(v_+-1)}{\mu}\right)\left(\partial_x\partial_t g_1 \frac{\rho}{\bv}\right)_{|x=0}\\
 & \geq \mu \int_{\R_+} \left( \partial_x \left(\dfrac{\partial_x \partial_t g_1}{\bv} \right) \right)^2 \rho 
- C \|\rho\|_{W^{2,\infty}}  \int_{\R_+} \big(\partial_x \partial_t g_1\big)^2 \\
 & \quad + \left( \partial_t \p_x g_{1|x=0}\right)^2 \left( \frac{s}{2} \rho(0)- \frac{\mu}{2}\rho'(0) \right)\\
 & \quad - \beta'(2\beta +s)s  \dfrac{(v_+-1)}{\mu} \big(\partial_t \partial_x g_1\big)_{|x=0} \ \rho(0),
\end{align*}
so that, with the  boundary condition \eqref{BC:pxg1-0} and with a computation identical to \eqref{eq:diffu_pxg1}, the right-hand side of the above inequality is equal to
\begin{align*}
 &  \mu \int_{\R_+} \left( \partial_x \left(\dfrac{\partial_x \partial_t g_1}{\bv} \right) \right)^2 \rho 
- C \|\rho\|_{W^{2,\infty}}  \int_{\R_+} \big(\partial_x \partial_t g_1\big)^2 \\
 & \quad + 2\dfrac{s^3 (v_+-1)^2}{\mu^2} |\beta'|^2 
 - 4 \dfrac{s^2(v_+-1)^2}{\mu^2} \beta |\beta'|^2.
\end{align*}
Similarly to the $L^\infty(H^1)$ estimate, the last term will be absorbed in the penultimate one when $\|\beta\|_{L^\infty} \leq \delta$ is small enough, while the second integral is controlled thanks to the previous $H^1(H^1)$-estimate.

In the right-hand side of our estimate, we have to control the following integrals
\begin{align*}
I_1 & = \beta \int_{\R_+} \partial_x^2 \partial_t g_1 \; \partial_t \partial_x g_1 \dfrac{\rho}{\bv} \\
I_2 & = \beta' \int_{\R_+} \partial^2_x g_1 \partial_t \partial_x g_1 \dfrac{\rho}{\bv} \\
I_3 & = \mu \beta' \int_{\R_+} \partial^2_x \left(\dfrac{\partial_x \bv}{\bv^2} g\right) \partial_t \partial_x g_1 \dfrac{\rho}{\bv} \\
I_4 & = \mu \beta \int_{\R_+} \partial^2_x \left(\dfrac{\partial_x \bv}{\bv^2} \partial_t g\right) \partial_t \partial_x g_1 \dfrac{\rho}{\bv} \\
I_5 & = \mu \int_{\R_+}  \cA \partial^2_x \partial_t \left(\ln \left(1 + \frac{g}{\bv}\right) - \frac{g}{\bv}\right)\p_x\left(\partial_t \partial_x g_1 \dfrac{\rho}{\bv}\right).
\end{align*}

\noindent
$\bullet$ The first integral is controlled via an integration by parts:
\begin{align*}
|I_1|
&  \leq \dfrac{|\beta|}{2} \int_{\R_+} (\partial_t \partial_x g_1)^2 \partial_x \left(\frac{\rho}{\bv}\right) 
+ \beta \left(\partial_t \partial_x g_1\right)^2_{|x=0} \\
& \leq C|\beta| \|\partial_t \partial_x g_1\|_{L^2(\R_+)}^2 
+ C |\beta'|^2 \beta.
\end{align*}

\medskip
\noindent
$\bullet$ For $I_2, I_3, I_4$, the Cauchy-Schwarz inequality yields
\begin{align*}
|I_2| + |I_3| + |I_4|
& \leq  \|\partial_x \partial_t g_1\|_{L^2(\R_+)}^2 
+ C|\beta'|^2 \Big(\|\partial^2_x g_1\|_{L^\infty([0,\bar T],L^2(\R_+))}^2+ \|g\|_{L^\infty([0,\bar T],H^2(\R_+))}^2 \Big) \\
& \quad
+ \|\beta\|_{L^\infty(0,\bar T)}^2 \|\partial_t g(t)\|_{H^2(\R_+)}^2.
\end{align*}

\medskip
\noindent
$\bullet$ For the nonlinear term $I_5$, we have similarly to~\eqref{eq:nonlin-1}:
\begin{align*} 
    & \left| \mu  \int_0^\infty  \cA \partial^2_x \partial_t \left(\ln \left(1 + \frac{g}{\bv}\right) - \frac{g}{\bv}\right)\p_x\left(\partial_t \partial_x g_1 \dfrac{\rho}{\bv}\right)\right| \nonumber \\
    & \leq C \left(\left\| \p_x \frac{\p_x \p_t g_1}{\bv} \right\|_{L^2(\R_+)} + \|\p_t \p_x g_1\|_{L^2(\R_+)}\right) \\
    & \quad \times \Bigg(  \|g\|_{L^\infty_{t,x}}\|\p_t \partial_x g\|_{H^2(\R_+)} + \|\p_t g\|_{W^{1,\infty}_{t,x}}\|g\|_{H^2(\R_+)} +  \|g\|_{W^{1,\infty}_{t,x}}(\|\p_t g\|_{H^2(\R_+)} + \|g\|_{H^3(\R_+)})
    \Bigg). \nonumber 
\end{align*}
To control $\|\partial_t \partial_x g\|_{H^2(\R_+)}$, we write in the case $\tw^0\equiv u_+$:
\[
\mu \p_t \p_x^3 \ln\left(1 + \frac{g}{\bv}\right)=\mu \partial_t \partial^3_x \ln \frac{v}{\bv} 
= \partial_t \partial^2_x (u -\bu),
\]
so that $\|\partial_t \partial_x g\|_{H^2(\R_+)} \leq C( \|\partial_t h\|_{H^2(\R_+)} + \|\p_t g\|_{H^2(\R_+)})$, where we recall that $h = u -\bu$.

\medskip
Gathering all terms and integrating in time, we get, using \eqref{est:g-global}, \eqref{est:h-global} and \eqref{borne-g1-L2H1}
\begin{multline}
    \| \partial_t \p_x g_1\|_{L^\infty([0,\bar T], L^2(\R_+))}^2 + \left\|\p_x \frac{\p_x \partial_t g_1}{\bv}\right\|^2_{L^2([0,\bar T]\times \R_+)}+ \|\beta'\|_{L^2([0,\bar T])}^2\\
    \leq  C\cE_0 + C(\cE_0 + \|\beta\|_{H^1(0,\bar T)}^2)^2 + C(\cE_0^{1/2} + \|\beta\|_{H^1(0,\bar T)})\|\beta'\|_{L^2(0,\bar T)}^2.
    \end{multline}
Observing that the last term in the right-hand side can be absorbed in the left-hand side provided $\cE_0^{1/2} + \|\beta\|_{H^1(0,\bar T)}$ is small enough,
we obtain eventually
\begin{align}\label{est:b'}
&\|\partial_t \partial_x g_1\|_{L^\infty([0,\bar T], L^2)}^2 + \left\|\partial_x \left(\dfrac{\partial_t \partial_x g_1}{\bv} \right) \right\|_{L^2([0, \bar T] \times \R_+)}^2 
+ \|\beta'\|_{L^2([0,\bar T])}^2 
\leq C \Bigg[\cE_0 + (\cE_0 +\|\beta\|_{H^1(0,\bar T)}^2 )^2 \Bigg]. 
\end{align}

$\rhd${\it Conclusion:} Combining ~\eqref{est:b} and~\eqref{est:b'}, we deduce that for $\cE_0 \leq c\delta^2$ with $c > 0$ small enough (depending only on the parameters $\mu, v_+,u_\pm$), 
\[
\|\beta\|_{H^1([0, \bar T])}^2 \leq C \left( \cE_0 + (\cE_0 + \|\beta\|_{H^1(0,\bar T)}^2)^2\right) \leq C c_0 \delta^2 + C \delta^4 \leq \frac{\delta^2}{4}.
\]
This completes the proof of Proposition \ref{prop:bootstrap} in the case $\tw^0\equiv u_+$. Note that in this case $\p_x^k \tw^0=0$ for $k=1,2,3$. Hence the additional conditions on $\tw^0$ are automatically satisfied.

\subsection{Case when \texorpdfstring{$\tw^0$}{w0} is arbitrary}
\label{ssec:w0-qcq}

In the case when $\tw^0$ is not constant, the estimates of the previous section involve new contributions coming both from  the equations, which contain additional terms linked to $\partial_x \tw^0$, and  from the boundary $x=0$, since $(\partial_x g)_{|x=0}$ (and thus $(g_1)_{|x=0}$) is not equal to $0$ anymore. More precisely,
\be\label{eq:g1-w0}
\ba
\p_t g_1 + \cA \p_x g_1 
= \beta \cA \p_x g + \mu \cA \p_x^2\left(\ln\left(1+ \frac{g}{\bv}\right) - \frac{g}{\bv}\right) + \cA \partial_x \tw^0(x+ \tx(t)),\\
g_{1|t=0}=\cA\left(\tv^0-\bv\right),\ea
\ee
with at the boundary $x=0$
\begin{equation}\label{eq:g1-x0}
(g_1)_{|x=0} 
= -s\underset{=0}{\underbrace{g_{|x=0}}} - \mu \left(\partial_x\left(\dfrac{g}{\bv}\right)\right)_{|x=0} 
= - \mu (\partial_x (v -\bv))_{|x=0}
= \tw^0(\tx) -u_+
\end{equation}
since $u(t,x) - \mu \p_x \ln v(t,x)=\tw^0(x+\tx(t))$ for all $t,x$.

Nevertheless, we will show that these new contributions are all controlled provided $\|\tw^0 - u_+\|_{L^\infty(\R_+)}$ and $\|(1+\sqrt{x})\partial^k_x \tw^0\|_{L^2(\R_+)}$, $k=1,2,3$, are small.
We recall that using the definition of $g_1$ together with the estimates~\eqref{est:h-global} and with the equation on $u$, we have the regularities~\eqref{reg:g1-1}-\eqref{reg:g1-3}:
\[
g_1\in L^2((0,\bar T), H^3(\R_+)),\quad \p_t g_1\in L^2((0,\bar T), H^2(\R_+)), \quad \p_t^2 g_1\in L^2((0,\bar T)\times \R_+).
\]
As a consequence,
\[
\p_x^k g_{1|x=0},\ \p_t \p_x^l g_{1|x=0}\in L^2(0,\bar T)
\]
for $0\leq k\leq 3$ and $0\leq l \leq 1$.

\paragraph{Estimates on the traces}
Let us detail the different traces associated to $g_1$ that will be involved in the estimates and how they are controlled in $L^2([0, \bar T])$

\begin{lem}[Trace estimates]\label{lem:traces}
Assume that $\|\tw^0-u_+\|_{W^{1,\infty}(\R_+)}\leq 1$.
Then

\begin{itemize}
    \item Estimates on $g_{1|x=0}$:
    \be\label{est:bc-g1}
    \|(g_1)_{|x=0}\|_{L^2(0,\bar T)}^2 + \|(\partial_t g_1)_{|x=0}\|_{L^2(0,\bar T)}^2 
 \leq C \|(1+\sqrt{x})\partial_x \tw^0\|_{L^2(\R_+)}^2
 \leq C \cE_0;
 \ee
    
    \item Estimates on $\p_x g_{1|x=0}$:
    \be\label{g1-R1}
   \p_x g_{1|x=0}= \beta \dfrac{s(v_+ - 1 )}{\mu} + R_1,
    \ee
    where
     \[
\|R_1\|_{H^1(0,\bar T)}\leq  C\left(\|\beta\|_{H^1(0,\bar T)}\|\tw^0- u_+\|_{W^{1,\infty}(\R_+)} + \|\tw^0-u_+\|_{W^{1,4}(\R_+)}^2 + \|\p_x \tw^0\|_{H^1(\R_+)}\right).
\]

\item Estimates on the traces of the nonlinear term:
\[
 \p_x^k \left(\ln \left(1+ \frac{g}{\bv}\right) - \frac{g}{\bv} \right)_{|x=0}=T_k, \quad k=1,2,3,
\]
with
\be\label{est:traces-NL}
\ba
T_1 = \left[\partial_x \left(\dfrac{g}{\bv} \right)\dfrac{g}{\bv}\right]_{|x=0} = 0, \\
\|T_2\|_{H^1(0,\bar T)}\leq C \|\tw^0-u_+\|_{W^{1,4}(\R_+)}^2, \\
\|T_3\|_{H^1(0,\bar T)}\leq C \left(\|\beta\|_{H^1(0, \bar T)}\|\tw^0-u_+\|_{W^{1,\infty}(\R_+)} + \|\tw^0-u_+\|_{W^{2,4}(\R_+)}^2\right).
\ea
\ee

 \item Estimates on $\p_x^2 g_{1|x=0}$:
 \begin{equation}\label{bc:px2g1}
\mu \left[\partial_x \left(\dfrac{\partial_x g_1}{\bv}\right) \right]_{|x=0}
= -(s+\beta)(\partial_x g_1)_{|x=0} - R_2 ,
\end{equation}
where
\begin{equation}\label{est:R2}
\|R_2\|_{H^1(0,\bar T)}\leq C\left( \|\beta\|_{H^1(0, \bar T)}\|\tw^0-u_+\|_{W^{1,\infty}(\R_+)} + \| \tw^0-u_+\|_{W^{2,4}(\R_+)}^2 + \|\p_x \tw^0\|_{H^2(\R_+)} \right).
 \end{equation}

\end{itemize}

\end{lem}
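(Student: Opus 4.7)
The plan is to derive all four items from two basic inputs at the boundary $x=0$. The first is the pointwise identity $u=\mu\p_x\ln v+\tw^0(x+\tx(t))$ combined with $u_{|x=0}=u_-$ and $v_{|x=0}=1$, which yields $\mu\p_xv_{|x=0}=u_--\tw^0(\tx(t))$, hence $\p_xg_{|x=0}=(u_+-\tw^0(\tx(t)))/\mu$ and $\p_x\bv_{|x=0}=s(v_+-1)/\mu$; in particular, since $g_{|x=0}=0$ and $\bv(0)=1$, one reads off $g_{1|x=0}=-\mu\p_xg_{|x=0}=\tw^0(\tx(t))-u_+$, whence $\p_tg_{1|x=0}=\tx'(t)\p_x\tw^0(\tx(t))$. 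The second input is the successive $x$-traces of the equation $\p_tg+\p_x\cA g=\beta\p_x\bv+\beta\p_xg+\mu\p_x^2(\ln(1+g/\bv)-g/\bv)+\p_x\tw^0(x+\tx(t))$ at $x=0$, legitimate in $L^2(0,\bar T)$ by the regularity~\eqref{reg:g1-1}-\eqref{reg:g1-3} of $g_1$.

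For~\eqref{est:bc-g1}, the change of variables $\tau=\tx(t)$, combined with $\tx'(t)\in[M^{-1},M]$, converts time integrals into spatial ones: $\|\p_tg_{1|x=0}\|_{L^2(0,\bar T)}^2\leq CM^3\|\p_x\tw^0\|_{L^2(\R_+)}^2$ directly, while $\|g_{1|x=0}\|_{L^2(0,\bar T)}^2\leq CM\|\tw^0-u_+\|_{L^2(\R_+)}^2$. The latter $L^2$-norm is not literally contained in $\cE_0$, but it is controlled via $\tw^0-u_+=\p_xW^0$: integration by parts gives $\|\p_xW^0\|_{L^2}^2=-W^0(0)(\tw^0(0)-u_+)-\int_0^\infty W^0\,\p_x\tw^0$, and applying the one-dimensional trace identity $|f(0)|^2\leq 2\|f\|_{L^2}\|\p_xf\|_{L^2}$ (valid for $f$ vanishing at $+\infty$) separately to $W^0$ and to $\tw^0-u_+$, followed by Cauchy--Schwarz and absorption, produces $\|\tw^0-u_+\|_{L^2(\R_+)}^2\leq C(\|W^0\|_{L^2}^2+\|\p_x\tw^0\|_{L^2}^2)\leq C\cE_0$.

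The nonlinear traces~\eqref{est:traces-NL} come from the Taylor expansion of $f(u)=\ln(1+u)-u$ at $u_{|x=0}=g_{|x=0}/\bv(0)=0$: $T_1=-(\p_xu\,u)_{|x=0}=0$, $T_2=-(\p_xu_{|x=0})^2=-(u_+-\tw^0(\tx))^2/\mu^2$, and $T_3=2(\p_xu_{|x=0})^3-3\p_xu_{|x=0}\p_x^2u_{|x=0}$, where $\p_x^2u_{|x=0}$ is computed by evaluating the equation on $v$ at $x=0$ (exploiting $\p_tv_{|x=0}=0$), and contributes a part linear in $\beta$ together with terms in $\tw^0(\tx)$ and $\p_x\tw^0(\tx)$. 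The $H^1(0,\bar T)$-bounds then reduce to Hölder's inequality, the change-of-variable identity $\|\varphi(\tx(\cdot))\|_{L^p(0,\bar T)}\leq C\|\varphi\|_{L^p(\R_+)}$ (applied to the various powers of $\tw^0-u_+$, $\p_x\tw^0$, $\p_x^2\tw^0$), time differentiation producing an extra factor $\tx'$, and the one-dimensional embedding $W^{1,4}(\R_+)\hookrightarrow L^\infty(\R_+)$ to treat the products.

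Finally, the identities~\eqref{g1-R1} and~\eqref{bc:px2g1} are read off by taking the trace of the $g$-equation at $x=0$ (for the former, using $\p_tg_{|x=0}=0$ and $\p_x\cA g=\p_xg_1$), and of its $x$-derivative at $x=0$ in a weak sense analogous to~\eqref{weak-dtdxg1} (for the latter). The commutator identity $\cA\p_x=\p_x\cA+\mu\p_x\bigl((\p_x\bv/\bv^2)\,\cdot\,\bigr)$ together with the expansion $\mu\p_x(\p_xg_1/\bv)=\p_x^2g_1-\mu(\p_x\bv/\bv^2)\p_xg_1$ isolates the leading-in-$\beta$ contributions $\beta\,s(v_+-1)/\mu$ in~\eqref{g1-R1} and $-(s+\beta)\p_xg_{1|x=0}$ in~\eqref{bc:px2g1}, every other term being gathered into $R_1$, $R_2$. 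Their $H^1(0,\bar T)$-bounds follow from exactly the product estimates used in the previous step, with $\|\beta\|_{H^1(0,\bar T)}\hookrightarrow L^\infty(0,\bar T)$ to absorb the prefactors. The main delicate point is this algebraic extraction of the leading $\beta$-contributions, designed so that $R_1$ and $R_2$ end up of genuinely lower order---either quadratic in $\beta$, or linear in (higher) derivatives of $\tw^0-u_+$---which is precisely what makes them amenable to absorption in the bootstrap of Proposition~\ref{prop:bootstrap}.
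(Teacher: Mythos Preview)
Your argument is correct and follows essentially the same route as the paper: extract the boundary identity $g_{1|x=0}=\tw^0(\tx)-u_+$ from the effective-velocity relation, compute the traces $T_k$ of the nonlinear term via Taylor expansion at $g/\bv=0$, and read off $\p_xg_{1|x=0}$ and $\mu\p_x(\p_xg_1/\bv)_{|x=0}$ by taking successive $x$-traces of the evolution equation, isolating the leading $\beta$-contribution and dumping the rest into $R_1,R_2$.

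Two minor differences worth flagging. For~\eqref{est:bc-g1}, the paper writes $(\tw^0(\tx)-u_+)^2=-2\int_{\tx}^\infty(\tw^0-u_+)\p_x\tw^0$ and applies the Fubini trick of Lemma~\ref{lem:x+y} to get the bound directly in terms of $\|(1+\sqrt{x})\p_x\tw^0\|_{L^2}^2$; your change-of-variables route produces instead a bound by $\|W^0\|_{L^2}^2+\|\p_x\tw^0\|_{L^2}^2$, which is not literally the intermediate inequality stated in~\eqref{est:bc-g1} but still lies in $C\cE_0$, so the conclusion is unaffected. For the second-order trace~\eqref{bc:px2g1}, the paper takes the trace of the $g_1$-equation~\eqref{eq:g1-w0} and uses $\cA\p_xg_1=-s\p_xg_1-\mu\p_x(\p_xg_1/\bv)$, rather than the $x$-derivative of the $g$-equation as you indicate; the two are equivalent, but the paper's route avoids having to justify the trace of one extra derivative of $g$. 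Both are harmless distinctions.
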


\bigskip

\begin{proof}
Throughout the proof, we recall that for all $t\in [0, \bar T]$, $\tx'$ satisfies \eqref{bound:tx'}, so that
\[
\frac{st}{2}\leq \tx(t)\leq \frac{3st}{2}.
\]

$\bullet$ First, the estimate on $g_1$ follows directly from \eqref{eq:g1-x0}, writing
\[
(\tw^0(\tx)-u_+)^2=-2\int_{\tx}^\infty (\tw^0(x)-u_+) \p_x \tw^0(x)\:dx 
= - 2\int_0^\infty \mathbf 1_{\{x>\tx\}}(\tw^0(x)-u_+)\p_x \tw^0(x)\:dx
\]
and using the same kind of trick as in the proof of Lemma \ref{lem:x+y}.

For the time derivative, we write
\[
\partial_t g_{1|x=0}=\tx' \p_x \tw^0(\tx)
\]
and integrate in time.

$\bullet$ Estimates on $\p_x g_{1|x=0}$: First, we have by~\eqref{eq:g1-x0}
\begin{align}\label{eq:d2ln-x0}
\partial^2_x \left(\ln \left(1+ \frac{g}{\bv}\right) -\frac{g}{\bv} \right)_{|x=0}
& = - \left(\partial^2_x\left(\frac{g}{\bv}\right)\dfrac{g/\bv}{1+ g/\bv} \right)_{|x=0}
+ \left(\partial_x \left(\frac{g}{\bv}\right)\right)^2_{|x=0} \left(\dfrac{1}{(1+ g/\bv)^2}\right)_{|x=0} \nonumber \\
& = \dfrac{1}{\mu^2} (\tw^0(\tx)-u_+)^2=:T_2
\end{align}
Note that the  estimate of \eqref{est:traces-NL} with $k=2$ follows.

Next, taking the trace of the equation satisfied by $g$~\eqref{eq:g0} (with the additional $\partial_x \tw^0$ in the right-hand side), we infer that
\begin{align}\label{eq:dxg1-0}
(\partial_x g_1)_{|x=0}
& = \beta (\partial_x v)_{|x=0} 
+ \mu \partial^2_x \left(\ln \left(1+ \frac{g}{\bv}\right) -\frac{g}{\bv} \right)_{|x=0} + \partial_x \tw^0(\tx)  \nonumber \\
& = \dfrac{u_- - \tw^0(\tx)}{\mu} \beta +\mu T_2+ \partial_x \tw^0(\tx)\\
& = \beta \dfrac{s(v_+ - 1 )}{\mu} + R_1 \nonumber,
\end{align}
with
\[
R_1 := \beta \dfrac{u_+ - \tw^0(\tx)}{\mu}  + \mu T_2 + \partial_x \tw^0(\tx).
\]
We then bound $R_1$ in $H^1(0,T)$. We have
\begin{align*}
\|R_1\|_{H^1(0,\bar T)}
& \leq  C\Big(\|\beta\|_{H^1(0, \bar T)}\|\tw^0- u_+\|_{L^{\infty}(\R_+)} + \|\beta\|_{L^2(0, \bar T)} \|\p_x\tw^0\|_{L^\infty(\R_+)} \\
& \qquad + \|T_2\|_{H^1(0, \bar T)} + \|\p_x \tw^0\|_{H^1(\R_+)}\Big).
\end{align*}

$\bullet$ Estimate of $T_3$: we have
\begin{align*}
\partial^3_x \left(\ln \left(1+ \frac{g}{\bv}\right) - \frac{g}{\bv} \right) 
& = - \partial^3_x \left(\frac{g}{\bv}\right) \dfrac{\frac{g}{\bv}}{1+ \frac{g}{\bv}}
- \dfrac{3}{(1+\frac{g}{\bv})^2} \partial_x \left(\frac{g}{\bv}\right) \partial^2_x \left(\frac{g}{\bv}\right)\\
& \quad +  \dfrac{2}{(1+ \frac{g}{\bv})^3}\left(\partial_x \left(\frac{g}{\bv}\right)\right)^3,
\end{align*}
and
\begin{align*}
\dfrac{\mu}{1+\frac{g}{\bv}} \partial^2_x \left(\frac{g}{\bv}\right) - \dfrac{\mu}{(1+\frac{g}{\bv})^2} \left(\partial_x\left(\frac{g}{\bv}\right)\right)^2 
& = \mu \partial_x \left( \dfrac{\partial_x\left(\frac{g}{\bv}\right) }{1+\frac{g}{\bv}} \right)\\
& = \mu \partial^2_x \ln\left(1 + \frac{g}{\bv}\right) \\
& = \partial_x (u-\bu) -\partial_x \tw^0(x +\tx).
\end{align*}
By~\eqref{y'-s}, we have
\begin{align}
(\partial_x(u -\bu))_{|x=0}
& = \dfrac{s}{\mu} (\tw^0(\tx) - u_+) - \frac{\beta}{\mu} (u_- -\tw^0(\tx)) \nonumber \\
& = - \beta s (v_+ -1) + \dfrac{\beta + s}{\mu} (\tw^0(\tx) -u_+), 
\end{align}
and therefore
\begin{align*}
\left[\partial^2_x \left(\frac{g}{\bv}\right)\right]_{|x=0}
& = \left(\partial_x\left(\frac{g}{\bv}\right)\right)^2_{|x=0} + \dfrac{1}{\mu}\Big[(\partial_x(u -\bu))_{|x=0} - \partial_x \tw^0(\tx)\Big] \\
& = \dfrac{(\tw^0(\tx) - u_+)^2}{\mu^2} 
-\beta \dfrac{s (v_+-1)}{\mu} + \dfrac{\beta +s}{\mu^2} (\tw^0(\tx) - u_+)- \dfrac{\partial_x\tw^0(\tx)}{\mu}.
\end{align*}
It follows that
\begin{align*}
T_3
& = - 3 \left[\partial^2_x \left(\frac{g}{\bv}\right)\right]_{|x=0} \left[\partial_x \left(\frac{g}{\bv}\right)\right]_{|x=0} 
+ 2\left[\partial_x \left(\frac{g}{\bv}\right)\right]^3_{|x=0} \\
& = -3\beta\dfrac{s(v_+-1)}{\mu^2} (\tw^0(\tx) - u_+) + 3 (\beta +s) \dfrac{(\tw^0(\tx)-u_+)^2}{\mu^3} \\
& \quad - \dfrac{3}{\mu^2} \partial_x \tw^0(\tx) (\tw^0(\tx)-u_+) + \dfrac{(\tw^0(\tx) - u_+)^3}{\mu^3}.
\end{align*}
Note that $(\tw^0(\tx)-u_+)^3$ can be considered as negligible compared to $(\tw^0(\tx) - u_+)^2$ since we assume that $\|\tw^0 - u_+\|_{W^{1, \infty}(\R_+)} \leq 1$. 
Hence, taking the $H^1$ norm of the right-hand side, we obtain
\[
\|T_3\|_{H^1(0,\bar T)}\leq C \left(\|\beta\|_{H^1(0, \bar T)}\|\tw^0-u_+\|_{W^{1,\infty}(\R_+)} + \|\tw^0-u_+\|_{W^{2,4}(\R_+)}^2\right).
\]

$\bullet$ Traces of $\partial^2_x g_1$:

We have
\[
\mu \left(\partial_x\dfrac{\partial_x g_1}{\bv}\right)_{|x=0}
= -s (\partial_x g_1)_{|x=0} - (\cA \partial_x g_1)_{|x=0}
\]
where, thanks to~\eqref{eq:g1-w0}-\eqref{eq:g1-x0} and \eqref{comm-A-dx}:
\begin{align*}
(\cA \partial_x g_1)_{|x=0}
& =
 \beta (\cA \partial_x g)_{|x=0} + \mu \left[\cA \partial^2_x \left(\ln \left(1+ \frac{g}{\bv}\right) - \frac{g}{\bv} \right) \right]_{|x=0} \\
& \quad  + (\cA \partial_x \tw^0(x+\tx))_{|x=0} - (\partial_t g_1)_{|x=0} \\
& = \beta (\partial_x g_1)_{|x=0} + \mu \beta \left[\partial_x \left(\dfrac{\partial_x \bv}{\bv^2} g\right)\right]_{|x=0} + \mu \left[\cA \partial^2_x \left(\ln \left(1+ \frac{g}{\bv}\right) - \frac{g}{\bv} \right) \right]_{|x=0} \\
& \quad  + s (v_+-2) \partial_x \tw^0(\tx) - \mu \partial^2_x \tw^0(\tx) - \tx'(t) \partial_x \tw^0(\tx).
\end{align*}
Hence, we deduce that
\begin{equation}\label{bc:p2xg1-w0}
\mu \left(\partial_x\dfrac{\partial_x g_1}{\bv}\right)_{|x=0}
= -(s+\beta) (\partial_x g_1)_{|x=0} - R_2, 
\end{equation}
with, compared to~\eqref{bc:p2xg1-0}, a remainder:
\begin{align*}
R_2 
& = \mu \beta \left[\partial_x \left(\dfrac{\partial_x \bv}{\bv^2} g\right)\right]_{|x=0} + \mu \left[\cA \partial^2_x \left(\ln \left(1+ \frac{g}{\bv}\right) - \frac{g}{\bv} \right) \right]_{|x=0} \\
& \quad + s (v_+-2) \partial_x \tw^0(\tx) - \mu \partial^2_x \tw^0(\tx) - \tx'(t) \partial_x \tw^0(\tx).
\end{align*}

Let us now estimate $R_2$. We have by definition of $\cA$
\begin{align*}
& \mu \left[\cA \partial^2_x \left(\ln \left(1+ \frac{g}{\bv}\right) - \frac{g}{\bv} \right) \right]_{|x=0} \\
& = -s \mu \left[\partial^2_x \left(\ln \left(1+ \frac{g}{\bv}\right) - \frac{g}{\bv} \right) \right]_{|x=0}
- \mu^2 \left[\partial_x \left( \frac{1}{\bv} \partial^2_x \left(\ln \left(1+ \frac{g}{\bv}\right) - \frac{g}{\bv} \right) \right)\right]_{|x=0}\\
& = \big(\mu(\partial_x \bv)_{|0}-s\big) \mu \left[\partial^2_x \left(\ln \left(1+ \frac{g}{\bv}\right) - \frac{g}{\bv} \right) \right]_{|x=0}
- \mu^2 \left[\partial^3_x \left(\ln \left(1+ \frac{g}{\bv}\right) - \frac{g}{\bv}  \right)\right]_{|x=0}\\
& = (v_+-2) s \mu T_2
- \mu^2 T_3.
\end{align*}
Replacing in the expression of $R_2$ and using \eqref{eq:g1-x0}, we obtain
\begin{eqnarray*}
R_2&=&\frac{s(v_+-1)}{\mu} \beta (\tw^0(\tx)-u_+) + (v_+-2) s \mu T_2
- \mu^2 T_3\\
&&+ s (v_+-2) \partial_x \tw^0(\tx) - \mu \partial^2_x \tw^0(\tx) - \tx'(t) \partial_x \tw^0(\tx).
\end{eqnarray*}
Thus
\begin{eqnarray*}
\|R_2\|_{H^1(0,\bar T)}
&\leq & C \left(\|\beta\|_{H^1}\|\tw^0-u_+\|_{W^{1,\infty}(\R_+)} + \|T_2\|_{H^1(0, \bar T)} + \|T_3\|_{H^1(0, \bar T)} + \|\p_x \tw^0\|_{H^2(\R_+)}\right)\\
&\leq & C\left( \|\beta\|_{H^1(0, \bar T)}\|\tw^0-u_+\|_{W^{1,\infty}(\R_+)} + \| \tw^0-u_+\|_{W^{2,4}(\R_+)}^2 + \|\p_x \tw^0\|_{H^2(\R_+)} \right) .
\end{eqnarray*}

\end{proof}

\bigskip

\paragraph{Estimates on $g_1$}
We now turn towards the energy estimates for $g_1$.
As announced before, we focus on the additional terms coming either from the boundary terms or from the non-zero source term involving $\p_x \tw^0$.

$\rhd${\it $L^{\infty}(L^2)\cap L^2(H^1)$ estimate:}
Multiplying~\eqref{eq:g1-w0} by $g_1$ and proceeding as in the previous subsection, we have now
\begin{align}\label{estim:g1-bis-0}
& \frac{1}{2}\frac{d}{dt}\int_0^\infty g_1^2 + \frac{\mu}{v_+}\int_0^\infty (\p_x g_1)^2 \nonumber \\
& \leq   C(\cE_0 + \|\beta\|_{H^1(0, \bar T)}^2)\left(|\beta|^2 + \left\| g(t) \right\|_{H^2(\R_+)}^2\right)\\
& \quad + \left|\int_{\R_+} \cA \partial_x \tw^0(x+ \tx) g_1 \right|
+ \dfrac{s+ \beta}{2} (g_1)_{|x=0}^2 + \mu \big|(g_1)_{|x=0}(\partial_x g_1)_{|x=0} \big| \nonumber \\
& \quad +\mu^2 \left|\left(\partial^2_x\left(\ln\left(1+ \frac{g}{\bv}\right) -\frac{g}{\bv} \right)\right)_{|x=0} (g_1)_{|x=0} \right|
+ s\mu \underset{=0}{\underbrace{\left| \partial_x \left(\ln \left(1+ \frac{g}{\bv}\right) -\frac{g}{\bv} \right)_{|x=0}\right|}} |(\partial_x g_1)_{|x=0}| \nonumber
\end{align}
where the two last lines contain the additional contributions compared to the case $\tw^0 \equiv u_+$ (cf.~\eqref{eq:energ-g1-0}). 

We treat the integral involving $\partial_x \tw^0$ as follows:
\begin{align*}
\int_{\R_+} \cA \partial_x \tw^0(x+ \tx) g_1
& = s \int_{\R_+} (\tw^0(x+\tx) - u_+) \partial_x g_1 
+ \mu \int_{\R_+} \dfrac{\partial_x \tw^0}{\bv} \partial_x g_1 \\
& \quad + s(\tw^0(\tx) - u_+) (g_1)_{|x=0} + \mu \partial_x \tw^0(\tx) (g_1)_{|x=0} 
\end{align*}
so that
\begin{align*} \left|\int_{\R_+} \cA \partial_x \tw^0(x+ \tx) g_1\right|
& \leq \dfrac{\mu}{8v_+} \int_{\R_+} (\partial_x g_1)^2 
+  s |\tw^0(\tx) - u_+| |(g_1)_{|x=0}| + \mu |\partial_x \tw^0(\tx)| |(g_1)_{|x=0}| \\
& \quad + C \Big[\|\tw^0(x+\tx) - u_+\|_{L^2(\R_+)}^2 + \|\partial_x \tw^0(\cdot+\tx)\|_{L^2(\R_+)}^2 \Big] ,
\end{align*}
where the first integral can be absorbed in the left-hand side of~\eqref{estim:g1-bis-0}.

We then come back to~\eqref{estim:g1-bis-0} and integrate in time. The traces of $g_1$, $\p_x g_1$ and of the nonlinear term are estimated thanks to Lemma \ref{lem:traces}.
Note in particular that by the Cauchy-Schwarz inequality and the assumption $\|\tw^0 - u_+\|_{W^{1,\infty}(\R_+)} \leq 1$
\begin{eqnarray*}
\int_0^T \left| (g_1)_{|x=0}(\partial_x g_1)_{|x=0}\right|
&\leq& C\|(1+\sqrt{x})\p_x \tw^0\|_{L^2(\R_+)}\left(\|\beta\|_{H^1(0, \bar T)} + \cE_0^{1/2}\right) \\
&\leq& C \|\beta\|_{H^1(0, \bar T)}^4 + C\|(1+\sqrt{x})\p_x \tw^0\|_{L^2(\R_+)}^{4/3} + C \cE_0.
\end{eqnarray*}

Gathering all terms, we deduce that
\begin{align}\label{borne-g1-L2H1-bis}
& \|g_1\|_{L^\infty([0,\bar T], L^2(\R_+)}^2 + \frac{\mu}{v_+}\|\p_x g_1\|_{L^2([0,\bar T], L^2(\R_+)}^2 \nonumber \\
& \leq C \left(\cE_0+ \|(1+\sqrt{x})\p_x \tw^0\|_{L^2(\R_+)}^{4/3} +  (\cE_0 + \|\beta\|_{H^1(0, \bar T)}^2)^2\right).
\end{align}

\bigskip

$\rhd${\it $L^{\infty}(H^1)\cap L^2(H^2)$ estimate:}
Differentiating with respect to $x$ Eq.~\eqref{eq:g1-w0}, we obtain~\eqref{eq:dxg1} with the additional contribution $\partial_x \cA \partial_x \tw^0$ in the right-hand side. 
Following the same steps as in the constant case $\tw^0 \equiv u_+$, we use Lemma~\ref{lem:coerc-A} to handle the diffusion term:
\begin{eqnarray*}
\int_0^\infty \left( \p_x \cA \p_x g_1\right) \;\p_x g_1 \; \frac{\rho}{\bv } &\geq & \mu \int_0^\infty \left(\p_x \left(\frac{\p_x g_1}{\bv}\right)\right)^2 \rho - C \|\rho\|_{W^{2,\infty}}\int_0^\infty (\p_x g_1)^2\\
&&+ \left( \p_x g_{1|x=0}\right)^2 \left( \frac{s}{2} \rho(0)- \frac{\mu}{2}\rho'(0)\right) 
+ \mu \left(\partial_x\dfrac{\partial_x g_1}{\bv}\right)_{|0} (\partial_x g_1)_{|x=0} \rho(0)
\end{eqnarray*}
Choosing once again $\rho(0)=2$, 
$\rho'(0)=-4s/\mu$ and using \eqref{g1-R1}, we have
\begin{eqnarray*}
&&\left( \p_x g_{1|x=0}\right)^2 \left( \frac{s}{2} \rho(0)- \frac{\mu}{2}\rho'(0)\right) 
+ \mu \left(\partial_x\dfrac{\partial_x g_1}{\bv}\right)_{|0} (\partial_x g_1)_{|x=0} \rho(0)\\
&\geq &\left(\frac{s(v_+-1)}{\mu}\right)^2(s+\beta) \beta^2 -C\left(R_1^2 + |R_1|\; |\beta| + |R_2|\; |\beta| + |R_2|\; |R_1|\right).
\end{eqnarray*}
We then control the time integral of the second term in the right-hand side thanks to Lemma \ref{lem:traces}.

Following the same steps as in the constant case $\tw^0 \equiv u_+$, we can check that we have the following other additional contributions:
\begin{itemize}
    \item the additional integral involving $\partial_x \cA \partial_x \tw^0$ is treated by integration by parts with boundary terms of the type $\partial^k_x \tw^0_{|x=0} (\partial_x g_1)_{|x=0}$, $k=1,2$. The integral is easily controlled by use of the Cauchy-Schwarz inequality;
    \item from the nonlinear term, we get the following boundary term:
    \[
    \left[\cA \partial^2_x \left(\ln \left(1+ \frac{g}{\bv}\right) - \frac{g}{\bv} \right) \right]_{|x=0} (\partial_x g_1)_{|x=0},
    \]
\end{itemize}
which we also treat using Lemma \ref{lem:traces}, noticing that
\begin{eqnarray}
\left[\cA \partial^2_x \left(\ln \left(1+ \frac{g}{\bv}\right) - \frac{g}{\bv} \right) \right]_{|x=0}&=&-s T_2 -\mu\p_x\left( \frac{1}{\bv}
\partial^2_x \left(\ln \left(1+ \frac{g}{\bv}\right) - \frac{g}{\bv} \right)\right)_{|x=0}\nonumber\\
&=&s(v_+-2)T_2-\mu T_3 .\label{trace-A-dx2-NL}
\end{eqnarray}

We obtain
\begin{eqnarray*}
 &&  \|\p_x g_1\|_{L^\infty([0,\bar T], L^2)}^2 + \left\|\p_x \frac{\p_x g_1}{\bv}\right\|^2_{L^2([0,\bar T]\times \R_+)}+ \|\beta\|_{L^2([0,\bar T])}^2\\
  &  \leq &  C\left(\cE_0 + \left\| (1+\sqrt{x}) \p_x \tw^0\right\|_{L^2(\R_+)}^{4/3} + \left\|  \p_x \tw^0\right\|_{H^2(\R_+)}^{4/3}+(\cE_0 + \|\beta\|_{H^1(0,\bar T)}^2)^2 \right).
    \end{eqnarray*}

\bigskip

$\rhd${\it $W^{1,\infty}(L^2)\cap H^1(H^1)$ estimate:}
Differentiating with respect to time Equation~\eqref{eq:g1-w0}, we get Equation~\eqref{eq:dt-g1} with an additional term in the right-hand side, namely $\tx'(t)\cA \partial^2_x \tw^0(x+\tx(t))$.
Multiplying the equation by $\partial_t g_1$ and integrating in space, we have, similarly to \eqref{estim:g1-bis-0}
\begin{align}
& \frac{1}{2}\frac{d}{dt}\int_0^\infty (\p_t g_1)^2 + \frac{\mu}{v_+}\int_0^\infty (\p_t \p_x g_1)^2 \nonumber \\
& \leq   C(\cE_0 + \|\beta\|_{H^1(0, \bar T)}^2)\left(|\beta'|^2 + |\beta|^2+ \left\| g \right\|_{H^2(\R_+)}^2 + \| \p_t g\|_{H^2(\R_+)}^2\right)\label{estim:dt-g1-bis-0}\\
& \quad + \left|\int_{\R_+}  \tx'(t)\int_{\R_+}\cA \partial^2_x \tw^0(x+\tx(t)) \partial_t g_1 \right|\nonumber\\
&\quad + \dfrac{s+ \beta}{2} (\p_t g_1)_{|x=0}^2 + \mu \big|(\p_t g_1)_{|x=0}(\p_t \partial_x g_1)_{|x=0} \big|  +\mu^2 \left|\p_t T_2 (\p_t g_1)_{|x=0} \right|
 \nonumber\\
 &\quad + |\beta'| \; |g_{1|x=0}| \; |\p_t g_{1|x=0}|\nonumber.
\end{align}
We first consider the integral term involving $\cA \partial^2_x \tw^0(x+\tx(t))$:
\begin{align*}
& \tx'(t)\int_{\R_+}\cA \partial^2_x \tw^0(x+\tx(t)) \partial_t g_1 \\ 
& = - \tx'(t) \int_{\R_+} \left( \cA \partial_x \tw^0(x+ \tx(t)) + \mu \left(\frac{\partial_x \bv}{\bv^2}\partial_x \tw^0(x+\tx(t))\right) \right) \partial_x \partial_t g_1   \\
& \quad -\tx'(t) \left[ \cA \partial_x \tw^0(x+ \tx(t)) + \mu \left(\frac{\partial_x \bv}{\bv^2}\partial_x \tw^0(x+\tx(t))\right)\right]_{|x=0}(\partial_t g_1)_{|x=0}.
\end{align*}
The integral is then controlled by a Cauchy-Schwarz inequality, using the trace estimates from Lemma \ref{lem:traces} together with Lemma \ref{lem:x+y}
\begin{align*}
& \int_0^{\bar T}\int_{\R_+} \left|\tx'(t)  \left( \cA \partial_x \tw^0(x+ \tx(t)) + \mu \left(\frac{\partial_x \bv}{\bv^2}\partial_x \tw^0(x+\tx(t))\right) \right) \partial_x \partial_t g_1 \right| \\
& \leq \frac{\mu}{4v_+} \|\partial_t \partial_x g_1\|_{L^2((0,\bar T)\times \R_+)}^2 
+ C \Big(\|\sqrt{x} \partial_x \tw^0\|_{L^2(\R_+)}^2 + \|\sqrt{x} \partial^2_x \tw^0\|_{L^2(\R_+)}^2 \Big)\\
&\quad + C\|\p_x \tw^0\|_{H^1} \|(1+\sqrt{x})\p_x \tw^0\|_{L^2}.
\end{align*}
To conclude as in the constant case, we have to control the different boundary terms in \eqref{estim:dt-g1-bis-0}. Using Lemma \ref{lem:traces},
their $L^2(0,\bar T)$ norm is bounded by
\begin{align*}
    &C\Big(\| g_{1|x=0}\|_{H^1(0,\bar T)}^2 + \|\p_t T_2\|_{L^2(0,\bar T)}^2 + \|\p_t g_{1|x=0}\|_{L^2(0,\bar T)} \|\p_t \p_x g_{1|x=0}\|_{L^2(0,\bar T)} \\
    &\qquad+ \|\beta'\|_{L^2(0,\bar T)} \|g_{1|x=0}\|_{H^1(0,\bar T)}^2\Big)\\
    \leq& C\left( \|(1+\sqrt{x}) \p_x \tw^0\|_{L^2}^2 + \|\tw^0-u_+\|_{W^{1,4}}^4\right)\\
    &\quad +C \|(1+\sqrt{x}) \p_x \tw^0\|_{L^2} \left(\|\beta\|_{H^1(0,\bar T)} + \|\tw^0-u_+\|_{W^{1,4}}^2 + \|\p_x \tw^0\|_{L^2}\right)\\
    &\leq C \left(\|\beta\|_{H^1(0,\bar T)}^4 + \|(1+\sqrt{x}) \p_x \tw^0\|_{L^2}^{4/3} + \cE_0\right).
\end{align*}
Note that we used here the smallness of $\cE_0$. We obtain eventually
\begin{eqnarray*}
&&\|\p_t g_1\|_{L^\infty([0,\bar T], L^2}^2 + \|\p_x\p_t g_1\|_{L^2([0,\bar T]\times \R_+)}^2 \\
&\leq & C \left( \cE_0 + (\cE_0 + \|\beta\|_{H^1(0,\bar T)}^2)^2 +  \|(1+\sqrt{x}) \p_x \tw^0\|_{L^2}^{4/3}\right).
\end{eqnarray*}

\bigskip

$\rhd${\it $W^{1,\infty}(H^1)\cap H^1(H^2)$ estimate:}
We differentiate the equation~\eqref{eq:dxg1} satisfied by $\partial_x g_1$ with respect to time:
\begin{align}\label{eq:dtdx-g1-w0}
& \partial_t (\partial_t \partial_x g_1) + \partial_x \cA \partial_t \partial_x g_1 \nonumber \\
& = \beta \partial^2_x \partial_t g_1 + \beta'\partial^2_x g_1
+ \mu \beta' \partial^2_x \left(\dfrac{\partial_x \bv}{\bv^2} g\right)
+ \mu \beta \partial^2_x \left(\dfrac{\partial_x \bv}{\bv^2} \partial_t g\right) \\
& \quad + \mu \partial_x \cA \partial_x^2 \partial_t \left(\ln \left(1+ \dfrac{g}{\bv}\right) - \dfrac{g}{\bv} \right)
+ \tx'(t) \partial_x \cA \partial^2_x \tw^0(x + \tx(t))
\nonumber
\end{align}
As in the constant case $\tw^0 \equiv u_+$, we aim at obtaining a control of $\|\partial_t \partial_x g_1\|_{L^\infty(L^2)} + \|\beta'\|_{L^2}$.
For that purpose, we want to use $(\partial_t \partial_x g_1) \rho/\bv$ as a test function in the weak formulation associated with~\eqref{eq:dtdx-g1-w0}. This weak formulation is similar to \eqref{weak-dtdxg1}, with extra source and boundary terms. Using \eqref{trace-A-dx2-NL} and Lemma \ref{lem:traces}, we find that
for any test function $\psi\in L^\infty((0,\bar T), L^2(\R_+))\cap L^2((0,\bar T), H^1(\R_+))$, for almost every $t\in (0,\bar T)$,
\begin{eqnarray*}
&&\langle \p_t^2 \p_x g_1(t), \psi(t)\rangle_{H^{-1}, H^1}
-\int_0^\infty \cA(\p_t\p_x g_1)\; \p_x \psi \\&&-\left(\beta\p_t\p_x g_{1|x=0} + \beta' \p_xg_{1|x=0} + \p_t R_2\right)\psi_{|x=0}\\
&=&-\int_0^\infty \mu \cA \p_x^2 \p_t \left(\ln \left(1+ \dfrac{g}{\bv}\right) - \dfrac{g}{\bv} \right) \p_x \psi(t,x)\:dx \\
&& +\int_0^\infty \left[\beta \p_x^2\p_t g_1 + \beta'\partial^2_x g_1
+ \mu \beta' \partial^2_x \left(\dfrac{\partial_x \bv}{\bv^2} g\right)
+ \mu \beta \partial^2_x \left(\dfrac{\partial_x \bv}{\bv^2} \partial_t g\right) \right] \psi\\
&&+ \tx'(t) \int_0^\infty\partial_x \cA \partial^2_x \tw^0(x + \tx(t)) \psi\\
&&-\left[s\mu (v_+-2)\p_t T_2 - \mu \p_t T_3\right] \psi_{|x=0}.
\end{eqnarray*}

We now take $\psi=(\partial_t \partial_x g_1) \rho/\bv$, with $\rho\in \mathcal C^2_b(\R_+)$ such that $\rho(0)=2,$ $\rho'(0)=-4s/\mu$ as before.

The diffusion term is treated as previously:
\begin{align*}
-\int_0^\infty \cA(\p_t\p_x g_1)\; \p_x \left(\p_x \partial_t g_1 \dfrac{\rho}{\bv}\right)
& \geq \mu \int_{\R_+} \left( \partial_x \left(\dfrac{\partial_x \partial_t g_1}{\bv} \right) \right)^2 \rho 
 - C \|\rho\|_{W^{2,\infty}}  \int_{\R_+} \big(\partial_x \partial_t g_1\big)^2 \\
& \quad + 3s \left( \partial_t \p_x g_{1|x=0}\right)^2 .
\end{align*}

Recalling \eqref{g1-R1}, we have
\begin{align}
(\partial_x \partial_t g_1)_{|x=0}
& = \beta' \dfrac{s (v_+ -1)}{\mu} + \partial_t R_1.
\end{align}
Gathering all the boundary terms of the left-hand side of the weak formulation, we obtain, using once again Lemma \ref{lem:traces}
\begin{eqnarray*}
&&-2\left(\beta\p_t\p_x g_{1|x=0} + \beta' \p_xg_{1|x=0} + \p_t R_2\right)(\partial_t \partial_x g_1)_{|x=0}  + 3s \left( \partial_t \p_x g_{1|x=0}\right)^2 \\
&\geq & C^{-1}  \left( \partial_t \p_x g_{1|x=0}\right)^2 - C \left((\beta')^2 (\p_xg_{1|x=0})^2 + (\p_t R_2)^2\right)\\
&\geq & C^{-1}  (\beta')^2 - C \left( (\p_t R_1)^2 + |\beta'|^2 (\|\beta\|_{H^1(0,\bar T)}^2 + \|R_1\|_{H^1(0,\bar T)}^2) + (\p_t R_2)^2\right),
\end{eqnarray*}
for some constant $C\geq 1$ depending only on the parameters of the problem.
Note that
\begin{eqnarray*}
&&\int_0^{\bar T} \left( (\p_t R_1)^2 + |\beta'|^2 (\|\beta\|_{H^1(0,\bar T)}^2 + \|R_1\|_{H^1(0,\bar T)}^2) + (\p_t R_2)^2\right)\\
&\leq & C\left(\cE_0 + (\cE_0+ \|\beta\|_{H^1(0,\bar T)}^2)^2 \right).
\end{eqnarray*}

The reader may check that the other additional boundary terms can be controlled as well. Eventually, we are led to
\begin{eqnarray*}
&&\| \p_t \p_x g_1\|_{L^\infty([0,\bar T], L^2(\R_+)}^2 + \int_0^{\bar T} \int_0^\infty \left( \p_x \left(\frac{\p_t \p_x g_1}{\bv}\right) \right)^2
+ \|\beta'\|_{L^2(0, \bar T)}^2\\
&\leq & C\left(\cE_0 + \| (1+\sqrt{x}) \p_x^3 \tw^0\|_{L^2(\R_+)}^2 + (\cE_0+ \|\beta\|_{H^1(0,\bar T)}^2)^2 \right)\\
&&+ C  \|\p_t\p_x g_1\|_{L^2((0,\bar T)\times \R_+)}^2\\
&\leq & C \left( \cE_0+ \| (1+\sqrt{x}) \p_x^3 \tw^0\|_{L^2}^2 + (\cE_0 + \|\beta\|_{H^1(0,\bar T)}^2)^2 +  \|(1+\sqrt{x}) \p_x \tw^0\|_{L^2(\R_+)}^{4/3}\right).
\end{eqnarray*}

 \subsubsection*{Conclusion}
 
 We infer that
 \[
 \|\beta\|_{H^1(0,\bar T)}^2 \leq C \left( \cE_0+ (\cE_0 + \|\beta\|_{H^1(0,\bar T)}^2)^2 + \sum_{k=1}^3 \|(1+\sqrt{x}) \p_x^k \tw^0\|_{L^2(\R_+)}^{4/3}\right).
 \]
 Assume that
 \[
  \cE_0\leq c_0 \delta^2,\quad \|(1+\sqrt{x}) \p_x^k \tw^0\|_{L^2(\R_+)}\leq c_0\delta^{3/2}\quad \text{for }k=1,2,3.
 \]
 Then
 \[
 \|\beta\|_{H^1(0,\bar T)}^2 \leq C c_0 \delta^2 + C \delta^4.
 \]
 Once again, choosing  the constant $c_0$ small enough,  we obtain the desired result. This completes the proof of \cref{prop:bootstrap}.

\subsection{Long time behavior}
\label{ssec:stability}
\begin{cor}
Assume that the hypotheses of Theorem \ref{thm:main-glob} are satisfied.

Let $(\tx, v_s,u_s)$ be the unique global smooth solution to~\eqref{EDO-tx}-\eqref{eq:vs}-\eqref{eq:NS-shifted-1-u} constructed in Proposition~\ref{prop:fixed-pt}. Then 
\begin{equation}
|\tx'(t) - s| + \sup_{x \in \R_+} \Big| (v_s,u_s)(t,x) - (\bv, \bu)(x) \Big| \longrightarrow 0 \quad \text{as}~ t \to +\infty.
\end{equation}
Moreover, setting $p_s(t,x) = p_s(t) = -\mu [\partial_x u_s]_{|x=0}$, we also ensure that
\begin{equation*}
\big|(p_s - p_-)(t)\big| \longrightarrow 0 \quad \text{as}~ t \to +\infty,
\end{equation*}
where $p_- = -\mu [\partial_x \bu]_{|x=0} = s^2(v_+-1)$ has been defined in Lemma~\ref{lem:TW}.
\end{cor}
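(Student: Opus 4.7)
The plan is to combine the global bounds from Theorem~\ref{thm:main-glob} with the Sobolev embedding $H^1(0,\infty) \hookrightarrow C_0([0,\infty))$ and a Barbalat-type calculus lemma applied to the dissipation. The convergence $\tx'(t) \to s$ is immediate: since $\|\tx' - s\|_{H^1(\R_+)} \leq C\delta$ and $H^1(\R_+) \hookrightarrow C_0(\R_+)$, one has $\tx'(t) - s \to 0$; as $\tx'$ eventually stays above $s/2$, one also gets $\tx(t) \to +\infty$.

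For $v_s - \bv$, I would first establish $\|(v_s - \bv)(t)\|_{L^2(\R_+)} \to 0$ via a Barbalat argument. The uniform-in-time estimate~\eqref{eq:est_E0} (valid globally under the bootstrap bounds, since $\cE_T \leq 2\delta^2$) gives $v_s - \bv \in L^2(\R_+; L^2(\R_+))$, while Proposition~\ref{cor:recap-est-v} applied globally yields $\p_t v_s \in L^2(\R_+; L^2(\R_+))$. Setting $\varphi(t) := \|(v_s - \bv)(t)\|_{L^2}^2$, one has $\varphi \in L^1(\R_+)$, and Cauchy--Schwarz gives $\varphi' \in L^1(\R_+)$; hence $\varphi$ has a limit at infinity, which must be zero. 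The half-line interpolation $\|f\|_{L^\infty(\R_+)}^2 \leq f(0)^2 + 2\|f\|_{L^2}\|\p_x f\|_{L^2}$ applied to $f = (v_s - \bv)(t,\cdot)$ (with $f(0) = 0$), together with the uniform $H^1$ bound, upgrades this to $\|(v_s - \bv)(t)\|_{L^\infty} \to 0$.

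For $u_s - \bu$, I would use the identity $u_s - \bu = (\tw^0(\cdot + \tx(t)) - u_+) + \mu \p_x \ln(v_s/\bv)$ coming from $\tw(t, x) = \tw^0(x + \tx(t))$. The first summand tends to zero in $L^\infty_x$ since $\tx(t) \to \infty$ and $\tw^0(y) \to u_+$ as $y \to \infty$ (which follows from $\tw^0 - u_+ \in H^1(\R_+)$, itself a consequence of the decay assumptions on $\tv^0, \tu^0$). The second summand is controlled by $C(\|(v_s-\bv)(t)\|_{L^\infty} + \|\p_x(v_s-\bv)(t)\|_{L^\infty})$: the first factor has just been shown to vanish, while the second is handled by the same Barbalat argument applied this time to $\p_x(v_s - \bv)$, which lies in $L^2_t L^2_x \cap H^1_t L^2_x$ by Lemma~\ref{lem:vR_L2} and Proposition~\ref{cor:recap-est-v}, combined with a Gagliardo--Nirenberg interpolation against the uniform $H^2$ bound. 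Finally, since $p_s(t) = \tx'(t)(u_- - \tw^0(\tx(t)))$, the convergences of $\tx'(t)$ and of $\tw^0(\tx(t))$ immediately give $p_s(t) \to s(u_- - u_+) = s^2(v_+ - 1) = p_-$.

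The main obstacle, and the only nontrivial input beyond the bootstrap bounds themselves, is securing the $L^2_t L^2_x$ integrability of $v_s - \bv$ and $\p_x(v_s - \bv)$; this comes from the integrated-variable estimate on $V$, whose structure was already the key ingredient in the global existence argument. Once this time-integrability is in hand, every convergence reduces to a routine application of the calculus lemma.
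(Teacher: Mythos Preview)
Your argument is correct and follows the same Barbalat-type philosophy as the paper, but the execution differs in a few places. The paper runs a single, uniform argument: it notes that each of $v_s-\bv$, $u_s-\bu$, and $\partial_x(u_s-\bu)$ lies in $L^2\cap L^\infty(\R_+;H^1(\R_+))$ with time derivative in $L^2(\R_+;H^1(\R_+))$, concludes directly that the $H^1$ norm tends to zero, and then reads off the $L^\infty_x$ decay by interpolation; the pressure is handled by applying this to $\partial_x(u_s-\bu)$ and evaluating at $x=0$. You instead (i) run Barbalat separately on $\|g\|_{L^2}$ and $\|\partial_x g\|_{L^2}$, (ii) treat $u_s-\bu$ via the effective-velocity identity $u_s-\bu=(\tw^0(\cdot+\tx)-u_+)+\mu\,\partial_x\ln(v_s/\bv)$, and (iii) handle $p_s$ through the explicit formula $p_s=\tx'(u_--\tw^0(\tx))$. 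Your route is slightly more hands-on but has the advantage of using a little less regularity on $u_s$ (no Barbalat on $\partial_x h$ in $H^1$), at the cost of invoking the structural identities for $w_s$ and $p_s$; the paper's route is shorter and more systematic since the same two-line argument dispatches every quantity.
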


\medskip
\begin{proof}
The result easily follows from the controls of the solution $(\tx,v_s,u_s)$ and its time derivative. 
For instance, $v_s -\bv$ is controlled in $L^2 \cap L^\infty(\R_+; H^1(\R_+))$ and $\partial_t v_s \in L^2(\R_+; H^1(\R_+))$. 
Therefore,
\[
\|(v_s -\bv)(t)\|_{H^1(\R_+)} \longrightarrow 0 \quad \text{as}~ t\to +\infty,
\]
and 
\[
\sup_{x\in \R_+}\big| (v_s -\bv)(t,x) \big| \leq C \|(v_s -\bv)(t) \|_{L^2(\R_+)}^{1/2} \|\partial_x (v_s -\bv)(t) \|_{L^2(\R_+)}^{1/2} \longrightarrow 0 \quad \text{as}~ t\to +\infty.
\]
The long time behavior of $u_s-\bu$ and $\tx' -s$ can be derived with the same arguments. 
Finally, the long time behavior of $p_s$ is obtained through the control of $\partial_x(u_s -\bu)$ in $L^2 \cap L^\infty(\R_+; H^1(\R_+))$ and of $\partial_t \partial_x(u_s -\bu) \in L^2(\R_+; H^1(\R_+))$.

\bigskip

\end{proof}

\section*{Acknowledgement}
This project has received funding from the European Research Council (ERC) under the European Union's Horizon 2020 research and innovation program Grant agreement No 637653, project BLOC ``Mathematical Study of Boundary Layers in Oceanic Motion’’. This work was supported by the SingFlows and CRISIS projects, grants ANR-18-CE40-0027 and ANR-20-CE40-0020-01 of the French National Research Agency (ANR).
A.-L. D. acknowledges the support of the Institut Universitaire de France.
This material is based upon work supported by the National Science Foundation under Grant No. DMS-1928930 while the authors participated in a program hosted by the Mathematical Sciences Research Institute in Berkeley, California, during the Spring 2021 semester.

\newpage
\appendix

\section{Linear parabolic equations with Dirichlet boundary conditions}

In this section, we consider equations of the type
\be
\label{eq:lin-Dirichlet}
\ba
\p_t u +\p_x( b(t,x)  u )+ c(t,x) u -\p_{x}(a(t,x) \p_x u)=f(t,x),\quad t>0, \ x>0\\
u_{|x=0}=0,\quad 
u_{|t=0}=u^0.
\ea
\ee
We denote $\Omega_T:=(0,T)\times \R_+$. We will always assume that there exists $\alpha>0$ such that
\be
\label{hyp:minimales-coeffs}
a,b,c\in L^\infty([0,T]\times \R_+),\quad \inf_{[0,T]\times \R_+} a\geq \alpha>0.
\ee
Note that under such assumptions, it is classical that for any $f\in L^2([0,T], H^{-1}(\R_+))$, for any $u^0\in L^2(\R_+)$, equation \eqref{eq:lin-Dirichlet} has a unique weak solution $u\in L^\infty([0,T], L^2(\R_+))\cap L^2([0,T], H^1_0(\R_+))$.
We will now derive regularity estimates for $u$. These estimates are quite standard, and can be found in numerous textbooks on partial differential equations. 
Since they are used repeatedly in this paper, we gathered them in the following Proposition, for which we provide a proof for the reader's convenience.

\begin{prop}\label{prop:est-gal}Let $T>0$.
Assume that $a\in H^1([0,T];L^2(\R_+)) \cap L^2([0,T]; H^2(\R_+))$, $b\in W^{1,\infty}(\Omega_T)$, $c\in W^{1,\infty}([0,T], L^\infty(\R_+))$.\\
Let $u^0\in H^1(\R_+)$ such that $u^0(0)=0$, and let $f\in L^2(\Omega_T)$.
Let $u\in L^\infty([0,T], L^2(\R_+))\cap L^2([0,T], H^1_0(\R_+))$ be the unique weak solution of \eqref{eq:lin-Dirichlet}.

Then $u \in L^\infty([0,T];H^1(\R_+)) \cap H^1([0,T]; L^2(\R_+)) \cap L^2([0,T];H^2(\R_+))$ and we have the classical energy estimate
\be\label{est:energy-gal-1}
 A_0:=\| u\|_{L^\infty([0,T], L^2)}^2 + \| \p_x u\|_{L^2(\Omega_T)}^2 
\leq  C (\|f\|_{L^2(\Omega_T)}^2 + \| u^0\|_{L^2(\R_+)}^2)e^{CT}
\ee
for some constant $C=C(\alpha, \|b\|_\infty, \|c\|_\infty)$,
as well as the higher regularity estimates
\begin{align}\label{est:energy-gal-u_x}
A_1&:= \|\partial_x u\|_{L^\infty L^2}^2 + \|\partial_t u\|_{L^2 L^2}^2 + \|\partial^2_x u\|_{L^2 L^2}^2 \nonumber \\
& \leq C \Bigg[
\|a\|_{L^\infty}\|\partial_x u^0\|_{L^2}^2 + \big( \|\partial_x b\|_{L^2 L^\infty}^2 + \|c\|_{L^2L^\infty}^2\big) \|u\|_{L^\infty L^2}^2 + \|f\|_{L^2 L^2}^2\\ 
& \qquad + \big(\|\partial_t a\|_{L^\infty L^2}+ \|\partial_t a\|_{L^\infty L^2}^2 + \|\partial_x a\|_{L^\infty L^4}^2 + \|\partial_x a\|_{L^\infty L^4}^4 + \|b\|_{L^\infty}^2 \big) \|\partial_x u\|_{L^2 L^2}^2 
\Bigg]  \nonumber 
\end{align}
and
\begin{align}\label{est:energy-gal-u_xx}
& \|\p_t u\|_{L^\infty([0,T], L^2)}^2 + \| \p_x\p_t u\|_{L^2L^2}^2 \nonumber \\
&\leq C\Big[ \|u^0\|_{H^2(\R_+)}^2 \left(1 + \| b\|_{L^\infty(H^1)}^2 + \|c\|_\infty^2 + \| a\|_{L^\infty(H^1)}^2\right)+ A_0\|\partial_t b\|_{L^2 L^\infty}^2\\ 
&\qquad+ A_1\big(\|\partial_t c\|_{L^2(\Omega_T)}^2+ \|c\|_{L^\infty} + \|\partial_t a\|_{L^\infty L^2} + \|b\|_\infty+1\big)  + \|\partial_t f\|_{L^2(\Omega_T)}^2 \Big]
\end{align}

\end{prop}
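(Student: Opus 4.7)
}

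The existence of a weak solution $u\in L^\infty([0,T];L^2(\R_+))\cap L^2([0,T];H^1_0(\R_+))$ under the minimal assumptions~\eqref{hyp:minimales-coeffs} is classical (variational theory for linear parabolic equations). The regularity upgrades and the three explicit estimates will be proved as a priori bounds, which I would rigorously justify on a smooth approximating sequence (truncating to $(0,R)$, smoothing coefficients, passing to the limit, in the spirit of Section~\ref{sec:v-WP}). The core of the argument is the derivation of the three energy identities below.

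\paragraph*{Step 1: Basic estimate~\eqref{est:energy-gal-1}.} Multiply~\eqref{eq:lin-Dirichlet} by $u$ and integrate on $\R_+$. Using $u_{|x=0}=0$ and decay at infinity, the diffusion term gives $\int a(\p_x u)^2 \geq \alpha\|\p_x u\|_{L^2}^2$. The drift contribution becomes $-\tfrac12\int \p_x b\, u^2$ after integration by parts (with boundary term vanishing), and together with the reaction term it is bounded by $C(\|b\|_\infty,\|c\|_\infty)\|u\|_{L^2}^2$. The source is absorbed by Young: $\int fu \le \eta\|\p_x u\|_{L^2}^2 + C_\eta\|f\|_{L^2}^2$. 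Gronwall gives~\eqref{est:energy-gal-1}.

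\paragraph*{Step 2: Estimate~\eqref{est:energy-gal-u_x}.} Multiply~\eqref{eq:lin-Dirichlet} by $\p_t u$. The key identity, obtained via integration by parts using $(\p_t u)_{|x=0}=0$, is
\[
\int_{\R_+} a\,\p_x u\,\p_x\p_t u \;=\; \tfrac12\tfrac{d}{dt}\int_{\R_+} a(\p_x u)^2 \;-\; \tfrac12\int_{\R_+} \p_t a\,(\p_x u)^2.
\]
Together with $\int (\p_t u)^2$ on the left, the right-hand side contains: (i)~$\|f\|_{L^2}\|\p_t u\|_{L^2}$, (ii)~$\|b\|_\infty\|\p_x u\|_{L^2}\|\p_t u\|_{L^2}+(\|\p_x b\|_\infty+\|c\|_\infty)\|u\|_{L^2}\|\p_t u\|_{L^2}$, (iii)~$\|\p_t a\|_{L^2}\|\p_x u\|_{L^4}^2$. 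The last is the delicate one: by the 1D Gagliardo–Nirenberg inequality $\|\p_x u\|_{L^4}^2\le C\|\p_x u\|_{L^2}\|\p_x u\|_{H^1}$. The $H^1$ part of $\p_x u$ is reconstructed from the equation: since $a\p_x^2 u = \p_t u + \p_x(bu)+cu - f - \p_x a\,\p_x u$, one gets
\[
\|\p_x^2 u\|_{L^2} \;\le\; \tfrac1\alpha\Bigl(\|\p_t u\|_{L^2} + \|f\|_{L^2} + (1+\|b\|_{W^{1,\infty}}+\|c\|_\infty)\|u\|_{H^1} + \|\p_x a\|_{L^4}\|\p_x u\|_{L^4}\Bigr),
\]
and a second Gagliardo–Nirenberg absorbs the $\|\p_x a\|_{L^4}\|\p_x u\|_{L^4}$ term into the left-hand side after Young. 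Combining, using the bounds $A_0$ from Step~1 and Gronwall with weight $a$ (bounded above and below), one obtains~\eqref{est:energy-gal-u_x}. The initial contribution $\|a\|_\infty\|\p_x u^0\|_{L^2}^2$ enters through $\int a(\p_x u)^2$ at $t=0$.

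\paragraph*{Step 3: Estimate~\eqref{est:energy-gal-u_xx} and main obstacle.} Differentiate~\eqref{eq:lin-Dirichlet} in $t$: setting $v:=\p_t u$, one finds
\[
\p_t v + \p_x(bv) + cv - \p_x(a\p_x v) \;=\; \p_t f - \p_x(\p_t b\,u) - \p_t c\,u + \p_x(\p_t a\,\p_x u),
\]
with $v_{|x=0}=0$ and initial datum $v_{|t=0} = -\p_x(bu^0) - cu^0 + \p_x(a\p_x u^0) + f_{|t=0}$, which belongs to $L^2(\R_+)$ under the hypotheses and whose norm is controlled by the prefactor appearing in~\eqref{est:energy-gal-u_xx}. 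Apply Step~1 to $v$: the contributions of $\p_t f$, $\p_t c\,u$, $\p_t b\,u$ are estimated in $L^2(\Omega_T)$ via Hölder and the control $A_0$ from Step~1. The term $\p_x(\p_t a\,\p_x u)$ must be treated in duality: paired with $v$ it yields $\int\p_t a\,\p_x u\,\p_x v$, which by Hölder is bounded by $\|\p_t a\|_{L^\infty L^2}\|\p_x u\|_{L^\infty}\|\p_x v\|_{L^2}$; the middle factor is then controlled by $\|\p_x u\|_{H^1}^{1/2}\|\p_x u\|_{L^2}^{1/2}$ via Sobolev embedding, and the $H^1$-piece is estimated via $A_1$. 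The $\|\p_x v\|_{L^2}^2$ part is absorbed in the left-hand side through Young, with $\|\p_t a\|_{L^\infty L^2}$ appearing quadratically as displayed in~\eqref{est:energy-gal-u_xx}. Gronwall then closes the estimate.

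The main technical obstacle is the careful bookkeeping of the couplings between $\p_t a$, $\p_x a$ and derivatives of $u$, which must be distributed between $L^\infty$, $L^4$ and $L^2$ norms in space via the 1D Gagliardo–Nirenberg interpolation, so that all top-order contributions can be absorbed on the left-hand side and a clean \emph{linear-in-$A_1$} bound is obtained in~\eqref{est:energy-gal-u_xx}—a point that matters when this proposition is invoked in Sections~\ref{sec:estimates} and~\ref{sec:local} to propagate regularity without losing powers of the energy.
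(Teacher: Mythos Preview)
Your plan follows essentially the same three-step strategy as the paper (test with $u$; test with $\partial_t u$; differentiate in time and test with $\partial_t u$ again), with one harmless variation: in Step~2 you recover $\|\partial_x^2 u\|_{L^2}$ by reading $a\partial_x^2 u$ pointwise off the equation, whereas the paper obtains it by a second test with $-\partial_x^2 u$ (via eigenvalue-weighted Galerkin coefficients) and then linearly combines the two inequalities; both routes are standard and close with the same Gagliardo--Nirenberg interpolation. Two small slips in Step~1: there is no Poincar\'e inequality on $\R_+$ with a single Dirichlet condition at $x=0$, so $\int fu$ cannot be absorbed into $\|\partial_x u\|_{L^2}^2$---use $\int fu \le \tfrac12\|f\|_{L^2}^2+\tfrac12\|u\|_{L^2}^2$ and feed it into Gronwall; and for the drift term, stop the integration by parts at $-\int bu\,\partial_x u$ and apply Young so that only $\|b\|_\infty$ (not $\|\partial_x b\|_\infty$) enters the constant, matching the statement. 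The paper also makes the approximation explicit via Galerkin on $(0,R)$, using a Bessel-inequality argument to control $\|\partial_x u_{R,n}(0)\|_{L^2}$ by $\|\partial_x u^0\|_{L^2}$, which is the rigorous content of your ``smooth approximating sequence'' remark.
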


\begin{proof}
The weak solution $u$ of \eqref{eq:lin-Dirichlet} can be defined as the limit as $R\to \infty$ of the solution $u_R$ of the linear equation in a bounded domain
\be\label{eq:uR}
\ba
\p_t u_R +\p_x( b(t,x)  u_R )+ c(t,x) u_R -\p_{x}(a(t,x) \p_x u_R)=f(t,x),\quad t>0, \ x\in (0,R)\\
u_{R|x=0}=u_{R|x=R}=0,\quad 
u_{|t=0}=u^0_R,
\ea
\ee
where $u^0_R=u^0 \chi_R$, where $\chi_R(x)=1$ if $x\leq R-2$ and $\chi_R(x)=0$ if $x\geq R-1$, with $\|\chi_R\|_{W^{2,\infty}}\leq C$.
In turn, the function $u_R$ is defined as the limit of the Galerkin approximation
\[
u_{R,n}(t,x)=\sum_{k=1}^n d^{R,n}_k(t) w_{k,R}(x),
\]
where $w_{k,R}$ are eigenfunctions of the Laplacian with Dirichlet boundary conditions in $(0,R)$, normalized in $L^2$, namely $w_{k,R}(x)= (2/R)^{1/2} \sin (k\pi x/R)$. More precisely, for all $k\in \{1,\cdots,n\} $, for a.e. $t\in [0,T]$
\begin{multline}
\label{Galerkin}
\int_0^R\p_t u_{R,n}(t)w_{k,R} - \int_0^R b(t)u_{R,n}(t) \p_x w_{k,R} + \int_0^R c(t)u_{R,n}(t)  w_{k,R} + \int_0^R a(t) \p_x u_{R,n}(t) \p_x w_{k,R} \\=\int_0^R f(t) w_{k,R}.
\end{multline}
The initial data $u_{R,n|t=0}$ is the orthogonal projection in $L^2(0,R)$ of $u_R^0$ onto $\mathrm{Span} (w_{1,R},\cdots, w_{n,R})$.

{\it Energy estimate~\eqref{est:energy-gal-1}.}
The classical energy estimate is obtained by multiplying~\eqref{Galerkin} by $d^{R,n}_k(t)$ and summing for $k\in \{1,\cdots, n\}$. 
Using Cauchy-Schwarz inequalities, we infer that 
\[
\frac{d}{dt}\| u_{R,n}(t)\|_{L^2(0,R)}^2 + \alpha \| \p_x  u_{R,n}(t)\|_{L^2(0,R)}^2 \leq \|f(t)\|_{L^2(0,R)}^2 + \left(1 + \frac{\|b\|_\infty^2}{\alpha} + \| c\|_\infty\right) \| u_{R,n}(t)\|_{L^2(0,R)}^2.
\]
Integrating in time and recalling that $\| u_{R,n}(t=0)\|_{L^2(0,R)} \leq \| u_R^0\|_{L^2([0,R])}\leq \| u^0\|_{L^2(\R_+)}$, we deduce that $u_{R,n}$ satisfies \eqref{est:energy-gal-1} for all $R,n$. Passing to the limit as $n\to \infty$, we deduce that $u_R$ also satisfies \eqref{est:energy-gal-1}.

\medskip
{\it Estimate~\eqref{est:energy-gal-u_x}.}
We first multiply \eqref{Galerkin} by $(d^{R,n}_k)'(t)$ and sum for $k\in \{1,\cdots, n\}$. After integrations by parts in space and time, we get
\begin{align*}
& \int_0^R \frac{a(t)}{2} |\partial_x u_{R,n}(t)|^2 +  \int_0^t \int_0^R |\partial_t u_{R,n}|^2 \nonumber \\
& = \int_0^R \frac{a(0)}{2} |\partial_x u_{R,n}(0)|^2
- \int_0^t \int_0^R \partial_x(b u_{R,n}) \partial_t u_{R,n} 
- \int_0^t \int_0^R c u_{R,n} \partial_t u_{R,n} \\
& \quad + \int_0^t \int_0^R f \ \partial_t u_{R,n}
+ \int_0^t \int_0^R \frac{\partial_t a}{2} |\partial_x u_{R,n}|^2 \nonumber
\end{align*}
and therefore
\begin{align}\label{eq:du1}
& \int_0^R \frac{a(t)}{2} |\partial_x u_{R,n}(t)|^2 +  \dfrac{1}{2} \int_0^t \int_0^R |\partial_t u_{R,n}|^2 \nonumber \\
& \leq  \int_0^R \frac{a(0)}{2} |\partial_x u_{R,n}(0)|^2 
+ \int_0^t \int_0^R \frac{|\partial_t a|}{2} |\partial_x u_{R,n}|^2 \\
& \quad + C \Big[ 
\|b\|_{L^\infty(\Omega_T)}^2 \|\partial_x u_{R,n}\|_{L^2(\Omega_T)}^2 
+(\|\partial_x b\|_{L^2 L^\infty}^2 + \|c\|_{L^2 L^\infty}^2 ) \|u_{R,n}\|_{L^\infty L^2}^2  + \|f\|_{L^2(\Omega_T)}^2 \Big] \nonumber
\end{align}
The crucial point here is to notice that thanks to the compatibility condition $u_R^0(0)=u_R^0(R)=0$, we have
\be\label{est:pythagore}
\|  \p_x u_{R,n}(t=0)\|_{L^2(0,R)}\leq \| \p_x u_R^0\|_{L^2(0,R)}\leq C \| u^0\|_{H^1(\R_+)}. 
\ee
Indeed, 
\[
\p_x u_{R,n}(t=0)=\sum_{k=0}^n d^{R,n}_k(0) \p_x w_{k,R}.
\]
Computing $d^{R,n}_k(0)$ and using the compatibility conditions on $u_R^0$, we infer that
\[
d^{R,n}_k(0)= \sqrt{\frac{2}{R}}\int_0^R u_R^0(x)\sin \left( \frac{k\pi x}{R}\right)dx=\sqrt{\frac{2}{R}}\frac{R}{k\pi}\int_0^R\p_x  u_R^0(x)\cos \left( \frac{k\pi x}{R}\right)dx.
\]
Setting $v_{k,R}:=\sqrt{\frac{2}{R}}\cos(k\pi x/R)$ for $k\geq 1$, we notice that $(v_{k,R})_{k\geq 1}$ is an orthonormal family in $L^2(0,R)$, and that
\[
\p_x u_{R,n}(t=0)=\sum_{k=0}^n \left( \p_x u_R^0, v_{k,R}\right) v_{k,R},
\]
where $(\cdot ,\cdot )$ is the usual $L^2$ scalar product. Inequality \eqref{est:pythagore} then follows from the Bessel inequality.

Next, we multiply \eqref{Galerkin} by $\lambda^R_k d^{R,n}_k(t)$ where $\lambda^R_k$ is the $k$-th eigenvalue of the Laplacian. 
Summing for $k\in \{1,\cdots, n\}$, we get after integrations by parts:
\begin{align*}
- \int_0^t \int_0^R \partial_x(a\partial_x u_{R,n}) \partial^2_x u_{R,n} 
& = - \int_0^t \int_0^R \partial_x(bu_{R,n}) \partial^2_x u_{R,n}
- \int_0^t\int_0^Rcu_{R,n} \partial^2_x u_{R,n} \\
& \quad + \int_0^t\int_0^R f \ \partial^2_x u_{R,n} 
- \int_0^t\int_0^R \partial_t u_{R,n} \partial^2_x u_{R,n}. \nonumber 
\end{align*}
Using the Cauchy-Schwarz inequality, we deduce that 
\begin{align}\label{eq:du2}
& \int_0^t \int_0^R a |\partial^2_x u_{R,n}|^2 \nonumber\\
& \leq {C} \Big[\|\partial_t u_{R,n}\|_{L^2(\Omega_T)}^2 
+ \int_0^t\int_0^R \left|\frac{\partial_x a}{\sqrt{a}}\right|^2 |\partial_x u_{R,n}|^2  
+ \|b\|_{L^\infty(\Omega_T)}^2 \|\partial_x u_{R,n}\|_{L^2(\Omega_T)}^2 \\
& \qquad +(\|\partial_x b\|_{L^2 L^\infty}^2 + \|c\|_{L^2 L^\infty}^2 ) \|u_{R,n}\|_{L^\infty L^2}^2 
+ \|f\|_{L^2(\Omega_T)}^2
\Big] .\nonumber
\end{align}
Combining \eqref{eq:du1} and \eqref{eq:du2}, we obtain
\begin{align*}
& \int_0^R \frac{a(t)}{2} |\partial_x u_{R,n}(t)|^2 + \frac{1}{4} \int_0^t \int_0^R |\partial_t u_{R,n}|^2
+ \frac{1}{4{C}}\int_0^t \int_0^R a |\partial^2_x u_{R,n}|^2  \nonumber \\
& \leq  \int_0^R \frac{a(0)}{2} |\partial_x u_{R,n}(0)|^2 
+ \int_0^t \int_0^R \frac{|\partial_t a|}{2} |\partial_x u_{R,n}|^2 
+ \frac{1}{4{C}}\int_0^t\int_0^R \left|\frac{\partial_x a}{\sqrt{a}}\right|^2 |\partial_x u_{R,n}|^2 \\
& \quad + C \Big[ 
\|b\|_{L^\infty(\Omega_T)}^2 \|\partial_x u_{R,n}\|_{L^2(\Omega_T)}^2 
+(\|\partial_x b\|_{L^2 L^\infty}^2 + \|c\|_{L^2 L^\infty}^2 ) \|u_{R,n}\|_{L^\infty L^2}^2  + \|f\|_{L^2(\Omega_T)}^2 \Big].
\end{align*} 
Moreover,
\begin{align*}
\int_0^R |\partial_t a| |\partial_x u_{R,n}|^2 
& \leq \|\partial_t a \|_{L^2(0,R)} \|\partial_x u_{R,n}\|_{L^2(0,R)} \|\partial_x u_{R,n}\|_{L^\infty(0,R)} \\
& \leq C \|\partial_t a \|_{L^2(0,R)} \|\partial_x u_{R,n}\|_{L^2(0,R)} \|\partial_x u_{R,n}\|_{H^1(0,R)}
\end{align*}
so that
\begin{align*}
\int_0^t \int_0^R |\partial_t a| |\partial_x u_{R,n}|^2 
\leq \eta \|\partial^2_x u_{R,n}\|_{L^2(\Omega_T)}^{2} + C_\eta \left(\|\partial_t a\|_{L^\infty L^2}+ \|\partial_t a\|_{L^\infty L^2}^2\right) \|\partial_x u_{R,n}\|_{L^2(\Omega_T)}^2.
\end{align*}
In the same manner,
\begin{align*}
\int_0^t\int_0^R \left|\frac{\partial_x a}{\sqrt{a}}\right|^2 |\partial_x u_{R,n}|^2 
\leq \eta \|\partial^2_x u_{R,n}\|_{L^2(0,R)}^{2} + C_\eta\left( \|\partial_x a\|_{L^\infty L^4}^2 +\|\partial_x a\|_{L^\infty L^4}^4\right)  \|\partial_x u_{R,n}\|_{L^2(\Omega_T)}^2.
\end{align*}
As a consequence, choosing $\eta$ small enough, and setting $J(\xi)=\xi + \xi^2$ for $\xi\in \R$,
\begin{align*}
& \alpha \int_0^R |\partial_x u_{R,n}(t)|^2 + \int_0^t \int_0^R |\partial_t u_{R,n}|^2
+ \alpha \int_0^t \int_0^R |\partial^2_x u_{R,n}|^2  \nonumber \\
& \leq C \Bigg[ \|a\|_{L^\infty} \|\partial_x u_{R,n}(0)\|_{L^2(0,R)}^2 
+ \left(J\left(\|\partial_t a\|_{L^\infty L^2}\right) + J\left(\|\partial_x a\|_{L^\infty L^4}^2\right)\right) \|\partial_x u_{R,n}\|_{L^2(\Omega_T)}^2 \\
& \quad + \|b\|_{L^\infty(\Omega_T)}^2 \|\partial_x u_{R,n}\|_{L^2(\Omega_T)}^2 
+(\|\partial_x b\|_{L^2 L^\infty}^2 + \|c\|_{L^2 L^\infty}^2 ) \|u_{R,n}\|_{L^\infty L^2}^2  + \|f\|_{L^2(\Omega_T)}^2 \Bigg]
\end{align*} 
and eventually, passing to the limit $n\to +\infty$
\begin{align*}
& \| \p_x u_{R}\|_{L^\infty((0,T), L^2((0,R))}^2 + \| \p_t u_{R}\|_{L^2((0,T)\times (0,R))}^2 + \|\p_x^2 u_{R}\|_{L^2((0,T)\times (0,R))}^2 \\
& \leq C \Bigg[
\|a\|_{L^\infty}\|\partial_x u^0_R\|_{L^2}^2 +  \left(J\left(\|\partial_t a\|_{L^\infty L^2}\right) + J\left(\|\partial_x a\|_{L^\infty L^4}^2\right)\right) \|\partial_x u_R\|_{L^2 L^2}^2 
+ \|b\|_{L^\infty}^2 \|\partial_x u_R\|_{L^2 L^2}^2 \\
& \qquad + \big( \|\partial_x b\|_{L^2 L^\infty}^2 + \|c\|_{L^2L^\infty}^2\big) \|u_R\|_{L^\infty L^2}^2 + \|f\|_{L^2 L^2}^2
\Bigg]. \nonumber
%
\end{align*}

\medskip
{\it Estimate~\eqref{est:energy-gal-u_xx}.}
In a last step, we differentiate \eqref{Galerkin} with respect to $t$.
We find that $u_{R,n}':=\p_t u_{R,n}$ also satisfies a weak formulation similar to \eqref{Galerkin}, namely
\begin{multline}\label{Galerkin-bis}
   \int_0^R\p_t u_{R,n}'(t)w_{k,R} - \int_0^R b(t)u_{R,n}'(t) \p_x w_{k,R} + \int_0^R c(t)u_{R,n}'(t) w_{k,R} + \int_0^R a(t) \p_x u_{R,n}'(t) \p_x w_{k,R} \\
   =\int_0^R \tilde f_{R,n}(t) w_{k,R} + \int_0^R g_{R,n}(t) \p_x w_{k,R} 
 \end{multline}
 where
\[\ba
\tilde f_{R,n}:= \p_t f + \p_t c  u_{R,n} ,\\
g_{R,n}:=- \p_t b u_{R,n} + \p_t a \p_x u_{R,n}\ea
\]
and with the initial data
\[
u_{R,n}'(0)=\mathbb P_n\left[ -\p_x (b(t=0) u_{R}^0) - c(t=0) u_R^0 + \p_x (a(t=0) \p_x u_R^0)\right],
\]
where $\mathbb P_n$ is the orthogonal projection in $L^2(0,R)$ onto $\mathrm{Span}(w_{1,R}, \cdots, w_{n,R})$. 
Using \eqref{est:energy-gal-u_x}, we obtain 
\begin{align*}
\| \tilde f_{R,n}\|_{L^2((0,T)\times (0,R))} 
& \leq C \left(\|\p_t f\|_{L^2(\Omega_T)} + \|u_{R,n}\|_{L^\infty([0,T], H^1(0,R))} \|\p_t c\|_{L^2(\Omega_T)}\right),\\
\|g_{R,n}\|_{L^2((0,T)\times (0,R))} 
& \leq C\Big[ \|u_{R,n}\|_{L^\infty([0,T], L^2(0,R))} \|\p_t b\|_{L^2([0,T];L^\infty(\R_+))} \\
& \qquad + \|\p_t a\|_{L^\infty([0,T], L^2(\R_+))}\|\p_x u_{R,n}\|_{L^2([0,T], H^1)}\Big],
\end{align*}
so that
\begin{align*}
& \|u'_{R,n}\|_{L^\infty L^2}^2 + \|\partial_x u'_{R,n}\|_{L^2L^2}^2 \\
& \leq C\Big[ \|u^0\|_{H^2(\R_+)}^2 \left(1 + \| b\|_{L^\infty(H^1)}^2 + \|c\|_\infty^2 + \| a\|_{L^\infty(H^1)}^2\right)+ A_0\|\partial_t b\|_{L^2 L^\infty}^2\\ 
&\qquad+ A_1\big(\|\partial_t c\|_{L^2(\Omega_T)}^2+ \|c\|_{L^\infty} + \|\partial_t a\|_{L^\infty L^2} + \|b\|_\infty+1\big)  + \|\partial_t f\|_{L^2(\Omega_T)}^2 \Big]
\end{align*}
Passing to the limit as $n\to \infty$, we deduce that $u_R$ satisfies \eqref{est:energy-gal-u_xx}.

\bigskip

We then extend $u_R$ to $(0,T)\times (0, +\infty)$ in such a way that the extension satisfies the same bounds as $u_R$. 
The family thus obtained is compact in $L^2_\text{loc}([0,T]\times\R_+)$, and we can extract a subsequence converging weakly in $w^*-L^\infty([0,T], H^2(\R_+)$, and whose time-derivative converges weakly in $w^*-L^\infty([0,T], L^2(\R_+)$ and in $w-L^2([0,T], H^1(\R_+))$.
Passing to the limit in \eqref{eq:uR}, we find that the limit is a solution of \eqref{eq:lin-Dirichlet}, satisfying the estimates~\eqref{est:energy-gal-1}, \eqref{est:energy-gal-u_x} and~\eqref{est:energy-gal-u_xx}.
\end{proof}

\section{\texorpdfstring{$L^2$}{L2} estimates for some nonlinear parabolic equations}
In this section, we consider diffusion equations of the form
\be\label{eq:generique-g}
\ba
\p_t g -\tilde y'(t)\p_x g-\mu \p_{xx} \ln \left(1+\frac{g}{\bar g}\right)= G\quad t\in (0,T), \ x\in (0,R),\\
g_{|t=0}=g_0\in H^1_0((0,R)),\\
g_{|x=0}=0,\\
g_{|x=R}=0\quad \text{if }R<\infty, \quad \lim_{x\to \infty} g(t,x)=0\quad \text{if }R=+\infty.
\ea
\ee
We prove the following
\begin{lem}
Let $G\in L^2((0,T)\times (0,R))$, $g_0\in H^1_0((0,R))$. Assume that $\bar g \in  W^{1,\infty}\cap H^1((0,T)\times (0,R))$ and that $\bar g_{|x=0}=1$. Assume furthermore that $M^{-1}\leq \ty'(t)\leq M$ for a.e. $t\in (0,T)$, for some constant $M\geq 1$.

Let $g\in L^\infty([0,T], H^1(0,R))$ be a solution of \eqref{eq:generique-g} such that $\p_t g\in L^2((0,T)\times (0,R))$. 
Assume furthermore that $1\leq g + \bar g \leq \bar C$, $1\leq \bar g \leq \bar C$ a.e. for some constant $\bar C>1$. 

Then $g/\bar g\in  L^\infty([0,T], H^1(0,R))\cap L^2((0,T), H^2((0,R)))$, and there exists a constant $C$ depending only on $\mu, M$ and $\|\bar g\|_{W^{1,\infty}([0,T]\times [0,R])}$ such that
\begin{eqnarray*}
&&\|g\|_{L^\infty([0,T], H^1(0,R))} + \| \p_t g\|_{ L^2((0,T)\times (0,R))} +  \|\p_x g \|_{ L^2((0,T)\times (0,R))}   \\
&\leq & C  \left( \|g_0\|_{H^1} + \| G\|_{L^2((0,T)\times (0,R))} +\|g\|_{L^2((0,T)\times (0,R))}\right),
\end{eqnarray*}
and, if $R=+\infty$,
\begin{eqnarray*}\left\| \p_x^2 \left(\frac{g}{\bar g}\right)\right\|_{L^2((0,T)\times \R_+)}
&\leq & C \left( \|g_0\|_{H^1} + \| G\|_{L^2((0,T)\times \R_+)} +\|g\|_{L^2((0,T)\times \R_+)}\right)\\
&+& C \inf(1, T^{1/2})\left( \|g_0\|_{H^1} + \| G\|_{L^2((0,T)\times \R_+)} +\|g\|_{L^2((0,T)\times \R_+)}\right)^2.
\end{eqnarray*}
Additionally, for all $R>0$, $g$ satisfies  the exponential growth estimate
\begin{align*}
    &\|g\|_{L^\infty([0,T], H^1(0,R))} + \| \p_t g\|_{ L^2((0,T)\times (0,R))} +  \|\p_x g \|_{ L^2((0,T)\times (0,R))}\\
    &\leq C  \left( \|g_0\|_{H^1} + \| G\|_{L^2((0,T)\times (0,R))}\right) \exp((1+ \|\p_x \bar g\|_\infty^2) T).
\end{align*}

\label{lem:generique-g}
\end{lem}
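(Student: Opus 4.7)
My plan is to reduce the nonlinear diffusion to a divergence-form linear operator with uniformly coercive coefficient, and then to combine an application of Proposition~\ref{prop:est-gal} (for the exponential bound) with direct energy arguments (for the non-exponential bounds). Setting $h:=g/\bar g$ and using the identity $\p_{xx}\ln(1+h)=\p_x\bigl(\p_x h/(1+h)\bigr)$ together with $\p_x\ln(1+g/\bar g)=\p_x g/(g+\bar g)-g\,\p_x\bar g/(\bar g(g+\bar g))$, I rewrite \eqref{eq:generique-g} in the equivalent form
\begin{equation*}
\p_t g-\ty'(t)\p_x g-\mu\,\p_x\!\left(\frac{\p_x g}{g+\bar g}\right)=G-\mu\,\p_x\!\left(\frac{g\,\p_x\bar g}{\bar g(g+\bar g)}\right),
\end{equation*}
in which the diffusion coefficient $\mu/(g+\bar g)$ lies in $[\mu/\bar C,\mu]$ by hypothesis and the extra divergence source is bounded in $L^2((0,T);H^{-1})$ by $C\|g\|_{L^2L^2}$. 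The exponential estimate is then a direct application of the $L^\infty L^2\cap L^2 H^1_0$ part of Proposition~\ref{prop:est-gal} under the minimal assumptions \eqref{hyp:minimales-coeffs}, the extra divergence source being absorbed through Gronwall and producing the announced factor $\exp((1+\|\p_x\bar g\|_\infty^2)T)$.

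\textbf{Non-exponential energy estimate.} For the sharper (non-exponential) bound I multiply \eqref{eq:generique-g} by $h=g/\bar g$ and integrate in space. Since $h$ vanishes on the boundary, boundary terms disappear, the diffusion produces the coercive contribution $\mu\int(\p_x h)^2/(1+h)\ge (\mu/\bar C)\int(\p_x h)^2$, the $\p_t$-term gives $\tfrac{d}{dt}\int g^2/(2\bar g)$ up to a commutator in $\p_t\bar g$, and the transport term becomes (after one integration by parts) a Cauchy--Schwarz bound by $C\|\bar g\|_{W^{1,\infty}}\|g\|_{L^2}^2$. Integrating in time closes the $L^\infty L^2\cap L^2 H^1$ estimate without Gronwall exponential in terms of $\|g_0\|_{L^2}^2+\|G\|_{L^2 L^2}^2+\|g\|_{L^2 L^2}^2$. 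To upgrade this to $\|g\|_{L^\infty H^1}^2+\|\p_t g\|_{L^2 L^2}^2$, I multiply by $\p_t g\in L^2$ (which vanishes at the boundary since $g$ does), integrate by parts in space and then in time on the diffusion to obtain $\tfrac{d}{dt}\tfrac{\mu}{2}\int(\p_x h)^2/(\bar g(1+h))$ plus quadratic and cubic correction terms coming from commuting $\p_t$ with the coefficient $1/(\bar g(1+h))$. The cubic contribution $\int(\p_x h)^2\p_t h$ is the delicate one: I absorb it via the half-line interpolation $\|\p_x h\|_{L^4(\R_+)}^2\le C\|\p_x h\|_{L^2}^{3/2}\|\p_x^2 h\|_{L^2}^{1/2}$ and Young's inequality, a small fraction of $\|\p_t h\|_{L^2}^2$ going to the left-hand side and a small fraction of $\|\p_x^2 h\|_{L^2}^2$ being sent to the right-hand side to be closed at the next step.

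\textbf{$L^2H^2$ recovery and main obstacle.} For $R=+\infty$, the $L^2((0,T);H^2)$ bound on $h$ is extracted from the equation itself: solving $\mu\,\p_x(\p_x h/(1+h))=\bar g\,\p_t h+(\text{lower order terms})-G/\bar g$, whose right-hand side is controlled in $L^2$ by the previous step, and then inverting the chain rule via $\p_x^2 h=(1+h)\,\p_x(\p_x h/(1+h))+(\p_x h)^2/(1+h)$. The quadratic remainder $(\p_x h)^2$ is estimated in $L^2((0,T)\times\R_+)$ by the same $L^4$-interpolation; combining with $\|\p_x h\|_{L^2 L^2}\le T^{1/2}\|\p_x h\|_{L^\infty L^2}$ yields the $\inf(1,T^{1/2})\,(\mathrm{RHS})^2$ correction stated in the lemma. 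The \textbf{main obstacle} lies precisely in closing this coupled system: the cubic corrections from the $\p_t g$-multiplication couple the $H^1(L^2)$ estimate on $\p_t g$ to the $L^2H^2$ estimate on $h$, so one must balance the small constants in Young's inequality carefully and exploit either smallness of $T$ or of the $H^1$-norm of $g$, which is exactly what the $\inf(1,T^{1/2})$ prefactor is there to encode.
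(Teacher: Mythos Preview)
Your overall architecture is reasonable at the $L^\infty L^2$ level and in the final $L^2H^2$ recovery, but there is a genuine gap at the $L^\infty H^1$/$\p_t g$ step, and it is precisely the ``main obstacle'' you flag. The first displayed estimate in the lemma is \emph{linear} in the data, with a constant depending only on $\mu,M,\|\bar g\|_{W^{1,\infty}}$; no smallness, no quadratic correction, and no $\inf(1,T^{1/2})$ factor is allowed there. Your multiplier $\p_t g$ inevitably produces the cubic commutator $\int(\p_x h)^2\,\p_t h$, and the interpolation/Young routine you propose leaves a residual of size $\|\p_x h\|_{L^\infty L^2}^2\|\p_x h\|_{L^2H^1}^2$ (or an equivalent higher-order term) that couples the very quantities you are trying to close. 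You can bootstrap that coupling under smallness or short time, but you cannot extract from it the unconditional linear bound that the lemma states. The same defect propagates to the exponential $L^\infty H^1$ estimate: invoking Proposition~\ref{prop:est-gal} at the $H^1$ level (i.e.\ \eqref{est:energy-gal-u_x}) would require $a=\mu/(g+\bar g)$ to lie in $H^1_tL^2_x$, which amounts to controlling $\p_t g$ --- the very quantity you are after --- so that route is circular.

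The paper avoids the cubic term altogether by multiplying not by $\p_t g$ but by $\p_t\ln(1+g/\bar g)$. With this choice the diffusion term becomes an \emph{exact} time derivative,
\[
-\mu\int_0^R \p_{xx}\ln\!\Big(1+\tfrac{g}{\bar g}\Big)\,\p_t\ln\!\Big(1+\tfrac{g}{\bar g}\Big)
=\frac{\mu}{2}\,\frac{d}{dt}\int_0^R\Big(\p_x\ln\!\Big(1+\tfrac{g}{\bar g}\Big)\Big)^2,
\]
with \emph{no} cubic remainder, while $\int\p_t g\cdot\p_t\ln(1+g/\bar g)$ is coercive for $\p_t(g/\bar g)$ up to a term bounded by $\|\p_t\bar g\|_\infty^2\|g\|_{L^2}^2$. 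The transport and source contribute only $C\int(G^2+(\p_x g)^2)$. This yields directly
\[
\Big\|\p_t\tfrac{g}{\bar g}\Big\|_{L^2L^2}+\Big\|\p_x\tfrac{g}{\bar g}\Big\|_{L^\infty L^2}
\le C\big(\|g_0\|_{H^1}+\|G\|_{L^2L^2}+\|\p_x g\|_{L^2L^2}+\|g\|_{L^2L^2}\big),
\]
which, combined with the $L^2$-level estimate (multiply by $g$) gives the clean linear $L^\infty H^1$ bound, and, upon substituting the exponential $L^2H^1$ bound on the right, also the exponential $L^\infty H^1$ bound. Only \emph{after} this linear step does the paper pass to $\p_x^2(g/\bar g)$ by reading $\mu\,\p_x f=\p_t g-\ty'\p_x g-G$ with $f=\p_x\ln(1+g/\bar g)$ and using $\|f\|_{L^4}^4\le C\|f\|_{L^\infty L^2}^2\|f\|_{L^2L^2}\|\p_x f\|_{L^2L^2}$; the quadratic correction with the $\inf(1,T^{1/2})$ factor appears \emph{only} in this last step, exactly as stated. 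Replacing your $\p_t g$ multiplier by $\p_t\ln(1+g/\bar g)$ removes the obstacle and closes the argument.
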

\begin{proof}The proof is quite classical. The only subtlety lies in the derivation of the $L^\infty(H^1)$ estimate.  We only sketch the main steps in order to highlight the dependency on $\bar g$. Note that our purpose here is merely to derive energy estimates once the regularity of the solution is known, rather than proving extra regularity for the solution.

$\bullet$ We start with the $L^\infty(L^2)$ estimate. Multiplying \eqref{eq:generique-g} by $g$ and integrating by parts, we obtain
\[
\frac{1}{2}\frac{d}{dt}\int_0^R|g|^2 + \mu \int_0^R \frac{\p_x(\frac{g}{\bar g}) }{1 + \frac{g}{\bar g}}\p_x g = \int_0^R G g.
\]
Recalling that $\bar g\geq 1,\ g+ \bar g\geq 1$, we have
\[
\int_0^R \frac{\p_x(\frac{g}{\bar g}) }{1 + \frac{g}{\bar g}}\p_x g\geq \frac{1}{2}\int_0^R\frac{(\p_x g)^2}{g+ \bar g} - \frac{1}{2} \|\p_x \bar g\|_\infty^2 \int_0^R g^2,
\]
and therefore
\begin{eqnarray}\label{est:g-generique-L^2}
&&\|g\|_{L^\infty([0,T], L^2(0,R))} + \|\p_x g\|_{L^2((0,T)\times (0,R))}\\
&\leq& C \left(\|g_0\|_{L^2(0,R)} +  \| G\|_{L^2((0,T)\times (0,R))} + (1+ \|\p_x \bar g\|_\infty) \|g\|_{L^2((0,T)\times (0,R))}\right).\nonumber
\end{eqnarray}
Note that we also obtain the exponential estimate
\begin{eqnarray*}
&&\|g\|_{L^\infty([0,T], L^2(0,R))} + \|\p_x g\|_{L^2((0,T)\times (0,R))}\\
&\leq& C \left(\|g_0\|_{L^2(0,R)} +  \| G\|_{L^2((0,T)\times (0,R))}\right)\exp(  (1+ \|\p_x \bar g\|_\infty^2)T).\nonumber
\end{eqnarray*}

$\bullet$ Let us now tackle the $L^\infty(H^1)$ estimate. We multiply \eqref{eq:generique-g} by $\p_t \ln (1+\frac{g}{\bar g})$ and examine each term separately. We have
\begin{eqnarray*}
\int_0^R \p_t g \p_t \ln \left( 1 + \frac{g}{\bar g}\right) &=& \int_0^R \frac{\left(\p_t \frac{g}{\bar g}\right)^2 }{1 + \frac{g}{\bar g}}\bar g - \int_0^R \frac{\p_t \bar g}{\bar g} g \frac{\p_t \frac{g}{\bar g}}{1 + \frac{g}{\bar g}}\\
&\geq & \frac{1}{2} \int_0^R\frac{\left(\p_t \frac{g}{\bar g}\right)^2 }{g+ \bar g}\bar g^2 -\frac{1}{2}\|\p_t \bar g\|_\infty^2 \int_0^R g^2.
\end{eqnarray*}
As for the diffusion term, we recognize a time derivative
\begin{eqnarray*}
        &&-\mu\int_0^R \p_{xx}\ln\left( 1 + \frac{g}{\bar g}\right)\p_t \ln \left( 1 + \frac{g}{\bar g}\right)\\
        &=&\mu \int_0^R \p_x\ln\left( 1 + \frac{g}{\bar g}\right) \p_t\p_x \ln\left( 1 + \frac{g}{\bar g}\right)\\
        &=&\frac{\mu}{2}\frac{d}{dt}\int_0^R \left(\p_x \ln\left( 1 + \frac{g}{\bar g}\right)\right)^2.
\end{eqnarray*}
Transferring the transport term into the right-hand side and using a Cauchy-Schwarz inequality, we obtain
\begin{eqnarray*}
&&\left| \int_0^R \left(G-\tilde y'(t)\p_x g\right) \p_t \ln \left( 1 + \frac{g}{\bar g}\right) \right|\\
&\leq & \frac{1}{4}\int_0^R\frac{\left(\p_t \frac{g}{\bar g}\right)^2 }{g+ \bar g}\bar g^2  + C \int_0^R (G^2 + (\p_x g)^2).
\end{eqnarray*}

Gathering all the terms and recalling that $\bar g \geq 1$, $g+\bar g \leq \bar C$, we obtain
\begin{eqnarray*}
&&\left\| \p_t \frac{g}{\bar g}\right\|_{L^2((0,T)\times (0,R))} + \left\| \p_x \frac{g}{\bar g}\right\|_{L^\infty((0,T), L^2 (0,R))}\\
&\leq & C \Big(\left\| \p_x \frac{g_0}{\bar g(t=0)}\right\|_{L^2 (0,R)} + \| G\|_{L^2((0,T)\times (0,R))} + \| \p_x g\|_{L^2((0,T)\times (0,R))} \\
&&\qquad+ \|\p_t \bar g\|_\infty \|g\|_{L^2((0,T)\times (0,R))}\Big).
\end{eqnarray*}
Recalling \eqref{est:g-generique-L^2}, we obtain the estimate announced in the Lemma for $\|\p_x g\|_{L^\infty(H^1)}$ and $\|\p_t g\|_{L^2(L^2)}$. 

$\bullet$ Let us now derive the estimate on the second derivative in the case $R=+\infty$. Let us set $f=\p_x \ln (1+ \frac{g}{\bar g})$. According to the previous estimates, we know that $f\in L^\infty((0,T), L^2(\R_+)$. Furthermore, using equation \eqref{eq:generique-g}, we have
\[
\mu \p_x f = \p_t g - \tilde y'(t) \p_x g - G\in L^2((0,T)\times \R_+).
\]
Hence, using the previous estimates,
\begin{eqnarray*}
   \|\p_x f\|_{L^2((0,T)\times \R_+)}&\leq& C \left(\|\p_t g\|_{L^2((0,T)\times \R_+)} + \|\p_x g\|_{L^2((0,T)\times \R_+)} + \|G\|_{L^2((0,T)\times \R_+)}\right) \\
   &\leq & C \left(\|g_0\|_{H^1(\R_+)} + \|g\|_{L^2((0,T)\times \R_+)} + \|G\|_{L^2((0,T)\times \R_+)}\right).
\end{eqnarray*}
In particular, the Gagliardo-Nirenberg-Sobolev inequality entails that
\begin{eqnarray*}
\| f\|_{L^4((0,T)\times\R_+)}&\leq& C \left(\int_0^T \|f(t)\|_{L^2(\R_+)}^3 \|\p_x f(t)\|_{L^2(\R_+)}\right)^{1/4} \\&\leq& C\|f\|_{L^\infty((0,T), L^2(\R_+))}^{1/2} \| f\|_{L^2((0,T)\times \R_+)}^{1/4}\| \p_x f\|_{L^2((0,T)\times \R_+)}^{1/4}\\
&\leq & C \left(\|g_0\|_{H^1(\R_+)} + \|g\|_{L^2((0,T)\times \R_+)} + \|G\|_{L^2((0,T)\times \R_+)}\right).
\end{eqnarray*}

Now
\[
\p_x f=\p_{xx}\ln\left(1 + \frac{g}{\bar g}\right) = \p_x \left(\frac{\p_x \frac{g}{\bar g}}{1 + \frac{g}{\bar g}}\right)= \frac{\p_{xx} \frac{g}{\bar g}}{1 + \frac{g}{\bar g}} - f^2.
\]
Thus
\begin{eqnarray*}
&&\|\p_{xx} g\|_{L^2((0,T)\times \R_+)}\\&\leq & C \left( \|\p_x f\|_{L^2((0,T)\times \R_+)} + \|f\|_{L^4((0,T)\times \R_+)}^2\right)\\
&\leq &  C  \left(\|g_0\|_{H^1(\R_+)} + \|g\|_{L^2((0,T)\times \R_+)} + \|G\|_{L^2((0,T)\times \R_+)}\right)\\
&&+ C \inf(1, T^{1/2})\left(\|g_0\|_{H^1(\R_+)} + \|g\|_{L^2((0,T)\times \R_+)} + \|G\|_{L^2((0,T)\times \R_+)}\right)^2.
\end{eqnarray*}

\end{proof}

\section{Two technical Lemmas}
\begin{lem}\label{lem:x+y}
Let $F\in L^2(\R_+)$ such that $\sqrt{x} F\in L^2(\R_+)$, and let $M\geq 1$.
Let $\ty\in L^\infty([0,T])$ such that $\ty(t)\geq t/M$ a.e.
Then for all $T>0$, $R>0$
\[
\int_0^T \int_0^R F^2(x+\ty(t))dx\:dt\leq M\int_0^\infty z F^2(z)\:dz.
\]
\end{lem}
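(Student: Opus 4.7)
The estimate is purely a change-of-variable plus Fubini computation, so I would carry it out in a few concrete steps. First, since $F^2 \geq 0$, enlarging the spatial domain of integration only increases the quantity we want to bound, hence
\[
\int_0^T \int_0^R F^2(x+\ty(t))\,dx\,dt \leq \int_0^T \int_0^\infty F^2(x+\ty(t))\,dx\,dt.
\]
For fixed $t$, I would perform the change of variable $z=x+\ty(t)$ in the inner integral (valid since $\ty(t)\geq 0$), which gives
\[
\int_0^\infty F^2(x+\ty(t))\,dx = \int_{\ty(t)}^\infty F^2(z)\,dz = \int_0^\infty \mathbf{1}_{\{z \geq \ty(t)\}} F^2(z)\,dz.
\]

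Next, I would apply Fubini's theorem to swap the $t$ and $z$ integrals, which is justified because the integrand is nonnegative and measurable:
\[
\int_0^T \int_0^\infty F^2(x+\ty(t))\,dx\,dt = \int_0^\infty F^2(z)\,\bigl|\{t\in[0,T] : \ty(t)\leq z\}\bigr|\,dz.
\]
The hypothesis $\ty(t)\geq t/M$ a.e. then yields the key pointwise bound: for a.e. $t$, $\ty(t)\leq z$ implies $t\leq Mz$, hence
\[
\bigl|\{t\in[0,T] : \ty(t)\leq z\}\bigr| \leq \min(T,Mz) \leq Mz.
\]

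Combining the two previous displays gives
\[
\int_0^T \int_0^R F^2(x+\ty(t))\,dx\,dt \leq M \int_0^\infty z\,F^2(z)\,dz,
\]
which is the desired inequality. There is no real obstacle here: the only point to be careful about is that the inequality $\ty(t)\geq t/M$ only holds almost everywhere, but this does not affect any of the integrals since Lebesgue-null sets are harmless in both Fubini and the change of variable. The assumption $\sqrt{x}F\in L^2$ is precisely what makes the right-hand side finite.
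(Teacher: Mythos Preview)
Your argument is correct and follows essentially the same route as the paper: a change of variable $z=x+\ty(t)$, a bound by the indicator $\mathbf 1_{\{z\geq \ty(t)\}}$, then Fubini together with the inclusion $\{\ty(t)\leq z\}\subset\{t\leq Mz\}$. The only cosmetic difference is that the paper replaces $\mathbf 1_{\{z>\ty(t)\}}$ directly by $\mathbf 1_{\{z>t/M\}}$ and integrates in $t$, whereas you phrase this step as a bound on the Lebesgue measure of the sublevel set; the content is identical.
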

\begin{proof}
We write
\begin{eqnarray*}
\int_0^T \int_0^R F^2(x+\ty(t))dx\:dt&=&\int_0^T\int_{\ty(t)}^{R+\ty(t)} F^2(z)\:dz\:dt\\
&\leq & \int_0^T \int_0^\infty \mathbf 1_{z>\ty(t)} F^2(z)\:dz\: dt.
\end{eqnarray*}
Using the assumption on $\ty$, we deduce
\begin{eqnarray*}
\int_0^T \int_0^R F^2(x+\ty(t))dx\:dt
&\leq & \int_0^T \int_0^\infty \mathbf 1_{z>t/M} F^2(z)\:dz\:dt\\
&\leq & M\int_0^\infty z F^2(z)\:dz.
\end{eqnarray*}
\end{proof}

\begin{lem}
Let $\ty_1, \ty_2\in L^\infty_\text{loc}([0,T])$ and such that $\ty_i'\in L^2\cap L^\infty(0,T)$. 
Assume that $\ty_1(0)=\ty_2(0)=0$ and that $M^{-1}\leq \ty_i'\leq M$ for some constant $M\geq 1$ and for $i=1,2$. Let $\tw^0\in W^{1,\infty}(\R_+)$ such that $\sqrt{x}\p_x \tw^0 , \sqrt{x}\p_x^2 \tw^0\in L^2(\R_+)$.

Then there exists a  constant $C$ depending only on $M$ such that
\begin{align*}
& \|\tw^0 (\ty_1)- \tw^0(\ty_2)\|_{L^\infty(0,T)}\\&\leq C\left(\| \sqrt{x} \p_x \tw^0\|_{L^2(\R_+)} +\| \sqrt{x} \p_x^2 \tw^0\|_{L^2(\R_+)} + \|\p_x \tw^0\|_{L^\infty(\R_+)}\right) \|\ty_1'-\ty_2'\|_{L^2(0,T)}
    \end{align*}
and
\[
\|\tw^0 (\ty_1)- \tw^0(\ty_2)\|_{L^2(0,T)}\leq C \|\ty_1'-\ty_2'\|_{L^2(0,T)}\| \sqrt{z} \p_x \tw^0(z)\|_{L^2(\R_+)}.
\]

\label{lem:Taylor}
\end{lem}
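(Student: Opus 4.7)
The plan is to treat the two inequalities in succession, starting with the $L^2$ bound, which is the cleaner of the two and sets up the change of variable that recurs in both. I would write, with $\theta(t,\lambda):=(1-\lambda)\ty_2(t) + \lambda\, \ty_1(t)$,
\[
\tw^0(\ty_1(t)) - \tw^0(\ty_2(t)) = (\ty_1(t)-\ty_2(t))\int_0^1 \p_x \tw^0(\theta(t,\lambda))\, d\lambda,
\]
square, apply Cauchy--Schwarz in $\lambda$, and use the simple bound $|\ty_1(t)-\ty_2(t)|^2 \leq t\,\|\ty_1'-\ty_2'\|_{L^2(0,T)}^2$. It then remains to show that $\int_0^T t\,(\p_x\tw^0(\theta(t,\lambda)))^2\,dt$ is bounded uniformly in $\lambda$ by $M^2\|\sqrt{z}\,\p_x\tw^0\|_{L^2(\R_+)}^2$. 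Since $\p_t \theta(\cdot,\lambda)=(1-\lambda)\ty_2'+\lambda \ty_1'\in [1/M,M]$, the map $t\mapsto \theta(t,\lambda)$ is a $C^1$-diffeomorphism starting at $\theta(0,\lambda)=0$; the substitution $z=\theta(t,\lambda)$, together with $t(z)\leq M z$ (from $\theta(t,\lambda)\geq t/M$) and $(\p_t\theta)^{-1}\leq M$, absorbs the factor $t$ into the weight $z$ and yields the result.

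For the $L^\infty$ bound I would start from $\tw^0(\ty_1(t))-\tw^0(\ty_2(t)) = \int_{\ty_2(t)}^{\ty_1(t)} \p_x\tw^0(z)\,dz$ and factor out $1/\sqrt{z}$, obtaining
\[
|\tw^0(\ty_1(t))-\tw^0(\ty_2(t))| \leq 2\,\|\sqrt{z}\,\p_x\tw^0\|_{L^\infty(\R_+)}\,\bigl|\sqrt{\ty_1(t)}-\sqrt{\ty_2(t)}\bigr|.
\]
Using $\ty_i(t)\geq t/M$ to write $|\sqrt{\ty_1}-\sqrt{\ty_2}| = |\ty_1-\ty_2|/(\sqrt{\ty_1}+\sqrt{\ty_2}) \leq \sqrt{M}\,|\ty_1-\ty_2|/(2\sqrt{t})$ and then $|\ty_1-\ty_2|\leq \sqrt{t}\,\|\ty_1'-\ty_2'\|_{L^2(0,T)}$, the factor $\sqrt{t}$ cancels and leaves exactly $\sqrt{M}\,\|\sqrt{z}\,\p_x\tw^0\|_{L^\infty}\,\|\ty_1'-\ty_2'\|_{L^2(0,T)}$.

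The crux of the argument is therefore to bound $\|\sqrt{z}\,\p_x\tw^0\|_{L^\infty(\R_+)}$ by the right-hand side of the stated inequality, which is a small weighted Sobolev embedding. Setting $\Psi(z):=\sqrt{z}\,\p_x\tw^0(z)$, one has $\Psi(0)=0$ because $\p_x\tw^0\in L^\infty$, and the identity $(\Psi^2)'=2\Psi\Psi'=(\p_x\tw^0)^2+2y\,\p_x\tw^0\,\p_x^2\tw^0$ integrated from $0$ to $z$ gives
\[
\Psi(z)^2 = \int_0^z (\p_x\tw^0(y))^2\,dy + 2\int_0^z y\,\p_x\tw^0(y)\,\p_x^2\tw^0(y)\,dy.
\]
The first integral is split at $y=1$: the part on $(0,1)$ is controlled by $\|\p_x\tw^0\|_{L^\infty}^2$, and on $(1,\infty)$ by $\|\sqrt{y}\,\p_x\tw^0\|_{L^2(\R_+)}^2$ since $y\geq 1$. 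The second integral is rewritten as $2\int (\sqrt{y}\,\p_x\tw^0)(\sqrt{y}\,\p_x^2\tw^0)\,dy$ and bounded by Cauchy--Schwarz by $\|\sqrt{y}\,\p_x\tw^0\|_{L^2}\,\|\sqrt{y}\,\p_x^2\tw^0\|_{L^2}$. The main obstacle is precisely this weighted embedding --- in particular the absence of an $L^2$ bound on $\p_x\tw^0$ alone, which forces one to use the $L^\infty$ norm of $\p_x\tw^0$ to handle the neighborhood of $y=0$, and accounts for the presence of $\|\p_x\tw^0\|_{L^\infty}$ on the right-hand side of the $L^\infty$ estimate.
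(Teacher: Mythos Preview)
Your $L^2$ argument is essentially identical to the paper's: Taylor formula, Cauchy--Schwarz in the convex parameter, the bound $|\ty_1(t)-\ty_2(t)|\le \sqrt{t}\,\|\ty_1'-\ty_2'\|_{L^2}$, and the change of variable $z=\theta(t,\lambda)$ using $\p_t\theta\in[M^{-1},M]$ and $t\le Mz$.

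For the $L^\infty$ bound your route is genuinely different. The paper does not work pointwise in $z$; instead it uses the embedding $H^1(0,T)\subset L^\infty(0,T)$ in the \emph{time} variable (with $T$-independent constant, since the function vanishes at $t=0$ because $\ty_1(0)=\ty_2(0)$), and then estimates $\|\p_t(\tw^0(\ty_1)-\tw^0(\ty_2))\|_{L^2(0,T)}$. Writing $\p_t(\tw^0(\ty_1)-\tw^0(\ty_2))=(\ty_1'-\ty_2')\p_x\tw^0(\ty_1)+\ty_2'(\p_x\tw^0(\ty_1)-\p_x\tw^0(\ty_2))$, the first piece contributes $\|\p_x\tw^0\|_{L^\infty}\|\ty_1'-\ty_2'\|_{L^2}$ and the second is handled by rerunning the $L^2$ argument with $\p_x\tw^0$ in place of $\tw^0$, producing $\|\sqrt{x}\,\p_x^2\tw^0\|_{L^2}$. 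Your approach instead extracts the weight directly, bounding by $\|\sqrt{z}\,\p_x\tw^0\|_{L^\infty(\R_+)}$ and then proving the one-line weighted embedding $\|\sqrt{z}\,\p_x\tw^0\|_{L^\infty}^2\lesssim \|\p_x\tw^0\|_{L^\infty}^2+\|\sqrt{y}\,\p_x\tw^0\|_{L^2}^2+\|\sqrt{y}\,\p_x\tw^0\|_{L^2}\|\sqrt{y}\,\p_x^2\tw^0\|_{L^2}$. Both are correct and give constants depending only on $M$. The paper's version has the advantage of recycling the $L^2$ machinery verbatim; yours is a clean pointwise argument that avoids any discussion of the $T$-dependence of the Sobolev constant, at the cost of an extra (but elementary) weighted embedding on $\R_+$.
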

\begin{proof}
First, using the Sobolev embedding $H^1(0,T)\subset L^\infty(0,T)$, we have
\[
\|\tw^0 (\ty_1)- \tw^0(\ty_2)\|_{L^\infty(0,T)} \leq C \left( \|\tw^0 (\ty_1)- \tw^0(\ty_2)\|_{L^2(0,T)} + \|\p_t (\tw^0 (\ty_1)- \tw^0(\ty_2))\|_{L^2(0,T)}\right).
\]
Let us start with the first term in the right-hand side. Using a Taylor formula,
\[
\tw^0 (\ty_1(t))- \tw^0(\ty_2(t))=(\ty_1(t)-\ty_2(t))\int_0^1 \p_x \tw^0\left(\tau \ty_1(t) + (1-\tau) \ty_2(t)\right)\:d\tau.
\]
For a.e. $t\in [0,T]$, the Cauchy-Schwarz inequality implies
\[
|\ty_1(t)-\ty_2(t)|=\left| \int_0^t (\ty_1'-\ty_2')\right|\leq t^{1/2}\|\ty_1'-\ty_2'\|_{L^2(0,T)}.
\]
Hence, setting $z=\tau \ty_1(t) + (1-\tau) \ty_2(t)$ and observing that $z/M\leq t\leq Mz$,
\begin{eqnarray*}
\|\tw^0 (\ty_1)- \tw^0(\ty_2)\|_{L^2(0,T)}^2&\leq &\|\ty_1'-\ty_2'\|_{L^2(0,T)}^2 \int_0^T\int_0^1 t \left(\p_x \tw^0\left(\tau \ty_1(t) + (1-\tau) \ty_2(t)\right)\right)^2\:d\tau\:dt\\
&\leq &M^2 \|\ty_1'-\ty_2'\|_{L^2(0,T)}^2 \int_0^{MT}\int_0^1 z (\p_x \tw^0(z))^2 \:d\tau\:dz\\
&\leq & M^2\|\ty_1'-\ty_2'\|_{L^2(0,T)}^2\| \sqrt{z} \p_x \tw^0(z)\|_{L^2(\R_+)}^2.
\end{eqnarray*}
The second term is treated in a similar fashion.

\end{proof}

\section{Proof of Lemma \ref{lem:coerc-A}}
\label{app:lem:coerc-A}

We recall that 
\[
\cA=-s\mathrm{Id } - \mu \p_x \left(\frac{\cdot}{\bv}\right).
\]
Let $\varphi \in H^2(\R_+)$ be arbitrary. 
\begin{itemize}
    \item Integrating by parts and recalling that $\bv(0)=1$,
    \begin{eqnarray*}
    \int_0^\infty (\cA \p_x \varphi) \varphi&=& -    \int_0^\infty s\p_x \varphi\;  \varphi - \mu \int_0^\infty\p_x \left(\frac{\p_x \varphi}{\bv}\right)\varphi\\
    &=&\frac{s}{2} (\varphi(0))^2 + \mu \int_0^\infty \frac{(\p_x \varphi)^2}{\bv} + \mu \varphi(0) \varphi'(0).
        \end{eqnarray*}
    
    \item In a similar fashion, for any $\rho\in \mathcal C_b^2(\R_+)$,
    \begin{eqnarray*}
    \int_0^\infty (\p_x\cA  \varphi) \frac{\varphi}{\bv} \rho &=& -s\int_0^\infty \p_x \varphi \; \frac{\varphi}{\bv} \rho - \mu \int_0^\infty \left(\p_x^2\left(\frac{\varphi}{\bv}\right)\right) \left(\frac{\varphi}{\bv}\right) \rho\\
    &=& \frac{s}{2} \varphi(0)^2 \rho(0) + \frac{s}{2} \int_0^\infty \varphi^2 \p_x \left(\frac{\rho}{\bv}\right) + \mu \int_0^\infty \left(\p_x\left(\frac{\varphi}{\bv}\right)\right)^2 \rho\\
    && + \mu \int_0^\infty \left(\p_x\left(\frac{\varphi}{\bv}\right)\right)\left(\frac{\varphi}{\bv}\right) \p_x \rho + \mu \p_x \left(\frac{\varphi}{\bv}\right)_{|x=0} \varphi(0)\rho(0)\\
    &=&  \mu \int_0^\infty \left(\p_x\left(\frac{\varphi}{\bv}\right)\right)^2 \rho + \frac{s}{2} \int_0^\infty \varphi^2 \p_x \left(\frac{\rho}{\bv}\right)\\
    &&- \frac{\mu}{2}\int_0^\infty \left(\frac{\varphi}{\bv}\right)^2 \rho''\\
    && +\frac{s}{2} \varphi(0)^2 \rho(0) - \frac{\mu}{2}\varphi(0)^2 \rho'(0)  + \mu \p_x \left(\frac{\varphi}{\bv}\right)_{|x=0} \varphi(0)\rho(0).
    \end{eqnarray*}
\end{itemize}
The result follows.
\bibliography{bibli-DP}
\end{document}